\numberwithin{equation}{section}
\newtheorem{thm}{Theorem}[section]
\newtheorem{cor}[thm]{Corollary}
\newtheorem{lem}[thm]{Lemma}
\newtheorem{prop}[thm]{Proposition}
\theoremstyle{remark}
\newtheorem{rem}[thm]{Remark}
\theoremstyle{definition}
\newtheorem{defin}[thm]{Definition}
\newcommand{\Alo}{A_{m, \mathrm{lo \wedge lo}}}
\newcommand{\Blo}{B_{m, \mathrm{lo \wedge lo}}}
\newcommand{\Bhi}{B_{m, \mathrm{hi \lor hi}}}
\newcommand{\Alol}{A_{\ell, \mathrm{lo \wedge lo}}}
\newcommand{\Nlo}{N_{\mathrm{lo}}}
\newcommand{\cA}{\mathcal{A}}
\newcommand{\cC}{\mathcal{C}}
\newcommand{\cN}{\mathcal{N}}
\newcommand{\cP}{\mathcal{P}}
\newcommand{\cT}{\mathcal{T}}
\newcommand{\oB}{\overline{B}}
\newcommand{\oG}{\overline{G}}
\newcommand{\oN}{\overline{N}}
\newcommand{\tG}{\widetilde{G}}
\newcommand{\tN}{\widetilde{N}}
\newcommand{\e}{\mathbf{e}}
\newcommand{\kx}{k_{\mathrm{max}}}
\newcommand{\kd}{k_{\mathrm{mid}}}
\newcommand{\kn}{k_{\mathrm{min}}}
\newcommand{\dx}{{\; dx}}
\newcommand{\ds}{{\; ds}}
\newcommand{\rr}{{\mathbf{R}^2}}
\newcommand{\cV}{{\epsilon}}
\newcommand{\Ec}{{E_{\mathrm{crit}}}}
\newcommand{\R}{{\mathbf{R}}}
\newcommand{\Rd}{{\mathbf{R}^d}}
\newcommand{\RR}{{\mathbf{R}^2}}
\newcommand{\nline}{{\vspace{\baselineskip}}}
\renewcommand{\Re}{\mathrm{Re}}
\renewcommand{\Im}{\mathrm{Im}}
\title[Conditionally global Schr\"odinger maps]{Conditional global regularity of Schr\"odinger maps: 
subthreshold dispersed energy}
\author[P. Smith]{Paul Smith}
\date{}
\address{University of California, Berkeley}
\email{smith@math.berkeley.edu}
\thanks{The author is supported in part by NSF grant DMS-1103877. 
This research was carried out at UCLA with their support and at UC Berkeley with NSF support.}
\begin{document}

\begin{abstract}
We consider the Schr\"odinger map initial value problem
\begin{equation*}
\begin{cases}
\partial_t \varphi &= \quad \varphi \times \Delta \varphi \\
\varphi(x, 0) &= \quad \varphi_0(x),
\end{cases}
\label{SMa}
\end{equation*}
with
$\varphi_0 : \mathbf{R}^2 \to \mathbf{S}^2 \hookrightarrow \mathbf{R}^3$
a smooth $H_Q^\infty$ map from the
Euclidean space $\mathbf{R}^2$ to the sphere $\mathbf{S}^2$
with subthreshold ($< 4 \pi$) energy.
Assuming an a priori $L^4$ boundedness condition on the solution $\varphi$,
we prove that the Schr\"odinger map system admits a unique global 
smooth solution $\varphi \in C( \mathbf{R} \to H_Q^\infty)$
provided that the initial data $\varphi_0$ is sufficiently energy-dispersed, i.e., sufficiently small
in the critical Besov space $\dot{B}^1_{2, \infty}$.
Also shown are global-in-time bounds on certain Sobolev norms of $\varphi$.
Toward these ends we establish improved local smoothing and bilinear 
Strichartz estimates,
adapting the Planchon-Vega approach
to such estimates to the nonlinear setting of Schr\"odinger maps.
\end{abstract}

\maketitle

\tableofcontents

\section{Introduction} \label{S:Introduction}
We consider the
Schr\"odinger map initial value problem
\begin{equation}
\begin{cases}
\partial_t \varphi &= \quad \varphi \times \Delta \varphi \\
\varphi(x, 0) &= \quad \varphi_0(x),
\end{cases}
\label{SM}
\end{equation}
with $\varphi_0 : \mathbf{R}^d \to \mathbf{S}^2 \hookrightarrow \mathbf{R}^3$.

The system (\ref{SM}) enjoys conservation of energy
\begin{equation}
E(\varphi(t)) := \frac{1}{2} \int_{\mathbf{R}^d} \lvert \partial_x \varphi(t) \rvert^2 dx
\label{Energy Def}
\end{equation}
and mass
\begin{equation*}
M(\varphi(t)) := \int_{\mathbf{R}^d} \lvert \varphi(t) - Q \rvert^2 dx,
\end{equation*}
where $Q \in \mathbf{S}^2$ is some fixed base point.
When $d = 2$, both (\ref{SM}) and (\ref{Energy Def}) are invariant
with respect to the scaling
\begin{equation}
\varphi(x, t) \to \varphi(\lambda x, \lambda^2 t), \quad \quad \lambda > 0,
\label{E:scaling}
\end{equation}
in which case
we call (\ref{SM}) \emph{energy-critical}.
In this article we restrict ourselves to the energy-critical setting.

For the physical significance of (\ref{SM}), see
\cite{ChShUh00, NaStUh03-2, PaTo91, La67}.
The system also arises naturally
from the (scalar-valued) free linear Schr\"odinger equation
\begin{equation*}
(\partial_t + i \Delta) u = 0
\end{equation*}
by replacing the target manifold $\mathbf{C}$ with the sphere
$\mathbf{S}^2 \hookrightarrow \mathbf{R}^3$, which then requires replacing $\Delta u$
with 
$(u^* \nabla)_j \partial_j u = \Delta u - \perp(\Delta u)$ and $i$ with the complex structure
$u \times \cdot$. Here $\perp$ denotes orthogonal projection onto the normal bundle,
which, for a given point $(x, t)$, is spanned by $u(x, t)$.
For more general analogues of (\ref{SM}), e.g., for K\"ahler targets
other than $\mathbf{S}^2$, see \cite{DiWa01, Mc07, NaShVeZe07}.
See also \cite{KePoVe00, KeNa05, IoKe11} for connections with
other spin systems.
The local theory for Schr\"odinger maps
is developed in \cite{SuSuBa86, ChShUh00, DiWa01, Mc07}.
For global results in the $d = 1$ setting, see \cite{ChShUh00, RoRuYaSt09}.
For $d \geq 3$, see \cite{Be08, Be08-2, BeIoKe07, IoKe06, IoKe07-2, BeIoKeTa11}.
Concerning the related
modified Schr\"odinger map system, see
\cite{Ka05, KaKo07, NaShVeZe07}.

The small-energy (take $d = 2$) theory for (\ref{SM}) is now well-understood: 
building upon previous work (e.g., see below or \cite[\S 1]{BeIoKeTa11} for a brief history), 
global wellposedness
and global-in-time bounds on certain Sobolev norms
are shown in \cite{BeIoKeTa11} given
initial data with sufficiently small energy.
The high-energy theory, however, is still very much in development.
One of the main goals is to establish
what is known as the \emph{threshold conjecture}, which
asserts that global wellposedness holds for (\ref{SM}) given
initial data with energy below a certain energy threshold,
and that finite-time blowup is possible for certain initial data with energy above this threshold.
The threshold is directly tied to the nontrivial stationary
solutions of (\ref{SM}), i.e., maps $\phi$ into $\mathbf{S}^2$ 
that satisfy
\begin{equation*}
\phi \times \Delta \phi \equiv 0
\end{equation*}
and that do not send all of $\mathbf{R}^2$ to a single point of $\mathbf{S}^2$.
Therefore we identify such stationary solutions with nontrivial
harmonic maps $\mathbf{R}^2 \to \mathbf{S}^2$,
which we refer to as \emph{solitons} for (\ref{SM}).
It turns out that
there exist no nontrivial harmonic maps into the sphere $\mathbf{S}^2$
with energy less than $4 \pi$, and that
the harmonic map given by the inverse of stereographic projection has energy precisely
equal to $4 \pi =: \Ec$.
We therefore refer to the range of energies $[0, \Ec)$ as \emph{subthreshold}, and call
$\Ec$ the \emph{critical} or \emph{threshold energy}.

Recently, an analogous threshold conjecture was established for wave maps
(see \cite{KrScTa08, RoSt06, StTa10-2, StTa10}
and, for hyperbolic space, \cite{KrSc09, T3, T4, T5, T6, T7}).
When $\mathcal{M}$ is a hyperbolic space, or, as in \cite{StTa10-2, StTa10},
a generic compact manifold, we may define the associated energy threshold 
$\Ec = \Ec(\mathcal{M})$ as follows.
Given a target manifold $\mathcal{M}$, consider the collection $\mathcal{S}$
of all non-constant finite-energy harmonic maps $\phi : \mathbf{R}^2 \to \mathcal{M}$.
If this set is empty, as is, for instance, the case when $\mathcal{M}$ is equal to a
hyperbolic space $\mathbf{H}^m$, then we formally set $\Ec = + \infty$.
If $\mathcal{S}$ is nonempty, then it turns out that the set $\{ E(\phi) : \phi \in \mathcal{S} \}$
has a least element and that, moreover, this energy value is positive.  In such case
we call this least energy $\Ec$.
The threshold $\Ec$ 
depends upon geometric and topological properties of the target manifold
$\mathcal{M}$;
see \cite[Chapter 6]{LiWa08} for further discussion.
This definition yields $\Ec = 4 \pi$ in the case of the sphere $\mathbf{S}^2$.
For further discussion of the critical energy level in the
wave maps setting, see \cite{StTa10, T3}.

We now summarize what is known for Schr\"odinger maps in $d = 2$.
Some of these developments postdate the submission of this article.
Asymptotic stability of harmonic maps of topological degree $\lvert m \rvert \geq 4$
under the Schr\"odinger flow
is established in \cite{GuKaTs08}.
The result is extended to maps of degree $\lvert m \rvert \geq 3$ in
\cite{GuNaTs10}.
A certain energy-class instability for degree-1 solitons of (\ref{SM}) is shown in \cite{BeTa10},
where it is also shown that global solutions always exist
for small localized equivariant perturbations of degree-1 solitons.
Finite-time blowup for (\ref{SM}) is demonstrated in \cite{MeRaRo11-3, MeRaRo11-2},
using less-localized equivariant perturbations of degree-1 solitons, thus
resolving the blowup assertion of the threshold conjecture.
Global wellposedness given data with small critical Sobolev norm (in all dimensions
$d \geq 2$) is shown in \cite{BeIoKeTa11}.
Recent work of the author \cite{Sm11} extends the result of \cite{BeIoKeTa11}
and the present conditional result to global regularity
(in $d = 2$) assuming small critical Besov norm $\dot{B}^{1}_{2, \infty}$.
In the radial setting (which excludes harmonic maps), \cite{GuKo11} establishes 
global wellposedness at any energy level.
Most recently, \cite{BeIoKeTa11b} establishes global existence and uniqueness
as well as scattering given 1-equivariant data with energy less than $4 \pi$.
Although stating the results only for data with energy less than $4 \pi$, \cite{BeIoKeTa11b} also notes
that their proofs remain valid for maps with energy slightly larger than $4 \pi$, suggesting
that the ``right" threshold conjecture for Schr\"odinger maps should be stated also in terms
of homotopy class, leading to a threshold of $8 \pi$ rather than $4 \pi$
in the case where the target is $\mathbf{S}^2$.
See the Introduction of \cite{BeIoKeTa11b} for further discussion of this point.

The main purpose of this paper is to show that 
(\ref{SM}) admits a unique smooth global solution $\varphi$
given smooth initial data $\varphi_0$ satisfying appropriate energy conditions
and assuming a priori boundedness of a certain $L^4$ spacetime norm of 
the spatial gradient of the solution $\varphi$.  In particular, we admit a restricted class of
initial data with energy ranging over the entire subthreshold range.  
As such, our main result is a small step toward
a large data regularity theory for (\ref{SM}) and the attendant
threshold conjecture.

In order to go beyond the small-energy results of \cite{BeIoKeTa11}, we
introduce physical-space proofs of local smoothing and bilinear Strichartz
estimates, in the spirit of \cite{PlVe09, PlVe, Tls}, that do not heavily depend upon perturbative methods.  
The local smoothing estimate that we establish is a nonlinear analogue of that shown in
\cite{IoKe06}.
The bilinear Strichartz estimate is a nonlinear analogue of the improved
bilinear Strichartz estimate of \cite{Bo98}.
These proofs more naturally account for magnetic nonlinearities, and
the technique developed here we believe to be of independent interest
and applicable to other settings.
For local smoothing in the context
of Schr\"odinger equations, see \cite{KePoVe93, KePoVe98, KePoVe04, IoKe05, IoKe06, IoKe07-2}.
For other Strichartz and smoothing results for magnetic Schr\"odinger equations,
see \cite{St07, DAnFa08, DAnFaVeVi10, ErGoSc08, ErGoSc09, FaVe09} and the references therein.
We also use in a fundamental way
the subthreshold \emph{caloric gauge} of \cite{Sm09}, which is an extension
of a construction introduced in \cite{Trenorm}.

To make these statements more precise, we now turn to some basic definitions
and elementary observations.

\subsection{Preliminaries}

First we establish some basic notation.  The boldfaced letters $\mathbf{Z}$ and $\mathbf{R}$
respectively denote the integers and real numbers. 
We use $\mathbf{Z}_+ = \{0, 1, 2 \ldots \}$ to denote the nonnegative integers.
Usual Lebesgue function spaces are denoted by $L^p$, and these sometimes include a subscript
to indicate the variable or variables of integration.  When function spaces are iterated, e.g., 
$L^\infty_t L^2_x$, the norms are applied starting with the rightmost one.
When we use $L^4$ without subscripts, we mean $L^4_{t,x}$.

We use $\mathbf{S}^2$ to denote the standard 2-sphere embedded in 3-dimensional Euclidean space:
$\{ x \in \mathbf{R}^3 : \lvert x \rvert = 1 \}$.  The ambient space $\mathbf{R}^3$ carries the usual
metric and $\mathbf{S}^2$ the inherited one.
Throughout, $\mathbf{S}^1$ denotes the unit circle.

We use
$\partial_x = ( \partial_{x_1}, \partial_{x_2} ) = ( \partial_1, \partial_2)$ 
to denote the gradient operator,
as throughout ``$\nabla$'' will stand for the Riemannian connection on $\mathbf{S}^2$.
As usual, ``$\Delta$" denotes the (flat) spatial Laplacian.

The symbol $\lvert \partial_x \rvert^\sigma$ denotes the Fourier multiplier with symbol
$\lvert \xi \rvert^\sigma$.  We also use standard Littlewood-Paley Fourier multipliers
$P_k$ and $P_{\leq k}$, respectively denoting restrictions
to frequencies $\sim 2^k$ and $\lesssim 2^k$; 
see section \S \ref{S:Function spaces}
for precise definitions.  We use $\hat{f}$ to denote the Fourier transform of a function $f$
in the spatial variables.

We also employ without further comment (finite-dimensional) vector-valued analogues of the above.

We use $f \lesssim g$ to denote the estimate $\lvert f \rvert \leq C \lvert g \rvert$ 
for an absolute constant $C > 0$.  As usual, the constant is allowed
to change line-to-line.
To indicate dependence of the implicit constant upon parameters (which,
for instance, can include functions), we use subscripts,
e.g. $f \lesssim_k g$.  As an equivalent alternative we write $f = O(g)$ (or,
with subscripts, $f = O_k(g)$, for instance) to denote $\lvert f \rvert \leq C \lvert g \rvert$.
If both $f \lesssim g$ and $g \lesssim f$, then we indicate this by writing $f \sim g$.

Now we introduce the notion of Sobolev spaces of functions mapping
from Euclidean space into $\mathbf{S}^2$.  The spaces are constructed
with respect to a choice of base point $Q \in \mathbf{S}^2$, the purpose of which
is to define a notion of decay:
instead of decaying to zero at infinity, our Sobolev class functions decay to $Q$.

For $\sigma \in [0, \infty)$,
let $H^\sigma = H^\sigma(\rr)$ denote the usual Sobolev space of complex-valued
functions on $\rr$.  For any $Q \in \mathbf{S}^2$, set
\begin{equation*}
H_Q^\sigma := \{ f : \rr \to \mathbf{R}^3 \text{ such that } |f(x)| \equiv 1 \text{ a.e. and }
f - Q \in H^\sigma \}.
\end{equation*}
This is a metric space with induced distance $d_Q^\sigma(f, g) = \lVert f - g \rVert_{H^\sigma}$.
For $f \in H_Q^\sigma$ we set $\lVert f \rVert_{H_Q^\sigma} = d_Q^\sigma(f, Q)$ for short.
We also define the spaces
\begin{equation*}
H^\infty := \bigcap_{\sigma \in \mathbf{Z}_+} H^\sigma
\quad \text{and} \quad
H_Q^\infty := \bigcap_{\sigma \in \mathbf{Z}_+} H_Q^\sigma.
\end{equation*}

For any time $T \in (0, \infty)$, the above definitions
may be extended to the spacetime slab $\rr \times (-T, T)$ (or $\rr \times [-T, T]$).
For any $\sigma, \rho \in \mathbf{Z}_+$,
let $H^{\sigma, \rho}(T)$ denote the Sobolev space of complex-valued functions on
$\rr \times (-T, T)$ with the norm
\begin{equation*}
\lVert f \rVert_{H^{\sigma, \rho}(T)} := \sup_{t \in (-T, T)}
\sum_{\rho^\prime = 0}^\rho \lVert \partial_t^{\rho^\prime} f(\cdot, t)\rVert_{H^\sigma},
\end{equation*}
and for $Q \in \mathbf{S}^2$ endow
\begin{equation*}
H_{Q}^{\sigma, \rho} := \{ f : \rr \times (-T, T) \to \mathbf{R}^3
\text{ such that } | f(x, t) | \equiv 1
\text{ a.e. and } f - Q \in H^{\sigma, \rho}(T) \}
\end{equation*}
with the metric induced by the $H^{\sigma, \rho}(T)$ norm.  Also, define the spaces
\begin{equation*}
H^{\infty, \infty}(T) = \bigcap_{\sigma, \rho \in \mathbf{Z}_+} H^{\sigma, \rho}(T)
\quad \text{and} \quad
H_Q^{\infty, \infty}(T) = \bigcap_{\sigma, \rho \in \mathbf{Z}_+} H_Q^{\sigma, \rho}(T).
\end{equation*}

For $f \in H^\infty$ and $\sigma \geq 0$ 
we define the homogeneous Sobolev norms as
\begin{equation*}
\lVert f \rVert_{\dot{H}^\sigma} =
\lVert \hat{f}(\xi) \cdot \lvert \xi \rvert^\sigma \rVert_{L^2}.
\end{equation*}

We mention two
important conservation laws obeyed by solutions of the Schr\"odinger
map system (\ref{SM}).  In particular, if $\varphi \in C((T_1, T_2) \to H_Q^\infty)$ solves
(\ref{SM}) on a time interval $(T_1, T_2)$, then both
\begin{equation*}
\int_\rr \lvert \varphi(t) - Q \rvert^2 \dx
\quad \text{and} \quad
\int_\rr \lvert \partial_x \varphi(t) \rvert^2 \dx
\end{equation*}
are conserved.  Hence the Sobolev norms $H_Q^0$ and $H_Q^1$ are conserved,
as well as the energy (\ref{Energy Def}).
Note also the time-reversibility
obeyed by (\ref{SM}), which in particular permits the smooth extension to $(-T, T)$
of a smooth solution on $[0, T)$.

According to our conventions,
\begin{equation*}
\lvert \partial_x \varphi(t) \rvert^2
:=
\sum_{m = 1, 2} \lvert \partial_m \varphi(t) \rvert^2.
\end{equation*}

We can now give a precise statement of a key known local result.
\begin{thm}[Local existence and uniqueness]
If the initial data $\varphi_0$ is such that $\varphi_0 \in H_Q^\infty$ for some $Q \in \mathbf{S}^2$,
then there exists a time $T = T(\lVert \varphi_0 \rVert_{H_Q^{25}}) > 0$ for which there exists
a unique solution $\varphi \in C([-T, T] \to H_Q^\infty)$ of the initial value problem (\ref{SM}).
\label{LWP}
\end{thm}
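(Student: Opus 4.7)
I would prove Theorem \ref{LWP} by the classical approach of parabolic regularization paired with Sobolev-space energy estimates, which is well-suited to smooth data. The plan is to approximate (\ref{SM}) by the Landau--Lifshitz--Gilbert family
\begin{equation*}
\partial_t \varphi^{(\cV)} = \varphi^{(\cV)} \times \Delta \varphi^{(\cV)} + \cV \bigl( \Delta \varphi^{(\cV)} - |\partial_x \varphi^{(\cV)}|^2 \varphi^{(\cV)} \bigr), \qquad \varphi^{(\cV)}(\cdot, 0) = \varphi_0,
\end{equation*}
a semilinear parabolic system that preserves the sphere constraint $|\varphi^{(\cV)}| \equiv 1$ (the dissipative term is $\cV$ times the tension field of $\varphi^{(\cV)}$) and for which short-time smooth solutions exist by standard parabolic theory. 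The real task is to produce bounds in $H_Q^\sigma$ uniform in $\cV$ on a common interval $[-T, T]$ with $T = T(\lVert \varphi_0 \rVert_{H_Q^{25}})$, then to pass to the limit and verify uniqueness.

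\textbf{A priori energy estimate.} The heart of the argument is to show, for every smooth solution $\varphi$ of (\ref{SM}) and every multi-index $\alpha$, a differential inequality of the form
\begin{equation*}
\frac{d}{dt} \lVert \partial^\alpha \varphi \rVert_{L^2(\rr)}^2 \leq C_{|\alpha|}\bigl(\lVert \varphi - Q \rVert_{H^{25}}\bigr) \lVert \varphi - Q \rVert_{H^{|\alpha|}}^2.
\end{equation*}
Testing $\partial^\alpha$ of (\ref{SM}) against $\partial^\alpha \varphi$ produces a Leibniz expansion whose potentially dangerous top piece is
\begin{equation*}
\int_{\rr} \partial^\alpha \varphi \cdot (\varphi \times \Delta \partial^\alpha \varphi) \dx,
\end{equation*}
which naively loses two derivatives relative to $\partial_t \partial^\alpha \varphi$. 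Integrating by parts in each factor of $\Delta$ and using the pointwise identity $a \cdot (\varphi \times a) = 0$ annihilates the worst-order contribution, leaving only commutator terms in which the highest derivative is $|\alpha|$ and the remaining factor is a low-order derivative of $\varphi$, pointwise bounded via two-dimensional Sobolev embedding from the $H^{25}$ norm. All subleading Leibniz terms admit similar product-rule/Sobolev bounds, exploiting $\lVert \varphi \rVert_{L^\infty} \equiv 1$. Summing over $|\alpha| \leq 25$ and applying Gronwall then yields a lifespan $T$ depending only on $\lVert \varphi_0 \rVert_{H_Q^{25}}$, and repeating the same scheme for $|\alpha| > 25$, with the $H^{25}$ norm now absorbing all low-order factors, propagates every higher Sobolev norm on the same interval $[-T, T]$.

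\textbf{Passage to the limit, uniqueness, and main obstacle.} The uniform $H_Q^\sigma$ bounds on $\varphi^{(\cV)}$, combined with the control on $\partial_t \varphi^{(\cV)}$ that the equation supplies, yield via Aubin--Lions a subsequence converging in $C([-T, T] \to H_Q^\sigma)$ for every $\sigma$; the limit lies in $C([-T, T] \to H_Q^\infty)$ and solves (\ref{SM}). For uniqueness, given two solutions $\varphi, \psi \in C([-T, T] \to H_Q^\infty)$, the difference $w := \varphi - \psi$ satisfies
\begin{equation*}
\partial_t w = \varphi \times \Delta w + w \times \Delta \psi,
\end{equation*}
and the same algebraic cancellation, applied in an $H^\sigma$ energy estimate for $w$ (with $\sigma$ fixed and large enough to place all factors of $\varphi, \psi$ into $L^\infty_x$), reduces to a Gronwall-type inequality forcing $w \equiv 0$. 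The main obstacle throughout is precisely the apparent loss of two derivatives in the nonlinearity $\varphi \times \Delta \varphi$; the entire scheme hinges on the skew-adjointness of $\varphi \times \,\cdot$ on the tangent space to $\mathbf{S}^2$, which renders the top-order symbol invisible to the $L^2$ energy identity. The generous choice $\sigma = 25$ is simply a comfortable margin that absorbs the remaining commutator and two-dimensional Sobolev-embedding losses.
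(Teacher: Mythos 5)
The paper itself does not prove Theorem \ref{LWP}; it delegates the proof entirely to the cited references \cite{SuSuBa86, ChShUh00, DiWa01, Mc07}. Your overall strategy---a dissipative (Landau--Lifshitz--Gilbert) regularization, uniform $H^\sigma_Q$ energy bounds, an Aubin--Lions compactness argument, and a difference estimate for uniqueness---is a legitimate and recognized route to local well-posedness. The gap is in the energy estimate, which is the crux of the whole approach.

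Your claim is that integrating by parts in $\int u \cdot (\varphi \times \Delta u)\, dx$ (with $u := \partial^\alpha \varphi$) and invoking $a \cdot (\varphi \times a) = 0$ ``annihilates the worst-order contribution, leaving only commutator terms in which the highest derivative is $|\alpha|$.'' That is not what happens. One integration by parts gives
\begin{equation*}
\int_{\rr} u \cdot (\varphi \times \Delta u)\, dx = - \int_{\rr} \partial_j u \cdot (\varphi \times \partial_j u)\, dx - \int_{\rr} u \cdot (\partial_j \varphi \times \partial_j u)\, dx,
\end{equation*}
and the first integral on the right vanishes by the triple-product identity, but the second does not: writing $J := \int u \cdot (\partial_j \varphi \times \partial_j u)\, dx$ and integrating by parts once more simply returns $J = J$ (the boundary term $\int u \cdot (\Delta\varphi \times u)\,dx$ vanishes, and the remaining term re-antisymmetrizes to $J$). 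So $J$ survives, and it carries a factor $\partial_j u$ with $|\alpha| + 1$ derivatives, not $|\alpha|$. The naive bound $|J| \lesssim \lVert \partial_x \varphi \rVert_{L^\infty} \lVert \varphi - Q \rVert_{\dot H^{|\alpha|}} \lVert \varphi - Q \rVert_{\dot H^{|\alpha|+1}}$ does not close a Gronwall inequality for $\lVert \varphi - Q \rVert_{H^{|\alpha|}}$, and absorbing the extra derivative into the parabolic term $\cV \Delta \varphi^{(\cV)}$ would cost a power of $\cV^{-1}$, destroying uniformity in $\cV$. The subleading Leibniz terms contribute pieces of the same size and do not produce an exact cancellation for a general multi-index $\alpha$ (for $u = \Delta^m \varphi$, for instance, the surviving contribution is $(2m-1) J$).

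This residual term $J$ is precisely the derivative-loss feature that makes Schr\"odinger-map local theory non-trivial, and it is the same phenomenon that the present paper is designed to handle globally (the $A_{\mathrm{lo}} \partial_x \psi_{\mathrm{hi}}$ term). The references you would need to invoke do not dispose of it by integration by parts alone: Chang--Shatah--Uhlenbeck pass to a gauge/frame formulation in which the top-order operator becomes genuinely skew-adjoint; McGahagan and Ding--Wang use wave-map-type approximations and covariant derivatives $(\varphi^*\nabla)$, for which the curvature replaces $J$ by a zeroth-order term; and Sulem--Sulem--Bardos exploit the sphere constraint $|\varphi| \equiv 1$ and a discretization scheme with finer structural cancellations. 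To complete your sketch you would need to either replace $\partial^\alpha$ by covariant derivatives of $\partial_x\varphi$ (in which case $\frac{d}{dt}\lVert D_x^m \psi \rVert_{L^2}^2$ closes because $i D_\ell D_\ell$ is covariantly skew-adjoint, with curvature terms that are genuinely lower order) or to decompose $u$ into tangential and normal components relative to $\varphi$ and use $\varphi \cdot \partial^\beta \varphi = (\text{products of lower-order derivatives})$ to show the problematic piece of $J$ is actually subcritical. As written, the step asserting the energy estimate closes is not justified.
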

\begin{proof}
See \cite{SuSuBa86, ChShUh00, DiWa01, Mc07} and the references therein.
\end{proof}

\subsection{Global theory}

Theorem \ref{LWP} yields short-time existence and uniqueness as well as a blow-up criterion;
as such it is central to the continuity arguments used for global results.
In the small-energy setting, global regularity (and more)
was proved for (\ref{SM}) by Bejenaru, Ionescu, Kenig, and Tataru \cite{BeIoKeTa11}.
We now state a special case of their main result, omitting for the sake of brevity
the consideration of higher spatial dimensions
and continuity of the solution map.
\begin{thm}[Global regularity]
Let $Q \in \mathbf{S}^2$.  Then
there exists an $\varepsilon_0 > 0$ such that, for 
any $\varphi_0 \in H_Q^\infty$ with $\lVert \partial_x \varphi_0 \rVert_{L_x^2} \leq \varepsilon_0$, 
there is a unique solution $\varphi \in C(\mathbf{R} \to H_Q^\infty)$ of the initial value problem
(\ref{SM}).  Moreover, for any $T \in [0, \infty)$ and $\sigma \in \mathbf{Z}_+$,
\begin{equation*}
\sup_{t \in (-T, T)} \lVert \varphi(t) \rVert_{H_Q^\sigma} 
\lesssim_{\sigma, T, \lVert \varphi_0 \rVert_{H_Q^\sigma}} 1.
\end{equation*}
Also, given any $\sigma_1 \in \mathbf{Z}_+$, there exists
a positive
$\varepsilon_1 = \varepsilon_1(\sigma_1) \leq \varepsilon_0$
such that 
the uniform bounds
\begin{equation*}
\sup_{t \in \mathbf{R}} \lVert \varphi(t) \rVert_{H_Q^\sigma} 
\lesssim_\sigma
\lVert \varphi_0 \rVert_{H_Q^\sigma}
\end{equation*}
hold for all $1 \leq \sigma \leq \sigma_1$,
provided
$\lVert \partial_x \varphi_0 \rVert_{L_x^2} \leq \varepsilon_1$.
\label{BIKT1.1}
\end{thm}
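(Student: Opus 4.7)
The plan is to reduce the geometric system (\ref{SM}) to a semilinear magnetic Schr\"odinger system for the differentiated fields, and then close a bootstrap in carefully constructed dispersive function spaces. First I would pass to the caloric gauge of \cite{Sm09} (refining \cite{Trenorm}), extending the pullback frame via harmonic map heat flow to a frame $\{e_1, e_2\}$ over $\rr$ at each time $t$. Writing $\partial_m \varphi = \psi_m^1 e_1 + \psi_m^2 e_2$ and setting $\psi_m := \psi_m^1 + i \psi_m^2$, the differentiated equations become a nonlinear magnetic Schr\"odinger system of the schematic form
\begin{equation*}
(i \partial_t + \Delta) \psi_m = 2 i A_\ell \partial_\ell \psi_m + i (\partial_\ell A_\ell) \psi_m + (\lvert \psi \rvert^2 + A_0) \psi_m,
\end{equation*}
with connection coefficients $A_\ell, A_0$ quadratic in $\psi$ modulo nonlocal heat-equation smoothing.

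For each dyadic frequency $2^k$ I would build function spaces $F_k$ combining Strichartz, local smoothing ($L^\infty_{x_j} L^2_{x_{3-j}, t}$) and maximal-function ($L^2_{x_j} L^\infty_{x_{3-j}, t}$) norms rescaled appropriately, together with energy spaces $G_k$ and inhomogeneity spaces $N_k$. The linear step is to show that solutions of $(i \partial_t + \Delta) u = \cF$ carrying a small magnetic perturbation are controlled in $F_k \cap G_k$ by their data in $L^2$ plus $\cF$ in $N_k$; this follows from Kato smoothing and standard Strichartz for the flat equation, with the magnetic term absorbed by smallness in a maximal-function quantity.

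The nonlinear core is a collection of frequency-localized trilinear estimates bounding the magnetic and cubic contributions in $N_k$ by products of $F_{k'}$ norms of $\psi$, exploiting the divergence structure of the connection term and the heat-flow definition of $A_0$ to tame high-high to low interactions. Once these are in hand, a frequency envelope $\{c_k\} \in \ell^2$ matched to $\lVert \partial_x \varphi_0 \rVert_{L^2_x}$ propagates along the flow, and a continuity argument anchored at Theorem \ref{LWP} upgrades the local solution to a global one. The growth bound $\sup_{t \in (-T, T)} \lVert \varphi(t) \rVert_{H_Q^\sigma} \lesssim_{\sigma, T, \lVert \varphi_0 \rVert_{H_Q^\sigma}} 1$ then follows by carrying a higher envelope that encodes $\sigma$ derivatives and allowing the implicit constant to depend on $T$ via iteration of the local theorem.

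For the uniform-in-time bound on $H_Q^\sigma$ for $\sigma \leq \sigma_1$, I would choose $\varepsilon_1 = \varepsilon_1(\sigma_1)$ small enough that the multilinear constants arising in the $\sigma_1$-fold differentiated bootstrap are beaten by $\varepsilon_1^2$, so that the high-frequency envelope propagates with a time-independent loss. The principal obstacle I foresee lies precisely in this multilinear control of the nonlocal $A_\ell$ and $A_0$: one must verify that the caloric-gauge connection coefficients sit in the correct dispersive norms with constants that absorb into the linear estimate, and that these constants stay uniformly bounded as $\sigma$ increases. The divergence form of the magnetic term together with the parabolic smoothing built into the caloric gauge are what render this control tractable.
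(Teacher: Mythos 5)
Your proposal accurately reconstructs the Bejenaru--Ionescu--Kenig--Tataru strategy that the paper cites for this result (the paper itself does not reproduce the proof, referring to \cite{BeIoKeTa11}): pass to the caloric gauge, derive the magnetic nonlinear Schr\"odinger system (\ref{NLS}), build the frequency-localized $F_k$, $G_k$, $N_k$ spaces combining Strichartz, local smoothing, and maximal-function norms, establish the linear estimate, and close via trilinear estimates and frequency-envelope propagation in a continuity argument anchored at the local theory. One small inaccuracy: for this small-energy theorem the original caloric gauge construction of Tao suffices, whereas the subthreshold extension in \cite{Sm09} is only needed for the paper's main Theorem \ref{MainTheorem}.
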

A complete proof may be found in \cite{BeIoKeTa11}.  
Among the key contributions of their work are the construction of the main function spaces
and the completion of the linear estimate relating them, which includes an important maximal function
estimate.
A significant observation of \cite{BeIoKeTa11}, emphasized in their work,
is that it is important that these spaces take into account a local smoothing effect;
\cite{BeIoKeTa11} crucially uses this effect to help bring under control the worst term
of the nonlinearity.
Another novelty of \cite{BeIoKeTa11} is its implementation of 
the caloric gauge, which
was first introduced by Tao \cite{Trenorm} and subsequently
recommended by him for use
in studying Schr\"odinger maps \cite{TSchroedinger}. 
As the caloric gauge is defined using harmonic map heat flow, it can be thought of as
an intrinsic and
nonlinear analogue of classical Littlewood-Paley theory.  
In \cite{BeIoKeTa11}, both the intrinsic caloric gauge
and the extrinsic (and modern) Littlewood-Paley theory are used simultaneously.

Our main result extends Theorem \ref{BIKT1.1}.
\begin{thm}
Let $T > 0$ and $Q \in S^2$.
Let $\varepsilon_0 > 0$
and let $\varphi \in H_Q^{\infty, \infty}(T)$
be a solution of the Schr\"odinger map system (\ref{SM}) whose initial data $\varphi_0$
has energy $E_0 := E(\varphi_0) < E_{\mathrm{crit}}$ and satisfies the energy dispersion condition
\begin{equation}
\sup_{k \in \mathbf{Z}} 
\lVert P_k \partial_x \varphi_0 \rVert_{L^2_x}
\leq \varepsilon_0.
\label{EDcondition}
\end{equation}
Let $I \supset (-T, T)$ denote the maximal time interval for which
there exists a smooth (necessarily unique) extension of $\varphi$ satisfying (\ref{SM}).
Suppose a priori that
\begin{equation}
\sum_{k \in \mathbf{Z}} \lVert P_k \partial_x \varphi \rVert_{L^4_{t,x}(I \times \rr)}^2 
\leq \varepsilon_0^2.
\label{Main L4 Cal0}
\end{equation}
Then, for $\varepsilon_0$ sufficiently small,
\begin{equation}
\sup_{t \in (-T, T)} \lVert \varphi(t) \rVert_{H^\sigma_Q}
\lesssim_{\sigma, T, \lVert \varphi_0 \rVert_{H_Q^\sigma}} 1,
\label{SoboBound}
\end{equation}
for all $\sigma \in \mathbf{Z}_+$.
Additionally, $I = \mathbf{R}$, so that,
in particular, $\varphi$ admits a unique smooth global extension
$\varphi \in C( \mathbf{R} \to H^\infty_Q )$.
Moreover, for any $\sigma_1 \in \mathbf{Z}_+$, there exists a positive
$\varepsilon_1 = \varepsilon_1(\sigma_1) \leq \varepsilon_0$ such that
\begin{equation}
\lVert \varphi \rVert_{L^\infty_t {H^\sigma_Q}(\mathbf{R} \times \rr)}
\lesssim_\sigma
\lVert \varphi \rVert_{H^\sigma_Q(\rr)}
\label{SobolevBound}
\end{equation}
holds for all $0 \leq \sigma \leq \sigma_1$
provided
(\ref{EDcondition}) and (\ref{Main L4 Cal0}) hold with $\varepsilon_1$ in place of $\varepsilon_0$.
\label{MainTheorem}
\end{thm}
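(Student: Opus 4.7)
The plan is to convert (\ref{SM}) into a magnetic-type nonlinear Schr\"odinger system via the subthreshold caloric gauge, and then close a continuity/bootstrap argument on a master spacetime norm whose smallness comes not from smallness of the energy but from the frequency-localized hypothesis (\ref{EDcondition}) and the spacetime hypothesis (\ref{Main L4 Cal0}). To carry out the first step I would extend $\varphi(\cdot,t)$ along harmonic map heat flow to $\widetilde{\varphi}(\cdot,t,s)$, $s \in [0,\infty)$. The subthreshold hypothesis $E_0 < E_{\mathrm{crit}} = 4\pi$ is exactly what guarantees existence of the caloric gauge of \cite{Sm09}: parallel-transporting an orthonormal frame from $s = \infty$ back to $s = 0$ produces coordinates
\begin{equation*}
\psi_m = \langle \partial_m \widetilde{\varphi}, v_1 \rangle + i \langle \partial_m \widetilde{\varphi}, v_2 \rangle
\end{equation*}
and connection coefficients $A_m = \langle \partial_m v_1, v_2 \rangle$ which satisfy a magnetic NLS schematically of the form
\begin{equation*}
(i\partial_t + \Delta) \psi_m = 2 i A_\ell \partial_\ell \psi_m + (\text{quadratic}) \cdot \psi + (\text{cubic in }\psi),
\end{equation*}
with $A_\ell$ and the auxiliary potentials determined by elliptic heat-time equations whose inputs are quadratic in $\psi$, and with the caloric quantities obeying the Moser-type parabolic bounds supplied by \cite{Sm09}.

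Next I would install the Bejenaru--Ionescu--Kenig--Tataru function-space framework from \cite{BeIoKeTa11}: frequency-localized solution spaces $F_k(T)$ incorporating $U^p/V^p$ atoms, a Kato local-smoothing component, and a maximal-function component, together with matching spaces $N_k(T)$ for the nonlinearity. Defining a master norm $\mathcal{F}(T)^2 \sim \sum_k \lVert P_k \psi_m \rVert_{F_k(T)}^2$, the goal is to prove a self-improving inequality
\begin{equation*}
\mathcal{F}(T') \lesssim \varepsilon_0 + \mathcal{F}(T')^{3/2}
\end{equation*}
uniformly over $T'$ in the maximal smooth interval $I$. Continuity of $T' \mapsto \mathcal{F}(T')$ furnished by Theorem \ref{LWP} then forces $\mathcal{F}(T') \lesssim \varepsilon_0$ throughout $I$. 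Propagating the frequency envelope of the initial data through the magnetic NLS yields the Sobolev bound (\ref{SoboBound}) for every $\sigma$; the blow-up alternative implicit in Theorem \ref{LWP} then forces $I = \mathbf{R}$; and a refined frequency-envelope bookkeeping at higher regularity, closed by a smaller $\varepsilon_1 = \varepsilon_1(\sigma_1)$, produces the uniform-in-time bound (\ref{SobolevBound}).

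The analytic heart of the argument consists in proving the inputs to that bootstrap, namely a nonlinear Kato local-smoothing estimate in the spirit of \cite{IoKe06} and a nonlinear improved bilinear Strichartz estimate in the spirit of \cite{Bo98}, both for the magnetic operator $i \partial_t + \Delta - 2iA_\ell \partial_\ell$ rather than the flat Schr\"odinger operator. Following \cite{PlVe09, PlVe, Tls}, I would derive both directly in physical space from commutator/virial identities of the form $\partial_t \int \overline{\psi} K \psi \, dx$ for judiciously chosen multipliers $K$ (weights like $x/|x|$ for local smoothing, products of frequency projections for the bilinear bound), arranging the integrations by parts so that each occurrence of $A_\ell$ is paired with a factor small in $L^2$ or $L^4$. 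The multilinear smallness is then extracted by a frequency trichotomy: high--high interactions yielding low-frequency output are controlled by (\ref{EDcondition}), while high--low transport interactions are controlled by (\ref{Main L4 Cal0}).

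The principal obstacle is precisely this nonperturbative step. When $E_0$ is near $E_{\mathrm{crit}}$, the connection $A_\ell$ carries $O(1)$ mass and cannot be treated as a small perturbation of the free Schr\"odinger flow as in \cite{BeIoKeTa11}; one must instead build the smallness directly into the structure of the commutator identity, and then sustain a delicate case analysis of the cubic interactions in which each term ultimately contains at least one factor small by (\ref{EDcondition}) or (\ref{Main L4 Cal0}). The most dangerous interactions are those in which two high-frequency $\psi$'s resonate against a comparable-frequency piece of $A_\ell$ to produce output at a nearby frequency, and it is precisely for these that a physical-space bilinear Strichartz inequality, rather than any classical perturbative version, appears indispensable.
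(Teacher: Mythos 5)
Your overall architecture matches the paper's: caloric gauge from \cite{Sm09}, the $F_k/G_k/N_k$ function spaces of \cite{BeIoKeTa11} augmented with local smoothing and maximal function components, and physical-space (Planchon--Vega-style) proofs of local smoothing and bilinear Strichartz for the magnetic paradifferential operator. But the heart of your proposed closure argument cannot work in the subthreshold regime, and this is precisely the difficulty the theorem was written to address. You propose to run a continuity argument on a master norm $\mathcal{F}(T)^2 \sim \sum_k \lVert P_k \psi_m \rVert_{F_k(T)}^2$ with the self-improving bound $\mathcal{F} \lesssim \varepsilon_0 + \mathcal{F}^{3/2}$, concluding $\mathcal{F} \lesssim \varepsilon_0$. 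This fails on dimensional grounds: since $F_k(T) \hookrightarrow L_t^\infty L_x^2$, the master norm dominates $\lVert \psi_x \rVert_{L_t^\infty L_x^2} \sim E_0^{1/2}$, which is $O(1)$ (close to $2\sqrt{\pi}$), not $O(\varepsilon_0)$. Energy dispersion (\ref{EDcondition}) makes each $c_k$ small, but not their $\ell^2$ sum. Any bootstrap of the form you propose would require the initial master norm to be small, so it can only recover the small-energy result of \cite{BeIoKeTa11}.

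The paper closes the loop differently, at the frequency-envelope level. The bootstrap hypothesis is relative, $b_k \leq \varepsilon_1^{-1/10} c_k$, and the conclusion sought is $b_k \lesssim c_k$ per frequency, \emph{not} smallness of $\sum b_k^2$. Even so, the magnetic paraproduct term $A_{\mathrm{lo \wedge lo}} \partial_x \psi_{\mathrm{hi}}$ (note: low-frequency connection hitting the gradient of high-frequency $\psi$, not two high-frequency $\psi$'s against comparable $A$) produces, after the improved bilinear Strichartz bound of Theorem \ref{T:BilinearStrichartz}, an inequality of the schematic form
\begin{equation*}
b_k \;\lesssim\; c_k + \cV b_k + \sum_{j \leq k - \varpi} \bigl(b_j c_j c_k + \cV^{1/2} b_j^2 b_k\bigr),
\end{equation*}
and the sum over low frequencies is $O(1)$, not $O(\varepsilon_0)$. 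The remaining step --- which is absent from your proposal and is the novel part of the closure --- is the multiplicative Gronwall argument of \S\ref{SS:OutlineConclusion}: setting $B_k := 1 + \sum_{j < k} b_j^2$, one squares and applies Cauchy--Schwarz to obtain $b_k^2 \lesssim (1 + \sum_{j<k} b_j^2) c_k^2$, i.e.\ $B_{k+1} \leq B_k(1 + C c_k^2)$; iterating gives $B_k \lesssim \exp(C \sum c_j^2) \lesssim_{E_0} 1$, since the total energy $\sum c_j^2 \sim E_0$ is finite (though not small). Only after this is $b_k \lesssim c_k$ recovered. So the bilinear Strichartz bound must be strong enough to let you replace \emph{two} low-frequency $b_j$ factors by $c_j$ factors, as in (\ref{BSs}); a pure local-smoothing improvement (which only fixes one factor) is demonstrably insufficient, a point the paper makes explicitly after Theorem \ref{ls thm}. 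You should replace your master-norm continuity argument with this envelope bootstrap plus Gronwall iteration.
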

Note that the energy dispersion condition (\ref{EDcondition}) holds
automatically in the case of small energy.
In such case, our proofs may be modified
(essentially by collapsing
to or reverting to the arguments of \cite{BeIoKeTa11})
so that the a priori $L^4$ bound is not required.  
Such an $L^4$ bound, however, can then be seen to hold a posteriori.

Using time divisibility of the $L^4$ norm, we can replace
(\ref{Main L4 Cal0}) with
\begin{equation*}
\sum_{k \in \mathbf{Z}} \lVert P_k \partial_x \varphi \rVert_{L^4_{t,x}(I \times \rr)}^2 
\leq K
\end{equation*}
for any $K > 0$
provided we allow the threshold for $\varepsilon_0$ 
and the implicit constant in (\ref{SobolevBound}) to depend upon $K > 0$.
We work with (\ref{Main L4 Cal0}) as stated so as to avoid the additional technicalities
that would arise otherwise.

We now turn to a very rough sketch of the proof of Theorem \ref{MainTheorem};
for a detailed outline, see \S \ref{S:Main}.

\textbf{Basic setup and gauge selection.}

It suffices to prove homogeneous Sobolev variants of (\ref{SoboBound})
and (\ref{SobolevBound}) over a suitable range.  Thanks to mass and energy
conservation, we need only consider $\sigma > 1$.  For $\sigma \geq 1$, controlling
$\lVert \varphi(t) \rVert_{\dot{H}^\sigma}$ is equivalent to controlling
$\lVert \partial_x \varphi(t) \rVert_{\dot{H}^{\sigma - 1}}$.  We therefore consider the time
evolution of $\partial_x \varphi$, which may be written entirely in terms of
derivatives of the map $\varphi$.  A more intrinsic way of expressing these equations
is to select a \emph{gauge} rather than an extrinsic embedding and coordinate system.
We employ the caloric gauge, which is geometrically natural and is analytically well-suited
for studying
Schr\"odinger maps.  See \cite{Sm09} for the complete details of the construction.
It turns out that Sobolev bounds for the gauged derivative map imply corresponding
Sobolev bounds for the ungauged derivative map.  We schematically write
the gauged equation as
\begin{equation*}
(\partial_t - \Delta) \psi = \mathcal{N},
\label{Schem1}
\end{equation*}
where $\psi$ is $\partial_x \varphi$ placed in the caloric gauge and $\mathcal{N}$ is a nonlinearity
constructed in part from $\psi$ and $\partial_x \psi$.

\textbf{Function spaces and their interrelation.}

To prove global results in the energy-critical setting, 
we of course must look for bounds other than energy estimates
to control the solution.  
Local smoothing estimates and Strichartz estimates 
will be among the most important required.
Our goal is to prove control over $\psi$ within a suitable space through
the use of a bootstrap argument.
A standard setup requires a space, say $G$, for the functions $\psi$
and a space, say $N$, for the nonlinearity $\mathcal{N}$.  In fact, we work with stronger,
frequency-localized spaces, $G_k$ and $N_k$, to respectively hold $P_k \psi$
and $P_k \mathcal{N}$.  We want them to be related at least by the linear estimate
\begin{equation*}
\lVert P_k \psi \rVert_{G_k} \lesssim \lVert P_k \psi(t = 0) \rVert_{L^2_x} + \lVert P_k \mathcal{N} \rVert_{N_k}.
\end{equation*}
The hope, then, is to control $\lVert P_k \mathcal{N} \rVert_{N_k}$ in terms of
$\lVert P_k \psi(t = 0) \rVert_{L^2_x}$ and 
$\varepsilon \lVert P_k \psi \rVert_{G_k}$ (with $\varepsilon$ small),
so that, by proving (under a bootstrap hypothesis) a statement such as
\begin{equation*}
\lVert P_k \psi \rVert_{G_k} \lesssim \lVert P_k \psi(t = 0) \rVert_{L^2_x} + \varepsilon \lVert P_k \psi \rVert_{G_k},
\end{equation*}
we may conclude
\begin{equation}
\lVert P_k \psi \rVert_{G_k} \lesssim \lVert P_k \psi(t = 0) \rVert_{L^2_x}.
\label{SchemGoal}
\end{equation}
Once (\ref{SchemGoal}) is proved, showing
(\ref{SoboBound}) and (\ref{SobolevBound}) is reduced to the comparatively easy tasks
of unwinding the gauging and frequency localization steps so as to conclude
with a standard continuity argument.

\textbf{Controlling the nonlinearity.}

In this context, the main contribution of this paper lies in showing
that we may conclude (\ref{SchemGoal}) without assuming small energy.
The most difficult-to-control terms in the nonlinearity
$P_k \mathcal{N}$ are those involving a derivative landing on high-frequency pieces of
the derivative fields;
we represent them schematically as $A_{\mathrm{lo}} \partial \psi_{\mathrm{hi}}$.
Local smoothing estimates controlling the linear evolution (introduced in \cite{IoKe06, IoKe07-2})
were successfully used in \cite{BeIoKeTa11} to handle $A_{\mathrm{lo}} \partial_x \psi_{\mathrm{hi}}$.
These are not strong enough to control $A_{\mathrm{lo}} \partial_x \psi_{\mathrm{hi}}$
in the subthreshold energy setting.  We instead pursue a more covariant approach, working
directly with a certain covariant frequency-localized
Schr\"odinger equation (see \S \ref{S:Local smoothing}).
Our approach is also physical-space based,
in the vein of \cite{PlVe09, PlVe, Tls},
and modular.
\section{Gauge field equations} \label{S:Gauge Field Equations}

In \S \ref{SS:Derivative equations}
we pass to the derivative formulation of the Schr\"odinger map system (\ref{SM}).
All of the main arguments of our subsequent analysis take place at this level.
The derivative formulation is at once both overdetermined, reflecting geometric constraints,
and underdetermined, exhibiting \emph{gauge invariance}.
\S \ref{SS:Introduction to the caloric gauge} 
introduces the caloric gauge, 
which is the gauge
we select and work with throughout.  Both \cite{TSchroedinger} and \cite{BeIoKeTa11}
give good explanations justifying the use of the caloric gauge 
in our setting as opposed to alternative gauges.  
The reader is referred to \cite{Sm09} for the requisite construction of the caloric gauge
for maps with energy up to $E_{\mathrm{crit}}$.
\S \ref{SS:Frequency localization}
deals with frequency localizing components of the caloric gauge.
Proofs are postponed to \S \ref{S:The caloric gauge} so that we can more quickly turn our
attention to the gauged Schr\"odinger map system.

\subsection{Derivative equations}  \label{SS:Derivative equations}
We begin with some
constructions valid for any smooth function $\phi : \RR \times (-T, T) \to \mathbf{S}^2$.
For a more general and extensive introduction to the gauge formalism we now introduce,
see \cite{Trenorm}.
Space and time derivatives of $\phi$ are denoted by
 $\partial_\alpha \phi(x, t)$, where $\alpha = 1, 2, 3$ ranges
over the spatial variables $x_1, x_2$ and time $t$ with $\partial_3 = \partial_t$.

Select a (smooth) orthonormal frame
$(v(x, t), w(x, t))$ for $T_{\phi(x, t)} \mathbf{S}^2$, i.e.,
smooth functions $v, w: \RR \times (-T, T) \to T_{\phi(x, t)} \mathbf{S}^2$
such that at each point $(x, t)$ in the domain the vectors
$v(x, t), w(x, t)$ form an orthonormal basis for $T_{\phi(x, t)} \mathbf{S}^2$.
As a matter of convention we assume that $v$ and $w$ are chosen so that
$v \times w = \phi$.

With respect to this chosen frame we
then introduce the derivative fields $\psi_\alpha$, setting
\begin{equation}
\psi_\alpha := v \cdot \partial_\alpha \phi + i w \cdot \partial_\alpha \phi.
\label{Derivative Field}
\end{equation}
Then $\partial_\alpha \phi$ admits the representation
\begin{equation}
\partial_\alpha \phi = v \; \Re(\psi_\alpha) + w \; \Im(\psi_\alpha)
\label{Phi Frame Rep}
\end{equation}
with respect to the frame $(v, w)$.
The derivative fields can be thought of as arising from the following process:
First, rewrite the vector $\partial_\alpha \phi$ with respect to the orthonormal
basis $(v, w)$; then, identify $\RR$ with the complex numbers $\mathbf{C}$
according to $v \leftrightarrow 1$, $w \leftrightarrow i$.  
Note that this identification
respects the complex structure
of the target manifold.

Through this identification
the Riemannian connection on $\mathbf{S}^2$ pulls back to
a covariant derivative on $\mathbf{C}$, which we denote by
\begin{equation*}
D_\alpha := \partial_\alpha + i A_\alpha.
\end{equation*}
The real-valued connection coefficients $A_\alpha$
are defined via
\begin{equation}
A_\alpha := w \cdot \partial_\alpha v,
\label{Connection Coefficient}
\end{equation}
so that in particular
\begin{align*}
\partial_\alpha v &= - \phi \; \Re(\psi_\alpha) + w A_\alpha \\
\partial_\alpha w &= - \phi \; \Im(\psi_\alpha) - v A_\alpha.
\end{align*}

Due to the fact that the Riemannian connection on $\mathbf{S}^2$ is torsion-free,
the derivative fields satisfy the relations
\begin{equation}
D_\beta \psi_\alpha = D_\alpha \psi_\beta.
\label{Torsion Free}
\end{equation}
A straightforward calculation (which uses the fact that the sphere
has constant curvature $+1$) shows
\begin{equation*}
\partial_\beta A_\alpha - \partial_\alpha A_\beta =
\Im( \psi_\beta \overline{\psi_\alpha}) =: q_{\beta \alpha}.
\end{equation*}
The curvature of the connection is therefore given by
\begin{equation}
[D_\beta, D_\alpha] :=
D_\beta D_\alpha - D_\alpha D_\beta = i q_{\beta \alpha}.
\label{Curvature}
\end{equation}

Assuming now that we are given a smooth solution $\varphi$ of the Schr\"odinger map
system (\ref{SM}), we derive the equations satisfied by the derivative
fields $\psi_\alpha$.
The system (\ref{SM}) directly translates to
\begin{equation}
\psi_t = i D_\ell \psi_\ell
\label{Psi_t Equation}
\end{equation}
because
\begin{equation*}
\varphi \times \Delta \varphi = J(\varphi) (\varphi^* \nabla)_j \partial_j \varphi,
\end{equation*}
where $J(\varphi)$ denotes the complex structure $\varphi \times$
and $(\varphi^* \nabla)_j$ the pullback of the Levi-Civita connection
$\nabla$ on the sphere.

Let us pause to note the following conventions regarding indices.  
Roman typeface letters are used
to index spatial variables.  Greek typeface letters are used to index the spatial
variables along with time.  Repeated lettered indices within the same subscript or occurring
in juxtaposed terms
indicate an implicit summation over the
appropriate set of indices.

Using (\ref{Torsion Free}) and (\ref{Curvature}) in (\ref{Psi_t Equation}) yields
\begin{equation*}
D_t \psi_m = i D_\ell D_\ell \psi_m + q_{\ell m} \psi_\ell,
\end{equation*}
which is equivalent to the nonlinear Schr\"odinger equation
\begin{equation}
(i \partial_t + \Delta) \psi_m =
\cN_m,
\label{NLS}
\end{equation}
where
the nonlinearity $\cN_m$ is defined by the formula
\begin{equation*}
\cN_m 
:= 
-i A_\ell \partial_\ell \psi_m - i \partial_\ell (A_\ell \psi_m)
+ (A_t + A_x^2) \psi_m - i \psi_\ell \Im(\overline{\psi_\ell} \psi_m).
\end{equation*}
We split this nonlinearity as a sum $\cN_m = B_m + V_m$ with $B_m$ and $V_m$ defined by
\begin{equation}
B_m 
:= - i \partial_\ell (A_\ell \psi_m) - i A_\ell \partial_\ell \psi_m \label{B Def} 
\end{equation}
and
\begin{equation}
V_m := (A_t + A_x^2) \psi_m - i \psi_\ell \Im(\overline{\psi_\ell} \psi_m),
\label{V Def}
\end{equation}
thus separating 
the essentially semilinear magnetic potential terms
and the essentially semilinear electric potential terms 
from each other.

We now state the gauge formulation of the differentiated Schr\"odinger map system:
\begin{equation}
\left\{
\begin{array}{ll}
D_t \psi_m &= i D_\ell D_\ell \psi_m + \Im(\psi_\ell \overline{\psi_m}) \psi_\ell \\
D_\alpha \psi_\beta &= D_\beta \psi_\alpha \\
\Im(\psi_\alpha \overline{\psi_\beta}) &= \partial_\alpha A_\beta - \partial_\beta A_\alpha.
\end{array}
\right.
\label{DSM System}
\end{equation}
A solution $\psi_m$ to (\ref{DSM System}) cannot be determined uniquely without
first choosing an orthonormal frame $(v, w)$.  Changing a given choice
of orthonormal frame induces a gauge transformation and
may be represented as
\begin{equation*}
\psi_m \to e^{-i \theta} \psi_m,
\quad\quad
A_m \to A_m + \partial_m \theta
\end{equation*}
in terms of the gauge components.  The system (\ref{DSM System}) is invariant
with respect to such gauge transformations.

The advantage of working with this gauge formalism rather than the
Schr\"odinger map system or the derivative equations directly is that a 
carefully selected choice of gauge tames the nonlinearity.
In particular, when the caloric gauge is employed, the nonlinearity in (\ref{NLS})  
is nearly perturbative.

\subsection{Introduction to the caloric gauge} \label{SS:Introduction to the caloric gauge}

In this section we introduce the caloric gauge, which is the gauge we shall
employ throughout the remainder of the paper.  Gauges were first used to
study (\ref{SM}) in \cite{ChShUh00}.
We note here that the while the Coulomb gauge
would seem an attractive choice,
it turns out that this
gauge is not well-suited to the study of Schr\"odinger maps in low dimension,
as in low dimension parallel interactions of waves are more probable than in high dimension,
resulting in unfavorable $\text{high} \times \text{high} \to \text{low}$ cascades.
See \cite{TSchroedinger} and \cite{BeIoKeTa11} for further discussion and a comparison
of the Coulomb and caloric gauges.  Also see \cite[Chapter 6]{Tdis}
for a discussion of various gauges that have been used in the study of wave maps.

The caloric gauge
was introduced by Tao in \cite{Trenorm} in the setting of wave maps
into hyperbolic space.
In a series of unpublished papers \cite{T3, T4, T5, T6, T7},
Tao used this gauge in establishing
global regularity of wave maps
into hyperbolic space.  
In his unpublished note \cite{TSchroedinger},
Tao also suggested the caloric gauge
as a suitable gauge for the study of Schr\"odinger maps.
The caloric gauge was first used in the Schr\"odinger maps problem
by Bejenaru, Ionescu, Kenig, and Tataru in \cite{BeIoKeTa11} to establish global well-posedness
in the setting of initial data with sufficiently small critical norm.
We recommend \cite{Trenorm, T4, TSchroedinger, BeIoKeTa11} for background
on the caloric gauge and for helpful heuristics.

\begin{thm}[The caloric gauge]\label{T:CG}
Let $T \in (0, \infty)$, $Q \in S^2$, and let $\phi(x, t) \in H_{Q}^{\infty, \infty}(T)$
be such that $\sup_{t \in (-T, T)} E(\phi(t)) < E_{\mathrm{crit}}$.
Then
there exists a unique smooth extension 
$\phi(s, x, t) \in C([0, \infty) \to H_Q^{\infty, \infty}(T))$
solving the covariant heat equation
\begin{equation}
\partial_s \phi = \Delta \phi + \phi \cdot \lvert \partial_x \phi \rvert^2
\label{Covariant Heat}
\end{equation}
and with $\phi(0, x, t) = \phi(x, t)$.  Moreover, 
for any given choice of a (constant) orthonormal basis $(v_\infty, w_\infty)$
of $T_Q \mathbf{S}^2$,
there exist smooth functions
$v, w: [0, \infty) \times \RR \times (-T, T) \to S^2$ such that at each point $(s, x, t),$ the set
$\{v, w, \phi\}$ naturally forms an orthonormal basis for $\mathbf{R}^3$, the
gauge condition
\begin{equation}
w \cdot \partial_s v \equiv 0,
\label{Gauge Condition}
\end{equation}
is satisfied, and
\begin{equation}
\lvert \partial_x^\rho f(s) \rvert \lesssim_\rho \langle s \rangle^{-(\lvert \rho \rvert + 1)/2}
\label{Soft Decay Bounds}
\end{equation}
for each $f \in \{ \phi - Q, v - v_\infty, w - w_\infty \}$, multiindex $\rho$, and $s \geq 0$.
\end{thm}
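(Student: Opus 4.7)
The plan is to construct the caloric gauge in two stages: first extend $\phi$ to positive heat-time $s$ via harmonic map heat flow, then construct the frame $(v,w)$ by parallel transport initialized at $s=\infty$. Most of the heavy lifting is packaged in the author's earlier paper \cite{Sm09}, which builds exactly this construction for subthreshold energy; the proof here should essentially consist of invoking that result and recording smoothness in $(x,t)$.

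For stage one, for each fixed $t \in (-T,T)$ I would solve the covariant heat equation (\ref{Covariant Heat}) with initial datum $\phi(0,\cdot,t) = \phi(\cdot,t)$. Since $\sup_t E(\phi(t)) < E_{\mathrm{crit}} = 4\pi$, the subthreshold global regularity theory for harmonic map heat flow into $\mathbf{S}^2$ (as developed in \cite{Sm09}) yields a unique smooth global-in-$s$ solution $\phi(s,x,t)$ satisfying the decay bound (\ref{Soft Decay Bounds}) for $f = \phi - Q$. Joint smoothness in $(s,x,t)$ follows by differentiating (\ref{Covariant Heat}) in $t$ and $x$ and running parabolic energy estimates on the resulting linear covariant parabolic equations, using the already-established smoothness in $(s,x)$ to close the bounds. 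The key mechanism behind (\ref{Soft Decay Bounds}) is the Bochner-type monotonicity that forces $\phi(s) \to Q$ as $s \to \infty$ in the subthreshold regime, combined with standard parabolic smoothing to trade $s$-decay for $x$-derivatives.

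For stage two, with $\phi(s,x,t)$ in hand, observe that the gauge condition (\ref{Gauge Condition}) together with the orthonormality and compatibility constraints $|v|=|w|=1$, $v \cdot w = 0$, $v \times w = \phi$ uniquely determines the $s$-evolution of $(v,w)$ as the linear ODE
\begin{equation*}
\partial_s v = -\phi\,(\partial_s \phi \cdot v), \qquad \partial_s w = -\phi\,(\partial_s \phi \cdot w),
\end{equation*}
with $s$-decaying coefficients (thanks to stage one). I would initialize at $s=\infty$ by $(v(\infty,x,t), w(\infty,x,t)) = (v_\infty, w_\infty)$ and integrate backward in $s$; the integrability of $\partial_s \phi$ at $\infty$ guarantees this backward Cauchy problem has a unique solution on $[0,\infty)$. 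Writing the ODE in integral form
\begin{equation*}
v(s) - v_\infty = \int_s^\infty \phi\,(\partial_s \phi \cdot v)\, ds',
\end{equation*}
and similarly for $w$, the decay (\ref{Soft Decay Bounds}) for $v - v_\infty$ and $w - w_\infty$ is inherited from that for $\phi - Q$ via differentiation under the integral and induction on $|\rho|$. Smoothness in $(x,t)$ likewise propagates along the ODE.

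The main obstacle is entirely in stage one: the global-in-$s$ existence, smoothness, and quantitative decay of the harmonic map heat flow at \emph{subthreshold} energy (as opposed to small energy) is a genuinely delicate analytic fact, and is precisely the content of \cite{Sm09}. Once that black box is granted, stage two is a routine ODE construction and the decay statement (\ref{Soft Decay Bounds}) is essentially automatic. Accordingly, the proof in the paper should reduce to a citation of \cite{Sm09} together with a brief verification that the construction extends smoothly in the parameter $t$, which follows from uniform-in-$t$ versions of the heat flow estimates applied to the $t$-differentiated equation.
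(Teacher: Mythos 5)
Your proposal is correct and matches the paper's actual proof, which is simply a citation of \cite[Theorem 7.6]{Sm09} as you anticipated. The one small mismatch is in what the paper flags as needing a brief remark: it is not smoothness in the parameter $t$ (which is already part of the cited result) but rather the relaxation of the function-space category from Schwartz functions to $H_Q^{\infty,\infty}(T)$, justified by noting that no weighted $L^2$-Sobolev decay is used in the proofs of \cite{Sm09}.
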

\begin{proof}
This is a special case of the more general result \cite[Theorem 7.6]{Sm09}.
Whereas in \cite{Sm09} everything is stated in terms of the
category of Schwartz functions, in fact this requirement may be relaxed
to $H_Q^{\infty, \infty}(T)$ without difficulty (at least
in the case of compact target manifolds) since weighted decay
in $L^2$-based Sobolev spaces is not used in any proofs.
\end{proof}

In our application in this paper, $E(\varphi(t))$ is conserved. Therefore, here and elsewhere,
we set $E_0 := E(\varphi_0)$.

Having extended $v, w$ along the heat flow, we may likewise extend $A_x$ along the flow.
We record here for reference a technical bound from \cite{Sm09} that proves useful.
\begin{thm}
Assume the conditions of Theorem \ref{T:CG} are in force. Then
the following bound holds:
\begin{align}
\lVert A_x(s) \rVert_{L^2_x(\mathbf{R}^2)}
&\lesssim_{E_0} 1.
\label{A_x Bound}
\end{align}
\end{thm}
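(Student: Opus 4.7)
The natural starting point is the defining caloric gauge condition $A_s = w \cdot \partial_s v \equiv 0$ from (\ref{Gauge Condition}), combined with the $(s,m)$-analogue of the curvature identity in the third equation of (\ref{DSM System}),
$$\partial_s A_m - \partial_m A_s = \Im(\psi_s \overline{\psi_m}),$$
which follows from torsion-freeness. These together give $\partial_s A_m = \Im(\psi_s \overline{\psi_m})$, where $\psi_s = D_\ell \psi_\ell$ is the heat-time derivative field (this latter identity being the gauge-theoretic translation of $\partial_s \phi$ equaling the tension field in (\ref{Covariant Heat})). The pointwise decay (\ref{Soft Decay Bounds}) applied to $v - v_\infty$ and $w - w_\infty$ forces $A_m(s) \to 0$ as $s \to \infty$, so we may integrate backwards to obtain the representation
$$A_m(s) = -\int_s^\infty \Im(\psi_s \overline{\psi_m})(s') \, ds'.$$

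Next I would bring in the energy dissipation identity for the harmonic map heat flow. Pairing (\ref{Covariant Heat}) against $\partial_s \phi$ and integrating in $x$ yields $\tfrac{d}{ds} E(\phi(s)) = - \lVert \psi_s(s) \rVert_{L^2_x}^2$. Since $E(\phi(0)) = E_0 < E_{\mathrm{crit}}$, this simultaneously gives the energy bound $\lVert \psi_x(s) \rVert_{L^2_x}^2 = 2 E(\phi(s)) \leq 2 E_0$ uniformly in $s$, and the spacetime integrability
$$\int_0^\infty \lVert \psi_s(s') \rVert_{L^2_x}^2 \, ds' \leq E_0.$$

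The core of the proof is then to combine the representation formula with these bounds. A direct Minkowski and Cauchy--Schwarz approach asks for an $L^\infty_x$ bound on $\psi_m$, which (\ref{Soft Decay Bounds}) supplies for $s'$ bounded away from zero (via $\lvert \psi_m(s') \rvert \lesssim (s')^{-1/2}$) but not near $s' = 0$. In the small-heat-time region I would integrate by parts in $x$, rewriting $\Im(\overline{\psi_m} D_\ell \psi_\ell)$ as a spatial divergence plus a cubic expression in $\psi_x$ by means of the torsion-free identity $D_\ell \psi_m = D_m \psi_\ell$: the divergence piece contributes harmlessly when tested against the $L^2$-dual, while the cubic piece has $L^1_x$-norm dominated by $\lVert \psi_x(s') \rVert_{L^2_x}^2 \leq 2 E_0$. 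Parabolic smoothing from (\ref{Covariant Heat}) then promotes $L^1_x$ control to $L^2_x$ control after a short heat time, closing the estimate.

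The main obstacle is that $A_x$ itself appears inside the covariant derivatives $D_\ell$, so the scheme just sketched is implicitly nonlinear in the quantity one wishes to bound. The resolution, which is the technical core of \cite{Sm09} and is where the subthreshold hypothesis enters essentially, is a continuity/bootstrap argument running from large $s$ down to $s = 0$: the decay tail and energy dissipation supply smallness of $\lVert A_x(s) \rVert_{L^2_x}$ at large $s$, while the strict inequality $E_0 < E_{\mathrm{crit}} = 4\pi$ prevents bubbling of nontrivial harmonic maps along the flow and so rules out runaway growth, allowing the bootstrap to close uniformly down to $s = 0$.
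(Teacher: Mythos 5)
Your starting point is right: the representation $A_m(s) = -\int_s^\infty \Im(\psi_s\overline{\psi_m})\,ds'$ together with energy dissipation along the flow is exactly where the argument begins. But the worries you raise after that---the failure of a pointwise-in-$s'$ $L^\infty_x$ bound near $s'=0$, the supposed circularity from $A_\ell$ sitting inside $D_\ell$, and the resulting need for integration by parts, parabolic smoothing, and a bootstrap in $s$---are not the real obstacles, and the route you sketch through them is not how the estimate closes.

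The key observation you are missing is that both factors in $\Im(\psi_s\overline{\psi_m})$ are controlled by \emph{gauge-invariant} quantities. One has $\lvert\psi_m\rvert = \lvert\partial_m\varphi\rvert$ so $\lvert\psi_m\rvert^2 \le \e_1$, and $\lvert\psi_s\rvert = \lvert D_\ell\psi_\ell\rvert$ is precisely the norm of the tension field $\lvert(\varphi^*\nabla)_\ell\partial_\ell\varphi\rvert$, so $\lvert\psi_s\rvert^2 \le 2\e_2$. These energy densities $\e_k$ are defined intrinsically (Definition \ref{Energy Densities}) with no reference to a frame or to $A$; there is no circularity. The quantitative heat-flow bounds recalled in \S\ref{SS:CalCon}---specifically (\ref{ek1}) with $k=1$, giving $\int_0^\infty \lVert\e_2(s')\rVert_{L^1_x}\,ds'\lesssim_{E_0}1$, and (\ref{ek4}) with $k=1$, giving $\int_0^\infty \lVert\e_1(s')\rVert_{L^\infty_x}\,ds'\lesssim_{E_0}1$---are exactly what is needed. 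Applying Minkowski in $s'$ and then Cauchy--Schwarz in $s'$,
\begin{align*}
\lVert A_m(s)\rVert_{L^2_x}
&\le \int_s^\infty \lVert\psi_s(s')\rVert_{L^2_x}\,\lVert\psi_m(s')\rVert_{L^\infty_x}\,ds' \\
&\le \Bigl(\int_0^\infty \lVert\psi_s\rVert_{L^2_x}^2\,ds'\Bigr)^{1/2}\Bigl(\int_0^\infty \lVert\e_1\rVert_{L^\infty_x}\,ds'\Bigr)^{1/2}
\lesssim_{E_0} 1,
\end{align*}
uniformly in $s$. No integration by parts, no short-time parabolic smoothing step, and no bootstrap from large $s$ down to $s=0$ is required at this level. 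The subthreshold hypothesis and induction-on-energy machinery that you correctly sense must be present do their work one layer down---in establishing (\ref{ek1})--(\ref{controlling norm}) themselves in \cite{Sm09}---not in passing from those bounds to the $L^2_x$ control of $A_x$.
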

\begin{proof}
See \cite[\S \S7, 7.1]{Sm09}.
\end{proof}

\begin{cor}[Energy bounds for the frame]
Suppose that $\varphi$ is a Schr\"odinger map with energy $E_0 < \Ec$.
Then it holds that
\begin{equation}
\lVert \partial_x v \rVert_{L_t^\infty L_x^2} \lesssim_{E_0} 1.
\label{dv Bound}
\end{equation}
\end{cor}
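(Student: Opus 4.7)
The plan is to reduce the bound on $\partial_x v$ to two facts already available: the pointwise identity $|\partial_x \varphi|^2 = |\psi_x|^2$ (coming from the frame representation of $\partial_\alpha \varphi$), which together with conservation of energy yields uniform control of $\psi_x$ in $L^\infty_t L^2_x$; and the preceding $A_x$ estimate. The bridge between these inputs and $\partial_x v$ is the frame equation derived in \S\ref{SS:Derivative equations}.

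First I would recall the identity
\begin{equation*}
\partial_m v = -\varphi\,\Re(\psi_m) + w\,A_m, \qquad m = 1, 2,
\end{equation*}
evaluated at heat-flow time $s = 0$. Since $\{\varphi, v, w\}$ forms an orthonormal basis of $\mathbf{R}^3$, in particular $\varphi \perp w$, so taking pointwise magnitudes yields
\begin{equation*}
|\partial_m v(x, t)|^2 = |\Re \psi_m(x, t)|^2 + A_m(x, t)^2 \leq |\psi_m(x, t)|^2 + A_m(x, t)^2.
\end{equation*}
Summing over $m \in \{1, 2\}$ and integrating in $x$ gives
\begin{equation*}
\lVert \partial_x v(t) \rVert_{L^2_x}^2 \leq \lVert \psi_x(t) \rVert_{L^2_x}^2 + \lVert A_x(t) \rVert_{L^2_x}^2,
\end{equation*}
where on the right-hand side $\psi_x$ and $A_x$ are understood at $s = 0$.

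Next I would observe that the representation $\partial_m \varphi = v\,\Re\psi_m + w\,\Im\psi_m$ together with the orthonormality of $(v, w)$ gives $|\partial_m \varphi|^2 = |\psi_m|^2$, so $\lVert \psi_x(t) \rVert_{L^2_x}^2 = 2 E(\varphi(t)) = 2 E_0$ by conservation of energy under the Schr\"odinger map flow. The second term is $\lesssim_{E_0} 1$ by the $A_x$ bound (\ref{A_x Bound}) applied at $s = 0$, with an implicit constant depending only on $E_0$ and hence $t$-independent. Taking the supremum over $t$ then yields (\ref{dv Bound}).

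There is no serious obstacle here; the corollary is essentially a direct consequence of the frame equations, the energy identity, and the previously established $A_x$ estimate. The only minor bookkeeping point is ensuring the constant in (\ref{A_x Bound}) is uniform in $t$, which follows from the fact that $E(\varphi(t)) \equiv E_0 < \Ec$ along the Schr\"odinger map flow.
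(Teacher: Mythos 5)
Your proof is correct and follows essentially the same route as the paper: you recall the representation $\partial_m v = -\varphi\,\Re(\psi_m) + w\,A_m$ from \S\ref{SS:Derivative equations}, exploit the orthonormality of $\{\varphi, v, w\}$ to control $|\partial_m v|$ pointwise, and then invoke energy conservation (via $|\partial_m\varphi| = |\psi_m|$) together with the $A_x$ bound (\ref{A_x Bound}). The paper's proof is exactly this, stated slightly more tersely, so there is nothing to add.
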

\begin{proof}
Because $\lvert v \rvert \equiv 1$, it holds that $v \cdot \partial_m v \equiv 0$.
Therefore,
with respect to the orthonormal frame $(v, w, \varphi)$, 
the vector $\partial_m v$ admits the representation
\begin{equation}
\partial_m v = A_m \cdot w - \Re(\psi_m) \cdot \varphi.
\label{dv Representation}
\end{equation}
The bound (\ref{dv Bound}) then follows from
using
$\lvert w \rvert \equiv 1 \equiv \lvert \varphi \rvert$, $\lVert \psi_m \rVert_{L_x^2}
\equiv \lVert \partial_m \varphi \rVert_{L_x^2}$, energy conservation, and (\ref{A_x Bound})
all in (\ref{dv Representation}).
\end{proof}

Adopting the convention $\partial_0 = \partial_s$, 
and now and hereafter allowing all Greek indices to range
over heat time, spatial variables, and time, we define for all 
$(s, x, t) \in [0, \infty) \times \RR \times (-T, T)$ the various gauge components
\begin{align*}
\psi_\alpha &:= v \cdot \partial_\alpha \varphi + i w \cdot \partial_\alpha \varphi \\
A_\alpha &:= w \cdot \partial_\alpha v \\
D_\alpha &:= \partial_\alpha + A_\alpha \\
q_{\alpha \beta} &:= \partial_\alpha A_\beta - \partial_\beta A_\alpha.
\end{align*}
For $\alpha = 0, 1, 2, 3$ it holds that
\begin{equation*}
\partial_\alpha \varphi = v \; \Re(\psi_\alpha) + w \; \Im(\psi_\alpha).
\end{equation*}
The parallel transport condition $w \cdot \partial_s v \equiv 0$ is
equivalently expressed in terms of the connection coefficients as
\begin{equation}
A_s \equiv 0.
\label{A_s}
\end{equation}
Expressed in terms of the gauge, the heat flow (\ref{Covariant Heat})
lifts to
\begin{equation}
\psi_s = D_\ell \psi_\ell.
\label{Psi_s Equation}
\end{equation}
Using (\ref{Torsion Free}) and (\ref{Curvature}), we may rewrite 
the $D_m$ covariant derivative of (\ref{Psi_s Equation}) as
\begin{equation*}
\partial_s \psi_m = D_\ell D_\ell \psi_m + i \Im(\psi_m \overline{\psi_\ell}) \psi_\ell,
\end{equation*}
or equivalently
\begin{equation}
(\partial_s - \Delta) \psi_m =
i A_\ell \partial_\ell \psi_m +
i \partial_\ell (A_\ell \psi_m) -
A_x^2 \psi_m + i \psi_\ell \Im(\overline{\psi_\ell} \psi_m).
\label{HF}
\end{equation}
More generally, taking the $D_\alpha$ covariant derivative, we obtain
\begin{equation}
(\partial_s - \Delta) \psi_\alpha =
U_\alpha,
\label{genPsiEQ}
\end{equation}
where we set
\begin{equation}
U_\alpha 
:=
i A_\ell \partial_\ell \psi_\alpha + i \partial_\ell(A_\ell \psi_\alpha) 
- A_x^2 \psi_\alpha + i \psi_\ell \Im(\overline{\psi_\ell} \psi_\alpha),
\label{Heat Nonlinearity 0}
\end{equation}
which admits the alternative representation
\begin{equation}\label{Heat Nonlinearity}
U_\alpha 
=
2i A_\ell \partial_\ell \psi_\alpha + i (\partial_\ell A_\ell) \psi_\alpha
- A_x^2 \psi_\alpha + i \psi_\ell \Im(\overline{\psi_\ell} \psi_\alpha).
\end{equation}
From (\ref{Curvature}) and (\ref{A_s}) it follows that
\begin{equation*}
\partial_s A_\alpha = \Im(\psi_s \overline{\psi_\alpha}).
\end{equation*}
Integrating back from $s = \infty$ (justified using (\ref{Soft Decay Bounds})) yields
\begin{equation}
A_\alpha(s) = - \int_s^\infty \Im(\overline{\psi_\alpha} \psi_s)(s^\prime) \ds^\prime.
\label{CC Integral Rep}
\end{equation}

At $s = 0$, $\varphi$ satisfies both (\ref{SM}) and (\ref{Covariant Heat}),
or equivalently, $\psi_t(s = 0) = i \psi_s(s = 0)$. While for $s > 0$
it continues to be the case that $\psi_s = D_\ell \psi_\ell$ by construction,
we no longer necessarily have $\psi_t(s) = i D_\ell(s) \psi_\ell(s)$, i.e.,
$\varphi(s, x, t)$ is not necessarily a Schr\"odinger map at fixed $s > 0$. 
In the following lemma we derive an evolution equation for the commutator
$\Psi = \psi_t - i \psi_s$.
\begin{lem}[Flows do not commute]
Set $\Psi := \psi_t - i \psi_s$. Then
\begin{align}
\partial_s \Psi
&= D_\ell D_\ell \Psi + i \Im(\psi_t \bar{\psi}_\ell) \psi_\ell - \Im(\psi_s \bar{\psi}_\ell) \psi_\ell 
\label{commutator1}\\
&= D_\ell D_\ell \Psi + i \Im(\Psi \bar{\psi}_\ell) \psi_\ell + i \Im(i\psi_s \bar{\psi}_\ell) \psi_\ell
- \Im(\psi_s \bar{\psi}_\ell) \psi_\ell.
\label{commutator2}
\end{align}
\label{L:Commutator}
\end{lem}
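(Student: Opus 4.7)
The plan is to evaluate $\partial_s \Psi = \partial_s \psi_t - i \partial_s \psi_s$ by treating each of the two $s$-derivatives separately via the template already used in the derivation of (\ref{genPsiEQ}). Specifically, the argument immediately preceding (\ref{genPsiEQ}) can be recast in the compact form
\begin{equation*}
\partial_s \psi_\alpha
= D_\ell D_\ell \psi_\alpha + i\,\Im(\psi_\alpha \overline{\psi_\ell})\,\psi_\ell,
\end{equation*}
and this formula is valid for \emph{every} Greek index $\alpha$, because its proof uses only the structural facts (i) $A_s \equiv 0$, so that $D_s = \partial_s$; (ii) the torsion-free identity $D_\beta \psi_\alpha = D_\alpha \psi_\beta$ from (\ref{Torsion Free}); and (iii) the curvature identity $[D_\beta, D_\alpha] = i q_{\beta\alpha}$ from (\ref{Curvature}), together with $q_{\beta\alpha} = \Im(\psi_\beta \overline{\psi_\alpha})$.

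First I would apply this with $\alpha = t$. The chain of identities $\partial_s \psi_t = D_s \psi_t = D_t \psi_s = D_t D_\ell \psi_\ell$, using $A_s \equiv 0$, torsion-freeness, and the defining relation (\ref{Psi_s Equation}), followed by commuting $D_t$ past $D_\ell$ at the cost of a curvature term, and one more application of torsion-freeness to recognize $D_\ell D_t \psi_\ell = D_\ell D_\ell \psi_t$, produces
\begin{equation*}
\partial_s \psi_t = D_\ell D_\ell \psi_t + i\,\Im(\psi_t \overline{\psi_\ell})\,\psi_\ell.
\end{equation*}
Second I would repeat the same procedure with $\alpha = s$: starting from $\partial_s \psi_s = D_s D_\ell \psi_\ell$ and commuting $D_s$ past $D_\ell$, the curvature contribution is now $i q_{s\ell}\psi_\ell = i\,\Im(\psi_s \overline{\psi_\ell})\psi_\ell$, and torsion-freeness gives $D_\ell D_s \psi_\ell = D_\ell D_\ell \psi_s$, yielding an analogous expression for $\partial_s \psi_s$.

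Forming $\partial_s \psi_t - i \partial_s \psi_s$ then assembles the principal terms, by linearity of $D_\ell D_\ell$, into $D_\ell D_\ell \Psi$, and collecting the curvature contributions (tracking carefully the product of $-i$ from the definition of $\Psi$ with the $i$ carried by the commutator) yields the first identity (\ref{commutator1}). The second identity (\ref{commutator2}) is then a purely algebraic rearrangement: substitute $\psi_t = \Psi + i\psi_s$ into $\Im(\psi_t \overline{\psi_\ell})$ and use $\mathbf{R}$-linearity of $\Im$ to split as $\Im(\Psi \overline{\psi_\ell}) + \Im(i\psi_s \overline{\psi_\ell})$.

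There is no substantive obstacle here; the entire argument is a short and direct bookkeeping computation built on the three structural identities of the caloric gauge recorded in \S\ref{SS:Introduction to the caloric gauge}. The only point that truly demands care is the arithmetic of the factors of $i$ introduced by the commutator $[D_s, D_\ell]$, since it is precisely these factors that produce the real-valued cross-term $\Im(\psi_s \overline{\psi_\ell})\psi_\ell$ in (\ref{commutator1})---the feature that distinguishes the $\Psi$ heat equation from the covariant heat equation for $\psi_t$ alone.
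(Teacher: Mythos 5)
Your argument is correct and is essentially the paper's own proof: both apply the uniform heat-time evolution law $\partial_s\psi_\alpha = D_\ell D_\ell \psi_\alpha + i\,\Im(\psi_\alpha\overline{\psi_\ell})\psi_\ell$ (i.e.\ (\ref{genPsiEQ}) with the covariant terms collapsed) at $\alpha = t$ and $\alpha = s$, multiply the latter by a factor of $i$ (noting that $i$ commutes with $D_\ell D_\ell$ but not with $\Im(\cdot)$), and combine, with (\ref{commutator2}) then following from the substitution $\psi_t = \Psi + i\psi_s$. One remark worth making since you emphasize careful sign-tracking: the bookkeeping $-i\cdot i\,\Im(\psi_s\overline{\psi_\ell})\psi_\ell = +\Im(\psi_s\overline{\psi_\ell})\psi_\ell$ actually produces a $+$ on the last term, not the $-$ appearing in the statement of (\ref{commutator1})--(\ref{commutator2}); the same discrepancy arises when one combines the paper's own intermediate equations (\ref{psit evolv}) and (\ref{psiis evolv}), so this is evidently a sign typo in the lemma as printed and not a defect of your reasoning.
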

\begin{proof}
We prove (\ref{commutator1}), since (\ref{commutator2}) is a trivial
consequence of it.

Applying (\ref{HF}) and (\ref{genPsiEQ}) to $\psi_s$
and $\psi_t$ and collapsing the covariant derivative terms
yields
\begin{align}
\partial_s \psi_t &= D_\ell D_\ell \psi_t + i \Im(\psi_t \overline{\psi_\ell}) \psi_\ell
\label{psit evolv} \\
\partial_s \psi_s &= D_\ell D_\ell \psi_s + i \Im(\psi_s \overline{\psi_\ell}) \psi_\ell.
\label{psis evolv}
\end{align}
Multiply (\ref{psis evolv}) by $i$ to obtain the $s$-evolution
of $i \psi_s$. Multiplication by $i$ commutes with $D_\ell$,
but fails to do so with $\Im(\cdot)$, and thus we obtain
\begin{equation}
\partial_s i \psi_m
= D_\ell D_\ell i \psi_s - \Im(\psi_s \overline{\psi_\ell}) \psi_\ell.
\label{psiis evolv}
\end{equation}
Together (\ref{psit evolv}) and (\ref{psiis evolv}) imply (\ref{commutator1}).
\end{proof}


\subsection{Frequency localization} \label{SS:Frequency localization}
Frequency localization plays an indispensable role in our analysis.  
In this subsection we establish some basic concepts and then state
some basic results for the caloric gauge.

Our notation for a standard
Littlewood-Paley frequency localization of a function $f$ to frequencies $\sim 2^k$ is $P_k f$
and to frequencies $\lesssim 2^k$ is $P_{\leq k} f$.
The particular localization chosen is of course immaterial to our analysis, 
but for definiteness is specified in the next section and chosen for convenience to
coincide with that in \cite{BeIoKeTa11}.

We shall frequently make use of the following standard \emph{Bernstein inequalities} for
$\rr$ with $\sigma \geq 0$ and $1 \leq p \leq q \leq \infty$:
\begin{align*}
\lVert P_{\leq k} \lvert \partial_x \rvert^\sigma f \rVert_{L_x^p(\rr)}
&\lesssim_{p, \sigma}
2^{\sigma k} \lVert P_{\leq k} f \rVert_{L_x^p(\rr)} \\
\lVert P_k \lvert \partial_x \rvert^{\pm \sigma} f \rVert_{L_x^p(\rr)}
&\lesssim_{p, \sigma}
2^{\pm \sigma k} \lVert P_k f \rVert_{L_x^p(\rr)} \\
\lVert P_{\leq k} f \rVert_{L_x^q(\rr)}
&\lesssim_{p, q}
2^{2k(1/p - 1/q)} \lVert P_{\leq k} f \rVert_{L_x^p(\rr)} \\
\lVert P_k f \rVert_{L_x^q(\rr)}
&\lesssim_{p, q}
2^{2k(1/p - 1/q)} \lVert P_k f \rVert_{L_x^p(\rr)}.
\end{align*}

A particularly important notion for us is that of a frequency envelope, as it provides
a way to rigorously manage the ``frequency leakage'' phenomenon and
the frequency cascades produced by nonlinear interactions.
We introduce a parameter $\delta$ in the definition; for the purposes of this paper
$\delta = \frac{1}{40}$ suffices.
\begin{defin}[Frequency envelopes]
A positive sequence $\{a_k\}_{k \in \mathbf{Z}}$ is a frequency envelope if it belongs to $\ell^2$
and is slowly varying:
\begin{equation}
a_k \leq a_j 2^{\delta \lvert k - j \rvert},
\quad \quad j, k \in \mathbf{Z}.
\label{Slowly Varying}
\end{equation}
A frequency envelope $\{a_k\}_{k \in \mathbf{Z}}$ is $\varepsilon$-energy dispersed if it satisfies
the additional condition
\begin{equation*}
\sup_{k \in \mathbf{Z}} a_k \leq \varepsilon.
\end{equation*}
\end{defin}
Note in particular that frequency envelopes satisfy the following summation rules:
\begin{align}
\sum_{k^\prime \leq k} 2^{p k^\prime} a_{k^\prime}
&\lesssim (p - \delta)^{-1} 2^{p k} a_k
&p > \delta \label{Sum 1}\\
\sum_{k^\prime \geq k} 2^{-p k^\prime} a_{k^\prime}
&\lesssim (p - \delta)^{-1} 2^{-p k} a_k
&p > \delta \label{Sum 2}.
\end{align}
In practice we work with $p$ bounded away from $\delta$, e.g., $p > 2 \delta$ suffices,
and iterate these inequalities only $O(1)$ times.
Therefore, in applications we drop the factors $(p - \delta)^{-1}$ appearing in
(\ref{Sum 1}) and (\ref{Sum 2}).

Finally, pick a positive integer $\sigma_1$ and hold it fixed throughout the remainder of this section.
Results in this section hold for any such $\sigma_1$, though implicit constants
are allowed to depend upon this choice.

Given initial data $\varphi_0 \in H_Q^\infty$, define
for all $\sigma \geq 0$ and $k \in \mathbf{Z}$
\begin{equation}
c_k(\sigma) 
:= 
\sup_{k^\prime \in \mathbf{Z}} 
2^{-\delta \lvert k - k^\prime \rvert} 2^{\sigma k^\prime}
\lVert P_{k^\prime} \partial_x \varphi_0 \rVert_{L_x^2}.
\label{c Envelope}
\end{equation}
Set $c_k := c_k(0)$ for short.
For $\sigma \in [0, \sigma_1]$ it then holds that
\begin{equation}
\lVert \partial_x \varphi_0 \rVert_{\dot{H}_x^\sigma}^2 \sim
\sum_{k \in \mathbf{Z}} c_k^2(\sigma)
\quad\quad \text{and} \quad\quad
\lVert P_k \partial_x \varphi_0 \rVert_{L_x^2} \leq c_k(\sigma) 2^{-\sigma k}.
\label{c cons}
\end{equation}
Similarly, for $\varphi \in H_Q^{\infty, \infty}(T)$, define for all
$\sigma \geq 0$ and $k \in \mathbf{Z}$
\begin{equation}
\gamma_k(\sigma) := \sup_{k^\prime \in \mathbf{Z}} 2^{-\delta \lvert k - k^\prime \rvert}
2^{\sigma k^\prime} \lVert P_{k^\prime} \varphi \rVert_{L_t^\infty L_x^2}.
\label{gamma Envelope}
\end{equation}
Set $\gamma_k := \gamma_k(1)$.

\begin{thm}[Frequency-localized energy bounds for heat flow]
Let $f \in \{\varphi, v, w\}$.  Then for $\sigma \in [1, \sigma_1]$ the bound
\begin{equation}
\lVert P_k f(s) \rVert_{L_t^\infty L_x^2}
\lesssim
2^{-\sigma k} \gamma_k(\sigma) (1 + s 2^{2k})^{-20}
\label{Hard Envelope Bounds}
\end{equation}
holds
and for any $\sigma, \rho \in \mathbf{Z}_+$ it holds that
\begin{equation}
\sup_{k \in \mathbf{Z}} \sup_{s \in [0, \infty)}
(1 + s)^{\sigma / 2} 2^{\sigma k} \lVert P_k \partial_t^\rho f(s) \rVert_{L_t^\infty L_x^2} < \infty.
\label{Soft Envelope Bounds}
\end{equation}
\label{Energy Envelope Decay}
\end{thm}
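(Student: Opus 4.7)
I would prove \eqref{Hard Envelope Bounds} and \eqref{Soft Envelope Bounds} by treating all three functions $\varphi, v, w$ as solutions of semilinear heat equations with Laplacian principal part, then running a frequency-localized Duhamel argument with bootstrap. For $\varphi$ the equation is \eqref{Covariant Heat}, which we rewrite as $(\partial_s - \Delta)(\varphi - Q) = \varphi\,|\partial_x \varphi|^2$. For the frame vectors, differentiating $v \cdot \varphi = 0$, $|v| \equiv 1$, and $w \cdot \partial_s v \equiv 0$ forces $\partial_s v$ to point along $\varphi$ with coefficient determined by $-v \cdot \partial_s \varphi$; combining with \eqref{Psi_s Equation} gives a heat equation $(\partial_s - \Delta)(v - v_\infty) = Q_v$ whose nonlinearity $Q_v$ is schematically $\varphi\,\psi_\ell\,\partial_\ell v + \varphi\,|\partial_x \varphi|^2\,v + \ldots$, and similarly for $w$. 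Thus each of the three functions $f - f_\infty$ satisfies a heat equation whose nonlinearity is at worst cubic in $\partial_x$-type quantities.

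\textbf{Duhamel and linear piece.} Apply $P_k$ and use Duhamel:
\begin{equation*}
P_k(f(s) - f_\infty) = e^{s\Delta} P_k (f(0) - f_\infty) + \int_0^s e^{(s-s')\Delta} P_k N_f(s')\, ds'.
\end{equation*}
The linear contribution is acceptable: for $\sigma \in [1,\sigma_1]$,
$
\lVert e^{s\Delta} P_k (f(0) - f_\infty) \rVert_{L_x^2}
\lesssim e^{-c s 2^{2k}} \lVert P_k f(0) \rVert_{L_x^2}
\lesssim 2^{-\sigma k} \gamma_k(\sigma) (1+s 2^{2k})^{-20},
$
where we use $\lVert P_k f(0) \rVert_{L_x^2} \le 2^{-\sigma k}\gamma_k(\sigma)$ from \eqref{gamma Envelope}, the definition of $\gamma_k$ for $\varphi$, and the fact that $v,w$ at $s=0$ can be compared to $v_\infty, w_\infty$ via $\partial_x v = A_m w - \mathrm{Re}(\psi_m)\varphi$ (and analogously for $w$) together with \eqref{A_x Bound} and energy conservation.

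\textbf{Nonlinear piece and bootstrap.} For the Duhamel integral, I would Littlewood--Paley decompose the cubic nonlinearity $P_k N_f = \sum_{k_1,k_2,k_3} P_k(P_{k_1}(\cdot)\,P_{k_2}(\cdot)\,P_{k_3}(\cdot))$ and use Bernstein to place the two derivatives on the two highest-frequency factors, with Hölder in $x$ distributing $L^2 \cdot L^\infty \cdot L^\infty$ or $L^2 \cdot L^2 \cdot L^\infty$ as dictated by frequency balance. The soft bounds \eqref{Soft Decay Bounds} provide crude $L^\infty$ control via $\lvert \partial_x^\rho f(s)\rvert \lesssim \langle s \rangle^{-(|\rho|+1)/2}$, which is enough to close the iteration on $s$-intervals of length $\lesssim 2^{-2k}$ once we have bounds on lower frequencies. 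Running an induction on frequency scales — or equivalently, a continuity argument in $s$ starting from $s=0$ — and exploiting the slowly varying property \eqref{Slowly Varying} to sum the off-diagonal frequency contributions, I recover the claimed bound $2^{-\sigma k}\gamma_k(\sigma)(1 + s 2^{2k})^{-20}$, the $(1+s 2^{2k})^{-20}$ factor being extracted from the heat kernel $e^{(s-s')\Delta}P_k$.

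\textbf{Time derivatives and main obstacle.} For \eqref{Soft Envelope Bounds}, since $\partial_t$ commutes with $(\partial_s - \Delta)$, applying $\partial_t^\rho$ to the heat equation and repeating the Duhamel/bootstrap argument for $\partial_t^\rho f$ yields the stated bounds inductively in $\rho$, the initial data $\partial_t^\rho f(0)$ being controlled via the Schr\"odinger map equation at $s=0$ and $H_Q^{\infty,\infty}(T)$ membership. The main technical obstacle is the cubic high-high-to-low frequency cascade at low $k$ and large $s$: the crude bound from \eqref{Soft Decay Bounds} alone does not give the envelope-type control, and one must carefully feed back the improved bounds as they are established at high frequencies, summing against the slowly varying envelope. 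This is precisely the kind of delicate bootstrap that is standard for the harmonic map heat flow and carried out in detail in \cite{Sm09, Trenorm}, so much of the work can be cited rather than repeated.
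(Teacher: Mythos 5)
Your argument for $f = \varphi$ is broadly in the spirit of the paper: Duhamel for \eqref{Covariant Heat Alt}, Littlewood--Paley trichotomy, Cauchy--Schwarz in $s$, and a bootstrap. One noteworthy difference is where the smallness needed to close the bootstrap comes from. The paper does not iterate over dyadic $s$-intervals of length $\lesssim 2^{-2k}$ with the soft pointwise bounds; instead it divides $[0,\infty)$ into $O_{E_0}(1)$ $s$-intervals on which the \emph{divisible} global heat-flow norms $\lVert \e_1 \rVert_{L_s^1 L_x^\infty}^{1/2}$, $\lVert \e_2 \rVert_{L_{s,x}^1}^{1/2}$, $\lVert \e_1 \rVert_{L_{s,x}^2}$ are simultaneously small (using the subthreshold estimates \eqref{ek1}, \eqref{ek4}, \eqref{controlling norm} from \cite{Sm09}), and iterates the bootstrap across those intervals. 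As you yourself flag, the soft decay \eqref{Soft Decay Bounds} alone is not quantitative enough; the divisibility mechanism is the missing ingredient that should be named.

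For $f \in \{v, w\}$ there is a genuine gap. You propose to write $(\partial_s - \Delta)(v - v_\infty) = Q_v$ and run Duhamel forward from $s = 0$, but that requires a frequency-localized envelope bound on $P_k\bigl(v(0) - v_\infty\bigr)$ as input, and this is exactly what the theorem asserts at $s=0$: the envelope $\gamma_k(\sigma)$ in \eqref{gamma Envelope} is defined only in terms of $\varphi$, so no a priori bound $\lVert P_k v(0) \rVert_{L_x^2} \lesssim 2^{-\sigma k}\gamma_k(\sigma)$ is available, and the comparison you mention via $\partial_x v = A_m w - \Re(\psi_m)\varphi$ and \eqref{A_x Bound} yields only an $L_x^2$ energy-type bound, not a frequency-localized one. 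The argument is therefore circular. You also cannot run the heat Duhamel backward from $s = \infty$, since the backward heat propagator is ill-posed. The paper avoids the issue entirely by \emph{not} treating $v$ as a heat solution: it uses the parallel-transport identity $\partial_s v = R(s)\, v$, where $R := \partial_s\varphi\cdot\varphi^\dagger - \varphi\cdot\partial_s\varphi^\dagger = \Delta\varphi\cdot\varphi^\dagger - \varphi\cdot\Delta\varphi^\dagger$, which is a first-order ODE in $s$ and hence can be integrated backward from $s=\infty$ where $v(\infty) = Q'$ is constant and trivial. This gives the representation $P_k v(s) = -\int_s^\infty P_k(R v)\,ds'$, and one first establishes frequency-localized bounds on $P_k R$ (using the already-proven bound for $\varphi$ and the decomposition \eqref{R defmod}), then runs a bootstrap in $s$ backward from $\infty$ via $\cC_1(S,t)$, again exploiting divisibility of $\int_0^\infty \lVert R(s)\rVert_{L_x^2}^2\,ds$ over finitely many $s$-intervals.
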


\begin{cor}[Frequency-localized energy bounds for the caloric gauge]
For $\sigma \in [0, \sigma_1 - 1]$, it holds that
\begin{equation}
\lVert P_k \psi_x(s) \rVert_{L_t^\infty L_x^2} +
\lVert P_k A_m(s) \rVert_{L_t^\infty L_x^2}
\lesssim
2^k 2^{-\sigma k} \gamma_k(\sigma) (1 + s 2^{2k})^{-20}.
\label{Hard Field Bounds}
\end{equation}
Moreover, for any $\sigma \in \mathbf{Z}_+$,
\begin{equation}
\sup_{k \in \mathbf{Z}} \sup_{s \in [0, \infty)} (1 + s)^{\sigma / 2}
2^{\sigma k} 2^{-k} \left(
\lVert P_k (\partial_t^\rho \psi_x(s)) \rVert_{L_t^\infty L_x^2} +
\lVert P_k (\partial_t^\rho A_x(s)) \rVert_{L_t^\infty L_x^2} \right)
< \infty
\label{Soft Field Space Bounds}
\end{equation}
and
\begin{equation}
\sup_{k \in \mathbf{Z}} \sup_{s \in [0, \infty)} (1 + s)^{\sigma / 2}
2^{\sigma k} \left(
\lVert P_k (\partial_t^\rho \psi_t(s)) \rVert_{L_t^\infty L_x^2} +
\lVert P_k (\partial_t^\rho A_t(s)) \rVert_{L_t^\infty L_x^2} \right)
< \infty.
\label{Soft Field Time Bounds}
\end{equation}
\label{EED Cor}
\end{cor}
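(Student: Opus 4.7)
The plan is to deduce the frequency-localized bounds on the gauge quantities $\psi_x, A_m, \psi_t, A_t$ directly from the corresponding bounds on $\varphi, v, w$ supplied by Theorem \ref{Energy Envelope Decay}, using the defining product identities $\psi_\alpha = v \cdot \partial_\alpha \varphi + i w \cdot \partial_\alpha \varphi$ and $A_\alpha = w \cdot \partial_\alpha v$ together with standard Littlewood-Paley paraproduct decompositions.

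To establish the hard bound (\ref{Hard Field Bounds}) for $P_k \psi_m$, I would split $v = v_\infty + (v - v_\infty)$ (and likewise for $w$), so that
\begin{equation*}
P_k(v \cdot \partial_m \varphi) = v_\infty \cdot P_k \partial_m \varphi + P_k \bigl( (v - v_\infty) \cdot \partial_m \varphi \bigr).
\end{equation*}
The first term is bounded immediately by $\lvert v_\infty \rvert \cdot 2^k \lVert P_k \varphi \rVert_{L^\infty_t L^2_x} \lesssim 2^k \cdot 2^{-\sigma k} \gamma_k(\sigma)(1 + s 2^{2k})^{-20}$ via (\ref{Hard Envelope Bounds}) and Bernstein. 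For the second, I would apply the standard trichotomy in Littlewood-Paley frequencies: in the low-high case the pointwise bound $\lvert v \rvert \equiv 1$ yields $\lVert P_{\leq k-4}(v - v_\infty) \rVert_{L^\infty_x} \lesssim 1$; in the high-low case we pair (\ref{Hard Envelope Bounds}) for $v$ with the Bernstein bound $\lVert P_{\leq k - 4} \partial_m \varphi \rVert_{L^\infty_x} \lesssim 2^k E_0^{1/2}$; in the high-high case, using that $\lVert P_{k_1} v \rVert_{L^\infty_x} \lesssim 1$ uniformly in $k_1$, we sum the resulting bound on $\lVert P_{k_1} \partial_m \varphi \rVert_{L^2_x}$ over $k_1 \geq k$, exploiting the rapid decay $(1 + s 2^{2 k_1})^{-20}$ at the higher frequency together with the slowly-varying property (\ref{Slowly Varying}) and the summation rules (\ref{Sum 1}), (\ref{Sum 2}) to close within the envelope $\gamma_k(\sigma)$. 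The identical decomposition applies to $A_m = w \cdot \partial_m v$, since by (\ref{Hard Envelope Bounds}) the factor $\partial_m v$ obeys the same frequency-localized bound as $\partial_m \varphi$.

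For the soft bounds (\ref{Soft Field Space Bounds}) and (\ref{Soft Field Time Bounds}), the approach is identical, with (\ref{Soft Envelope Bounds}) in place of (\ref{Hard Envelope Bounds}). Time derivatives $\partial_t^\rho$ applied to the product identities distribute via the Leibniz rule into sums of products of the form $(\partial_t^{\rho_1} v)(\partial_t^{\rho_2} \partial_m \varphi)$ with $\rho_1 + \rho_2 = \rho$, each factor controlled by (\ref{Soft Envelope Bounds}). For $\psi_t$ and $A_t$ the absence of the $2^{-k}$ factor relative to (\ref{Soft Field Space Bounds}) simply reflects that $\partial_t \varphi$ replaces $\partial_x \varphi$ in the product, so the factor of $2^k$ extracted via Bernstein in the spatial case is instead absorbed into the $\partial_t$ bound supplied by (\ref{Soft Envelope Bounds}). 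No separate estimates in the heat-time direction are required, since $A_s \equiv 0$ and $\psi_s \equiv D_\ell \psi_\ell$ by (\ref{A_s}) and (\ref{Psi_s Equation}).

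The main technical obstacle is the high-high Littlewood-Paley cascade, where both factors sit at frequencies $2^{k_1} \gg 2^k$ yet the paraproduct output lands at $\sim 2^k$. Here the rapid heat-time decay at the higher frequency is essential: combining it with the slowly-varying inequality (\ref{Slowly Varying}) and the summation rules keeps the contribution within the envelope $\gamma_k(\sigma)$, as required.
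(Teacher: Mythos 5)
Your overall strategy --- a Littlewood--Paley trichotomy applied to the product identities for $\psi_\alpha$ and $A_\alpha$, invoking the envelope bounds of Theorem~\ref{Energy Envelope Decay} --- is the right framework, and the low-high and high-low cases close as you describe. The high-high case, however, has a genuine gap. There you bound $\lVert P_k(P_{k_1}(v - v_\infty)\cdot P_{k_2}\partial_m\varphi)\rVert_{L^2_x}$, with $k_1 \sim k_2 \geq k$, by H\"older using only $\lVert P_{k_1} v\rVert_{L^\infty_x}\lesssim 1$. This $L^\infty$ bound carries no decay in $k_1$, so you are left to sum $\sum_{k_2 \geq k} 2^{(1-\sigma)k_2}\gamma_{k_2}(\sigma)(1+s2^{2k_2})^{-20}$. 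At $s = 0$ the heat decay factor is inactive, and after extracting $\gamma_k(\sigma)$ via (\ref{Slowly Varying}) the remaining geometric series $\sum_{j \geq 0} 2^{(1-\sigma+\delta)j}$ diverges whenever $\sigma \leq 1 + \delta$ --- well inside the stated range $\sigma \in [0,\sigma_1-1]$. The crude $L^\infty$ estimate on $P_{k_1} v$ has cost you a factor $\sim 2^{k_2 - k}$ that you cannot afford.

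The paper avoids this by working instead with the differentiated identity $\psi_m = -\partial_m v \cdot \varphi - i\,\partial_m w \cdot \varphi$ (a consequence of $v \cdot \varphi \equiv 0$, which also dispenses with $v_\infty$ automatically since $\partial_m v_\infty = 0$), and in the high-high case applying Bernstein at the output frequency: $\lVert P_k(P_{k_1}\partial_m v \cdot P_{k_2}\varphi)\rVert_{L^2_x}\lesssim 2^k\lVert P_{k_1}\partial_m v\rVert_{L^2_x}\lVert P_{k_2}\varphi\rVert_{L^2_x}$, with $\lVert P_{k_1}\partial_m v\rVert_{L^2_x}\lesssim 1$ from (\ref{dv Bound}). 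The extra $2^{k-k_2}$ is now present and the sum closes via (\ref{Sum 2}). You could equally keep your representation but replace H\"older by Bernstein $L^2\times L^2\to L^1$ at the output frequency, using $\lVert P_{k_1}(v-v_\infty)\rVert_{L^2_x} = \lVert P_{k_1} v\rVert_{L^2_x}\lesssim 2^{-k_1}\lVert \partial_x v\rVert_{L^2_x}$; that recovers the same gain. In either case, the same correction is needed in your high-high treatment of $A_m = w\cdot\partial_m v$ and of the soft bounds (\ref{Soft Field Space Bounds}), (\ref{Soft Field Time Bounds}).
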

We prove Theorem \ref{Energy Envelope Decay} and its corollary in \S \ref{S:The caloric gauge}.

Note that Corollary \ref{EED Cor} has as an elementary consequence the following:
\begin{cor}
For $\sigma \in [0, \sigma_1 - 1]$, it holds that
\begin{equation}
\lVert P_k \psi_x(0, \cdot, 0) \rVert_{L_x^2}
\lesssim
2^{-\sigma k} c_k(\sigma).
\end{equation}
\label{DataCor}
\end{cor}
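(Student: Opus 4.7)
The plan is to obtain the corollary by specializing the Hard Field Bounds of Corollary \ref{EED Cor} to the initial time $t = 0$, where the evolved frequency envelope $\gamma_k$ collapses to an object directly comparable to the initial-data envelope $c_k$.

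First, I would revisit the (deferred) proof of the Hard Field Bounds to confirm that it yields a pointwise-in-$t$ estimate of the form $\lVert P_k \psi_x(s, \cdot, t) \rVert_{L_x^2} \lesssim 2^{(1-\sigma)k} \gamma_k(\sigma, t) (1 + s 2^{2k})^{-20}$, where $\gamma_k(\sigma, t) := \sup_{k' \in \mathbf{Z}} 2^{-\delta \lvert k - k' \rvert} 2^{\sigma k'} \lVert P_{k'} \varphi(\cdot, t) \rVert_{L_x^2}$, so that the $L_t^\infty L_x^2$ form stated in Corollary \ref{EED Cor} arises only after taking $\sup_t$. Since the caloric heat flow acts purely in the spatial variables and at fixed $t$ sees only the slice $\varphi(\cdot, t)$, this pointwise version should follow by the same arguments.

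Next, I evaluate the pointwise bound at $t = 0$. There $\varphi(\cdot, 0) = \varphi_0$, and a Bernstein-type computation gives
\[
\gamma_k(\sigma + 1, 0)
= \sup_{k'} 2^{-\delta \lvert k - k' \rvert} 2^{(\sigma+1) k'} \lVert P_{k'} \varphi_0 \rVert_{L_x^2}
\sim \sup_{k'} 2^{-\delta \lvert k - k' \rvert} 2^{\sigma k'} \lVert P_{k'} \partial_x \varphi_0 \rVert_{L_x^2}
= c_k(\sigma),
\]
the equivalence using that $P_{k'} \varphi_0 = P_{k'}(\varphi_0 - Q)$ whenever $P_{k'}$ annihilates constants (the remaining $O(1)$ block of frequencies contributing a harmless term controlled by the energy). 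Plugging into the pointwise bound with $\sigma$ replaced by $\sigma + 1$ and specializing to $s = 0$, $t = 0$, I obtain
\[
\lVert P_k \psi_x(0, \cdot, 0) \rVert_{L_x^2}
\lesssim 2^{-\sigma k} \gamma_k(\sigma + 1, 0)
\sim 2^{-\sigma k} c_k(\sigma),
\]
valid in the stated range $\sigma \in [0, \sigma_1 - 1]$ (coming from the range $[0, \sigma_1 - 1]$ for the Hard Field Bounds via the $\sigma \to \sigma + 1$ shift).

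The main obstacle I anticipate is not conceptual: it is simply the verification that the argument underlying Corollary \ref{EED Cor} in \S \ref{S:The caloric gauge} admits the stronger pointwise-in-$t$ reading used above. Given that the heat flow is constructed independently on each time slice, this should be transparent from the structure of the proof rather than requiring new ideas. Once this is in hand, the remaining step is the elementary envelope identification $\gamma_k(\sigma + 1, 0) \sim c_k(\sigma)$, consistent with the author's labeling of the corollary as ``elementary.''
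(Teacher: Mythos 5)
Your approach matches the one the paper presumably intends; Corollary \ref{DataCor} is presented without proof as an ``elementary consequence'' of Corollary \ref{EED Cor}, and the natural route is exactly what you describe: the heat flow (and hence the caloric frame $v,w$ and the fields $\psi_m$) at physical time $t=0$ depends only on $\varphi_0$, so the bootstrap arguments of \S \ref{S:The caloric gauge} produce the pointwise-in-$t$ bound with the envelope $\gamma_k(\sigma,0)$ built from $\varphi_0$ alone; then $\gamma_k(\sigma+1,0)\sim c_k(\sigma)$ follows from Bernstein (using that every $P_{k'}$ annihilates the constant $Q$, so $P_{k'}\varphi_0 = P_{k'}(\varphi_0 - Q)$ and $\lVert P_{k'}\varphi_0\rVert_{L^2_x}\sim 2^{-k'}\lVert P_{k'}\partial_x\varphi_0\rVert_{L^2_x}$ for \emph{all} $k'$, with no exceptional low-frequency block as you cautiously hedge). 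One small bookkeeping point: your $\sigma\mapsto\sigma+1$ shift requires (\ref{Hard Field Bounds}) at $\sigma+1\le\sigma_1-1$, which yields the conclusion only for $\sigma\in[0,\sigma_1-2]$ rather than the stated $[0,\sigma_1-1]$; since $\sigma_1$ is chosen freely at the start of \S \ref{SS:Frequency localization}, this is harmless but worth noting explicitly.
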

\section{Function spaces and basic estimates} \label{S:Function spaces}

\subsection{Definitions}

\begin{defin}[Littlewood-Paley multipliers]
Let $\eta_0 : \mathbf{R} \to [0, 1]$ be a smooth even function vanishing outside the interval
$[-8/5, 8/5]$ and equal to $1$ on $[-5/4, 5/4]$.  For $j \in \mathbf{Z}$, set
\begin{equation*}
\chi_j(\cdot) = \eta_0( \cdot / 2^j ) - \eta_0( \cdot / 2^{j - 1}),
\quad
\chi_{\leq j}(\cdot) = \eta_0(\cdot / 2^j).
\end{equation*}
Let $P_k$ denote the operator on $L^\infty(\RR)$ defined by the Fourier multiplier
$\xi \to \chi_k(\lvert \xi \rvert)$.  For any interval $I \subset \mathbf{R}$, let
$\chi_I$  be the Fourier multiplier defined by $\chi_I = \sum_{j \in I \cap \mathbf{Z}} \chi_j$ and 
let $P_I$ denote its corresponding operator on $L^\infty(\RR)$.  We shall denote
$P_{(-\infty, k]}$ by $P_{\leq k}$ for short.  For $\theta \in \mathbf{S}^1$ and $k \in \mathbf{Z}$, we define
the operators $P_{k, \theta}$ by the Fourier multipliers $\xi \to \chi_k(\xi \cdot \theta)$.
\label{D:LP}
\end{defin}

Some frequency interactions in the nonlinearity of (\ref{NLS}) can be controlled
using the following Strichartz estimate:
\begin{lem}[Strichartz estimate]
Let $f \in L_x^2(\RR)$ and $k \in \mathbf{Z}$.  Then the Strichartz estimate
\begin{equation*}
\lVert e^{i t \Delta} f \rVert_{L_{t,x}^4} \lesssim \lVert f \rVert_{L_x^2}
\end{equation*}
holds, as does
the maximal function bound
\begin{equation*}
\lVert e^{i t \Delta} P_k f \rVert_{L_x^4 L_t^\infty} \lesssim
2^{k/2} \lVert f \rVert_{L_x^2}.
\end{equation*}
\end{lem}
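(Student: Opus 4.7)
The plan is to establish both bounds by standard $TT^*$ arguments, working in the two-dimensional setting fixed by the paper via $\RR = \mathbf{R}^2$. (Both displayed exponents confirm this: the $(4,4)$ Strichartz pair is scale-invariant precisely when $d = 2$, and by parabolic rescaling $\xi \mapsto 2^k\xi$, $x\mapsto 2^{-k}x$, $t\mapsto 2^{-2k}t$ the maximal prefactor is $2^{(d/4)k}$, matching $2^{k/2}$ exactly when $d = 2$.)

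\textbf{Strichartz bound.} Setting $T f(x,t) := e^{it\Delta}f(x)$, one has $TT^* F(s,x) = \int e^{i(s-t)\Delta} F(t,\cdot)(x)\,dt$. The 2D dispersive decay $\lVert e^{it\Delta} g \rVert_{L^\infty_x} \lesssim |t|^{-1} \lVert g \rVert_{L^1_x}$ (immediate from the explicit Gaussian Schr\"odinger kernel) interpolated with $L^2$-unitarity yields $\lVert e^{i(s-t)\Delta} F(t) \rVert_{L^4_x} \lesssim |s-t|^{-1/2} \lVert F(t) \rVert_{L^{4/3}_x}$. Putting $L^4_x$ outside the $t$-integral by Minkowski and invoking the 1D Hardy--Littlewood--Sobolev inequality for the fractional integral $I_{1/2}: L^{4/3}_t(\R)\to L^4_t(\R)$ gives $\lVert TT^* F\rVert_{L^4_{t,x}} \lesssim \lVert F \rVert_{L^{4/3}_{t,x}}$, which is equivalent by $TT^*$ to the stated $L^2_x \to L^4_{t,x}$ bound.

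\textbf{Maximal bound.} Parabolic rescaling reduces the claim to $\lVert e^{it\Delta}P_0 f\rVert_{L^4_x L^\infty_t} \lesssim \lVert f\rVert_{L^2}$, with the $2^{k/2}$ factor emerging from the rescaling. Write $e^{it\Delta}P_0 f = K_t * f$ with $K_t(y) = \int e^{iy\cdot\xi - it|\xi|^2}\chi_0(|\xi|)\,d\xi$. Stationary phase on the phase $\Phi(\xi) = y\cdot\xi - t|\xi|^2$ (unique critical point $\xi^* = y/(2t)$, which sits inside the Fourier support only when $|y| \sim |t|$) produces the kernel bound $|K_t(y)| \lesssim (1+|t|)^{-1}$ on the diagonal $|y| \sim |t|$ together with rapid decay off it. A second $TT^*$ application, now with $T: L^2_x \to L^4_x L^\infty_t$, reduces the estimate to $TT^* : L^{4/3}_x L^1_t \to L^4_x L^\infty_t$; the kernel $K_{s-t}(x-y)$ of this map is controlled pointwise by the stationary-phase bounds, the diagonal support absorbs the $L^\infty_s$, and the remaining spatial $L^{4/3}\to L^4$ step is Hardy--Littlewood--Sobolev.

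\textbf{Main difficulty.} The maximal estimate is the nontrivial step: the $L^\infty_t$ norm on the left precludes a direct Minkowski--then--HLS argument, and one must exploit the sharp diagonal support $|y|\sim|t|$ of the Schr\"odinger kernel. In practice this is handled either by a careful dyadic decomposition of $(x,t)$-space with geometric summation over scales, or by reduction to a Hardy--Littlewood maximal function along parabolic characteristics in the spirit of Kenig--Ponce--Vega. Either path delivers the scale-invariant bound at $k = 0$, and the $2^{k/2}$ prefactor is recovered upon undoing the rescaling.
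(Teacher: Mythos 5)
Your argument is correct, but note that the paper does not actually prove this lemma: it treats both estimates as classical, citing \cite{St77} for the $L^4_{t,x}$ bound and disposing of the maximal bound with the remark that it ``follows from scaling'' (i.e.\ rescale to unit frequency and invoke the known $k=0$ estimate). What you supply instead is a self-contained derivation: the $TT^*$ plus dispersive-decay plus one-dimensional Hardy--Littlewood--Sobolev argument for the $(4,4)$ pair is the standard Ginibre--Velo-style proof (Strichartz's original route was via Fourier restriction, but the two are interchangeable here), and your treatment of the maximal bound --- stationary-phase bounds for the kernel of $e^{it\Delta}P_0$, a mixed-norm $TT^*$ reduction to $L^{4/3}_x L^1_t \to L^4_x L^\infty_t$, absorbing the $\sup$ in $s$ via $\sup_s \lvert K_{s-t}(x-y)\rvert \lesssim (1+\lvert x-y\rvert)^{-1}$, and then two-dimensional HLS --- is also a correct and standard route; the only points to flag are the usual caveat that $L^{4/3}_x L^1_t$ is a predual rather than the dual of $L^4_x L^\infty_t$ (handled by the standard duality computation of the $L^\infty$ norm) and the bounded regime $\lvert t\rvert,\lvert y\rvert\lesssim 1$ where the kernel is simply $O(1)$. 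Your approach buys a complete proof from first principles; the paper's citation-plus-scaling buys brevity. It is also worth knowing the shortest bridge between the two statements, which is likely what the scaling remark gestures at: since $\partial_t e^{it\Delta}P_k f = i\Delta e^{it\Delta}P_k f$ has $L^4_{t,x}$ norm $\lesssim 2^{2k}\lVert f\rVert_{L^2}$ by the first bound and Bernstein, the one-dimensional inequality $\lVert g\rVert_{L^\infty_t}\lesssim \lVert g\rVert_{L^4_t}^{3/4}\lVert \partial_t g\rVert_{L^4_t}^{1/4}$ followed by H\"older in $x$ converts the $L^4_{t,x}$ Strichartz estimate directly into the $L^4_x L^\infty_t$ bound with the factor $2^{k/2}$, with no kernel analysis needed.
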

The first bound is the original Strichartz estimate (see \cite{St77}) 
and the second follows from scaling.
These will be augmented with certain lateral Strichartz estimates to be introduced shortly.
Strichartz estimates alone are not sufficient for controlling the nonlinearity
in (\ref{NLS}).  The additional control required comes from local smoothing
and maximal function estimates.
Certain local smoothing
spaces localized to cubes were introduced in \cite{KePoVe93} to study the local wellposedness
of Schr\"odinger equations with general derivative nonlinearities.
Stronger spaces were introduced in \cite{IoKe07} to prove a low-regularity global result.
In the Schr\"odinger map setting, local smoothing spaces were first used in \cite{IoKe06}
and subsequently in \cite{IoKe07-2, BeIoKe07, Be08}.  The particular
local smoothing/maximal function spaces we shall use were introduced in \cite{BeIoKeTa11}.

For a unit length $\theta \in \mathbf{S}^1$, we denote by $H_\theta$ its orthogonal
complement in $\RR$ with the induced measure.
Define the lateral spaces $L_\theta^{p,q}$ as those consisting of all 
measurable $f$ for which the norm
\begin{equation*}
\lVert h \rVert_{L_\theta^{p,q}} =
\left( \int_{\mathbf{R}} \left(
\int_{H_\theta \times \mathbf{R}} \lvert h(x_1 \theta + x_2, t )
\rvert^q dx_2 dt \right)^{p/q} dx_1 \right)^{1/p},
\end{equation*}
is finite.
We make the usual modifications when $p = \infty$ or $q = \infty$.
The most important spaces for our analysis are the local smoothing space
$L_\theta^{\infty, 2}$ and the inhomogeneous local smoothing space $L_\theta^{1,2}$.
To move between these spaces we use the maximal function space $L_\theta^{2, \infty}$.

The following two estimates were shown in \cite{IoKe06} and \cite{IoKe07-2}:
\begin{lem}[Local smoothing]
Let $f \in L_x^2(\RR)$, $k \in \mathbf{Z}$, and $\theta \in \mathbf{S}^1$.  Then
\begin{equation*}
\lVert e^{i t \Delta} P_{k, \theta} f \rVert_{L_{\theta}^{\infty, 2}}
\lesssim
2^{-k/2} \lVert f \rVert_{L_x^2}.
\end{equation*}
For $f \in L_x^2(\mathbf{R}^d)$, 
the maximal function space bound
\begin{equation*}
\lVert e^{i t \Delta} P_k f \rVert_{L_\theta^{2, \infty}} \lesssim
2^{k(d-1)/2} \lVert f \rVert_{L_x^2}
\end{equation*}
holds for dimension $d \geq 3$.
\label{L:LS}
\end{lem}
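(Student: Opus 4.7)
The plan is to prove the two estimates separately using complementary Fourier-analytic methods.

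For the local smoothing estimate, I would take $\theta = e_1$ without loss of generality and fix $x_1 \in \mathbf{R}$. Starting from
\[
u(x,t) := (e^{it\Delta} P_{k, e_1} f)(x) = \int_{\mathbf{R}^d} e^{ix\cdot\xi - it|\xi|^2}\chi_k(\xi_1)\hat{f}(\xi)\,d\xi,
\]
I apply Plancherel in $(x', t) \in H_{e_1} \times \mathbf{R}$ for fixed $x_1$; the $(x', t)$-Fourier transform can be read off from the delta measure on the paraboloid $\tau = -|\xi|^2$. Solving $\tau = -\xi_1^2 - |\xi'|^2$ for $\xi_1$ gives two roots each with Jacobian $|2\xi_1|^{-1}$, and on $\mathrm{supp}\,\chi_k$ one has $|\xi_1| \sim 2^k$. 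Changing variables from $\tau$ back to $\xi_1$ to compare with $\|f\|_{L^2}^2$ converts $d\tau$ into $2|\xi_1|\,d\xi_1$, so the two $|2\xi_1|^{-1}$ factors combine with the single $|2\xi_1|$ to yield a net $2^{-k}$ gain in the squared norm, hence the claimed $2^{-k/2}$, uniformly in $x_1$.

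For the maximal function estimate I would use $TT^*$. Setting $T f := e^{it\Delta} P_k f$, the bound is equivalent to
\[
\|TT^* g\|_{L^2_{x_1} L^\infty_{x',t}} \lesssim 2^{k(d-1)}\|g\|_{L^2_{x_1} L^1_{x',t}},
\]
where $TT^*$ is spacetime convolution with $K_k(y, \tau) := \int e^{iy\cdot\xi - i\tau|\xi|^2}\chi_k(|\xi|)^2\,d\xi$. Bounding the sup in $(x',t)$ pointwise by $M(x_1 - y_1)\|g(y_1,\cdot,\cdot)\|_{L^1}$ with $M(z_1) := \sup_{w \in \mathbf{R}^{d-1},\,\tau \in \mathbf{R}} |K_k(z_1, w, \tau)|$, Young's inequality in $x_1$ reduces everything to $\|M\|_{L^1_{z_1}} \lesssim 2^{k(d-1)}$. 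The rescaling $\xi = 2^k\eta$ gives $K_k(z_1, w, \tau) = 2^{dk} K_0(2^k z_1, 2^k w, 2^{2k}\tau)$, and consequently $\|M\|_{L^1} = 2^{k(d-1)} \|M_0\|_{L^1}$, so it suffices to show $\|M_0\|_{L^1} = O(1)$ at frequency one.

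Applying stationary phase to $K_0(y,\tau) = \int e^{iy\cdot\xi - i\tau|\xi|^2}\tilde{\chi}(\xi)^2\,d\xi$, the critical point $\xi_c = y/(2\tau)$ lies in $\mathrm{supp}\,\tilde{\chi}$ only when $|y| \sim |\tau|$, in which case $|K_0(y,\tau)| \lesssim |\tau|^{-d/2}$, while rapid decay holds off this light-cone regime. For fixed $z_1$, maximizing $|\tau|^{-d/2}$ over the admissible set $|\tau| \sim \sqrt{z_1^2 + |w|^2}$ selects $w = 0$ and $|\tau| \sim |z_1|$, giving $M_0(z_1) \lesssim \min(1, |z_1|^{-d/2})$. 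The main obstacle is the final integrability step: $\int_\mathbf{R} \min(1, |z_1|^{-d/2})\,dz_1 < \infty$ precisely when $d/2 > 1$, i.e., $d \geq 3$. The logarithmic divergence at $d = 2$ explains both the hypothesis of the lemma and why the two-dimensional case instead relies on the classical $L^4_x L^\infty_t$ maximal function bound stated earlier.
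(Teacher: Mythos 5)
The paper offers no proof of this lemma---it simply cites \cite{IoKe06} and \cite{IoKe07-2}---so there is nothing in the text to compare against. Your argument is a correct, self-contained rendition of the standard proofs. For the local smoothing bound, freezing $x_1$ and applying Plancherel in $(x',t)$ reduces to a delta measure on $\tau+|\xi|^2=0$; both branches $\xi_1=\pm\sqrt{-\tau-|\xi'|^2}$ carry a $|2\xi_1|^{-1}$ factor, and after squaring and changing variables $\tau\mapsto\xi_1$ (picking up $2|\xi_1|\,d\xi_1$) the net factor is $(2|\xi_1|)^{-1}\sim 2^{-k}$ in the squared norm, uniformly in $x_1$. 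For the maximal estimate, the $TT^*$ reduction to a kernel bound is set up correctly (the pre-dual $L^2_{x_1}L^1_{x',t}$ of $L^2_{x_1}L^\infty_{x',t}$ is the right target space), the parabolic rescaling $\xi=2^k\eta$ gives $\|M\|_{L^1}=2^{(d-1)k}\|M_0\|_{L^1}$, and the stationary-phase analysis of $K_0$ gives $M_0(z_1)\lesssim\min(1,|z_1|^{-d/2})$, so $\|M_0\|_{L^1}<\infty$ exactly when $d\geq 3$. Your closing remark about the logarithmic divergence at $d=2$ correctly anticipates the reason the paper switches to Galilean-boosted $L^{2,\infty}_{\theta,W_k}$ spaces in two dimensions. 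The only stylistic caveat is that the stationary-phase step should be understood with the non-stationary regime (rapid decay via integration by parts) folded in when taking the $\sup$ over $w,\tau$, which you do note implicitly; no essential gap.
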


In $d = 2$, the maximal function bound fails due to a logarithmic divergence.
In order to overcome this, we exploit Galilean invariance as in \cite{BeIoKeTa11}
(the idea goes back to \cite{Tat01} in the setting of wave maps).

For $p, q \in [1, \infty]$, $\theta \in \mathbf{S}^1$, $\lambda \in \mathbf{R}$,
define $L_{\theta, \lambda}^{p,q}$ using the norm
\begin{equation*}
\lVert h \rVert_{L_{\theta, \lambda}^{p,q}} =
\lVert T_{\lambda \theta}(h) \rVert_{L_\theta^{p,q}} =
\left( \int_{\mathbf{R}} \left(
\int_{H_\theta \times \mathbf{R}}
\lvert h((x_1 + t \lambda)\theta + x_2, t)\rvert^q dx_2 dt
\right)^{p/q} dx_1 \right)^{1/p},
\end{equation*}
where $T_w$ denotes the Galilean transformation
\begin{equation*}
T_w(f)(x,t) = e^{-i x \cdot w / 2} e^{-i t \lvert w \rvert^2 / 4} f(x + t w, t).
\end{equation*}
With $W \subset \mathbf{R}$ finite we define the spaces $L_{\theta, W}^{p,q}$ by
\begin{equation*}
L_{\theta, W}^{p,q} = \sum_{\lambda \in W} L_{\theta, \lambda}^{p,q}, \quad \quad
\lVert f \rVert_{L_{\theta, W}^{p,q}} =
\inf_{f = \sum_{\lambda \in W} f_\lambda} \sum_{\lambda \in W}
\lVert f_\lambda \rVert_{L_{\theta, \lambda}^{p,q}}.
\end{equation*}
For $k \in \mathbf{Z}$, $\mathcal{K} \in \mathbf{Z}_+$, set
\begin{equation*}
W_k := \{ \lambda \in [-2^k, 2^k] : 2^{k + 2 \mathcal{K}} \lambda \in \mathbf{Z} \}.
\end{equation*}
In our application we shall work on a finite time interval
$[- 2^{2 \mathcal{K}}, 2^{2 \mathcal{K}}]$ in order to ensure that the $W_k$
are finite.  This still suffices for proving global results so long as our effective bounds
are proved with constants independent of $T, \mathcal{K}$.
As discussed in \cite[\S 3]{BeIoKeTa11}, restricting $T$ to a finite time interval
avoids introducing additional technicalities.

\begin{lem}[Local smoothing/maximal function estimates]
Let $f \in L_x^2(\RR)$, $k \in \mathbf{Z}$, and $\theta \in \mathbf{S}^1$.  Then
\begin{equation*}
\lVert e^{i t \Delta} P_{k, \theta} f \rVert_{L_{\theta, \lambda}^{\infty, 2}}
\lesssim 2^{-k / 2} \lVert f \rVert_{L_{x}^2}, \quad \quad
\lvert \lambda \rvert \leq 2^{k - 40},
\end{equation*}
and moreover, if $T \in (0, 2^{2 \mathcal{K}}]$, then
\begin{equation*}
\lVert 1_{[-T, T]}(t) e^{i t \Delta} P_k f
\rVert_{L_{\theta, W_{k + 40}}^{2, \infty}} \lesssim
2^{k/2} \lVert f \rVert_{L_{x}^2}.
\end{equation*}
\label{LSMF}
\end{lem}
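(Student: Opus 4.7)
The plan is to prove both estimates by exploiting the intertwining of the free Schr\"odinger flow with Galilean boosts, reducing each to the unshifted estimates of Lemma \ref{L:LS}. For the tilted local smoothing bound I would start from the identity
\[
T_{\lambda\theta}(e^{it\Delta} f) \;=\; e^{it\Delta}\bigl(e^{-i x\cdot \lambda\theta/2}\, f\bigr),
\]
which follows from a direct Fourier-side calculation reflecting that Galilean boosts are symmetries of the free Schr\"odinger flow. By the very definition of $\lVert \cdot \rVert_{L^{\infty,2}_{\theta,\lambda}}$ as the $L^{\infty,2}_\theta$ norm of $T_{\lambda\theta}$ applied to the argument,
\[
\lVert e^{it\Delta} P_{k,\theta} f \rVert_{L^{\infty,2}_{\theta,\lambda}} \;=\; \lVert e^{it\Delta}(e^{-i x\cdot \lambda\theta/2}\, P_{k,\theta} f) \rVert_{L^{\infty,2}_\theta}.
\]
Modulation shifts Fourier support by $\lambda\theta/2$, so for $|\lambda|\leq 2^{k-40}$ the modulated function is still supported where $\xi\cdot\theta\sim 2^k$, and Lemma \ref{L:LS} applies (after decomposing the slightly widened slab into $O(1)$ pieces of the form $P_{k',\theta}$) to yield the $2^{-k/2}\lVert f\rVert_{L^2_x}$ bound.

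The maximal function bound requires more work because in dimension two the naive $L^{2,\infty}_\theta$ estimate on $e^{it\Delta} P_k f$ diverges logarithmically. I would absorb this divergence by a wavepacket-by-velocity decomposition. Concretely, partition the Fourier annulus $\{|\xi|\sim 2^k\}$ along the $\theta$-direction into thin slabs centered at $\xi\cdot\theta = -\lambda/2$ for $\lambda \in W_{k+40}$, obtaining a frequency decomposition $P_k f = \sum_\lambda f_\lambda$. The spacing $2^{-k-2\mathcal{K}}$ of $W_{k+40}$ and the slab width are calibrated to the dispersion scale: on the window $|t|\leq 2^{2\mathcal{K}}$ a wavepacket of frequency $2^k$ with this $\theta$-width remains spatially coherent. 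For each slab the intertwining identity shifts the relevant Fourier support to a thin strip about the $\theta^\perp$-axis, and the maximal norm of $e^{it\Delta}$ applied to a thin-strip function is controlled by combining local smoothing in the $\theta^\perp$-direction (Lemma \ref{L:LS}) with a Sobolev embedding in $(x_2,t)$ on $[-T,T]$; the thin localization ensures the $\theta$-direction derivative cost is negligible. Summing the per-slab bounds via $\ell^2$ orthogonality of the decomposition then delivers the claimed $2^{k/2}\lVert f\rVert_{L^2_x}$ estimate.

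The main obstacle is the quantitative calibration in the maximal function argument: the slab width (and hence the spacing of $W_k$) must be chosen so that the frequency-localization gain exactly offsets the logarithmic loss of the 2D maximal function, and the time window must be short enough that velocity can be discretized at that resolution without losing wavepacket coherence. This explains the otherwise mysterious coupling of the parameter $\mathcal{K}$ to both $W_k$ and the restricted time interval $[-2^{2\mathcal{K}}, 2^{2\mathcal{K}}]$, and is the only reason the second estimate does not follow from a direct repackaging of Lemma \ref{L:LS}.
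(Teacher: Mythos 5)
Your proof of the first estimate is correct and coincides with the paper's (which says only that it ``follows from Lemma \ref{L:LS} via a Galilean boost''): the intertwining identity $T_{\lambda\theta}(e^{it\Delta}f) = e^{it\Delta}(e^{-ix\cdot\lambda\theta/2}f)$ reduces the $L^{\infty,2}_{\theta,\lambda}$ estimate to the unboosted one, and the hypothesis $|\lambda|\le 2^{k-40}$ ensures the modulation moves the $\theta$-frequency support of $P_{k,\theta}f$ by only $O(2^{k-40})$, so Lemma \ref{L:LS} still applies.

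For the second estimate the paper gives no proof at all --- it defers entirely to \cite{BeIoKeTa11}, \S 7 --- so any complete argument must reproduce that. Your sketch has the right big picture (velocity-slab partition, Galilean boost, the calibration of $W_{k+40}$ against $T\le 2^{2\mathcal K}$), but the per-slab mechanism as described does not close. Writing $(y_1,y_2)=(x\cdot\theta,x\cdot\theta^\perp)$, local smoothing in the $\theta^\perp$-direction controls $L^{\infty,2}_{\theta^\perp}=L^\infty_{y_2}L^2_{y_1,t}$, whereas the target $L^{2,\infty}_\theta=L^2_{y_1}L^\infty_{y_2,t}$. These norms are ordered the wrong way for your purpose: $\lVert \cdot \rVert_{L^\infty_{y_2}L^2_{y_1,t}} \le (2T)^{1/2}\lVert\cdot\rVert_{L^2_{y_1}L^\infty_{y_2,t}}$ trivially, so $L^{\infty,2}_{\theta^\perp}$-control is strictly weaker information, and no Sobolev embedding in the remaining variables can reverse that inequality. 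Moreover a 2D Sobolev embedding in $(y_2,t)$ costs more than one derivative while $\partial_t^{1/2}\sim\Delta^{1/2}\sim 2^k$ already exhausts the entire $2^k$ gap between the local-smoothing constant $2^{-k/2}$ and the target $2^{k/2}$. What the thin slab and short time window actually buy is the near-constancy of the phase $e^{-it(\xi\cdot\theta)^2}$ on the support (since $T\delta^2\ll 1$); that frozen-phase decoupling of the $y_1$- and $y_2$-evolutions, followed by a one-dimensional Bernstein bound in $y_2$, is the ingredient that makes a per-slab bound plausible --- local smoothing plays no role here.

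The summation step is also unaddressed. The norm $\lVert\cdot\rVert_{L^{2,\infty}_{\theta,W}}$ is defined as an infimum of $\ell^1$-type sums over decompositions $h=\sum_{\lambda\in W} h_\lambda$, not an $\ell^2$ quantity. Decomposing $P_kf$ into $\sim|W_{k+40}|$ disjoint Fourier slabs $f_\lambda$ and bounding each by $2^{k/2}\lVert f_\lambda\rVert_{L^2}$ leaves an $\ell^1$ sum $\sum_\lambda\lVert f_\lambda\rVert_{L^2}$ that generically exceeds $\lVert f\rVert_{L^2}$ by a factor $|W_{k+40}|^{1/2}$; ``$\ell^2$ orthogonality of the decomposition'' does not reconcile an $\ell^2$ bound with an $\ell^1$ norm. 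A correct proof has to extract a sharper, slab-dependent gain or organize the decomposition differently --- this is precisely the technical content of \cite{BeIoKeTa11}, \S 7 that the paper cites without reproducing, and it is missing from your sketch.
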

\begin{proof}
The first bound follows from Lemma \ref{L:LS} via a Galilean boost.
The second is more involved and proven in \cite[\S 7]{BeIoKeTa11}.
\end{proof}

\begin{lem}[Lateral Strichartz estimates]
Let $f \in L_x^2(\RR)$, $k \in \mathbf{Z}$, and $\theta \in \mathbf{S}^1$.
Let $ 2 < p \leq \infty, 2 \leq q \leq \infty$ and $1/p + 1/q = 1/2$. Then
\begin{align*}
\lVert e^{i t \Delta} P_{k, \theta} f \rVert_{L_\theta^{p, q}}
&\lesssim
2^{k(2/p - 1/2)} \lVert f \rVert_{L_x^2}, &p \geq q,
\\
\lVert e ^{i t \Delta} P_k f \rVert_{L_\theta^{p,q}}
&\lesssim_p
2^{k(2/p - 1/2)} \lVert f \rVert_{L_x^2}, &p\leq q.
\end{align*}
\label{Lateral}
\end{lem}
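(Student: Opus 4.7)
Proof plan: By the rotational invariance of the Schr\"odinger group and the $L^p_{x_1}L^q_{x_2,t}$ norm, I may take $\theta = e_1$. The parabolic rescaling $f(x) \mapsto 2^{-k}f(2^{-k}x)$, which preserves $L^2$ and converts $P_k$ into $P_0$ (resp.\ $P_{k,\theta}$ into $P_{0,\theta}$), yields $\|e^{it\Delta}P_k f\|_{L^{p,q}_\theta} = 2^{k(1-1/p-3/q)}\|e^{it\Delta}P_0 \tilde f\|_{L^{p,q}_\theta}$ for the corresponding rescaled $\tilde f$; under the hypothesis $1/p + 1/q = 1/2$ this exponent simplifies exactly to $2/p - 1/2$. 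The problem thus reduces to the $k=0$ case with a (possibly $p$-dependent) constant on the right-hand side.

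For the first inequality ($p \geq q$), the plan is complex interpolation between the two endpoints $(4,4)$ and $(\infty, 2)$ on the admissibility line $1/p + 1/q = 1/2$. The $(4,4)$ endpoint is the classical 2D $L^4_{t,x}$ Strichartz bound. For the $(\infty, 2)$ endpoint I exploit the directional localization $P_{0,e_1}$: taking the partial Fourier transform $\mathcal{F}_{x_2}$, the function $\mathcal{F}_{x_2}(e^{it\Delta}P_{0,e_1}f)(x_1,\xi_2,t)$ reduces, up to the unitary phase $e^{-it\xi_2^2}$, to a 1D Schr\"odinger evolution in $x_1$ of the slice $\mathcal{F}_{x_2}(P_{0,e_1}f)(\cdot,\xi_2)$, which has $x_1$-frequency support near $1$. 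Applying the 1D local smoothing bound of Lemma \ref{L:LS} uniformly in $\xi_2$, pulling the supremum past the $\xi_2$-integration, and invoking Plancherel in $x_2$ yields $\|e^{it\Delta}P_{0,e_1}f\|_{L^\infty_{x_1}L^2_{x_2,t}} \lesssim \|f\|_{L^2}$. Complex interpolation closes the argument.

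For the second inequality ($p \leq q$), the situation is more delicate because the natural second endpoint $(p,q) = (2,\infty)$ would require a 2D maximal function estimate $\|e^{it\Delta}P_0 f\|_{L^2_{x_1}L^\infty_{x_2,t}} \lesssim \|f\|_{L^2}$, which fails by logarithmic divergence --- exactly the phenomenon that necessitates the Galilean-shifted spaces in Lemma \ref{LSMF}. My plan is to interpolate between the Strichartz endpoint $(4,4)$ and a family of substitute estimates whose implicit constants degenerate as one approaches the forbidden point $(2,\infty)$. Such substitutes can be produced either by invoking the 2D Strichartz estimate at the admissible pair $(q,p)$ (which satisfies $2/q + 2/p = 1$ on our line $1/p + 1/q = 1/2$) and reshuffling norms via Minkowski's inequality (valid since $p \leq q$) together with Bernstein's inequality for the frequency-localized data, or by analyzing $TT^*$ directly via the 2D dispersive decay $|e^{it\Delta}P_0 f| \lesssim \langle t\rangle^{-1}\|f\|_{L^1}$. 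The main obstacle is precisely this absence of a genuine second endpoint: one must work with near-endpoint estimates whose constants blow up as $p \to 2$, and this blowup manifests in the conclusion as the qualifier $\lesssim_p$.
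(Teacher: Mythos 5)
Your reduction by scaling to $k=0$ is correct (one checks $1 - 1/p - 3/q = 2/p - 1/2$ on the line $1/p + 1/q = 1/2$), and your plan for the first bound --- complex interpolation of mixed lateral norms between the $L^4_{t,x}$ Strichartz endpoint at $(p,q)=(4,4)$ and the $L^{\infty,2}_\theta$ local smoothing endpoint at $(\infty,2)$, the latter derived from one-dimensional local smoothing by a partial Fourier transform in the transverse variable --- is sound and is essentially the argument behind \cite[Lemma 7.1]{BeIoKeTa11}, which the paper itself only cites rather than proves.

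Your plan for the second bound has a genuine gap, however: Minkowski's integral inequality goes the \emph{wrong} way for the reduction you propose. For $p \leq q$ one has $\lVert u \rVert_{L^p_{x_1}L^q_{x_2,t}} \geq \lVert u \rVert_{L^q_{x_2,t}L^p_{x_1}}$ --- the mixed norm with the \emph{smaller} exponent on the outside is the \emph{larger} one --- so the lateral norm in the regime $p \leq q$ dominates, rather than is dominated by, any rearrangement of the Strichartz norm $L^q_t L^p_x$. Consequently it cannot be recovered from the $L^q_t L^p_x$ Strichartz estimate by Minkowski plus Bernstein. A quick sanity check: were that route available, letting $p \downarrow 2$ (so $q \uparrow \infty$) would deduce the two-dimensional maximal function estimate from the trivial $L^\infty_t L^2_x$ conservation law --- but that maximal estimate is precisely the one you correctly identify as failing. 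Your alternative $TT^*$ route is closer in spirit to what is actually needed, but the single pointwise decay $\langle t \rangle^{-1}$ does not by itself yield a mixed-norm estimate; one needs the angular decomposition of $P_k$ into sectors carried out in \cite[Lemma 7.1]{BeIoKeTa11}, whose summation over sectors is exactly what converges for $p > 2$ with a constant degenerating as $p \downarrow 2$ and produces the $\lesssim_p$. Note finally that the ``local smoothing/maximal function estimates of Lemma \ref{LSMF}'' invoked in the paper's proof sketch live in the Galilean-shifted spaces $L^{2,\infty}_{\theta,W}$, which are not the un-shifted $L^{p,q}_\theta$ spaces appearing in the statement; so the paper's description should be read as informal, and the actual argument is nontrivial and is not just interpolation between those two estimates.
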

\begin{proof}
Informally speaking, these bounds follow from interpolating between the $L^4$ Strichartz
estimate and the local smoothing/maximal function estimates of Lemma \ref{LSMF}.
See \cite[Lemma 7.1]{BeIoKeTa11} for the rigorous argument.
\end{proof}

We now introduce the main function spaces.
Let $T > 0$.  For $k \in \mathbf{Z}$, let $I_k = \{ \xi \in \mathbf{R}^2 : \lvert \xi \rvert
\in [2^{k-1}, 2^{k+1}] \}$.  
Let
\begin{equation*}
L_k^2(T) := \{ f \in L^2(\mathbf{R}^2 \times [-T, T] ) : \mathrm{supp}\; \hat{f}(\xi, t) \subset I_k \times
[-T, T] \}.
\end{equation*}

For $f \in L^2 (\mathbf{R}^2 \times [-T, T] )$, let
\begin{equation*}
\begin{split}
\lVert f \rVert_{F_k^0(T)} :=& \;
\lVert f \rVert_{L_t^\infty L_x^2} +
\lVert f \rVert_{L_{t,x}^4}
+ 2^{-k/2} \lVert f \rVert_{L_x^4 L_t^\infty} \\
&+ 2^{-k/6} \sup_{\theta \in \mathbf{S}^1}
\lVert f \rVert_{L^{3,6}_\theta} + 
+ 2^{-k/2} \sup_{\theta \in \mathbf{S}^1}
\lVert f \rVert_{L_{\theta, W_{k + 40}}^{2, \infty}}.
\end{split}
\end{equation*}
We then define, similarly to as in \cite{BeIoKeTa11}, $F_k(T)$, $G_k(T)$, $N_k(T)$ 
as the normed spaces of functions in $L_k^2(T)$ for which the corresponding norms are finite:
\begin{align*}
\lVert f \rVert_{F_k(T)} :=& \inf_{J, m_1, \ldots, m_J \in \mathbf{Z}_+ }
\inf_{f = f_{m_1} + \cdots + f_{m_J}}
\sum_{j = 1}^J 2^{m_j} \lVert f_{m_j} \rVert_{F_{k + m_j}^0} \\
\lVert f \rVert_{G_k(T)} :=& \;\lVert f \rVert_{F_k^0(T)}
+ 2^{k / 6} \sup_{\lvert j - k \rvert \leq 20} \sup_{\theta \in \mathbf{S}^1}
\lVert P_{j, \theta} f \rVert_{L_\theta^{6,3}} \\
&+ 2^{k/2} \sup_{\lvert j - k \rvert \leq 20} \sup_{\theta \in \mathbf{S}^1}
\sup_{\lvert \lambda \rvert < 2^{k - 40}}
\lVert P_{j, \theta} f \rVert_{L_{\theta, \lambda}^{\infty, 2}} \\
\lVert f \rVert_{N_k(T)} :=& \inf_{f = f_1 + f_2 + f_3 + f_4 + f_5 + f_6}
\lVert f_1 \rVert_{L_{t,x}^{4/3}}
+ 2^{k/6} \lVert f_2 \rVert_{L_{\hat{\theta}_1}^{3/2, 6/5}}
+ 2^{k/6} \lVert f_3 \rVert_{L_{\hat{\theta}_2}^{3/2, 6/5}} \\
& + 2^{-k/6}
\lVert f_4 \rVert_{L^{6/5,3/2}_{\hat{\theta}_1}}
+ 2^{-k/6}
\lVert f_5 \rVert_{L^{6/5,3/2}_{\hat{\theta}_2}}
+ 2^{-k/2} \sup_{\theta \in \mathbf{S}^1} 
\lVert f_6 \rVert_{L_{\theta, W_{k - 40}}^{1,2}},
\end{align*}
where $(\hat{\theta}_1, \hat{\theta}_2)$ denotes the canonical basis in $\mathbf{R}^2$.

There are a few minor differences between these spaces and those appearing
in \cite{BeIoKeTa11}. The space $F^0_k$ now includes the 
lateral Strichartz space $L^{3,6}_\theta$,
whereas in \cite{BeIoKeTa11}, only $G_k$ was endowed with this norm. 
The net effect on the space $G_k$ is that it is left unchanged.
The space $F_k$, however, now explicitly incorporates this particular lateral Strichartz structure.
Note though, that for fixed $\theta \in \mathbf{S}^1$, we have by
enough applications of Young's and H\"older's inequalities that
\begin{align*}
2^{-k/6} \lVert f \rVert_{L^{3,6}_\theta}
&=
2^{-k/6} \left( \int_{\R} \left( \int_{H_\theta \times \R}
\lvert f(x_1 \theta + x_2, t) \rvert^6 dx_2 dt \right)^{1/2} dx_1 \right)^{1/3} \\
&\lesssim
2^{-k/6} \left( \int_{\R}
\lVert f \rVert_{L^4_{\theta, t}}^2 \lVert f \rVert_{L^\infty_{\theta, t}} dx_1 \right)^{1/3} \\
&\lesssim
2^{-k/6} \left( \int_{\R} \lVert f \rVert_{L^4_{\theta, t}}^4 dx_1 \right)^{1/6}
\left( \int_{\R} \lVert f \rVert_{L^\infty_{\theta, t}}^2 dx_1 \right)^{1/6} \\
&\lesssim
\lVert f \rVert_{L^4}^{2/3} \cdot 2^{-k/6} \lVert f \rVert_{L^{2, \infty}_\theta}^{1/3} \\
&\lesssim
\lVert f \rVert_{L^4} + 2^{-k/2} \lVert f \rVert_{L^{2,\infty}_\theta}.
\end{align*}
We also make one change to the $N_k$ space: We explicitly incorporate
$L^{6/5,3/2}_\theta$.

Incorporating these extra lateral Strichartz spaces affords us greater flexibility in
certain estimates: We can avoid having to use local smoothing/maximal function
spaces if we are willing to give up some decay. This tradeoff pays off in 
\S \ref{S:Local smoothing}, where as a consequence we can
prove a stronger
local smoothing estimate for a certain magnetic nonlinear Schr\"odinger 
equation in the one regime where this improvement is absolutely essential.

\begin{prop}[Main linear estimate]
Assume $\mathcal{K} \in \mathbf{Z}_+$, $T \in (0, 2^{2 \mathcal{K}}]$ and $k \in \mathbf{Z}$.  Then
for each $u_0 \in L^2$ that is frequency-localized to $I_k$ and for any $h \in N_k(T)$, the solution $u$ of
\begin{equation*}
(i \partial_t + \Delta_x) u = h, \quad\quad u(0) = u_0
\end{equation*}
satisfies
\begin{equation*}
\lVert u \rVert_{G_k(T)} \lesssim \lVert u(0) \rVert_{L_x^2} + \lVert h \rVert_{N_k(T)}.
\end{equation*}
\label{MainLinearEstimate}
\end{prop}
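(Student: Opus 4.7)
The plan is to split $u$ via Duhamel's formula as
\[
u(t) = e^{it\Delta} u_0 \;-\; i \int_0^t e^{i(t-s)\Delta} h(s)\, ds =: u_{\mathrm{hom}}(t) + v(t),
\]
and to bound $\|u_{\mathrm{hom}}\|_{G_k(T)}$ and $\|v\|_{G_k(T)}$ separately. For the homogeneous part, the strategy is to bound each of the constituent norms defining $\|\cdot\|_{G_k(T)}$ directly by $\|u_0\|_{L_x^2}$. Since $u_0$ is frequency-localized to $I_k$, every such bound is already in hand: the $L_t^\infty L_x^2$ piece is Plancherel; the $L_{t,x}^4$ piece is the standard Strichartz estimate; the weighted $L_x^4 L_t^\infty$ piece is the maximal function bound; the lateral $L_\theta^{3,6}$ and the $P_{j,\theta}$-localized $L_\theta^{6,3}$ pieces follow from Lemma \ref{Lateral}; and the Galilean-boosted local smoothing $L_{\theta,\lambda}^{\infty,2}$ and maximal function $L_{\theta, W_{k+40}}^{2,\infty}$ pieces are exactly Lemma \ref{LSMF}.

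For the inhomogeneous part $v$, I would decompose $h = f_1 + \cdots + f_6$ according to the infimum defining $\|h\|_{N_k(T)}$ and control each $v_j(t) := -i\int_0^t e^{i(t-s)\Delta} f_j(s)\,ds$ separately via a $TT^*$/duality argument. The $N_k$-decomposition is arranged so that each piece is predual (up to the relevant power of $2^k$) to a corresponding $G_k$-piece: $L_{t,x}^{4/3}$ pairs with $L_{t,x}^4$; $2^{k/6} L_{\hat\theta_j}^{3/2,6/5}$ pairs with the newly added $2^{-k/6} L_\theta^{3,6}$ piece in $F_k^0$; the newly added $2^{-k/6} L_{\hat\theta_j}^{6/5,3/2}$ pieces pair with the $P_{j,\theta}$-localized $2^{k/6} L_\theta^{6,3}$ piece in $G_k$; and finally $2^{-k/2} L_{\theta, W_{k-40}}^{1,2}$ pairs with $2^{k/2} L_{\theta,\lambda}^{\infty,2}$ through the maximal function/local smoothing duality of Lemma \ref{LSMF}. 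For each pairing the $TT^*$ argument first yields the corresponding bound for the full-line integral $\int_\mathbf{R} e^{i(t-s)\Delta} f_j\, ds$; the Christ--Kiselev lemma then converts this into the retarded Duhamel bound, since in every pairing the $N_k$-side exponent is strictly smaller than the $G_k$-side exponent (e.g.\ $4/3<4$, $3/2<3$, $6/5<6$, $1<\infty$). The remaining $G_k$ components of $v$ not obtained directly by $TT^*$ (for instance, the $L_t^\infty L_x^2$ norm) follow by combining $\sup_t \|v(t)\|_{L^2_x} \lesssim \|h\|_{N_k(T)}$ with the homogeneous estimates already established.

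Most of this scheme is implemented in \cite[\S 7]{BeIoKeTa11}; the only genuinely new ingredients are the two lateral Strichartz pairings $L_\theta^{3,6} \leftrightarrow L_\theta^{3/2,6/5}$ and $L_\theta^{6,3} \leftrightarrow L_\theta^{6/5,3/2}$ that have been built into the spaces here. These proceed by exactly the same $TT^*$ plus Christ--Kiselev template, relying only on Lemma \ref{Lateral}. The step I expect to be the main obstacle is the local smoothing/maximal function pairing, because in dimension two the maximal function estimate only holds after Galilean boosting and only for the finite net $W_{k\pm 40}$, which is why the finite-time restriction $T \leq 2^{2\mathcal{K}}$ is imposed; once Lemma \ref{LSMF} is invoked, the bound on $v$ reduces to carefully tracking how an $L_{\theta, W_{k-40}}^{1,2}$ decomposition of $f_6$ is matched to the supremum over $|\lambda| < 2^{k-40}$ defining the $L_{\theta,\lambda}^{\infty,2}$ norm on the $G_k$-side, which is essentially bookkeeping.
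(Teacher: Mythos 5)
Your proposal is essentially the paper's approach: the paper proves Proposition~\ref{MainLinearEstimate} by citing \cite[Proposition~7.2]{BeIoKeTa11} and remarking that the only changes needed are to incorporate $L^{6/5,3/2}_{\hat\theta_1}$, $L^{6/5,3/2}_{\hat\theta_2}$ into $N_k^0(T)$, which is precisely the Duhamel/$TT^*$/Christ--Kiselev template you describe. One small correction on the bookkeeping: only the $L^{6/5,3/2}_\theta$ components of $N_k$ (paired with the $P_{j,\theta}$-localized $L^{6,3}_\theta$ piece of $G_k$) are genuinely new relative to \cite{BeIoKeTa11}; the $L^{3/2,6/5}_\theta \leftrightarrow L^{3,6}_\theta$ pairing was already present there, the novelty being only that $L^{3,6}_\theta$ is now also part of $F^0_k$ (and, as the paper observes, this is dominated by $L^4$ and $L^{2,\infty}_\theta$, so it is not substantively new). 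A more substantive caution concerns your final parenthetical "$1<\infty$" for the $L^{1,2}_{\theta,W_{k-40}} \leftrightarrow L^{\infty,2}_{\theta,\lambda}$ pairing: the time exponent is $2$ on both sides, so the classical Christ--Kiselev lemma in the $t$-variable does not apply (the strict inequality fails). What actually makes the retarded local smoothing bound go through, following Ionescu--Kenig and \cite{BeIoKeTa11}, is that after the angular frequency localization $P_{j,\theta}$ the Schr\"odinger flow propagates essentially one-sidedly in the $x_1 = x\cdot\theta$ variable, so the Christ--Kiselev-type monotonicity is exploited in $x_1$ (where indeed $1 < \infty$) rather than in $t$; this is more than bookkeeping, and it is the genuine content of the $f_6$-term estimate in \cite[\S 7]{BeIoKeTa11}.
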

\begin{proof}
See \cite[Proposition 7.2]{BeIoKeTa11} for details.
Our changes to the spaces necessitate only minor changes in their proof,
as we must incorporate
$L^{6/5,3/2}_{\hat{\theta}_1}$ and $L^{6/5,3/2}_{\hat{\theta}_2}$
into the space $N_k^0(T)$.
\end{proof}

The spaces $G_k(T)$ are used to hold projections $P_k \psi_m$ of the derivative fields
$\psi_m$ satisfying (\ref{NLS}).  The main components of $G_k(T)$ are the
local smoothing/maximal function spaces
$L_{\theta, \lambda}^{\infty, 2}$,
$L_{\theta, W_{k + 40}}^{2, \infty}$,
and the lateral Strichartz spaces.  The local smoothing and
maximal function space components play 
an essential role in recovering the derivative loss that is due to the magnetic
nonlinearity.

The spaces $N_k(T)$ hold frequency projections of the nonlinearities in (\ref{NLS}).  
Here the main
spaces are the inhomogeneous local smoothing spaces $L_{\theta, W_{k - 40}}^{1,2}$
and the Strichartz spaces, both chosen to match those of $G_k(T)$.

The spaces $G_k(T)$ clearly embed in $F_k(T)$.  Two key properties enjoyed only by
the larger spaces $F_k(T)$ are
\begin{equation*}
\lVert f \rVert_{F_k(T)} \approx \lVert f \rVert_{F_{k+1}(T)},
\end{equation*}
for $k \in \mathbf{Z}$ and $f \in F_k(T) \cap F_{k+1}(T)$,
and
\begin{equation*}
\lVert P_k(uv) \rVert_{F_k(T)} \lesssim
\lVert u \rVert_{F_{k^\prime}(T)}
\lVert v \rVert_{L_{t,x}^\infty}
\end{equation*}
for $k, k^\prime \in \mathbf{Z}$, $\lvert k - k^\prime \rvert \leq 20$, $u \in F_{k^\prime}(T)$,
$v \in L^\infty(\RR \times [-T, T])$.
Both of these properties follow readily from the definitions.

In order to bound the nonlinearity of (\ref{NLS}) in $N_k(T)$, it is important to gain
regularity from the parabolic heat-time smoothing effect.  The desired frequency-localized
bounds do not (or at least not so readily) propagate in heat-time in the spaces $G_k(T)$, 
whereas these bounds do propagate with decay
in the larger spaces $F_k(T)$.
Note that since the $F_k(T)$ norm is translation
invariant, it holds that
\begin{equation*}
\lVert e^{s \Delta} h \rVert_{F_k(T)} \lesssim (1 + s 2^{2k})^{-20} \lVert h \rVert_{F_k(T)}
\quad \quad s \geq 0,
\end{equation*}
for $h \in F_k(T)$.
In certain bilinear estimates we do not need the full strength of the spaces
$F_k(T)$ and instead can use the bound
\begin{equation}
\lVert f \rVert_{F_k(T)} \lesssim \lVert f \rVert_{L_x^2 L_t^\infty} + \lVert f \rVert_{L_{t,x}^4},
\label{FsoftBound}
\end{equation}
which follows from
\begin{equation*}
\lVert f \rVert_{L_{\theta, W_{k + m_j}}^{2, \infty}} \leq
\lVert f \rVert_{L_\theta^{2, \infty}}
\lesssim
2^{k/2} \lVert f \rVert_{L_x^2 L_t^\infty}.
\end{equation*}

We introduce one more class of function spaces.  These can be viewed as a refinement
of the Strichartz part of $F_k(T)$.  For $k \in \mathbf{Z}$ and $\omega \in [0, 1/2]$
we define $S_k^\omega(T)$ to be the normed space of functions belonging to
$L_k^2(T)$ whose norm
\begin{equation}
\lVert f \rVert_{S_k^\omega(T)}
=
2^{\omega k} \left(
\lVert f \rVert_{L_t^\infty L_x^{2_\omega}} +
\lVert f \rVert_{L_t^4 L_x^{p_\omega}} +
2^{-k/2} \lVert f \rVert_{L_x^{p_\omega} L_t^\infty} \right)
\label{SomegaDef}
\end{equation}
is finite, 
where the exponents $2_\omega$ and $p_\omega$ are determined by
\begin{equation*}
\frac{1}{2_\omega} - \frac{1}{2}
=
\frac{1}{p_\omega} - \frac{1}{4}
=
\frac{\omega}{2}.
\end{equation*}
Note that $F_k(T) \hookrightarrow S_k^0(T)$
and that by Bernstein we have
\begin{equation*}
\lVert f \rVert_{S_k^{\omega^\prime}(T)}
\lesssim
\lVert f \rVert_{S_k^{\omega}(T)},
\quad\quad
\omega^\prime \leq \omega.
\end{equation*}

\subsection{Bilinear estimates}

\begin{lem}[Bilinear estimates on $N_k(T)$]
For $k, k_1, k_3 \in \mathbf{Z}$, $h \in L_{t,x}^2$, $f \in F_{k_1}(T)$, and $g \in G_{k_3}(T)$,
we have the following inequalities under the given restrictions on $k_1, k_3$.
\begin{align}
\lvert k_1 - k \rvert \leq 80: \quad
\lVert P_k (h f) \rVert_{N_k(T)} 
&\lesssim 
\lVert h \rVert_{L_{t,x}^2} \lVert f \rVert_{F_{k_1}(T)} 
\label{bilin1} \\
k_1 \leq k - 80: \quad
\lVert P_k (h f) \rVert_{N_k(T)} 
&\lesssim 2^{- \lvert k - k_1 \rvert /6} 
\lVert h \rVert_{L_{t,x}^2} \lVert f \rVert_{F_{k_1}(T)} 
\label{bilin2} \\
k \leq k_3 - 80: \quad
\lVert P_k (h g) \rVert_{N_k(T)} 
&\lesssim 2^{- \lvert k - k_3 \rvert /6} 
\lVert h \rVert_{L_{t,x}^2} \lVert g \rVert_{G_{k_3}(T)} 
\label{bilin3}.
\end{align}
\label{L:NkBilinear}
\end{lem}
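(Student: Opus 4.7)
The plan is, for each of the three estimates, to place $P_k(hf)$ (resp.\ $P_k(hg)$) in an appropriately chosen component of $N_k(T)$, apply Hölder in the attending mixed-norm structure, and invoke the embedding of $F_{k_1}(T)$ (resp.\ $G_{k_3}(T)$) into the relevant lateral Strichartz space.

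For (\ref{bilin1}) I use the $L^{4/3}_{t,x}$ piece of $N_k(T)$: Hölder gives $\|P_k(hf)\|_{L^{4/3}_{t,x}} \lesssim \|h\|_{L^2_{t,x}} \|f\|_{L^4_{t,x}}$, and the chain $L^4_{t,x} \hookrightarrow F^0_{k_1}(T) \hookrightarrow F_{k_1}(T)$ (the latter obtained by taking the trivial decomposition $J=1$, $m_1 = 0$ in the infimum defining $F_{k_1}$) closes the argument. For (\ref{bilin2}), with $f$ at a much lower frequency, I use the $L^{6/5,3/2}_{\hat{\theta}_1}$ piece of $N_k(T)$, which carries the coefficient $2^{-k/6}$. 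A lateral Hölder yields $\|P_k(hf)\|_{L^{6/5,3/2}_{\hat{\theta}_1}} \lesssim \|h\|_{L^2_{t,x}} \|f\|_{L^{3,6}_{\hat{\theta}_1}}$. The $L^{3,6}_\theta$ piece of $F^0_{k_1}(T)$ carries coefficient $2^{-k_1/6}$; feeding this into the infimum defining $F_{k_1}(T)$ and using the elementary bound $2^{m_j/6} \leq 2^{m_j}$ for $m_j \in \mathbf{Z}_+$ gives $\|f\|_{L^{3,6}_\theta} \lesssim 2^{k_1/6} \|f\|_{F_{k_1}(T)}$. Combining the two coefficients produces the gain $2^{-(k-k_1)/6}$.

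The main obstacle is (\ref{bilin3}): the naive analogue of (\ref{bilin2}), placing $g$ in $L^{3,6}_\theta$, produces the wrong sign, since the embedding runs as $\|g\|_{L^{3,6}_\theta} \lesssim 2^{k_3/6} \|g\|_{G_{k_3}(T)}$. The saving must instead come from the $L^{6,3}_\theta$ piece of $G_{k_3}(T)$, which, however, controls only the angularly localized pieces $P_{j,\theta} g$ whose direction $\theta$ matches the lateral space. I therefore perform a smooth angular decomposition $g = g_1 + g_2$ on the frequency annulus $|\xi| \sim 2^{k_3}$, with $g_i$ Fourier-supported where $|\xi_i| \sim |\xi|$; then $g_i$ equals a sum of $O(1)$ terms of the form $P_{j, \hat{\theta}_i} g_i$ with $|j-k_3| = O(1)$. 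The angular cutoffs are zeroth-order Fourier multipliers bounded on each norm defining $G_{k_3}(T)$, so $\|g_i\|_{G_{k_3}(T)} \lesssim \|g\|_{G_{k_3}(T)}$. For each $i \in \{1, 2\}$ I place $P_k(hg_i)$ in the $L^{3/2, 6/5}_{\hat{\theta}_i}$ piece of $N_k(T)$ (coefficient $2^{k/6}$) and apply Hölder: $\|P_k(hg_i)\|_{L^{3/2, 6/5}_{\hat{\theta}_i}} \lesssim \|h\|_{L^2_{t,x}} \|g_i\|_{L^{6,3}_{\hat{\theta}_i}}$. The direction-matched $L^{6,3}$ bound from $G_{k_3}(T)$ then gives $\|g_i\|_{L^{6,3}_{\hat{\theta}_i}} \lesssim 2^{-k_3/6} \|g\|_{G_{k_3}(T)}$, and the combined coefficient $2^{k/6} \cdot 2^{-k_3/6} = 2^{-(k_3-k)/6}$ supplies exactly the required gain; summing the two angular pieces closes the estimate.

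The only auxiliary facts needed are the boundedness of $P_k$ on the lateral spaces $L^{p,q}_\theta$ with $1 < p, q < \infty$ (standard, via Young's inequality for mixed-norm convolutions) and the boundedness of the smooth angular cutoffs on each constituent of the $G_{k_3}(T)$ norm (verified norm by norm), both routine.
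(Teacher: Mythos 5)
Your proof is correct and essentially mirrors the paper's: Hölder in $L^{4/3}_{t,x}$ for \eqref{bilin1}, and the matched lateral Strichartz dualities $L^{6/5,3/2}_\theta \cdot L^{3,6}_\theta$ for \eqref{bilin2} and $L^{3/2,6/5}_\theta \cdot L^{6,3}_\theta$ for \eqref{bilin3}, with the angular decomposition of $g$ in the last case. One small streamlining: you avoid the angular decomposition of $f$ in \eqref{bilin2} by observing that $F^0_{k_1}$ already carries $\sup_{\theta}\lVert\cdot\rVert_{L^{3,6}_\theta}$, which is a valid shortcut (the paper decomposes $f$ there as well, mostly to parallel the $g$ case). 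One notational slip worth fixing: the chain ``$L^4_{t,x}\hookrightarrow F^0_{k_1}(T)\hookrightarrow F_{k_1}(T)$'' runs the wrong way, and the trivial decomposition $J=1,\,m_1=0$ only yields $\lVert f\rVert_{F_{k_1}}\le\lVert f\rVert_{F^0_{k_1}}$; what is actually needed is $\lVert f\rVert_{L^4_{t,x}}\lesssim\lVert f\rVert_{F_{k_1}(T)}$, which requires summing over an arbitrary decomposition and exploiting $m_j\ge0$---precisely the mechanism you invoke correctly a few lines later for the $L^{3,6}_\theta$ embedding in \eqref{bilin2}.
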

\begin{proof}
Estimate (\ref{bilin1}) follows from H\"older's inequality
and the definition of $F_k(T), N_k(T)$:
\begin{equation*}
\lVert F f \rVert_{L^{4/3}}
\leq
\lVert F \rVert_{L^2} \lVert f \rVert_{L^4}.
\end{equation*}
For  (\ref{bilin2}) and
(\ref{bilin3}), we use an angular partition of unity in frequency to
write
\begin{equation*}
f = f_1 + f_2, \quad \quad
\lVert f_1 \rVert_{L^{3,6}_{\hat{\theta}_1}}
+
\lVert g_1 \rVert_{L^{3,6}_{\hat{\theta}_2}}
\lesssim
2^{k_1 / 6} \lVert f \rVert_{F_k(T)}
\end{equation*}
and
\begin{equation*}
g = g_1 + g_2, \quad \quad
\lVert g_1 \rVert_{L^{6,3}_{\hat{\theta}_1}}
+
\lVert g_1 \rVert_{L^{6,3}_{\hat{\theta}_2}}
\lesssim
2^{-k_1 / 6} \lVert g \rVert_{G_k(T)}.
\end{equation*}
Then
\begin{align*}
\lVert P_k(F f) \rVert_{N_k(T)}
&\lesssim 2^{-k/6} 
\left( \lVert F f_1 \rVert_{L^{6/5,3/2}_{\hat{\theta}_1}} +
\lVert F f_2 \rVert_{L^{6/5, 3/2}_{\hat{\theta}_2}} \right) \\
&\lesssim 2^{-k/6}
\lVert F \rVert_{L^2}
\left(
\lVert f_1 \rVert_{L^{3,6}_{\hat{\theta}_1}}
+
\lVert f_1 \rVert_{L^{3,6}_{\hat{\theta}_2}}
\right) \\
&\lesssim
2^{(k_1 - k)/6} \lVert F \rVert_{L^2} \lVert f \rVert_{F_{k_1}(T)}.
\end{align*}
and
\begin{align*}
\lVert P_k(F g) \rVert_{N_k(T)}
&\lesssim 2^{k/6} 
\left( \lVert F g_1 \rVert_{L^{3/2,6/5}_{\hat{\theta}_1}} +
\lVert F g_2 \rVert_{L^{3/2, 6/5}_{\hat{\theta}_2}} \right) \\
&\lesssim 2^{k/6}
\lVert F \rVert_{L^2}
\left(
\lVert g_1 \rVert_{L^{6,3}_{\hat{\theta}_1}}
+
\lVert g_1 \rVert_{L^{6,3}_{\hat{\theta}_2}}
\right) \\
&\lesssim
2^{(k - k_1)/6} \lVert F \rVert_{L^2} \lVert g \rVert_{G_{k_3}(T)}.
\end{align*}
\end{proof}

\begin{lem}[Bilinear estimates on $L_{t,x}^2$]
For $k_1, k_2, k_3 \in \mathbf{Z}$, $f_1 \in F_{k_1}(T)$, $f_2 \in F_{k_2}(T)$, and $g \in G_{k_3}(T)$,
we have
\begin{align}
\lVert f_1 \cdot f_2 \rVert_{L_{t,x}^2} &\lesssim \lVert f_1 \rVert_{F_{k_1}(T)} \lVert f_2 \rVert_{F_{k_2}(T)}
\label{bilin4} \\
k_1 \leq k_3: \quad
\lVert f \cdot g \rVert_{L_{t,x}^2} &\lesssim
2^{- \lvert k_1 - k_3 \rvert / 6} \lVert f \rVert_{F_{k_1}(T)} \lVert g \rVert_{G_{k_3}(T)}. 
\label{bilin5}
\end{align}
\label{L:L2Bilinear}
\end{lem}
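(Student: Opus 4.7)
The plan is to deduce both estimates from H\"older's inequality in carefully chosen spacetime norms, leveraging the Strichartz-type structure of $F_k(T)$ and, for the second estimate, the lateral Strichartz content of $G_k(T)$.

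For (\ref{bilin4}), I would simply apply $\lVert f_1 f_2 \rVert_{L^2_{t,x}} \leq \lVert f_1 \rVert_{L^4_{t,x}} \lVert f_2 \rVert_{L^4_{t,x}}$. Since $F^0_k(T)$ dominates $L^4_{t,x}$ by definition, an atomic decomposition $f_i = \sum_j f_{i,m_j}$ realizing $\lVert f_i \rVert_{F_{k_i}(T)}$ up to a factor of 2 yields
\[
\lVert f_i \rVert_{L^4_{t,x}} \leq \sum_j \lVert f_{i,m_j} \rVert_{L^4_{t,x}} \leq \sum_j \lVert f_{i,m_j} \rVert_{F^0_{k_i + m_j}} \leq \sum_j 2^{m_j}\lVert f_{i,m_j} \rVert_{F^0_{k_i+ m_j}} \lesssim \lVert f_i \rVert_{F_{k_i}(T)},
\]
since $m_j \in \mathbf{Z}_+$. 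The bound follows immediately.

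For (\ref{bilin5}), the required gain $2^{-(k_3 - k_1)/6}$ is built into the mismatched lateral Strichartz weights: the seminorm $\lVert \cdot \rVert_{L^{3,6}_\theta}$ enters $F^0_k$ with weight $2^{-k/6}$, while the dual seminorm $\lVert P_{j,\theta}(\cdot) \rVert_{L^{6,3}_\theta}$ enters $G_k$ with weight $2^{+k/6}$, and H\"older gives $L^{3,6}_\theta \cdot L^{6,3}_\theta \hookrightarrow L^2_{t,x}$ for fixed $\theta$. To exploit this I would mimic the angular decomposition used in the proof of Lemma \ref{L:NkBilinear}: since $g$ is Fourier-supported in $|\xi| \sim 2^{k_3}$ and every such $\xi$ satisfies $\max(|\xi \cdot \hat\theta_1|, |\xi \cdot \hat\theta_2|) \sim 2^{k_3}$, a frequency partition of unity yields $g = g_1 + g_2$ with $g_i$ captured by $P_{j,\hat\theta_i}$ for some $|j - k_3| \leq 20$. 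Reading off $\lVert g_i \rVert_{L^{6,3}_{\hat\theta_i}} \lesssim 2^{-k_3/6} \lVert g \rVert_{G_{k_3}(T)}$ from the definition of $G_{k_3}(T)$, I combine with the analogous atomic unpacking for $F_{k_1}(T)$, which gives $\lVert f \rVert_{L^{3,6}_{\hat\theta_i}} \lesssim 2^{k_1/6} \lVert f \rVert_{F_{k_1}(T)}$, to obtain
\[
\lVert f \cdot g_i \rVert_{L^2_{t,x}} \leq \lVert f \rVert_{L^{3,6}_{\hat\theta_i}} \lVert g_i \rVert_{L^{6,3}_{\hat\theta_i}} \lesssim 2^{(k_1 - k_3)/6} \lVert f \rVert_{F_{k_1}(T)} \lVert g \rVert_{G_{k_3}(T)}.
\]
Summing over $i = 1, 2$ gives the claim.

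The only moderately delicate step is the atomic bookkeeping used to pass from $F^0_{k_1+m_j}$ to $F_{k_1}(T)$ for the lateral seminorm: the weight $2^{(k_1 + m_j)/6}$ appearing from $F^0_{k_1 + m_j}$ is absorbed by the $2^{m_j}$ factor in the atomic norm because $m_j \geq 0$. This is essentially the same manipulation needed for (\ref{bilin4}), and I do not expect any genuine obstacle beyond carrying it out; in essence (\ref{bilin5}) is the $L^2_{t,x}$ dual of (\ref{bilin3}), with the multiplier $h$ replaced by an $F_{k_1}(T)$ element realized through its own lateral Strichartz structure.
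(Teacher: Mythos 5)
Your proposal is correct and takes essentially the same route as the paper: both prove~(\ref{bilin4}) by $L^4 \times L^4 \hookrightarrow L^2$, and both prove~(\ref{bilin5}) by an angular frequency decomposition of $g$ followed by the lateral H\"older pairing $L^{3,6}_\theta \cdot L^{6,3}_\theta \hookrightarrow L^2$, reading off the $2^{-k_1/6}\lVert\cdot\rVert_{L^{3,6}_\theta}$ piece of $F^0_{k_1}$ and the $2^{k_3/6}\lVert P_{j,\theta}\cdot\rVert_{L^{6,3}_\theta}$ piece of $G_{k_3}$. The only cosmetic differences are that the paper frames the angular reduction via a generic cap direction $\theta_0$ rather than the two coordinate axes, and the paper leaves implicit the atomic bookkeeping passing from $F^0_{k_1}$ to $F_{k_1}$ that you spell out.
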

\begin{proof}
It suffices to show
\begin{equation}
\lVert f g \rVert_{L^2} \lesssim \lVert f \rVert_{F^0_{k_1}(T)}
\lVert g \rVert_{G_{k_2}(T)},
\quad \quad k_1 \geq k_2 - 100
\label{bilin4 suff}
\end{equation}
and
\begin{equation}
\lVert f g \rVert_{L^2}
\lesssim
2^{(k_1 - k_2)/6} \lVert f \rVert_{F^0_{k_1}(T)} \lVert g \rVert_{G_{k_2}(T)},
\quad \quad k_1 < k_2 - 100.
\label{bilin5 suff}
\end{equation}
Estimate (\ref{bilin4 suff}) follows from estimating each factor in $L^4$.
For (\ref{bilin5 suff}), we first observe that, using a smooth partition of unity
in frequency space, we may assume that $\hat{g}$ is supported in
the set
\begin{equation*}
\left\{ \xi : \lvert \xi \rvert \in [2^{k_2 - 1}, 2^{k_2 + 1}] \; \text{and} \;
\xi \cdot \theta_0 \geq 2^{k_2 - 5} \right\}
\end{equation*}
for some direction $\theta_0 \in \mathbf{S}^1$.
Then
\begin{equation*}
\lVert f g \rVert_{L^2}
\lesssim
\lVert f \rVert_{L^{3,6}_{\theta_0}}
\lVert g \rVert_{L^{6,3}_{\theta_0}}
\lesssim 2^{(k_1 - k_2)/6} \lVert f \rVert_{F^0_{k_1}(T)}
\lVert g \rVert_{G_{k_2}(T)}
\end{equation*}
\end{proof}

We also have the following stronger estimates, which rely upon the local smoothing and
maximal function spaces.
\begin{lem}[Bilinear estimates using local smoothing/maximal function bounds]
For $k, k_1, k_2 \in \mathbf{Z}$, $h \in L^2_{t,x}$, $f \in F_{k_1}(T)$,
$g \in G_{k_2}(T)$, we have the following inequalities under
the given restrictions on $k_1, k_2$.
\begin{align}
k_1 \leq k - 80: \quad
&\lVert P_k (h f) \rVert_{N_k(T)} 
\lesssim 2^{- \lvert k - k_1 \rvert / 2} 
\lVert h \rVert_{L_{t,x}^2} \lVert f \rVert_{F_{k_1}(T)} 
\label{bilin2 improv}
\\
k_1 \leq k_2: \quad
&\lVert f \cdot g \rVert_{L_{t,x}^2} 
\lesssim
2^{- \lvert k_1 - k_2 \rvert / 2} \lVert f \rVert_{F_{k_1}(T)} 
\lVert g \rVert_{G_{k_2}(T)}.
\label{bilin5 improv}
\end{align}
\end{lem}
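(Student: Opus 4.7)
The plan is to prove both improved estimates by exploiting the local smoothing and maximal function components of the function spaces $F_k, G_k, N_k$, which were not brought to bear in the proofs of (\ref{bilin2}) and (\ref{bilin5}). The key pointwise inequalities are the H\"older-type bounds
\begin{align*}
\lVert uv \rVert_{L^{1,2}_{\theta,\lambda}} &\leq \lVert u \rVert_{L^2_{t,x}} \lVert v \rVert_{L^{2,\infty}_{\theta,\lambda}}, \\
\lVert uv \rVert_{L^2_{t,x}} &\leq \lVert u \rVert_{L^{2,\infty}_{\theta,\lambda}} \lVert v \rVert_{L^{\infty,2}_{\theta,\lambda}},
\end{align*}
valid after a common Galilean boost in the lateral direction $\theta$. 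These bounds yield the improvement from $2^{-|k-k_1|/6}$ to $2^{-|k-k_1|/2}$ because the maximal function norm at frequency $k_1$ costs $2^{k_1/2}$ while the inhomogeneous local smoothing norm at frequency $k$ saves $2^{-k/2}$, and the two factors combine to produce precisely the claimed decay.

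For (\ref{bilin2 improv}) I would use the $L^{1,2}_{\theta, W_{k-40}}$ component of $N_k(T)$. After atomically decomposing $f = \sum_j f_{m_j}$ with $\sum_j 2^{m_j} \lVert f_{m_j} \rVert_{F^0_{k_1+m_j}} \lesssim \lVert f \rVert_{F_{k_1}(T)}$, for each atom with effective frequency $k_1+m_j \leq k-80$ I would further split $f_{m_j} = \sum_\lambda f_{m_j, \lambda}$ according to the maximal function space $L^{2,\infty}_{\theta, W_{k_1+m_j+40}}$, so that $\sum_\lambda \lVert f_{m_j,\lambda} \rVert_{L^{2,\infty}_{\theta,\lambda}} \lesssim 2^{(k_1+m_j)/2} \lVert f_{m_j} \rVert_{F^0_{k_1+m_j}}$. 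The crucial structural observation is that $W_{k_1+m_j+40} \subset W_{k-40}$, both in magnitude and in the dyadic lattice condition, precisely because $k_1+m_j+40 \leq k-40$; hence each boost parameter $\lambda$ is admissible for $N_k$. Applying the H\"older bound to each piece $h f_{m_j,\lambda}$ and combining the prefactor $2^{-k/2}$ with $2^{(k_1+m_j)/2}$ gives decay $2^{-(k-k_1-m_j)/2}$ per atom. For atoms with $k_1+m_j > k-80$, a balanced-frequency regime, one falls back on (\ref{bilin1}); the missing decay is absorbed by the weight $2^{m_j}$ in the atomic sum, and summing over $j$ reconstructs the full bound.

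For (\ref{bilin5 improv}), the case $|k_2-k_1| \leq 100$ is essentially already (\ref{bilin5}) (the decay factors differ only by $2^{O(1)}$), so assume $k_1 \leq k_2-100$. I would decompose $g$ via a nearly orthogonal angular partition in Fourier space, $g = \sum_n P_{j_n, \theta_n} g$ with $|j_n-k_2|\leq 20$, and for each term pair the $L^{\infty,2}_{\theta_n,\lambda}$ norm of $P_{j_n,\theta_n} g$, controlled by $2^{-k_2/2} \lVert g \rVert_{G_{k_2}(T)}$ provided $|\lambda|\leq 2^{k_2-40}$, against an $L^{2,\infty}_{\theta_n,\lambda}$ norm of a maximal-function atom $f_\lambda$ extracted from an $F^0_{k_1}$ decomposition of $f$. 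The admissibility requirement $|\lambda|\leq 2^{k_1+40} \leq 2^{k_2-40}$ is exactly the gap hypothesis $k_2 \geq k_1+80$. Summing the pieces over the (finite) angular directions yields the claimed $2^{(k_1-k_2)/2}$ gain.

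The main obstacle I expect is the bookkeeping of the atomic and angular/Galilean-boost decompositions, in particular verifying that the $W_k$-lattice constraints propagate correctly across regimes and that the summation over the angular sectors $\theta_n$ contributes only a harmless $O(1)$ loss rather than a logarithmic one. A secondary technical point is the border regime of the $F_{k_1}$ atomic decomposition where the atom's effective frequency exceeds $k-80$; there one relies on the $2^{m_j}$ weight to absorb a crude (\ref{bilin1})-type estimate and still recover the advertised $2^{-(k-k_1)/2}$ decay.
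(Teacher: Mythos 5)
Your approach is essentially identical to the paper's: for (\ref{bilin2 improv}) the paper likewise bounds $\lVert P_k(hf)\rVert_{N_k(T)}$ via the $2^{-k/2}\sup_\theta \lVert\,\cdot\,\rVert_{L^{1,2}_{\theta,W_{k-40}}}$ component of $N_k$, applies H\"older to pull out $\lVert h\rVert_{L^2_{t,x}}$, and controls $f$ through the $L^{2,\infty}_{\theta,W_{k_1+40}}$ component of $F^0_{k_1}$, paying $2^{k_1/2}$; for (\ref{bilin5 improv}) the paper simply observes that the argument parallels (\ref{bilin5}) with the lateral $L^{6,3}/L^{3,6}$ pair replaced by $L^{\infty,2}/L^{2,\infty}$, citing BIKT. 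Where you go further is in explicitly tracking the $F_{k_1}$ atomic decomposition: the paper's displayed proof writes $\lVert f\rVert_{L^{2,\infty}_{\theta,W_{k_1+40}}}$ as if $f$ lived at a single frequency scale, while you correctly note that the atoms $f_{m_j}$ are measured with the lattice $W_{k_1+m_j+40}$, verify the nesting $W_{k_1+m_j+40}\subset W_{k-40}$ under the gap hypothesis, and flag the border regime $k_1+m_j>k-80$ where one must instead invoke (\ref{bilin1}) and absorb the loss into the atomic weight $2^{m_j}$. That last point deserves one more line of care: (\ref{bilin1}) requires $|k_1+m_j-k|\leq 80$, so for atoms pushed above $k+80$ you should note that $P_k(hf_{m_j})$ forces the corresponding high-frequency piece of $h$, and the estimate still closes because the weight $2^{m_j}$ already dominates $2^{(k-k_1)/2}$ there. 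With that remark your argument is complete and, if anything, more scrupulous than the paper's.
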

\begin{proof}
Estimate (\ref{bilin2 improv}) follows from the definitions since
\begin{equation*}
\lVert P_k(h f) \rVert_{N_k(T)}
\lesssim
2^{-k/2}
\sup_{\theta \in \mathbf{S}^1}
\lVert h f \rVert_{L^{1,2}_{\theta, W_{k-40}}}
\lesssim
2^{-k/2}
\sup_{\theta \in \mathbf{S}^1}
\lVert f \rVert_{L^{2,\infty}_{\theta, W_{k_1 + 40}}}
\lVert h \rVert_{L^2_{t,x}}.
\end{equation*}
The proof of (\ref{bilin5 improv}) parallels that of (\ref{bilin5})
and is omitted (see \cite[Lemma 6.5]{BeIoKeTa11} for details.)
\end{proof}

\subsection{Trilinear estimates and summation}

We combine the bilinear estimates to
establish some trilinear estimates.
As we do not control local smoothing norms along the heat flow,
we will oftentimes be able to put only one term in a $G_k$
space. Nonetheless, such estimates still exhibit good off-diagonal
decay.

Define the sets $Z_1(k), Z_2(k), Z_3(k) \subset \mathbf{Z}^3$ as follows:
\begin{equation}
\begin{split}
Z_1(k) := 
&\{ (k_1, k_2, k_3) \in \mathbf{Z}^3 :
k_1, k_2 \leq k - 40 \; \mathrm{ and } \;  \lvert k_3 - k \rvert \leq 4 \}, 
\\
Z_2(k) :=
&\{ (k_1, k_2, k_3) \in \mathbf{Z}^3 :
k, k_3 \leq k_1 - 40 \; \mathrm{ and } \;  \lvert k_2 - k_1 \rvert \leq 45\},
\\
Z_3(k) :=
&\{ (k_1, k_2, k_3) \in \mathbf{Z}^3 :
k_3 \leq k \; \mathrm{ and } \; \lvert k - \max\{k_1, k_2\} \rvert \leq 40 \\
& \; \mathbf{or} \;
k_3 > k \; \mathrm{and} \; \lvert k_3 - \max\{k_1, k_2\} \rvert \leq 40 \}.
\end{split}
\label{Z def}
\end{equation}

In our main trilinear estimate, we avoid using local smoothing / maximal function
spaces.
\begin{lem}[Main trilinear estimate]
Let $C_{k,k_1,k_2,k_3}$ denote the best constant $C$ in the estimate
\begin{equation}
\lVert P_k \left( P_{k_1} f_1 P_{k_2} f_2 P_{k_3} g \right) \rVert_{N_k(T)} \lesssim
C
\lVert P_{k_1} f_1 \rVert_{F_{k_1}(T)}
\lVert P_{k_2} f_2 \rVert_{F_{k_2}(T)}
\lVert P_{k_3} g \rVert_{G_{k_3}(T)}.
\label{trilinFFG}
\end{equation}
The best constant $C_{k,k_1,k_2,k_3}$ satisfies the bounds
\begin{displaymath}
C_{k,k_1,k_2,k_3} \lesssim
\left\{ \begin{array}{ll}
2^{ - \lvert (k_1 + k_2)/6 - k/3 \rvert} & (k_1, k_2, k_3) \in Z_1(k) \\
2^{- \lvert k - k_3 \rvert / 6} & (k_1, k_2, k_3) \in Z_2(k) \\
2^{- \lvert \Delta k \rvert / 6} & (k_1, k_2, k_3) \in Z_3(k) \\
0 & (k_1, k_2, k_3) \in \mathbf{Z}^3 \setminus \{ Z_1(k) \cup Z_2(k) \cup Z_3(k) \},
\end{array} \right.
\end{displaymath}
where $\Delta k = \max \{ k, k_1, k_2, k_3 \} - \min \{ k, k_1, k_2, k_3 \} \geq 0$.
\label{L:Trilin1}
\end{lem}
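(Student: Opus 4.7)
The plan is a systematic case analysis built on the bilinear pieces already proved. By Fourier-support considerations, $P_k$ of a product at frequencies $k_1, k_2, k_3$ vanishes unless the two largest elements of $\{k, k_1, k_2, k_3\}$ agree up to $O(1)$; enumerating which of the four indices realizes the maximum shows that only $(k_1, k_2, k_3) \in Z_1(k) \cup Z_2(k) \cup Z_3(k)$ can contribute, which handles the last line of the table. In each remaining zone the strategy is identical: pair two of the three inputs in $L^2_{t,x}$ using Lemma \ref{L:L2Bilinear}, then combine the resulting $L^2_{t,x}$ function with the third factor and project to frequency $\sim 2^k$ via Lemma \ref{L:NkBilinear}. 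Since both lemmas avoid local smoothing and maximal function norms, so will the final constant.

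For $Z_1(k)$, i.e.\ two low inputs $k_1, k_2 \leq k-40$ together with $|k_3 - k| \leq 4$, I pair $P_{k_1} f_1$ with $P_{k_3} g$ using (\ref{bilin5}), gaining $2^{-(k_3 - k_1)/6} \sim 2^{-(k - k_1)/6}$, and then multiply the resulting $L^2_{t,x}$ function by $P_{k_2} f_2$ and project via (\ref{bilin2}), gaining another $2^{-(k - k_2)/6}$. The two factors combine to $2^{(k_1 + k_2 - 2k)/6} = 2^{-|(k_1+k_2)/6 - k/3|}$, which is the desired bound. In the marginal range $k - k_j < 80$ where (\ref{bilin2}) does not literally apply, one uses (\ref{bilin1}) with constant $O(1)$, which is of the same order as the claimed decay.

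For $Z_2(k)$, with two comparable high frequencies $k_1 \sim k_2$ well above $k$ and $k_3$, I pair $P_{k_1} f_1 \cdot P_{k_2} f_2$ in $L^2_{t,x}$ via (\ref{bilin4}) (no gain needed between matching high frequencies) and then multiply by $P_{k_3} g$, applying (\ref{bilin1}), (\ref{bilin2}), or (\ref{bilin3}) according as $|k - k_3| \leq 80$, $k_3 \leq k - 80$, or $k \leq k_3 - 80$. Each sub-case yields a constant $\lesssim 2^{-|k - k_3|/6}$. The full $G_{k_3}$ control on $g$ is only strictly needed in the third sub-case; in the first two, the embedding $G_{k_3} \hookrightarrow F_{k_3}^0 \hookrightarrow F_{k_3}$ lets us invoke the $F$-based estimates.

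For $Z_3(k)$ the geometry forces one of $k_1, k_2$ to lie within $40$ of $\max(k, k_3)$; WLOG assume $k_2 \leq k_1$, so that $k_1 \sim \max(k, k_3)$. I pair $P_{k_1} f_1$ with whichever of $P_{k_2} f_2, P_{k_3} g$ has the lower frequency using (\ref{bilin4}) or (\ref{bilin5}), gaining $2^{-(k_1 - k_{\min})/6}$, and then combine the $L^2_{t,x}$ output with the remaining factor via the appropriate member of (\ref{bilin1})--(\ref{bilin3}). After unwinding the definition $\Delta k = \max\{k, k_i\} - \min\{k, k_i\}$, the total gain is exactly $2^{-\Delta k/6}$. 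The main obstacle in the whole proof is the bookkeeping across the several sub-cases of $Z_3$ (whether $k_3$ exceeds or is dominated by $k$, and which index realizes $k_{\min}$), to ensure that the exponent matches $\Delta k/6$ on the nose in every sub-case; this is routine but tedious.
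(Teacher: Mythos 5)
Your global strategy --- pair two of the three inputs in $L^2_{t,x}$ via Lemma \ref{L:L2Bilinear}, then combine the product with the third input and project to frequency $\sim 2^k$ via Lemma \ref{L:NkBilinear} --- is the paper's approach, and your $Z_1$ and $Z_2$ arguments (including the handling of marginal ranges where (\ref{bilin2}) does not literally apply) are essentially correct. The $Z_3$ argument, however, has a genuine gap.

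You propose, under your convention $k_2 \leq k_1$ so that $k_1 \sim \max\{k, k_3\}$, to pair $P_{k_1} f_1$ with the \emph{lower}-frequency of $\{P_{k_2} f_2, P_{k_3} g\}$ via (\ref{bilin4}) or (\ref{bilin5}) and thereby gain $2^{-(k_1 - k_{\min})/6}$. Neither estimate delivers this. Estimate (\ref{bilin4}) carries no decay factor, and (\ref{bilin5}) gains $2^{-\lvert k_1 - k_3 \rvert/6}$ only when the higher-frequency member of the pair is the $G$-normed factor $g$; but in your pairing the higher member is $f_1 \in F_{k_1}$, so (\ref{bilin5}) is unavailable. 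The $L^2$ step therefore produces nothing, and the only decay comes from (\ref{bilin2})--(\ref{bilin3}) applied to the leftover factor, whose frequency is, by your choice, the middle index rather than the minimum --- too weak an exponent. Concretely, with $(k, k_1, k_2, k_3) = (0, 0, -100, -200) \in Z_3(0)$ and $\Delta k = 200$, your recipe pairs $(f_1, g)$ at frequencies $(0, -200)$ with no gain, then applies (\ref{bilin2}) to $f_2$ at $-100$, producing $2^{-100/6}$ rather than the required $2^{-200/6}$.

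The correct choice of pairing is governed by two observations: (\ref{bilin5}) gains only when $g$ is the higher of the $L^2$ pair, and (\ref{bilin2})/(\ref{bilin3}) gain precisely $2^{-\lvert k - k_j \rvert/6}$ in the frequency $k_j$ of the factor \emph{held out} of the $L^2$ pair. Hence: when $k_3 \leq k$, pair the two highest inputs via (\ref{bilin4}) (no gain) and let (\ref{bilin2}) on the lowest deliver $2^{-\lvert k - k_{\min}\rvert/6} = 2^{-\Delta k/6}$; when $k_3 > k$ and $k = k_{\min}$, pair $(f_1, f_2)$ via (\ref{bilin4}) and use (\ref{bilin3}) on $g$ to get $2^{-\lvert k - k_3 \rvert/6} = 2^{-\Delta k/6}$; when $k_3 > k$ and $\min\{k_1, k_2\} < k$, pair $g$ with the lowest $F$-input via (\ref{bilin5}) --- legitimate now, since $g$ is the higher of that pair and lies in $G$ --- obtaining $2^{-\Delta k / 6}$, and close with (\ref{bilin1}).
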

\begin{proof}
After placing the term $P_k \left( P_{k_1} f_1 P_{k_2} f_2 P_{k_3} g \right)$ 
in $L_{t,x}^{4/3}$ and then using H\"older's inequality to bound each factor in $L_{t,x}^4$,
it follows from Bernstein that
\begin{equation}
C_{k, k_1, k_2, k_3} \lesssim 1,
\label{Ck prelim}
\end{equation}
and so, in particular, for any choice of integers $k, k_1, k_2, k_3$, 
such a constant $C_{k,k_1,k_2,k_3}$ exists.

Frequencies not represented in one of
$Z_1(k), Z_2(k), Z_3(k)$ cannot interact so as to yield a frequency in $I_k$.
Over $Z_1(k)$, we apply (\ref{bilin2}) and (\ref{bilin5}).

On $Z_2(k)$ we apply (\ref{bilin2}) if $k > k_3$ and (\ref{bilin3}) if $k \leq k_3$.
We conclude with (\ref{bilin4}).

On $Z_3(k)$ we may assume without loss of generality that $k_1 \leq k_2$.
First suppose that $k_3 \leq k$ and $\lvert k - k_2 \rvert \leq 40$.
If $k_1 \leq k_3$, then use (\ref{bilin2}), applying (\ref{bilin4})
to $P_{k_2}f_2 P_{k_3}g$. If $k_3 < k_1$, then use (\ref{bilin4}) on
$P_{k_1} f_1 P_{k_2} f_2$ instead.

Now suppose that $k_3 > k$ and $\lvert k_3 - k_2 \rvert \leq 40$.
If $k_1 \leq k$, then use (\ref{bilin1}), applying (\ref{bilin5})
to $P_{k_1}f_1 P_{k_3} g$. If $k_{\min} = k$, then use (\ref{bilin3})
and (\ref{bilin4}).
\end{proof}

\begin{cor}
Let $\{ a_k \}$, $\{ b_k \}$, $\{ c_k \}$ be $\delta$-frequency envelopes. 
Let $C_{k, k_1, k_2, k_3}$ be as in Lemma \ref{L:Trilin1}.
Then
\begin{equation*}
\sum_{(k_1, k_2, k_3) \in \mathbf{Z}^3 \setminus Z_2(k)} 
C_{k, k_1, k_2, k_3} a_{k_1} b_{k_2} c_{k_3}
\lesssim
a_k b_k c_k.
\end{equation*}
\label{C:MTE1}
\end{cor}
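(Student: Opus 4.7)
The constant $C_{k,k_1,k_2,k_3}$ vanishes off $Z_1(k) \cup Z_2(k) \cup Z_3(k)$, so the plan is to split the sum into the two remaining pieces over $Z_1(k)$ and $Z_3(k)$ and play the off-diagonal decay of $C_{k,k_1,k_2,k_3}$ against the slow variation of the envelopes $\{a_k\}, \{b_k\}, \{c_k\}$.

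For the $Z_1(k)$ contribution, I first observe that on this region $(k_1 + k_2)/6 - k/3 = -((k - k_1) + (k - k_2))/6$ since $k_1, k_2 \leq k - 40$, so the decay factor splits as the product $2^{-(k - k_1)/6} \cdot 2^{-(k - k_2)/6}$. The constraint $|k_3 - k| \leq 4$ restricts $k_3$ to $O(1)$ values and slow variation gives $c_{k_3} \leq c_k 2^{4\delta} \lesssim c_k$. The remaining double sum factors, and summation rule (\ref{Sum 1}) with $p = 1/6 > \delta$ applied separately in each of $k_1, k_2$ yields the bound $a_k b_k c_k$ on this piece.

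For the $Z_3(k)$ contribution, the key observation is that, because $k$ is itself one of the four frequencies in the definition of $\Delta k$, we have $\Delta k \geq |k - k_i|$ for each $i \in \{1, 2, 3\}$. Averaging gives $\Delta k \geq \tfrac{1}{3}(|k - k_1| + |k - k_2| + |k - k_3|)$, hence
\begin{equation*}
2^{-\Delta k / 6} \leq 2^{-|k - k_1|/18} \cdot 2^{-|k - k_2|/18} \cdot 2^{-|k - k_3|/18}.
\end{equation*}
Since $1/18 > \delta = 1/40$, each one-dimensional sum $\sum_{k_i \in \mathbf{Z}} 2^{-|k - k_i|/18} a_{k_i}$ closes at $\lesssim a_k$ (and similarly with $b, c$) by applying summation rules (\ref{Sum 1}) and (\ref{Sum 2}) to the halves $\{k_i \leq k\}$ and $\{k_i > k\}$ separately. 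Multiplying the three one-dimensional estimates controls the $Z_3(k)$ contribution by $a_k b_k c_k$, completing the proof.

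The task is essentially pure bookkeeping once one notices the tripartite splitting of $2^{-\Delta k/6}$ on $Z_3(k)$, which is the only mildly nontrivial step; it has the bonus of allowing both geometric subcases in the definition of $Z_3(k)$ to be handled uniformly, since the inequality $\Delta k \geq |k - k_i|$ holds in both without any further case analysis.
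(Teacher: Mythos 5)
Your proof is correct, and the $Z_3(k)$ piece is handled by a genuinely different and noticeably cleaner route than the paper's. On $Z_1(k)$ you do essentially what the paper does: expand $|(k_1+k_2)/6-k/3|$ as $((k-k_1)+(k-k_2))/6$, absorb the $c_{k_3}$ sum into $c_k$ since $k_3$ ranges over $O(1)$ values, and close each of the two remaining geometric sums via (\ref{Sum 1}) against the $2^{-1/6}$ decay. The paper does the same bookkeeping slightly differently (it pulls out $2^{\delta|2k-k_1-k_2|}$ in one step rather than factoring the double sum), but the content is identical. On $Z_3(k)$ the paper's proof assumes WLOG $k_2 \le k_1$, splits the set according to whether $k_3 \le k$ or $k_3 > k$, and then further splits each of those according to which of the remaining low frequencies is smallest --- several sub-cases of iterated envelope trades. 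Your observation that $k$ itself participates in the definition of $\Delta k$, so that $|k-k_i| \le \Delta k$ for each $i$ and hence $2^{-\Delta k/6} \le \prod_i 2^{-|k-k_i|/18}$, decouples the triple sum into a product of three one-dimensional sums and eliminates all the case analysis in one stroke. The price is a weaker per-variable decay exponent $1/18$ in place of $1/6$, but since $1/18 > \delta = 1/40$ the summation rules (\ref{Sum 1}), (\ref{Sum 2}) still apply with room to spare, so nothing is lost. This is a nice simplification; the only thing it gives up is the sharper constant visible in the paper's method, which is irrelevant here.
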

\begin{proof}
By Lemma \ref{L:Trilin1}, it suffices to restrict the sum to $(k_1, k_2, k_3)$ lying in
$Z_1(k) \cup Z_3(k)$.
On $Z_1(k)$, the sum is bounded by
\begin{align*}
&\sum_{(k_1, k_2, k_3) \in Z_1(k)} 2^{- \lvert (k_1 + k_2)/6 - k/3 \rvert} a_{k_1} b_{k_2} c_{k_3} \\
&\lesssim
\sum_{k_1, k_2 \leq k - 40} 2^{- \lvert (k_1 + k_2)/6 - k/3 \rvert} 2^{\delta \lvert 2k - k_1 - k_2 \rvert} a_k
b_k c_k \\
&\lesssim
a_k b_k c_k.
\end{align*}

On $Z_3$, we may assume without loss of generality that $k_2 \leq k_1$.
The sum is then controlled by
\begin{align*}
&\sum_{(k_1, k_2, k_3) \in Z_3(k)} 2^{- \lvert \Delta k \rvert / 6} a_{k_1} b_{k_2} c_{k_3} 
\\
&\lesssim
\sum_{ \substack{ k_2 \leq k \\ k_3 \leq k \\ \lvert k_1 - k \rvert \leq 40}}  
2^{- \lvert k - \min\{k_2, k_3\} \rvert / 6} 
a_{k_1} b_{k_2} c_{k_3}
+
\sum_{ \substack{ k_2 \leq k_1 \\ k_1 >  k \\ \lvert k_3 - k_1 \rvert \leq 40 }} 
2^{- \lvert k_1 - \min\{k_2, k\} \rvert / 6} 
a_{k_1} b_{k_2} c_{k_3}
\\
&\lesssim
\sum_{ \substack{ k_2 \leq k \\ k_3 \leq k}} 2^{- \lvert k - \min\{k_2, k_3\} \rvert / 6} 
a_k b_{k_2} c_{k_3}+
\sum_{ \substack{ k_2 \leq k_1 \\ k_1 >  k}} 2^{- \lvert k_1 - \min\{k_2, k\} \rvert / 6} 
a_{k_1} b_{k_2} c_{k_1}.
\end{align*}
The first of these summands is controlled by
\begin{align*}
&\sum_{k_3 \leq k_2 \leq k} 2^{-\lvert k - k_3 \rvert / 6} a_k b_{k_2} c_{k_3}
+
\sum_{k_2 < k_3 \leq k} 2^{-\lvert k - k_2 \rvert / 6} a_k b_{k_2} c_{k_3}
\\
&\lesssim
\sum_{k_3 \leq k_2 \leq k} 2^{-\lvert k - k_3 \rvert / 6} 2^{\delta \lvert k - k_2 \rvert} a_k b_k
c_{k_3}
+
\sum_{k_2 < k_3 \leq k} 2^{- \lvert k - k_2 \rvert / 6} 2^{\delta \lvert k - k_3 \rvert} a_k b_{k_2} c_{k} \\
&\lesssim
\sum_{k_3 \leq k} 2^{(\delta - 1/6)\lvert k - k_3 \rvert} a_k b_k c_{k_3}
+
\sum_{k_2 < k} 2^{(\delta - 1/6)\lvert k - k_2 \rvert} a_k b_{k_2} c_{k}
\\
&\lesssim
\sum_{k_3 \leq k} 2^{(2\delta - 1/6)\lvert k - k_3 \rvert} a_k b_k c_{k}
+
\sum_{k_2 < k} 2^{(2\delta - 1/6)\lvert k - k_2 \rvert} a_k b_k c_{k}
\\
&\lesssim a_k b_k c_k.
\end{align*}
The second is controlled by
\begin{align*}
&\sum_{k \leq k_2 \leq k_1} 2^{- \lvert k_1 - k \rvert / 6} a_{k_1} b_{k_2} c_{k_1}
+
\sum_{k_2 < k \leq k_1} 2^{- \lvert k_1 - k_2 \rvert / 6} a_{k_1} b_{k_2} c_{k_1}
\\
&\lesssim
\sum_{k \leq k_2 \leq k_1} 2^{-\lvert k_1 - k \rvert / 6} 2^{\delta \lvert k_2 - k \rvert} 
a_{k_1} b_k c_{k_1}
+
\sum_{k_2 < k \leq k_1} 2^{\lvert k_1 - k_2 \rvert / 6} 2^{\delta \lvert k_2 - k \vert}
a_{k_1} b_k c_{k_1}
\\
&\lesssim
\sum_{k \leq k_1} 2^{(\delta - 1/6)\lvert k_1 - k \rvert} a_{k_1} b_k c_{k_1} +
\sum_{k_2 < k \leq k_1} 2^{(\delta - 1/6) \lvert k_1 - k_2 \rvert} a_{k_1} b_k c_{k_1}
\\
&\lesssim
\sum_{k \leq k_1} 2^{(3\delta - 1/6)\lvert k_1 - k \rvert} a_k b_k c_{k}+
\sum_{k_2 < k \leq k_1} 2^{(3 \delta - 1/6) \lvert k_1 - k_2 \rvert} a_k b_k c_{k}
\\
&\lesssim
a_k b_k c_k.
\end{align*}

\end{proof}

\begin{cor}
Let $\{a_k \}, \{ b_k \}$ be $\delta$-frequency envelopes. 
Let $C_{k, k_1, k_2, k_3}$ be as in Lemma \ref{L:Trilin1}.
Then
\begin{equation*}
\sum_{(k_1, k_2, k_3) \in Z_2(k) \cup Z_3(k)}
2^{\max\{k, k_3\} - \max\{k_1, k_2\}}
C_{k, k_1, k_2, k_3} a_{k_1} b_{k_2} c_{k_3}
\lesssim a_k b_k c_k
\end{equation*}
\label{C:MTE2}
\end{cor}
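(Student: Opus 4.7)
The plan is to split the sum into contributions from $Z_2(k)$ and $Z_3(k)$, observe that on $Z_3(k)$ the new weight $2^{\max\{k,k_3\}-\max\{k_1,k_2\}}$ is harmless, and then handle the $Z_2(k)$ piece by hand using the explicit decay estimate $C_{k,k_1,k_2,k_3} \lesssim 2^{-|k-k_3|/6}$ together with the geometry of $Z_2$. (I am reading the statement with a third frequency envelope $\{c_k\}$, which is clearly intended since $c_{k_3}$ appears in the sum.)

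On $Z_3(k)$, recall that either $k_3 \leq k$ and $|k - \max\{k_1,k_2\}| \leq 40$, or $k_3 > k$ and $|k_3 - \max\{k_1,k_2\}| \leq 40$. In the first case $\max\{k,k_3\} = k \sim \max\{k_1,k_2\}$, and in the second case $\max\{k,k_3\} = k_3 \sim \max\{k_1,k_2\}$. In either case the weight $2^{\max\{k,k_3\} - \max\{k_1,k_2\}}$ is $O(1)$, and so the $Z_3$ contribution is dominated by the $Z_3$ part of the sum treated in Corollary \ref{C:MTE1}, which is already known to be $\lesssim a_k b_k c_k$.

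For $Z_2(k)$, we have $k, k_3 \leq k_1 - 40$ and $|k_1 - k_2| \leq 45$, so the weight is $2^{\max\{k,k_3\} - k_1} \leq 2^{-40}$, providing exponential decay in $k_1 - \max\{k,k_3\}$. Using the slowly-varying property with $|k_1 - k_2| \leq 45$ to replace $b_{k_2}$ by $b_{k_1}$ (up to an absolute constant), I split into two subcases. When $k_3 \leq k$, applying $a_{k_1}, b_{k_1} \leq 2^{\delta(k_1-k)} a_k, b_k$ and $c_{k_3} \leq 2^{\delta(k-k_3)} c_k$ reduces the sum to
\begin{equation*}
a_k b_k c_k \sum_{k_1 \geq k+40} 2^{-(1-2\delta)(k_1-k)} \sum_{k_3 \leq k} 2^{-(1/6-\delta)(k-k_3)},
\end{equation*}
which is $\lesssim a_k b_k c_k$ since $\delta = 1/40$ keeps both exponents negative. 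When $k < k_3 \leq k_1 - 40$, the weight is $2^{k_3 - k_1}$ and $C \lesssim 2^{-(k_3-k)/6}$; after envelope substitution the double sum in $m = k_1 - k_3 \geq 40$ and $n = k_3 - k \geq 1$ converges with exponents $-(1-2\delta)m$ and $-(1/6 - 3\delta)n$, again yielding $\lesssim a_k b_k c_k$.

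No step here is substantially harder than the corresponding block in the proof of Corollary \ref{C:MTE1}; the only new ingredient is bookkeeping to verify that the weight $2^{\max\{k,k_3\}-\max\{k_1,k_2\}}$ is either trivial (on $Z_3$) or strictly reinforces the $Z_2$ decay. The mildest obstacle is confirming that the exponents one obtains after substituting the envelope inequality remain strictly negative for $\delta = 1/40$, which is immediate in both subcases above.
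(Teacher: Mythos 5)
Your proposal is correct and follows essentially the same route as the paper's own proof: on $Z_3(k)$ you observe that $\max\{k,k_3\} \sim \max\{k_1,k_2\}$ so the extra weight is $O(1)$ and Corollary \ref{C:MTE1} applies, and on $Z_2(k)$ you substitute $b_{k_2} \sim b_{k_1}$, split into $k_3 \leq k$ and $k < k_3 \leq k_1-40$, and sum the resulting geometric series after applying the slowly-varying property — which is exactly the paper's change of variables $j = k_1 - k_3$, $\ell = k_3 - k$ yielding the exponents $(2\delta - 1)j$ and $(3\delta - 1/6)\ell$. You also correctly noted the statement's missing hypothesis on $\{c_k\}$ being a frequency envelope, which is clearly intended.
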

\begin{proof}
On $Z_3(k)$, $\max\{k_1, k_2\} \sim \max\{k, k_3\}$,
and so the bound on $Z_3(k)$ follows from
Corollary \ref{C:MTE1}.

Note that $\max\{k_1, k_2\} > \max\{k, k_3\}$
on $Z_2$, where the sum is controlled by
\begin{align*}
&\sum_{(k_1, k_2, k_3) \in Z_2(k)} 
2^{\max\{k, k_3\} - \max\{k_1, k_2\}}
2^{- \lvert k - k_3 \rvert / 6} a_{k_1} b_{k_2} c_{k_3} \\
&\lesssim
\sum_{k, k_3 \leq k_1 - 40} 
2^{\max\{k, k_3\} - k_1}
2^{- \lvert k - k_3 \rvert / 6} a_{k_1} b_{k_1} c_{k_3},
\end{align*}
Restricting the sum to $k_3 \leq k$, we get
\begin{equation*}
\sum_{k_3 \leq k \leq k_1 - 40} 
2^{- \lvert k - k_1 \rvert}
2^{- \lvert k - k_3 \rvert / 6} a_{k_1} b_{k_1} c_{k_3}
\lesssim
a_k b_k c_k
\end{equation*}
Over the complementary range $k \leq k_3 \leq k_1 - 40$, we have
\begin{align*}
&\sum_{k \leq k_3 \leq k_1 - 40} 
2^{-\lvert k_3 - k_1 \rvert }
2^{- \lvert k - k_3 \rvert / 6} a_{k_1} b_{k_1} c_{k_3} \\
&\lesssim
a_k b_k c_k
\sum_{k \leq k_3 \leq k_1 - 40}
2^{-\lvert k_3 - k_1 \rvert }
2^{- \lvert k - k_3 \rvert / 6}
2^{2 \delta \lvert k_1 - k \rvert}
2^{\delta \lvert k - k-3 \rvert}.
\end{align*}
Performing the change of variables $j := k_1 - k_3, \ell := k_3 - k$, we control
the sum by
\begin{equation*}
\sum_{j, \ell \geq 0} 2^{-j} 2^{-\ell / 6} 2^{2 \delta (j + \ell)} 2^{\delta \ell}
\lesssim
\sum_{j, \ell \geq 0} 2^{(2 \delta - 1) j} 2^{(3 \delta - 1/6) \ell}
\lesssim
1.
\end{equation*}
\end{proof}

Taking advantage of the local smoothing/maximal function spaces,
we can obtain the following improvement.
\begin{lem}[Main trilinear estimate improvement over $Z_1$]
The best constant $C_{k,k_1,k_2,k_3}$ in $(\ref{trilinFFG})$
satisfies the improved estimate
\begin{equation}
C_{k,k_1,k_2,k_3}
\lesssim
2^{ - \lvert (k_1 + k_2)/2 - k \rvert}
\end{equation}
when $\{ k_1, k_2, k_3 \} \in Z_1(k)$.
\end{lem}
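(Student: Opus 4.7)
The plan is to repeat the $Z_1(k)$ case of the argument for Lemma \ref{L:Trilin1}, but substitute the sharper bilinear estimates \eqref{bilin2 improv} and \eqref{bilin5 improv} in place of \eqref{bilin2} and \eqref{bilin5}. These two sharper estimates extract the stronger decay $2^{-|k - k_{\mathrm{low}}|/2}$ (rather than $2^{-|k - k_{\mathrm{low}}|/6}$) by exploiting the local-smoothing and maximal-function components that are built into $G_k(T)$ and $N_k(T)$, and these are exactly the ingredients the improved claim requires.

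Without loss of generality take $k_1 \leq k_2$. The defining conditions of $Z_1(k)$, namely $k_1, k_2 \leq k - 40$ and $|k_3 - k| \leq 4$, in particular force $k_1 \leq k_3 - 36$, so \eqref{bilin5 improv} applies to the pair $(P_{k_1} f_1, P_{k_3} g)$ and yields
\begin{equation*}
\lVert P_{k_1} f_1 \cdot P_{k_3} g \rVert_{L^2_{t,x}} \lesssim 2^{-(k - k_1)/2} \lVert P_{k_1} f_1 \rVert_{F_{k_1}(T)} \lVert P_{k_3} g \rVert_{G_{k_3}(T)}.
\end{equation*}
Next I would pair this $L^2_{t,x}$ function with $P_{k_2} f_2 \in F_{k_2}(T)$ and project onto frequency $\sim 2^k$. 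If $k_2 \leq k - 80$, then \eqref{bilin2 improv} supplies an additional factor $2^{-(k - k_2)/2}$. Combining,
\begin{equation*}
C_{k, k_1, k_2, k_3} \lesssim 2^{-(k - k_1)/2 - (k - k_2)/2} = 2^{-(2k - k_1 - k_2)/2} = 2^{-|(k_1 + k_2)/2 - k|},
\end{equation*}
the last equality using $k_1 + k_2 \leq 2k - 80 < 2k$ on $Z_1(k)$.

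The only point requiring a brief comment is the transitional range $k - 80 < k_2 \leq k - 40$, in which \eqref{bilin2 improv} is not directly available. Here, however, the target bound $2^{-|(k_1+k_2)/2-k|}$ agrees with $2^{-(k-k_1)/2}$ up to an absolute constant, since $k - k_2 = O(1)$; one may simply use \eqref{bilin1} to contract against $P_{k_2} f_2$, losing only an $O(1)$ factor, which is absorbed in the implicit constant. Thus I anticipate no real obstacle: once \eqref{bilin2 improv} and \eqref{bilin5 improv} are invoked in the right order, the entire gain from $1/6$ to $1/2$ in the exponent is automatic, and the edge case $k_2 \in (k-80, k-40]$ is an $O(1)$ bookkeeping matter rather than a genuine difficulty.
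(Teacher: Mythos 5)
Your proof is correct and is exactly the argument the author had in mind: the paper states this lemma without proof, placing it immediately after the improved bilinear estimates (\ref{bilin2 improv}) and (\ref{bilin5 improv}), and the intended argument is precisely to rerun the $Z_1(k)$ case of Lemma \ref{L:Trilin1} with those sharper inputs in place of (\ref{bilin2}) and (\ref{bilin5}). Your handling of the transitional range $k_2 \in (k-80, k-40]$ via (\ref{bilin1}) (absorbing the bounded factor $2^{(k-k_2)/2} < 2^{40}$ into the implicit constant) is the right way to close the small gap between the $Z_1(k)$ threshold of $k-40$ and the $k-80$ threshold required by (\ref{bilin2 improv}).
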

 \section{Proof of Theorem \ref{MainTheorem}} \label{S:Main}

In this section we outline the proof of Theorem \ref{MainTheorem},
taking as our starting point the local result stated in Theorem \ref{LWP}.

For technical reasons related to the function space definitions of the last section, 
it will be convenient to construct a solution $\varphi$ on a time
interval $(-2^{2 \mathcal{K}}, 2^{2 \mathcal{K}})$ for some given $\mathcal{K} \in \mathbf{Z}_+$
and proceed to prove bounds that are uniform in $\mathcal{K}$.  We assume
$1 \ll \mathcal{K} \in \mathbf{Z}_+$ is chosen and hereafter fixed.
Invoking Theorem \ref{LWP}, we assume that we have a solution 
$\varphi \in C([-T, T] \to H_Q^\infty)$ of (\ref{SM}) on the time interval $[-T, T]$ for some
$T \in (0, 2^{2 \mathcal{K}})$. 
In order to extend $\varphi$ to a solution on all of
$(-2^{2 \mathcal{K}}, 2^{2 \mathcal{K}})$ with uniform bounds 
(uniform in $T, \mathcal{K}$),
it suffices
to prove
uniform a priori estimates on
\begin{equation*}
\sup_{t \in (-T, T)} \lVert \varphi(t) \rVert_{H_Q^\sigma}
\end{equation*}
for, say, $\sigma$ in the interval $[1, \sigma_1]$, with $\sigma_1 \gg 1$ chosen sufficiently large
(e.g., $\sigma_1 = 25$ will do).  

The first step in our approach, carried out in \S \ref{S:Gauge Field Equations},
is to lift the Schr\"odinger map system (\ref{SM})
to the tangent bundle and view it with respect to the caloric gauge.
Recall that the lift of (\ref{SM}) expressed in terms of the caloric gauge takes the form
(\ref{NLS}),
or, equivalently,
\begin{equation}
(i \partial_t + \Delta) \psi_m = B_m + V_m,
\label{dSM}
\end{equation}
with initial data $\psi_m(0)$.  Here $B_m$ and $V_m$ respectively denote
the magnetic and electric potentials (see (\ref{B Def}) and (\ref{V Def}) for
definitions).

The goal then becomes proving
a priori bounds on $\lVert \psi_m \rVert_{L_t^\infty H_x^\sigma}$.
Herein lies the heart of the argument, and
the purpose of this section is not only to give a high level description
of the proof of Theorem \ref{MainTheorem}, 
but also to outline the proof of the key a priori bounds.
To establish these bounds, we in fact prove stronger frequency-localized estimates.
The argument naturally splits into several components, and we consider
each individually below.

Finally, to complete the proof of Theorem \ref{MainTheorem}, we must transfer the a priori
bounds on the derivative fields $\psi_m$ back to bounds on the map $\varphi$,
thereby allowing us to close a bootstrap argument.
Once the derivative field bounds are established,
this is, comparatively speaking, an easy task, and we take it up in the last subsection.

We return now to (\ref{dSM}), projecting it to frequencies $\sim 2^k$
using the Littlewood-Paley multiplier $P_k$.
Applying the linear estimate of
Lemma \ref{MainLinearEstimate}
then yields
\begin{equation}
\lVert P_k \psi_m \rVert_{G_k(T)} \lesssim \lVert P_k \psi_m (0) \rVert_{L_x^2} +
\lVert P_k V_m \rVert_{N_k(T)} + \lVert P_k B_m \rVert_{N_k(T)}.
\label{mainestimate}
\end{equation}
In order to express control of the $G_k(T)$ norm of $P_k \psi_m$ in terms of the initial data,
we introduce the following frequency envelopes.
Let $\sigma_1 \in \mathbf{Z}_+$ be positive.
For $\sigma \in [0, \sigma_1 - 1]$, set
\begin{equation}
b_k(\sigma) = \sup_{k^\prime \in \mathbf{Z}} 2^{\sigma k^\prime} 2^{-\delta \lvert k - k^\prime \rvert}
\lVert P_{k^\prime} \psi_x \rVert_{G_k(T)}.
\label{b Envelope}
\end{equation}
By (\ref{Soft Field Space Bounds}),
these envelopes are finite and in $\ell^2$.
We abbreviate $b_k(0)$ by setting $b_k := b_k(0)$.

We now state the key result for solutions of the gauge field equation (\ref{dSM}).
\begin{thm}
Assume $T \in (0, 2^{2 \mathcal{K}})$ and $Q \in \mathbf{S}^2$.
Choose $\sigma_1 \in \mathbf{Z}_+$ positive.
Let $\varepsilon_1 > 0$
and let $\varphi \in H_Q^{\infty, \infty}(T)$
be a solution of the Schr\"odinger map system (\ref{SM}) whose initial data $\varphi_0$
has energy $E_0 := E(\varphi_0) < E_{\mathrm{crit}}$ and satisfies the energy dispersion condition
\begin{equation}
\sup_{k \in \mathbf{Z}} c_k \leq \varepsilon_1.
\label{iED}
\end{equation}
Assume moreover that
\begin{equation}
\sum_{k \in \mathbf{Z}} \lVert P_k \psi_x \rVert_{L^4_{t,x}(I \times \rr)}^2 \leq \varepsilon_1^2
\label{Main L4 Cal}
\end{equation}
for any smooth extension $\varphi$ on $I$, $[-T, T] \subset I \subset (-2^{2 \mathcal{K}}, 2^{2 \mathcal{K}})$.
Suppose that the bootstrap hypothesis
\begin{equation}
b_k \leq \varepsilon_1^{-1/10} c_k
\label{Main Bootstrap}
\end{equation}
is satisfied.
Then, for $\varepsilon_1$ sufficiently small,
\begin{equation}
b_k(\sigma) \lesssim c_k(\sigma)
\label{Field Conclusion}
\end{equation}
holds for all $\sigma \in [0, \sigma_1 - 1]$ and $k \in \mathbf{Z}$.
\label{EnvelopeTheorem}
\end{thm}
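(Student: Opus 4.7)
The plan is to apply Proposition \ref{MainLinearEstimate} after Littlewood--Paley localizing (\ref{dSM}), yielding the key inequality (\ref{mainestimate}), and then to establish
\begin{equation*}
\lVert P_k V_m \rVert_{N_k(T)} + \lVert P_k B_m \rVert_{N_k(T)} \lesssim \varepsilon_1^{\alpha} b_k(\sigma) + c_k(\sigma)
\end{equation*}
for some positive $\alpha$ (with the right-hand side of course also allowed to contain terms controlled by the initial data envelope). Combined with (\ref{mainestimate}), taking $\varepsilon_1$ small will then close the bootstrap (\ref{Main Bootstrap}) and give $b_k(\sigma) \lesssim c_k(\sigma)$. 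The case $\sigma = 0$ is what actually closes the bootstrap; once it is in hand, the $\sigma > 0$ estimates follow by the same multilinear analysis, since every nonlinearity is at least cubic in the derivative fields and only one factor need carry the $\sigma$-weight, the remaining two being controlled at regularity zero through the $\sigma = 0$ envelope.

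For the electric nonlinearity $V_m$, the purely cubic piece $\psi_\ell\,\Im(\overline{\psi_\ell}\psi_m)$ is estimated by the trilinear bound of Lemma \ref{L:Trilin1}, with each factor of $\psi$ placed in $G_{k_i}(T)$; the off-diagonal decay furnished by Lemma \ref{L:Trilin1} is summed using Corollaries \ref{C:MTE1} and \ref{C:MTE2} against the $\delta$-envelope $b_k(\sigma)$. The potential pieces $A_x^2 \psi_m$ and $A_t \psi_m$ are first reduced to cubic expressions in $\psi$ by inserting the integral representation (\ref{CC Integral Rep}) together with (\ref{HF})--(\ref{genPsiEQ}) for $\psi_x$ along the heat flow and, for $A_t$, using Lemma \ref{L:Commutator} to decompose $\psi_t$ in terms of $\psi_s$ and the commutator $\Psi$. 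The parabolic smoothing in the frequency-localized heat-flow bounds (\ref{Hard Envelope Bounds})--(\ref{Hard Field Bounds}) makes the heat-time integrals converge, and the resulting trilinear expressions are controlled as above. Smallness arises by placing one $\psi_x$ factor in $L^4_{t,x}$ using (\ref{Main L4 Cal}), and the remaining factors via bootstrap, which yields a total prefactor of $\varepsilon_1^{1-2/10}$.

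The main obstacle is the magnetic nonlinearity $B_m = -i\partial_\ell(A_\ell \psi_m) - iA_\ell\partial_\ell\psi_m$: the low-high interaction $A_{\mathrm{lo}}\partial_x\psi_{m,\mathrm{hi}}$ costs a derivative that cannot be recovered perturbatively in the subthreshold regime, since $A_x$ is only of order one in $L^2_x$ by (\ref{A_x Bound}) rather than small. The plan here is to forgo the perturbative local smoothing of \cite{BeIoKeTa11} and instead apply the improved covariant local smoothing estimate promised in Section \ref{S:Local smoothing}, treating the critical portion of $B_m$ as part of the linear covariant operator rather than as a forcing term. That estimate recovers the lost derivative without requiring small $A_x$, producing the $L^{1,2}_{\theta, W_{k-40}}$ control required by $N_k(T)$. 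The non-resonant (diagonal and high-high) interactions in $B_m$ can then be handled by Lemma \ref{L:Trilin1}, the improvement over $Z_1$ at the end of Section 3, and bilinear estimate (\ref{bilin2 improv}) when local smoothing/maximal function norms are available.

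Finally, the $\sigma$-induction proceeds verbatim: define the weighted envelope $b_k(\sigma)$ as in (\ref{b Envelope}), repeat the multilinear analysis above with exactly one factor carrying the $2^{\sigma k'}$ weight, and apply the summation rules (\ref{Sum 1})--(\ref{Sum 2}) for $p$ bounded away from $\delta$ (which holds since the off-diagonal decay exponents $1/6$ and $1/2$ arising in Lemma \ref{L:Trilin1} dominate $\delta = 1/40$). This yields (\ref{Field Conclusion}) for all $\sigma \in [0, \sigma_1 - 1]$, completing the proof.
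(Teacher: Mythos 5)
Your overall architecture matches the paper's: Littlewood--Paley localize (\ref{dSM}), apply Proposition \ref{MainLinearEstimate} to get (\ref{mainestimate}), show $V_m$ and $B_{m,\mathrm{hi\lor hi}}$ are perturbative via the $L^4$ / energy-dispersion smallness captured in $\cV$, and throw the full weight of the improved bilinear Strichartz estimate at $B_{m,\mathrm{lo\wedge lo}}$. Your remark that one expects a prefactor $\varepsilon_1^{1-2/10}$ on the electric potential and the $B_{\mathrm{hi\lor hi}}$ piece is also consistent with Corollaries \ref{V bound corollary} and \ref{B bound corollary}.

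However, there is a genuine gap in the claim that the magnetic low-low term can be brought under a bound of the form $\varepsilon_1^\alpha b_k(\sigma) + c_k(\sigma)$, and that the bootstrap then closes by taking $\varepsilon_1$ small. This is precisely what fails in the subthreshold regime: even after the improved bilinear Strichartz (\ref{BSs}), the leading contribution of $P_k \Blo$ takes the form
\begin{equation*}
\Nlo \lesssim \sum_{j \leq k - \varpi}\left(b_j c_j c_k + \cV^{1/2} b_j^2 b_k\right),
\end{equation*}
and the sum $\sum_{j \leq k} b_j c_j$ is \emph{not} $\varepsilon_1$-small. Under the bootstrap hypothesis $b_j \leq \varepsilon_1^{-1/10} c_j$ it is of order $\varepsilon_1^{-1/10} E_0$, which blows up rather than shrinks as $\varepsilon_1 \to 0$. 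The improved local smoothing does recover the derivative loss and replaces two $b$-factors with $c$-factors, but it cannot by itself produce a perturbative prefactor in front of the cumulative low-frequency sum. The missing step in your proposal is the discrete Gronwall/iteration device: square the inequality $b_k \lesssim c_k + c_k\sum_{j \leq k-\varpi}b_j c_j$, apply Cauchy--Schwarz to obtain $b_k^2 \lesssim (1 + \sum_{j \leq k-\varpi}b_j^2)\, c_k^2$, set $B_k := 1 + \sum_{j<k} b_j^2$, and iterate $B_{k+1} \leq B_k(1 + C c_k^2)$. Because $\sum_k c_k^2 \sim E_0 = O(1)$ by energy conservation (crucially, finite but not small), one obtains $B_k \lesssim_{E_0} 1$ uniformly, hence $\sum_j b_j^2 \lesssim 1$ and then $b_k \lesssim c_k$. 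Without this argument the bootstrap simply does not close at subthreshold energy; the smallness on which your proposed closure relies is unavailable.
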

\begin{proof}
We use a continuity argument to prove Theorem \ref{EnvelopeTheorem}.
For $T^\prime \in (0, T]$, let
\begin{equation*}
\Psi(T^\prime) = \sup_{k \in \mathbf{Z}} c_k^{-1}
\lVert P_k \psi_m(s = 0) \rVert_{G_k(T^\prime)}.
\end{equation*}
Then $\psi : (0, T] \to [0, \infty)$ is well-defined, increasing, continuous, and satisfies
\begin{equation*}
\lim_{T^\prime \to 0} \psi(T^\prime) \lesssim 1.
\end{equation*}
The critical implication to establish is
\begin{equation*}
\Psi(T^\prime) \leq \varepsilon_1^{-1/10} \implies
\Psi(T^\prime) \lesssim 1,
\end{equation*}
which in particular follows from
\begin{equation}
b_k \lesssim c_k.
\label{Goal}
\end{equation}
We also must similarly establish
\begin{equation}
b_k(\sigma) \lesssim c_k(\sigma)
\label{Goal2}
\end{equation}
for $\sigma \in (0, \sigma_1 - 1]$.
The next several subsections describe the main steps of the proof of (\ref{Goal})
and (\ref{Goal2}),
to which the bulk of the remainder of this paper is dedicated.
In
\S \ref{SS:OutlineConclusion}
we complete the high level argument used to prove
(\ref{Goal})
and (\ref{Goal2}).
\end{proof}

\begin{cor}
Given the conditions of Theorem \ref{EnvelopeTheorem},
\begin{equation}
\lVert P_k \lvert \partial_x \rvert^\sigma \partial_m \varphi 
\rVert_{L_t^\infty L_x^2( (-T, T) \times \rr)}
\lesssim c_k(\sigma)
\label{Main Conclusion}
\end{equation}
holds for all $\sigma \in [0, \sigma_1 - 1]$.
\label{ETcor}
\end{cor}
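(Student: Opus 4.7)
The plan is to transfer the frequency-envelope bound $b_k(\sigma) \lesssim c_k(\sigma)$ on the gauged derivative field $\psi_m$, furnished by Theorem \ref{EnvelopeTheorem}, into the corresponding bound on the ungauged map $\partial_m \varphi$ via the frame representation (\ref{Phi Frame Rep}). First I would extract the pointwise-in-frequency bound
\begin{equation*}
\lVert P_{k'} \psi_m \rVert_{L_t^\infty L_x^2((-T, T) \times \rr)}
\leq \lVert P_{k'} \psi_m \rVert_{G_{k'}(T)}
\lesssim 2^{-\sigma k'} c_{k'}(\sigma),
\end{equation*}
valid for every $k' \in \mathbf{Z}$ and $\sigma \in [0, \sigma_1 - 1]$, as an immediate consequence of the definition of $G_{k'}(T)$ and Theorem \ref{EnvelopeTheorem}.

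Next I would apply (\ref{Phi Frame Rep}),
\begin{equation*}
\partial_m \varphi = v \cdot \Re(\psi_m) + w \cdot \Im(\psi_m),
\end{equation*}
and perform the standard Littlewood--Paley paraproduct decomposition on the product, distinguishing the low-high, high-low, and high-high regimes for the frame factor versus the derivative-field factor. In the low-high regime (frame at frequency $\leq 2^{k - 10}$), the pointwise identities $|v| \equiv |w| \equiv 1$ give $\lVert P_{\leq k - 10} v \rVert_{L_t^\infty L_x^\infty} \lesssim 1$, so H\"older's inequality combined with the bound from the previous step yields the target estimate; multiplication by $2^{\sigma k}$ accounts for $|\partial_x|^\sigma$ acting on a frequency-$2^k$ function.

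For the remaining regimes, in which the frame sits at frequency $2^{k'}$ with $k' \geq k - O(1)$, I would invoke (\ref{Hard Envelope Bounds}) at heat time $s = 0$ to obtain
\begin{equation*}
\lVert P_{k'} v \rVert_{L_t^\infty L_x^2} + \lVert P_{k'} w \rVert_{L_t^\infty L_x^2}
\lesssim 2^{-\tau k'} \gamma_{k'}(\tau), \qquad \tau \in [1, \sigma_1],
\end{equation*}
pair with a Bernstein $L_x^\infty$-bound on the low-frequency $\psi_m$ factor in the high-low case (or an $L_x^2 \cdot L_x^\infty$ H\"older split in the high-high case, again using Bernstein to place one factor in $L_x^\infty$), and sum over the dyadic frequencies using the envelope summation rules (\ref{Sum 1})--(\ref{Sum 2}). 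Since $\gamma$ is slowly varying and the frame frequency $k'$ dominates $k$ in these regimes, the exponential off-diagonal gain beats the $\delta$-drift of the envelopes and the sum closes with constant $\lesssim c_k(\sigma)$.

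The only nontrivial technical point in the above plan is the comparison of the envelopes $\gamma_k$ (attached to $\varphi$) and $c_k$ (attached to $\partial_x \varphi_0$), needed to upgrade the frame estimate to a bound involving $c_k$. At high frequencies this follows from $|\partial_x \varphi| = |\psi_x|$ together with the conservation of energy and the already-established bound on $\psi_x$, propagating the initial envelope through the caloric heat flow via (\ref{Hard Field Bounds}); at low frequencies $\varphi - Q$ contributes only at the mass level, which does not enter the final estimate because every relevant term in the paraproduct carries a $\psi_m$-factor governed by $c_k(\sigma)$. With this envelope comparison in hand, assembling the three paraproduct pieces and inserting the $2^{\sigma k}$ weight from $|\partial_x|^\sigma$ yields (\ref{Main Conclusion}).
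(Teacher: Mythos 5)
Your overall architecture is right---invoke the frame representation (\ref{Phi Frame Rep}), run a Littlewood--Paley paraproduct decomposition, and dispatch the low-high piece via $|v| \equiv |w| \equiv 1$ and H\"older---and this matches the paper's treatment at $\sigma = 0$. However, there is a genuine gap in how you close the $\sigma > 0$ case.

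The problem is circularity. For the high-low piece (frame $v$ at frequency $\sim 2^k$, $\psi_m$ at low frequency), you need a weighted estimate on $\lVert P_{k'} v \rVert_{L_t^\infty L_x^2}$ for $k'$ near $k$. Via (\ref{Hard Envelope Bounds}) this is controlled by $\gamma_{k'}(\sigma)$, but $\gamma_{k'}(\sigma)$ is by its very definition (\ref{gamma Envelope}) the envelope of $\lVert P_{k'} \varphi \rVert_{L_t^\infty L_x^2}$ over the \emph{entire} time slab---which, after multiplying by $2^{k'}$, is precisely the quantity the corollary is trying to bound. You flag this ("the only nontrivial technical point\ldots is the comparison of $\gamma_k$ and $c_k$") but your proposed resolution does not work: the pointwise identity $|\partial_x \varphi| = |\psi_x|$ does not commute with Littlewood--Paley projections, so $\lVert P_k \partial_x \varphi \rVert_{L_x^2}$ and $\lVert P_k \psi_x \rVert_{L_x^2}$ are not directly comparable, and "propagating the initial envelope through the heat flow" still requires a frame bound, which is the thing you are trying to produce.

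The paper's mechanism for breaking the loop is a bootstrap at the level of the ungauged map: let $\mathcal{C}$ be the best constant in $\lVert P_k \partial_x \varphi \rVert_{L_t^\infty L_x^2} \leq \mathcal{C}\, 2^{-\sigma k} c_k(\sigma)$, note that via (\ref{gamma Envelope}) and (\ref{Hard Envelope Bounds}) one then gets $\lVert P_k \partial_x v(0) \rVert_{L_t^\infty L_x^2} \lesssim \mathcal{C}\, 2^{-\sigma k} c_k(\sigma)$, insert this into the high-low paraproduct piece to obtain a bound of the form $\mathcal{C}\, c_k\, 2^{-\sigma k} c_k(\sigma)$, and then invoke energy dispersion ($c_k \leq \varepsilon$) to get $\mathcal{C} \lesssim 1 + \varepsilon\,\mathcal{C}$, hence $\mathcal{C} \lesssim 1$. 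Without this (or some equivalent) self-improving argument, the high-low interaction for $\sigma > 0$ cannot be closed, and your plan stalls exactly where you anticipated trouble. You should also note that at $\sigma = 0$ the circularity does not arise because one can use the non-envelope energy bound (\ref{dv Bound}) on the frame, which is what the paper does; it is specifically the need for a $\sigma$-weighted envelope at high frame frequency that forces the bootstrap.
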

The proof we defer to \S \ref{SS:Degauging}.

Together Theorem \ref{LWP}, Theorem \ref{EnvelopeTheorem}, and Corollary
\ref{ETcor} are almost enough to establish Theorem \ref{MainTheorem}.
The next lemma provides the final piece.  We also defer its proof to \S \ref{SS:Degauging}.

\begin{lem}
It holds that
\begin{equation*}
\sum_{k \in \mathbf{Z}} \lVert P_k \psi_x \rVert_{L_{t,x}^4}^2
\sim
\sum_{k \in \mathbf{Z}} \lVert P_k \partial_x \varphi \rVert_{L_{t,x}^4}^2.
\end{equation*}
\label{equiv mt con}
\end{lem}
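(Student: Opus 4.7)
The frame identities $\partial_m\varphi = v\,\Re(\psi_m) + w\,\Im(\psi_m)$ and $\psi_m = v\cdot\partial_m\varphi + i\,w\cdot\partial_m\varphi$ give the pointwise equality $|\psi_m(x,t)| = |\partial_m\varphi(x,t)|$ for $m=1,2$, and hence the unlocalised identity $\|\psi_m\|_{L^4_{t,x}} = \|\partial_m\varphi\|_{L^4_{t,x}}$. The frequency-localised version fails pointwise because $P_k$ does not commute with multiplication by the frame fields $v,w$, so a paraproduct analysis is required. It suffices to prove the forward bound
\[
\sum_{k\in\mathbf{Z}}\|P_k\psi_m\|_{L^4_{t,x}}^2 \;\lesssim\; \sum_{k\in\mathbf{Z}}\|P_k\partial_m\varphi\|_{L^4_{t,x}}^2;
\]
the reverse direction follows from the identical argument after swapping the roles of $\psi_m$ and $\partial_m\varphi$, since the two representations have the same algebraic structure and $v,w$ play symmetric roles.

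\textbf{Paraproduct bounds.}
Starting from $\psi_m = v\cdot\partial_m\varphi + i\,w\cdot\partial_m\varphi$ at $s=0$, expand each product into a double Littlewood--Paley sum and split into \emph{low--high} ($k_1\le k-5$, $k_2\sim k$), \emph{high--low} ($k_2\le k-5$, $k_1\sim k$), and \emph{high--high} ($k_1\sim k_2\ge k-3$) interactions. The low--high piece is handled immediately by $\|P_{\le k-5}v\|_{L^\infty_{t,x}}\lesssim\|v\|_{L^\infty_{t,x}}\le 1$, yielding $\|P_k^{\mathrm{LH}}\|_{L^4_{t,x}}\lesssim\|\widetilde{P}_k\partial_m\varphi\|_{L^4_{t,x}}$ for a fattened projection $\widetilde{P}_k$. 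For the remaining two interactions I combine the frame-field bound $\|P_jv\|_{L^\infty_tL^2_x}\lesssim 2^{-j}\gamma_j$ from Theorem~\ref{Energy Envelope Decay} at $s=0$ with spatial Bernstein to obtain $\|P_jv\|_{L^\infty_tL^4_x}\lesssim 2^{-j/2}\gamma_j$ and $\|P_{k'}\partial_m\varphi\|_{L^4_tL^\infty_x}\lesssim 2^{k'/2}\|P_{k'}\partial_m\varphi\|_{L^4_{t,x}}$, and then apply the mixed-norm H\"older inequality $\|fg\|_{L^4_{t,x}}\le\|f\|_{L^\infty_tL^4_x}\|g\|_{L^4_tL^\infty_x}$. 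For the high--high piece I also apply spatial Bernstein to the outer $P_k$, namely $\|P_kF\|_{L^4_{t,x}}\lesssim 2^{k/2}\|F\|_{L^4_tL^2_x}$, so as to extract off-diagonal decay from the smallness of the output frequency relative to the internal frequencies. The resulting estimates are
\[
\|P_k^{\mathrm{HL}}\|_{L^4_{t,x}}\lesssim \gamma_k\!\!\sum_{k_2\le k-5}\!\! 2^{(k_2-k)/2}\|P_{k_2}\partial_m\varphi\|_{L^4_{t,x}}, \qquad \|P_k^{\mathrm{HH}}\|_{L^4_{t,x}}\lesssim \!\!\sum_{k_1\ge k-3}\!\! 2^{(k-k_1)/2}\gamma_{k_1}\|P_{k_1}\partial_m\varphi\|_{L^4_{t,x}}.
\]

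\textbf{Summation and main obstacle.}
Both the HL and HH bounds are discrete convolutions of the sequence $\{\|P_{k'}\partial_m\varphi\|_{L^4_{t,x}}\}$ with the $\ell^1$ kernel $\{2^{-|j|/2}\}$, modulated by $\gamma_k$ or $\gamma_{k_1}$. Since $\sum_j\gamma_j^2\lesssim E_0<E_{\mathrm{crit}}$, the envelope satisfies $\|\gamma\|_{\ell^\infty}\lesssim 1$, and Young's convolution inequality on $\ell^2$ then closes the estimate; the identical argument for $w$ handles the imaginary part, and the reverse inequality is obtained symmetrically. The main obstacle is engineering the H\"older exponents so that each interaction produces off-diagonal decay of size $2^{-|k-k'|/2}$: the mixed norms $L^\infty_tL^4_x$ and $L^4_tL^\infty_x$ accomplish this cleanly by converting one half-derivative of Bernstein into matching factors $2^{\pm j/2}$, whereas the na\"ive H\"older split using $L^\infty_{t,x}\cdot L^4_{t,x}$ (which only uses $\|P_jv\|_{L^\infty_{t,x}}\lesssim\gamma_j$) loses all off-diagonal structure in the high--low piece and thus fails to sum in $\ell^2$.
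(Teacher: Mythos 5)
Your proof is correct and takes essentially the same route as the paper: the trilinear Littlewood--Paley split into low-high, high-low, and high-high pieces, the mixed H\"older estimate $L^\infty_t L^4_x \times L^4_t L^\infty_x$ with Bernstein to manufacture the off-diagonal factors $2^{\pm j/2}$, and the $L^2_x$ energy bound on the frame $v,w$; the only cosmetic difference is that you close the summation with Young's inequality on $\ell^2$ while the paper uses the $\tilde{b}_k$ frequency envelope and the summation rules (\ref{Sum 1})--(\ref{Sum 2}). One small slip: the claim ``$\sum_j\gamma_j^2\lesssim E_0$'' does not follow, since $\gamma_j$ is built from $L^\infty_t L^2_x$ norms and one cannot freely interchange $\sup_t$ with $\sum_j$; fortunately what you actually use is only $\|\gamma\|_{\ell^\infty}\lesssim 1$, which follows directly from energy conservation, $\gamma_k = \sup_{k'}2^{-\delta|k-k'|}2^{k'}\|P_{k'}\varphi\|_{L^\infty_t L^2_x}\lesssim \|\partial_x\varphi\|_{L^\infty_t L^2_x}\lesssim E_0^{1/2}$, so the argument still goes through.
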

Note that this lemma affords us a condition equivalent to (\ref{Main L4 Cal})
whose advantage lies in the fact that it is not expressed in terms of gauges.

\begin{proof}[Proof of Theorem \ref{MainTheorem}]
Fix $\sigma_1 \in \mathbf{Z}_+$ positive and let $\varepsilon_1 = \varepsilon_1(\sigma_1) \geq 0$.
It suffices to prove (\ref{SobolevBound}) on the time interval $[-T, T]$ provided the
estimate is uniform in $T$.  In view of Theorem \ref{LWP} and mass-conservation, proving
\begin{equation}
\lVert \partial_x \varphi \rVert_{L^\infty_t \dot{H}^\sigma_Q((-T, T) \times \rr)}
\lesssim_\sigma
\lVert \partial_x \varphi \rVert_{\dot{H}^\sigma_Q(\rr)}
\label{NonGlobSoboBd}
\end{equation}
for $\sigma \in [0, \sigma_1 - 1]$ with $\sigma_1 = 25$ is
enough to establish (\ref{SoboBound}).

By virtue of Lemma \ref{equiv mt con}, the assumptions of Theorem \ref{MainTheorem}
are equivalent to those of \ref{EnvelopeTheorem}.  Therefore 
we have access
to Corollary \ref{ETcor}, which states that (\ref{Main Conclusion}) holds
$\sigma \in [0, \sigma_1 - 1]$.
Using (\ref{c cons}) 
and the Littlewood-Paley square function
completes the proof of (\ref{NonGlobSoboBd}).

Global existence and
(\ref{SobolevBound}) then follow 
via a standard bootstrap argument 
from 
Theorem \ref{LWP}
and from the fact that the constants
in (\ref{NonGlobSoboBd}) are uniform in $T$.
\end{proof}

The remainder of this section is organized as follows.
In \S \ref{SS:ParabolicEstimates} we state the key lemmas of parabolic type that
are used to control the electric and magnetic nonlinearities.
In \S \ref{SS: SmoothingStrichartz} we state bounds that rely principally upon
local smoothing, including a bilinear Strichartz estimate; they find application in controlling the worst
magnetic nonlinearity terms.

In \S \ref{SS:V} we piece together the parabolic estimates to control the electric potential.
In \S \ref{SS:B} we decompose the magnetic potential into two main pieces and demonstrate
how to control one of these pieces.

In \S \ref{SS:OutlineConclusion} we close the bootstrap argument
proving Theorem \ref{EnvelopeTheorem}. Here the remaining piece of the magnetic potential
is addressed using a certain nonlinear version of a bilinear Strichartz estimate.

Finally, in \S \ref{SS:Degauging}, we prove Corollary \ref{ETcor} and
Lemma \ref{equiv mt con}.

\subsection{Parabolic estimates} \label{SS:ParabolicEstimates}

By ``parabolic estimates" we mean those that principally rely upon the smoothing effect of
the harmonic map heat flow.  We include here only those that play a direct role in controlling
the nonlinearity $\cN$.  These are proved in \S \ref{S:Parabolic Estimates}, where
a host of auxiliary parabolic estimates are included as well.  
As the proofs rely upon a bootstrap argument
that takes advantage of energy dispersion (\ref{iED}), these bounds rely upon
this smallness constraint implicitly.  On the other hand, $L^4$ smallness (\ref{Main L4 Cal}) is not used in the
proofs of these bounds, but rather only in their application in this paper.

\begin{lem}
For $\sigma \in [0, \sigma_1 - 1]$,
the derivative fields $\psi_m$ satisfy
\begin{equation}
\lVert P_k \psi_m(s) \rVert_{F_k(T)} \lesssim 
(1 + s 2^{2k})^{-4} 2^{-\sigma k} b_k(\sigma)
\label{PsiF}
\end{equation}
for $s \geq 0$.
\end{lem}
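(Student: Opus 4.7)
The plan is to treat (\ref{genPsiEQ}) as a forced heat equation for $P_k\psi_m$ in the heat-time variable $s$, apply Duhamel's formula, and close a continuity/bootstrap argument in $s$. The base case is at $s=0$: since any function frequency-localized to $I_k$ satisfies $\lVert\cdot\rVert_{F_k(T)}\lesssim\lVert\cdot\rVert_{F_k^0(T)}\lesssim\lVert\cdot\rVert_{G_k(T)}$ (the second via the containment of norms in the definition of $G_k$), the definition (\ref{b Envelope}) of the envelope $b_k(\sigma)$ yields
\[
\lVert P_k\psi_m(0)\rVert_{F_k(T)}\lesssim 2^{-\sigma k}b_k(\sigma),
\]
which matches the claimed bound at $s=0$.

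For $s>0$, I would write
\[
P_k\psi_m(s)=e^{s\Delta}P_k\psi_m(0)+\int_0^s e^{(s-s')\Delta}P_k U_m(s')\,ds'.
\]
The homogeneous piece is immediate: the spatial heat semigroup acting on functions frequency-localized to $I_k$ gives arbitrary polynomial decay $(1+s2^{2k})^{-N}$ in every Lebesgue and mixed-Lebesgue norm entering the definitions of $F_k^0$ and $F_k$, so combining with the $s=0$ bound produces the claim with a rapidly decaying prefactor. For the Duhamel integral, I would expand $U_m$ via (\ref{Heat Nonlinearity}) into the semilinear magnetic pieces $2iA_\ell\partial_\ell\psi_m$ and $i(\partial_\ell A_\ell)\psi_m$, the quartic potential term $A_x^2\psi_m$, and the cubic resonance $\psi_\ell\Im(\overline{\psi_\ell}\psi_m)$. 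Each factor of $A_\ell$ is eliminated in favour of $\psi$ via (\ref{CC Integral Rep}), which brings in additional heat-time decay. After a paraproduct decomposition of each product, the bilinear estimates of Lemmas \ref{L:NkBilinear}--\ref{L:L2Bilinear}, the softer bound (\ref{FsoftBound}), and the envelope summation rules (\ref{Sum 1})--(\ref{Sum 2}) reduce the Duhamel integrand to sums of terms of the form $\varepsilon\cdot(1+s'2^{2k})^{-4}2^{-\sigma k}b_k(\sigma)$, where the smallness $\varepsilon$ is extracted from the energy dispersion (\ref{iED}) and the $L^4$ hypothesis (\ref{Main L4 Cal}), via the low-frequency factors in each interaction. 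Integrating $(1+(s-s')2^{2k})^{-N}(1+s'2^{2k})^{-4}$ in $s'\in[0,s]$ returns $(1+s2^{2k})^{-4}$ up to an $O(1)$ constant, so the nonlinear contribution is bounded by $\varepsilon C(1+s2^{2k})^{-4}2^{-\sigma k}b_k(\sigma)$ under the bootstrap hypothesis, closing the continuity argument with the original constant for $\varepsilon$ small.

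The main obstacle will be the magnetic term $A_\ell\partial_\ell\psi_m$ in the regime where the spatial derivative falls on the high-frequency factor: here one must prevent a derivative loss by carefully pairing the frequency envelope for $\psi_m$ (controlled by $b_k$ via the bootstrap) against that for $A_\ell$ (propagated along heat flow using (\ref{CC Integral Rep})), and by using the energy-dispersion smallness of low-frequency pieces of $A_\ell$ to absorb the scale. The remaining quartic and cubic pieces are essentially semilinear, handled by Strichartz-type components of $F_k^0$ together with (\ref{FsoftBound}); the derivative-free structure rules out any loss, and envelope summation suffices. Since the decay rate $(1+s2^{2k})^{-4}$ is much weaker than the rate $(1+s2^{2k})^{-20}$ already available for the $L_t^\infty L_x^2$ component via Corollary \ref{EED Cor}, there is ample room to absorb the nonlinear contributions in the remaining Strichartz and local-smoothing components defining $F_k$.
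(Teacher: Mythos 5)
Your overall strategy (Duhamel in heat time, $s=0$ base case from the definition of $b_k(\sigma)$, treat $A$ via (\ref{CC Integral Rep}), iterate to close a bootstrap) is the right shape, but two of your load-bearing choices diverge from what actually makes the argument close, and one of them is a genuine obstruction.

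First, the paper does not prove (\ref{PsiF}) by measuring the Duhamel integrand in $N_k(T)$ or $L^2_{t,x}$ and upgrading via (\ref{FsoftBound}). It works in the refined spaces $F_k(T)\cap S_k^{\omega}(T)$ throughout, and the entire reason the $S_k^\omega$ scale (defined in (\ref{SomegaDef})) exists is to gain in the high-high regime. Concretely, the key tool is Lemma \ref{BIKT Lemma 5.1}, whose high-high term carries the factor $2^k\sum_{j\ge k}2^{(j-k)(1-\omega)}\alpha_j\beta_j$, together with the last estimate of Lemma \ref{Parabolic Bilinear Estimates}. With only (\ref{FsoftBound}) and the $N_k$/$L^2$ bilinear estimates (Lemmas \ref{L:NkBilinear}--\ref{L:L2Bilinear}) you are using the wrong target spaces --- those lemmas land in $N_k(T)$ and $L^2_{t,x}$, which serve the Schr\"odinger linear estimate (Proposition \ref{MainLinearEstimate}), not the heat-flow evolution --- and you cannot sum the high-high interactions that arise from $A_\ell\partial_\ell\psi_m$ after you expand $A_\ell$ by (\ref{CC Integral Rep}). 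The paper accordingly sets up an auxiliary envelope $a_k(\sigma)$ built from the $F_k\cap S_k^{1/2}$ norm, a separate bootstrap constant $\cC$ for $\lVert P_k A_m(s)\rVert_{F_k\cap S_k^{1/2}}$ (Lemma \ref{A Bound}) with the bookkeeping sequences $b_{k,s}(\sigma)$, $c_{k,j}(\sigma)$, $d_{k,j}$, and a chain Lemma \ref{Psi Quadratic} $\to$ Lemma \ref{Psi A Cubic} $\to$ Lemma \ref{A Bound} $\to$ Lemma \ref{Quadratic-type Terms} $\to$ Lemma \ref{U Bound} $\to$ Lemma \ref{Nonlinear Evolution Bound} before closing. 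The statement actually proved, in Lemma \ref{Psi Bound}, is for the stronger $F_k(T)\cap S_k^{1/2}(T)$ norm, which is what allows the bootstrap to be self-consistent.

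Second, your source of smallness is partly wrong: you propose to extract $\varepsilon$ from both the energy dispersion hypothesis (\ref{iED}) and the $L^4$ a priori bound (\ref{Main L4 Cal}). The paper is explicit (opening of \S\ref{SS:ParabolicEstimates}) that these parabolic estimates use only energy dispersion, not $L^4$ smallness; the $L^4$ bound enters only downstream, in the application to the Schr\"odinger nonlinearity. Here the $\varepsilon$'s are pulled out as products $a_{-j}a_{-j}$, $a_{-j}a_\ell$, etc., in the summations of Lemmas \ref{Psi A Cubic}, \ref{A Quadratic}, and \ref{U Bound}, purely from the envelope's $\ell^\infty$ smallness.

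So the missing ideas are concretely: (i) the $S_k^\omega$ refinement and the associated parabolic bilinear lemmas, without which the high-high sums for the magnetic terms do not converge; and (ii) the nested envelope/constant bookkeeping ($a_k(\sigma)$, $\cC$, $b_{k,s}$, $c_{k,j}$, $d_{k,j}$) that threads the bootstrap through $A$, the quadratic terms, and $U_m$. As written, your plan would stall at the high-high contributions of $A_\ell\partial_\ell\psi_m$ once $A_\ell$ is replaced by its integral representation.
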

This estimate is used in \S \ref{SS:B} in controlling the magnetic nonlinearity,
which schematically looks like $A \partial_x \psi$.  To recover the loss of
derivative, it is important to take advantage of parabolic smoothing by
invoking representation (\ref{CC Integral Rep}) of $A$.  
Within the integral we schematically have $\psi(s) D_x \psi(s)$,
and hence
(\ref{PsiF}) allows us to take advantage of (\ref{bilin1})--(\ref{bilin5})
in bounding this term.
We prove (\ref{PsiF}) in \S \ref{SS:DFC}.

\begin{lem}
For $\sigma \in [0, \sigma_1 - 1]$,
the derivative fields $\psi_\ell$ and connection coefficients $A_m$ satisfy
\begin{equation}
\lVert P_k (A_m(s) \psi_\ell(s)) \rVert_{F_k(T)} \lesssim
(s 2^{2k})^{-3/8} (1 + s2^{2k})^{-2} 2^{-(\sigma - 1)k} b_k(\sigma).
\label{APsiF}
\end{equation}
\end{lem}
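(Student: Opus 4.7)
The plan is to prove the bound via a Littlewood-Paley decomposition of the product. Write
\[
P_k\bigl(A_m(s) \psi_\ell(s)\bigr) = \sum_{k_1, k_2 \in \mathbf{Z}} P_k\bigl( P_{k_1} A_m(s) \cdot P_{k_2} \psi_\ell(s) \bigr),
\]
where the surviving triples $(k, k_1, k_2)$ fall into the usual low-high, high-low, and high-high-to-low regimes. For each piece I would apply the $F_k(T)$ product estimates (either the soft bound (\ref{FsoftBound}) together with H\"older, or the bilinear estimates of Lemma \ref{L:L2Bilinear}), and then sum using the frequency-envelope rules (\ref{Sum 1})--(\ref{Sum 2}).

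The key subproblem is to obtain a frequency-localized heat-time decay estimate for $P_{k_1} A_m(s)$ in $F_{k_1}(T)$. Starting from the integral representation (\ref{CC Integral Rep}) and the heat equation (\ref{Psi_s Equation}) for $\psi_s$, one has
\[
P_{k_1} A_m(s) = -\int_s^\infty P_{k_1} \Im\bigl( \overline{\psi_m(s')}\, \psi_s(s') \bigr)\, ds',
\]
with $\psi_s(s') = D_\ell \psi_\ell(s')$ carrying a spatial derivative (and a cubic remainder $A_\ell \psi_\ell$). Expanding the bilinear integrand by a further Littlewood-Paley decomposition and applying Lemma \ref{L:L2Bilinear} to each frequency interaction, using the $F_{k'}$-bound (\ref{PsiF}) on $P_{k'}\psi(s')$ and integrating in $s'$, should yield a bound of the schematic form
\[
\lVert P_{k_1} A_m(s) \rVert_{F_{k_1}(T)} \lesssim 2^{k_1} (s 2^{2k_1})^{-\alpha}(1 + s 2^{2k_1})^{-N} 2^{-\sigma k_1} b_{k_1}(\sigma),
\]
for some exponent $\alpha$ strictly less than $\tfrac{1}{2}$ and some large $N$. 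The short-heat-time singularity arises because $P_{k'}\psi(s')$ is only $F_{k'}$-bounded uniformly for $s' \gtrsim 2^{-2k'}$, and the derivative on $\psi_s$ costs a factor of $2^{\max(k',k'')}$ in the integrand.

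Substituting this $F_{k_1}(T)$ bound on $P_{k_1}A_m(s)$ together with the $F_{k_2}(T)$ bound (\ref{PsiF}) on $P_{k_2}\psi_\ell(s)$ into the Littlewood-Paley sum, applying the bilinear estimates (\ref{bilin1})--(\ref{bilin5}) (and their local-smoothing improvements where relevant) in each frequency regime, and summing against the frequency envelopes gives (\ref{APsiF}). The decay $(1 + s 2^{2k})^{-2}$ is inherited from the slower of the two parabolic decay factors after summation, while the specific short-time exponent $3/8$ is produced by optimizing the combined short-time singularities of $P_{k_1}A_m$ and $P_{k_2}\psi_\ell$ over the dominant interaction frequency.

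The main obstacle is handling the short-time regime $s \ll 2^{-2k}$ sharply: the bound on $P_{k_1} A_m(s)$ degenerates there because the heat time has not yet smoothed out the full bilinear integrand in (\ref{CC Integral Rep}). To recover the clean $(s 2^{2k})^{-3/8}$ decay one must split the heat-time integral at $s' \sim 2^{-2k_1}$, use the strongest bilinear estimate available in each subrange (trading some parabolic decay for lateral Strichartz control from the enriched $F_k$ space), and then carefully balance exponents so that the short-time factor $(s 2^{2k})^{-3/8}$ emerges as an integrable singularity. Once this frequency-localized heat-time estimate for $A_m$ is in place, the remainder of the proof is a routine assembly of bilinear product estimates and envelope summations.
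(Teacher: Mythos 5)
Your overall shape — Littlewood--Paley decompose the product, derive a frequency-localized heat-time decay bound on $P_{k_1}A_m(s)$ from the integral representation (\ref{CC Integral Rep}), and then reassemble with bilinear estimates and envelope summation — mirrors the paper's architecture. But there are two concrete gaps in the tools you invoke.

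First, the bilinear estimates you cite do not land in the right space. Lemma \ref{L:L2Bilinear} bounds products in $L^2_{t,x}$, not in $F_k(T)$, so it cannot be used (as you propose) to produce an $F_{k_1}(T)$ bound on $P_{k_1}A_m(s)$, nor to produce the target $F_k(T)$ bound on $P_k(A_m\psi_\ell)$. Likewise, the soft bound (\ref{FsoftBound}) plus H\"older gives you $F_k$ control of an $L^2_xL^\infty_t\cap L^4_{t,x}$ function, but that is not what falls out of a product of two frequency-localized pieces. The tools the paper actually uses for $F_k$-valued products are Lemma \ref{Parabolic Bilinear Estimates} and, crucially, Lemma \ref{BIKT Lemma 5.1}, both set in $F_k(T)\cap S^{1/2}_k(T)$. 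Indeed the paper's proof of (\ref{APsiF}) (Lemma \ref{A Psi Bound}) just quotes the bounds (\ref{A Psi High}) and (\ref{A Psi Low}), which are intermediate outputs of the proof of Lemma \ref{U Bound}, which in turn rests on Lemma \ref{A Bound} via a bootstrap chain through Lemmas \ref{Psi Quadratic} and \ref{Psi A Cubic}.

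Second, the $S^\omega_k$ refinement (definition (\ref{SomegaDef})) is essential and absent from your plan. The high-high-to-low cascade in each of these product estimates is summed only because of the $2^{(j-k)(1-\omega)}$ gain in Lemma \ref{Parabolic Bilinear Estimates}/\ref{BIKT Lemma 5.1} with $\omega$ chosen near $1/2$; without it, summing $\sum_{j\geq k}$ against an $\ell^2$ envelope diverges. You attribute the summability to ``lateral Strichartz control from the enriched $F_k$ space,'' but the lateral spaces play no role here; the enrichment that matters is $S^\omega_k$, and the paper proves its bounds throughout in the stronger norm $F_k(T)\cap S^{1/2}_k(T)$ precisely so that this mechanism can be iterated. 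The $(s2^{2k})^{-3/8}$ factor is then not obtained by ``optimizing combined short-time singularities'': it comes out of the case split $k+j\lessgtr 0$ in (\ref{A Psi High k})/(\ref{A Psi Low}), with the $2^{-(1/2+\delta)(k+j)}$ factor (a trace of the $\omega=1/2$ exponent, worsened by $\delta$ from the envelope property) dominated by $(s2^{2k})^{-3/8}$ in the short-time regime, and the exponent $3/8$ is simply any convenient choice strictly between $1/4+\delta$ and $1/2$. Finally, you do not need to split the $s'$-integral at $s'\sim 2^{-2k_1}$ and trade spaces; the paper integrates directly over dyadic blocks $[2^{2j-1},2^{2j+1}]$ for $j\geq k_0$ using the heat-time-dependent envelope $b_{k,s}(\sigma)$, and the short-time singularity simply never arises there because the $A_m$-bound already carries the $(1+s2^{2k})^{-4}$ decay.
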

Like the previous estimate, this estimate is also used 
in \S \ref{SS:B} in controlling the magnetic nonlinearity.
Its proof is given in \S \ref{SS:CCC}.
The need for this estimate arises from the need
to control $D_x \psi$ appearing in
representation (\ref{CC Integral Rep}) of $A$.

The next several estimates are used in \S \ref{SS:V} to control the electric
potential.  In particular, they provide a source of smallness crucial here for closing
the bootstrap argument.  
They are proved in \S \ref{SS:CCC}.

\begin{lem}
For $\sigma \in [2 \delta, \sigma_1 - 1]$, the connection coefficient $A_x$ satisfies
\begin{equation}
\lVert A_x^2 \rVert_{L_{t,x}^2} 
\lesssim
\sup_{j \in \mathbf{Z}} b_j^2
\cdot
\sum_{k \in \mathbf{Z}} b_k^2
\label{AxL2}
\end{equation}
and
\begin{equation}
\lVert P_k A_x^2(0) \rVert_{L^2_{t,x}}
\lesssim
2^{-\sigma k} b_k(\sigma) \cdot \sup_{j} b_j \cdot \sum_{\ell \in \mathbf{Z}} b_\ell^2.
\label{PkAxL2}
\end{equation}
\end{lem}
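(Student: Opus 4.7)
The plan is to use the integral representation (\ref{CC Integral Rep}) of $A_x(0)$ together with the parabolic bounds (\ref{PsiF})--(\ref{APsiF}) and the bilinear estimates of Lemmas \ref{L:NkBilinear}--\ref{L:L2Bilinear} to control $A_x^2$ in $L^2_{t,x}$. The overall arithmetic is quartic in $\psi$: two $\psi$-factors come from each of the two $A_x$'s via (\ref{CC Integral Rep}), yielding a factor $\sup_j b_j$ from two "low-frequency" placements and $\sum_\ell b_\ell^2$ from the remaining $\ell^2$-summation.

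First I would establish the auxiliary $F_k(T)$-bound
\begin{equation*}
\lVert P_k A_m(s) \rVert_{F_k(T)} \lesssim 2^{-\sigma k} b_k(\sigma)\,(1 + s 2^{2k})^{-N},
\end{equation*}
valid for $s \geq 0$, $\sigma \in [0, \sigma_1 - 1]$, and any fixed large $N$. This follows by substituting $\psi_s = \partial_\ell \psi_\ell + i A_\ell \psi_\ell$ into (\ref{CC Integral Rep}), dyadically decomposing, and applying the bilinear $N_k$-estimates (\ref{bilin1})--(\ref{bilin3}) together with (\ref{PsiF}) for $\psi_m$ and (\ref{APsiF}) for $A_\ell \psi_\ell$. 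Integration in $s$ converges thanks to the smoothing factor $(1 + s 2^{2k})^{-4}$ in (\ref{PsiF}) and the integrable-at-zero weight $(s 2^{2k})^{-3/8}(1 + s 2^{2k})^{-2}$ in (\ref{APsiF}); the frequency-envelope summation rules (\ref{Sum 1})--(\ref{Sum 2}) then produce the $\sigma$-weighted envelope bound.

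With this auxiliary bound in hand I would derive (\ref{PkAxL2}) by decomposing $P_k(A_x^2) = \sum_{k_1, k_2} P_k(P_{k_1} A_x \cdot P_{k_2} A_x)$, retaining only triples $(k, k_1, k_2)$ compatible with the Littlewood--Paley trichotomy. An application of the bilinear $L^2$-estimate (\ref{bilin4}) combined with the auxiliary $F_k$-bound yields a bound that is quadratic in the $b_k$'s. To upgrade the resulting sum to the desired $\sup_j b_j \cdot \sum_\ell b_\ell^2$ form, I would expand one of the two $A_x$ factors once more via (\ref{CC Integral Rep}), placing its lower-frequency $\psi$-piece in an $L^\infty_{t,x}$-type norm using Bernstein applied to (\ref{PsiF}), thereby extracting one factor of $\sup_j b_j$ while leaving the remaining two $\psi$-indices $\ell^2$-square-summable. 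The envelope summation rules (\ref{Sum 1})--(\ref{Sum 2}) then produce the external weight $2^{-\sigma k} b_k(\sigma)$ at the output frequency $\sim 2^k$. Estimate (\ref{AxL2}) follows from an analogous unprojected decomposition, summing the contributions in $k$ without isolating a particular output frequency.

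Main obstacle: The hardest step is controlling $P_k A_m(s)$ in $F_k(T)$ uniformly in $s$, because the derivative loss in the term $\overline{\psi_m}\partial_\ell \psi_\ell$ and the $s \to 0$ singularity of the bound (\ref{APsiF}) must both be carefully balanced against the parabolic smoothing factors. A second delicate point is extracting the factor $\sup_j b_j$ from the quartic-in-$\psi$ structure of $A_x^2$ rather than settling for the crude bound $(\sum_k b_k)^2$, which requires placing one low-frequency $\psi$-piece in an $L^\infty$-type norm via Bernstein and carefully tracking the trichotomy so that the smallness is realized on the correct factor.
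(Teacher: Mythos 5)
There is a genuine gap in the counting of powers of $b$, which makes the proposed route fail as written. You assert the auxiliary bound $\lVert P_k A_m(s) \rVert_{F_k(T)} \lesssim 2^{-\sigma k} b_k(\sigma)(1+s2^{2k})^{-N}$, which is \emph{linear} in the frequency envelope. But $A_m$ is quadratic in $\psi$ via the integral representation (\ref{CC Integral Rep}), so any honest envelope bound for $P_k A_m$ in a Strichartz-type space must itself be quadratic in the $b$'s. Indeed the paper's Lemma \ref{A Bound} proves $\lVert P_k A_m(s)\rVert_{F_k(T)\cap S_k^{1/2}(T)} \lesssim (1+s2^{2k})^{-4} 2^{-\sigma k} b_{k,s}(\sigma)$ where $b_{k,s}(\sigma)$ has the quadratic structure $\sum_{j\geq k} a_j a_j(\sigma)$ at $s=0$. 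With the linear bound you wrote, the bilinear $L^2$ estimate applied to $P_{k_1}A_x\cdot P_{k_2}A_x$ yields only a quadratic-in-$b$ output, which falls short of the target $\sup_j b_j^2 \cdot \sum_k b_k^2$ by two powers. You sense this and propose to re-expand one $A_x$ factor once more via (\ref{CC Integral Rep}); but that patch extracts only one additional power, landing you at cubic, still one power short. Conversely, if the auxiliary bound is stated correctly (quadratic), then the bilinear $L^2$ estimate alone already delivers the quartic bound and the re-expansion step becomes unnecessary -- the fix you propose is symptomatic of the auxiliary bound being wrong.

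It is also worth noting that the paper takes a rather more elementary route than the $F_k$/bilinear-$L^2$ machinery you suggest. It first proves the frequency-localized $L^4_{t,x}$ estimate (\ref{Pk A4}), $\lVert P_k A_m(0)\rVert_{L^4_{t,x}} \lesssim 2^{-\sigma k} b_k b_k(\sigma)$ (correctly quadratic), via the integral representation and the $L^4$ bilinear Lemma \ref{BIKT Lemma 5.4}. Then (\ref{AxL2}) follows immediately from H\"older ($\lVert A_x^2\rVert_{L^2}\leq\lVert A_x\rVert_{L^4}^2$), the Littlewood--Paley square function ($\lVert A_x\rVert_{L^4}^2 \lesssim \sum_k \lVert P_k A_x\rVert_{L^4}^2$), and the arithmetic $\sum_k b_k^4 \leq \sup_j b_j^2 \cdot \sum_k b_k^2$. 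For (\ref{PkAxL2}) the paper then just does a Littlewood--Paley decomposition of $P_k(A_x\cdot A_x)$ and places each factor in $L^4$ using the same estimate; no bilinear-in-$F_k$ machinery is needed. Your approach could be salvaged by replacing the incorrect auxiliary bound with either the paper's $L^4$ bound (\ref{Pk A4}) or the genuinely quadratic $F_k\cap S_k^{1/2}$ bound of Lemma \ref{A Bound}, after which the structure of the argument is sound.
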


\begin{lem}
For $\sigma \in [2 \delta, \sigma_1 - 1]$, the connection coefficient $A_t$ satisfies
\begin{equation}
\lVert A_t \rVert_{L_{t,x}^2}
\lesssim
(1 + \sum_{j \in \mathbf{Z}} b_j^2)^2 \sum_{k \in \mathbf{Z}} \lVert P_k \psi_x(0) \rVert_{L_{t,x}^4}^2
\label{AtL2}
\end{equation}
and
\begin{equation}
\lVert P_k A_t \rVert_{L^2_{t,x}} 
\lesssim
(1 + \sum_p b_p^2) \tilde{b}_k 2^{-\sigma k} b_k(\sigma).
\label{PkAtL2}
\end{equation}
\end{lem}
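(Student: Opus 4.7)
The plan is to start from the integral representation of $A_t$ along the heat flow, exploit the fact that $\varphi$ is a Schr\"odinger map at $s=0$ to replace $\psi_t$ by $i\psi_s$ modulo a commutator $\Psi$, and then use the parabolic decay of $\psi$ and $A\psi$ already established in \eqref{PsiF}--\eqref{APsiF} together with the smallness hypotheses (\ref{iED}) and (\ref{Main L4 Cal}).

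Concretely, I would start from
\[
A_t(s=0) = -\int_0^\infty \Im(\overline{\psi_t(s)}\,\psi_s(s))\,ds
\]
and introduce $\Psi = \psi_t - i\psi_s$, so that $\Psi(s=0) \equiv 0$ by the Schr\"odinger map equation. A direct algebraic expansion, using $\Im(\overline{(i\psi_s)}\psi_s) = -|\psi_s|^2$, yields the decomposition
\[
A_t(0) \;=\; \int_0^\infty |\psi_s(s)|^2\,ds \;-\; \int_0^\infty \Im(\overline{\Psi(s)}\,\psi_s(s))\,ds.
\]
The first (sign-definite) piece is the main term; the second is a perturbation that must be controlled via Lemma~\ref{L:Commutator}.

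For the main term, Minkowski's inequality reduces matters to bounding $\int_0^\infty \|\psi_s(s)\|_{L^4_{t,x}}^2\,ds$. Writing $\psi_s = \partial_\ell \psi_\ell + iA_\ell\psi_\ell$ and applying Littlewood-Paley, the contribution of the principal part $\partial_\ell\psi_\ell$ at frequency $2^k$ is controlled via \eqref{PsiF} together with parabolic smoothing of $L^4_{t,x}$ norms, which transfers $L^4_{t,x}$-smallness of $P_k\psi_x$ from the $s=0$ slice to positive $s$ with rapid decay. The key $s$-integral, schematically $\int_0^\infty 2^{2k}(1+s2^{2k})^{-2N}\,ds \lesssim 1$, absorbs the derivative loss, producing the factor $\sum_k \|P_k\psi_x(0)\|_{L^4_{t,x}}^2$. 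The magnetic correction $A_\ell\psi_\ell$ is treated using \eqref{APsiF}, each factor of $A$ costing a power of $\sum_j b_j^2$, which is where the outer factor $(1+\sum_j b_j^2)^2$ enters. For the commutator piece, I would use the evolution equation \eqref{commutator2} with zero initial data, apply Duhamel with the covariant heat semigroup, and estimate $\Psi(s)$ in $L^4_{t,x}$ through the bounds \eqref{PsiF}--\eqref{APsiF}; the bootstrap hypothesis (\ref{Main Bootstrap}) combined with (\ref{iED}) makes this contribution absorbable into the stated right-hand side.

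The frequency-localized version \eqref{PkAtL2} follows the same outline but with an outer $P_k$ projection, so the estimates must now track the frequency envelope $b_k(\sigma)$. For each of the factors in $|\psi_s|^2$ (or in $\overline{\Psi}\psi_s$), I apply the trilinear/bilinear estimates of \S3.2 to the $F_k$-bounds given by \eqref{PsiF} and \eqref{APsiF}, summing according to Corollaries~\ref{C:MTE1}--\ref{C:MTE2}; the off-diagonal decay from those corollaries ensures the sum over output frequency $k$ behaves like a frequency envelope, and $\tilde{b}_k$ emerges as the natural envelope combining $L^4_{t,x}$-data with $b$. The main obstacle, and the step requiring the most care, is handling the commutator $\Psi$: its evolution equation is nonlinear in $\psi$ and $\psi_s$, so the estimate on $\Psi$ must be closed self-consistently with the estimates on $\psi_x$ and $A_x$ at each heat time, using the smallness from (\ref{iED}) and (\ref{Main Bootstrap}) to avoid circularity.
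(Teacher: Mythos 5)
Your algebraic decomposition is correct: since $\Psi(0)=0$ and $\Im\bigl(\overline{(i\psi_s)}\psi_s\bigr)=-|\psi_s|^2$, one indeed gets $A_t(0)=\int_0^\infty|\psi_s|^2\,ds-\int_0^\infty\Im(\overline\Psi\psi_s)\,ds$. This is a genuinely different decomposition from the paper's proof of Lemma~\ref{L:AtL4}, which never introduces the commutator $\Psi$: it works directly with $\overline\psi_t\cdot D_\ell\psi_\ell$ and estimates the bilinear product in $L^2_{t,x}$ via the $L^4$-envelope bound (\ref{L2 Envelope Bound}), using $L^4$ control on $\psi_t(s),\psi_s(s)$ supplied by Lemmas~\ref{Psi_t Bound} and~\ref{L:Psi_t Flow Bound}. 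Your route would buy a sign-definite main term (potentially useful for coercivity arguments), but that structure is not exploited in the $L^2$ estimate being proved here, so it adds complexity without visible gain for this lemma.

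There is a genuine gap in the main-term estimate, and it is exactly at the step you gloss over. You propose to control $\int_0^\infty\lVert\psi_s(s)\rVert_{L^4_{t,x}}^2\,ds$ by appealing to (\ref{PsiF}) ``together with parabolic smoothing of $L^4_{t,x}$ norms.'' But (\ref{PsiF}) only gives $\lVert P_k\psi_x(s)\rVert_{L^4_{t,x}}\lesssim (1+s2^{2k})^{-4}b_k$, where $b_k$ is the $G_k$-norm envelope. Squaring, integrating in $s$, and summing over $k$ then produces $\sum_k b_k^2$, \emph{not} $\sum_k\lVert P_k\psi_x(0)\rVert_{L^4}^2 \sim \sum_k\tilde b_k^2$. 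This is fatal for the application: under the hypotheses, $\sum_k\tilde b_k^2\lesssim\varepsilon_1^2$ is small, whereas $\sum_k b_k^2$ is only $O(1)$; if the right side of (\ref{AtL2}) were replaced by $\sum_k b_k^2$ the quantity $\cV$ defined in (\ref{calV def}) would not be small and the bootstrap in \S\ref{SS:OutlineConclusion} would not close. What you actually need is the $\tilde b_k$-envelope bound $\lVert P_k\psi_s(s)\rVert_{L^4}\lesssim (1+s2^{2k})^{-2}2^k\tilde b_k(1+\sum_j b_j^2)$, and this does \emph{not} follow from (\ref{PsiF}) plus abstract smoothing --- the fields $\psi_x(s)$ solve a nonlinear heat equation, and the propagation of the $L^4$-envelope to $s>0$ requires the explicit Duhamel iteration of the paper's Lemma~\ref{L:Psi_t Flow Bound} (and its input, Lemma~\ref{Psi_t Bound}), where the nonlinear source $U_x$ is estimated via Lemma~\ref{BIKT Lemma 5.4} and absorbed with a small factor. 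Once that lemma is in hand, your main-term estimate works, but then the paper's direct $\psi_t$-route becomes available as well and is shorter.

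The commutator piece has a second, unaddressed gap. You acknowledge $\Psi(s)$ must be estimated, but you describe its evolution equation as ``nonlinear in $\psi$ and $\psi_s$,'' which is not quite the structure: (\ref{commutator2}) is a \emph{linear}-in-$\Psi$ covariant heat equation with a cubic forcing in $\psi$, and with the vanishing data $\Psi(0)=0$ the Duhamel representation makes $\Psi(s)$ a cubic expression in $\psi$. To close your argument you would need (i) an $L^4$ frequency-envelope bound on $P_k\Psi(s)$ obtained by a Gronwall-type iteration that absorbs the $i\Im(\Psi\bar\psi_\ell)\psi_\ell$ term, then (ii) a bilinear $L^4\cdot L^4\to L^2$ paraproduct estimate for $\overline\Psi\psi_s$ analogous to (\ref{L2 Envelope Bound}), and (iii) enough decay in $s$ to integrate. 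None of these are established in your proposal, and the net effect is that your route requires re-deriving the paper's Lemma~\ref{L:Psi_t Flow Bound} machinery for two auxiliary quantities ($\psi_s$ and $\Psi$) instead of for one ($\psi_t$). For the frequency-localized version (\ref{PkAtL2}) you propose using the $F_k/G_k/N_k$ trilinear estimates of \S3.3 in place of the paper's $L^4$-envelope argument in Corollary~\ref{C:PkAtL2}; that is another deviation that could work in principle but is not checked, and it is not obvious how to produce the $\tilde b_k$ factor from those spaces without again passing through the $L^4$-envelope machinery.
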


In subsequent estimates the following shorthand will be useful:
\begin{equation}
\cV := 
(1 + \sum_{j \in \mathbf{Z}} b_j^2)^2
\sum_{\ell \in \mathbf{Z}} \lVert P_\ell \psi_x(0) \rVert_{L_{t,x}^4}^2
+ (1 + \sum_{\ell} b_\ell^2)  \sup_{k \in \mathbf{Z}} b_k^2.
\label{calV def}
\end{equation}
Under the assumptions of Theorem \ref{EnvelopeTheorem},
$\cV$ is a very small quantity, being at least as good as $O(\varepsilon_1^{1/2})$.

\subsection{Smoothing and Strichartz} \label{SS: SmoothingStrichartz}

The key result of \S \ref{S:Local smoothing} is the following frequency-localized bilinear Strichartz estimate.
\begin{thm}
Suppose that $\psi_m$ satisfies (\ref{NLS}) on $[-T, T]$.  
Assume $\sigma \in [0, \sigma_1 - 1]$.
Let the frequency envelopes
$b_j$ and $c_j$ be defined as in (\ref{b Envelope}) and 
(\ref{c Envelope}).
Let $\cV$ be given by (\ref{calV def}). 
Suppose also that  $2^{j - k} \ll1$.  Then
\begin{align}
2^{k - j} (1 + s 2^{2j})^{8}
\lVert P_j \psi_\ell(s) \cdot P_k \psi_m(0) \rVert_{L^2_{t,x}}^2
\lesssim 2^{-2 \sigma k}
c_j^2 c_k^2(\sigma) + 
\cV^2 b_j^2 b_k^2(\sigma).
\label{BSs}
\end{align}
\label{T:BilinearStrichartz}
\end{thm}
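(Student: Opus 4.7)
The strategy is to adapt the physical-space bilinear Strichartz method of Planchon-Vega to the nonlinear covariant setting, viewing $u := P_j \psi_\ell(s)$ and $v := P_k \psi_m(0)$ as $t$-dependent solutions of Schr\"odinger-type equations derived from (\ref{NLS}) and applying Duhamel to each. I would decompose $u = u_{\mathrm{f}} + u_{\mathrm{D}}$ and $v = v_{\mathrm{f}} + v_{\mathrm{D}}$, where $u_{\mathrm{f}}, v_{\mathrm{f}}$ are the free Schr\"odinger evolutions of the initial data $u(\cdot, 0), v(\cdot, 0)$ and $u_{\mathrm{D}}, v_{\mathrm{D}}$ are the Duhamel integrals of the respective nonlinearities. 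Expanding $\|u v\|_{L^2_{t,x}}^2$ into four cross terms, the purely free-evolution term will yield the $c_j c_k$ contribution via a linear bilinear Strichartz estimate, while each of the three remaining terms will pick up a factor of $\cV$ from the smallness of the nonlinearity in the $N_k$-norms.

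To establish the free-propagator bound
\[\|e^{it\Delta} P_j f \cdot e^{it\Delta} P_k g\|_{L^2_{t,x}}^2 \lesssim 2^{j-k} \|P_j f\|_{L^2}^2 \|P_k g\|_{L^2}^2, \qquad 2^{j-k} \ll 1,\]
I would follow Planchon-Vega: fix a direction $\theta \in \mathbf{S}^1$ and a monotone weight $\phi(x) = \mathrm{sgn}(\theta \cdot x)$, then compute $\frac{d}{dt} \int \phi\, |e^{it\Delta} P_j f|^2 |e^{it\Delta} P_k g|^2 \dx$. Integration by parts combined with the frequency separation produces the claimed bilinear estimate. Applied to $u_{\mathrm{f}} v_{\mathrm{f}}$ and inserting the initial-data bounds $\|P_k \psi_m(0, \cdot, 0)\|_{L^2_x} \lesssim 2^{-\sigma k} c_k(\sigma)$ (Corollary \ref{DataCor}) and $\|P_j \psi_\ell(s, \cdot, 0)\|_{L^2_x} \lesssim c_j (1 + s 2^{2j})^{-N}$ (Corollary \ref{EED Cor}), then multiplying through by $2^{k-j}(1+s 2^{2j})^8$, produces the first term on the right-hand side of (\ref{BSs}) as long as $N$ is chosen at least $4$.

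For the three mixed cross terms, I would use the main linear estimate (Proposition \ref{MainLinearEstimate}) to bound each Duhamel factor in the space $G_k$ (or $G_j$) by the $N_k$-norm of the corresponding nonlinearity; these $N_k$-norms will carry a factor of $\cV$ inherited from the parabolic estimates of \S \ref{SS:ParabolicEstimates} (cf.\ (\ref{AxL2})--(\ref{PkAtL2}) for the electric potential and the analysis of the magnetic potential in \S \ref{SS:B}). The bilinear estimate (\ref{bilin5 improv}), which trades local smoothing against the maximal function space, then preserves the $2^{-(k-j)}$ gain while delivering the $\cV$ factor, yielding the $\cV^2 b_j^2 b_k^2(\sigma)$ contribution after squaring. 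The decay factor $(1 + s 2^{2j})^8$ is absorbed by the corresponding heat-flow decay in (\ref{PsiF}) and (\ref{Hard Field Bounds}).

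The main obstacle is to retain the bilinear gain $2^{-(k-j)}$ in the presence of the magnetic nonlinearity $B_m = -i\partial_\ell(A_\ell \psi_m) - i A_\ell \partial_\ell \psi_m$, whose derivative loss on $\psi_m$ cannot be recovered by $L^4$-Strichartz alone. I would handle this by absorbing $A_\ell \partial_\ell$ into the covariant Laplacian $D_\ell D_\ell$ and first establishing a physical-space local smoothing estimate for the resulting magnetic frequency-localized Schr\"odinger equation, from which (\ref{BSs}) follows by the interpolation above; the residual error terms involving $A_x$ are then controlled via the parabolic representation (\ref{CC Integral Rep}) of $A_x$ and the bilinear bounds of Lemma \ref{L:L2Bilinear}, supplying the $\cV$ smallness needed to close the estimate.
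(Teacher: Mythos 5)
Your high-level diagnosis is sound — the obstruction is the magnetic term $B_{\lo\wedge\lo}$, and the cure involves a physical-space Planchon--Vega type argument for the covariant/magnetic Schr\"odinger operator rather than the free flow. However, the way you organize the argument has two genuine gaps.

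First, the Duhamel split in Schr\"odinger time, $u = u_{\mathrm{f}} + u_{\mathrm{D}}$ with $u_{\mathrm{f}} = e^{it\Delta}u(\cdot,0)$, is not available to you. To control $u_{\mathrm{D}}$ you would estimate $P_j(B_m + V_m)$ in $N_j$; but the piece $P_j B_{\lo\wedge\lo}$ is \emph{not} perturbative in $N_j$ — it is precisely the term that estimate (\ref{BSs}) exists to control (cf.\ the discussion at the end of \S\ref{SS:B} and the treatment of $N_{\mathrm{lo}}$ in \S\ref{SS:OutlineConclusion}). Asserting that ``these $N_k$-norms will carry a factor of $\cV$'' is true only for $V_m$ and $B_{\hi\lor\hi}$; for $B_{\lo\wedge\lo}$ the claim is circular. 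Your last paragraph implicitly acknowledges this by proposing to absorb $A_\ell\partial_\ell$ into the covariant Laplacian $\Delta_\cA$, but that fix is incompatible with the earlier plan: once the low-frequency magnetic potential is moved into the ``linear'' operator, the free propagator $e^{it\Delta}$ is no longer the correct object, the clean four-term cross expansion disappears, and the commutators between $\Delta_\cA$ and the Littlewood--Paley/angular projections (which do not commute with $\cA$) must be tracked; the paper handles these via Lemma~\ref{L:LeibnizRule} and the remainder $R$ in (\ref{R def}).

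Second, even granting a directional local smoothing estimate for the magnetic equation, you do not spell out how to pass from this monolinear smoothing estimate to a \emph{bilinear} Strichartz bound for a product of two solutions. That bridge is not an ``interpolation.'' The paper's device is to form the tensor product $w(x,y) = u(x)v(y)$ on $\mathbf{R}^{2d}\times[0,T]$, observe that $w$ again solves a magnetic NLS in the doubled variables, and then run the directional virial/local-smoothing computation along $\tilde\e = (-\e,\e)/\sqrt{2}$; the trace of $E_{\tilde\e}(w)$ on $\{x=y\}$ recovers $\|uv\|_{L^2_{t,x}}^2$ after a Bernstein step in the $d-1$ transverse directions (Corollary~\ref{C:ABS}). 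This tensor-product reduction, together with the decomposition of $P_{\theta,j}$ into narrow sectors (\ref{Psum}) needed to make the bootstrap (\ref{RTI}) close, is the technical core you are missing. The $s>0$ case also uses a Duhamel split, but in heat time $s$ rather than Schr\"odinger time $t$ (see (\ref{Uhamel})), with translation-invariance handling the linear heat-flow factor.
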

In \S \ref{SS:App}  
we split the proof into two cases: $s = 0$ and $s > 0$,
the more involved being the $s = 0$ case.  In either case,
if instead we only were to appeal to the local smoothing-based estimate (\ref{bilin5 improv})
and the frequency envelope definition (\ref{b Envelope}), then we would
get the bound
\begin{equation*}
2^{k - j} (1 + s 2^{2j})^8
\lVert P_j \psi_\ell(s) \cdot P_k \psi_m(0) \rVert_{L^2_{t,x}}^2
\lesssim
b_j^2 b_k^2.
\end{equation*}
In practice this sort of bound must needs be summed over $j \ll k$.
When initial energy is assumed to be small, as is done in \cite{BeIoKeTa11},
the sum $\sum_j b_j^2 \ll 1$ is small, and consequently the resulting term perturbative.
In our subthreshold energy setting this is no longer the case, as in fact the sum
may be large.  What (\ref{BSs}) reveals, though, is that any $b_j$ contributions
come with a power of $\varepsilon$.  
In view of additional work which we present in due course,
this turns out to be sufficient for establishing $b_k \lesssim c_k$.

An interesting related bound is the following local smoothing estimate, also
proved in \S \ref{SS:App}. 
It arises as an easy corollary of our proof of Theorem \ref{T:BilinearStrichartz}.
\begin{thm}
Suppose that $\psi_m$ satisfies (\ref{NLS}) on $[-T, T]$.  
Assume $\sigma \in [0, \sigma_1 - 1]$.
Let the frequency envelopes
$b_j(\sigma)$ and $c_j(\sigma)$ be defined as in (\ref{b Envelope}) and 
(\ref{c Envelope}).
Also, let $\cV$ be given by (\ref{calV def}).
Then
\begin{equation}
2^{k} \sup_{\lvert j - k \rvert \leq 20} \sup_{\theta \in \mathbf{S}^1}
\lVert P_{j, \theta} P_k \psi_m
\rVert_{L_{\theta}^{\infty, 2}}^2
 \lesssim
2^{-2 \sigma k} c_k^2(\sigma) +
\cV 2^{-2 \sigma k} b_k^2(\sigma)
\label{LS}
\end{equation}
holds for each $k \in \mathbf{Z}$.
\label{ls thm}
\end{thm}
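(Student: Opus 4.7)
The plan is to obtain Theorem \ref{ls thm} as a specialization of the physical-space multiplier argument used for Theorem \ref{T:BilinearStrichartz}: when the second factor $P_k \psi_m(0)$ in (\ref{BSs}) is replaced by $P_{j,\theta} P_k \psi_m$ itself (or, equivalently, when the bilinear identity is contracted against $\overline{P_k \psi_m}$ rather than $\overline{P_k \psi_m(0)}$), the left-hand side collapses to exactly the local smoothing quantity in (\ref{LS}).

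First, starting from the frequency-localized equation
\[
(i\partial_t + \Delta)P_k \psi_m = P_k B_m + P_k V_m,
\]
I would pair against $\overline{P_k \psi_m}$ times a Planchon--Vega type weight $\chi_{j,\theta}(x\cdot\theta)$ adapted to the directional frequency projection $P_{j,\theta}$ with $|j-k|\leq 20$, integrate in $x$ and $t$ over $(-T,T)\times\rr$, and take the real part. The $\Delta$ contribution, together with an angular decomposition and Plancherel in the $H_\theta$-slice, produces $2^k \lVert P_{j,\theta} P_k \psi_m\rVert_{L^{\infty,2}_\theta}^2$ on the left (the $L^\infty$ in $x_1$ is obtained by the usual pigeonhole upgrade from the identity written on a time slab). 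The $\partial_t$ contribution is a boundary term bounded by $\lVert P_k \psi_m\rVert_{L^\infty_t L^2_x}^2$, which by (\ref{Hard Field Bounds}) at $s=0$ and the definition of $c_k(\sigma)$ is $\lesssim 2^{-2\sigma k}c_k^2(\sigma)$.

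Next I would dispatch the nonlinear error $\int P_k \cN \cdot \chi_{j,\theta}\cdot \overline{P_k \psi_m}\,dx\,dt$. For the electric piece $P_k V_m$, the bounds (\ref{PkAxL2}) and (\ref{PkAtL2}), together with the trilinear estimate for the cubic term $\psi_\ell \operatorname{Im}(\overline{\psi_\ell}\psi_m)$ obtained by (\ref{bilin4}) and summation via Corollaries \ref{C:MTE1}--\ref{C:MTE2}, give a bound of size $\cV \cdot 2^{-\sigma k}b_k(\sigma)\cdot\lVert P_k \psi_m\rVert_{F_k(T)}$, which by (\ref{PsiF}) at $s=0$ is $\lesssim \cV \cdot 2^{-2\sigma k}b_k^2(\sigma)$. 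For the magnetic piece $P_k B_m = -iP_k\partial_\ell(A_\ell \psi_m) - i P_k(A_\ell \partial_\ell \psi_m)$, an integration by parts transfers the spatial derivative either onto the weight $\chi_{j,\theta}$ (yielding a harmless factor of $2^j \sim 2^k$ absorbed against the $2^{-k/2}$ from (\ref{APsiF})) or onto $\overline{P_k \psi_m}$; then (\ref{APsiF}) applied to $A_\ell(s)\psi_m(s)$ with $s = 0$ combined with the bilinear Lemma \ref{L:NkBilinear} again produces the same product structure.

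After Cauchy--Schwarz, the nonlinear error is $\lesssim \cV^{1/2}\cdot 2^{-\sigma k}b_k(\sigma)\cdot 2^{k/2}\lVert P_{j,\theta}P_k\psi_m\rVert_{L^{\infty,2}_\theta}$, so absorbing one factor of the local smoothing norm to the left (or, equivalently, applying the arithmetic/geometric inequality) yields
\[
2^k \sup_{|j-k|\leq 20}\sup_{\theta\in \mathbf{S}^1}\lVert P_{j,\theta}P_k\psi_m\rVert_{L^{\infty,2}_\theta}^2 \lesssim 2^{-2\sigma k}c_k^2(\sigma) + \cV\cdot 2^{-2\sigma k}b_k^2(\sigma),
\]
which is (\ref{LS}).

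The main obstacle is the magnetic term $A_\ell \partial_\ell \psi_m$, in which the spatial derivative lands on a factor at the same frequency $2^k$ as the output, so a naive bilinear pairing loses a derivative. The remedy is the heat-flow representation (\ref{CC Integral Rep}) of $A_\ell$ together with the parabolically smoothed estimate (\ref{APsiF}), which supplies the smallness factor $\cV^{1/2}$ necessary to beat the energy (rather than small-energy) regime; without this gain the corresponding term in $Z_2(k)$-type interactions would only be perturbative under a small-energy hypothesis, not a small-energy-dispersion one.
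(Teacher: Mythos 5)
The proposal captures the Planchon--Vega flavor of the paper's argument, but it diverges in a way that creates a genuine gap, centered on how $\Blo$ is treated.

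\textbf{Main issue: $\Blo$ cannot be paired away as an error.} You propose to pair $(i\partial_t+\Delta)P_k\psi_m = P_k\cN$ against $\chi_{j,\theta}\overline{P_k\psi_m}$ and to dualize \emph{all} of $P_k\cN$ (including $P_k\Blo$) against the local smoothing norm, closing via Cauchy--Schwarz and absorption. But as the paper stresses at the end of \S\ref{SS:DMP}, the best available bilinear bound for $\Blo$ in the relevant dual space only yields $\sum_{k_1,k_2\le k-\varpi}b_{k_1}b_{k_2}b_{k_3}$, which is $O(E_0)$, not $O(\cV^{1/2})$. So the constant you would need to absorb the error is not small, and no amount of Cauchy--Schwarz recovers (\ref{LS}). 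The point of the paper's argument is precisely that $\Alol$ must be absorbed into the \emph{operator} as a magnetic Laplacian $\Delta_\cA$ (Lemma~\ref{L:LS prep}, (\ref{ls app prep})). Once it is, the magnetic contribution to the local smoothing identity is not a spacetime error but a \emph{boundary} integral $\int_{x\cdot\theta=0}\theta\cdot\cA\,|v|^2\,dx'$, which is absorbed directly against the leading term using the $L^\infty$ bound $\|\cA\|_{L^\infty}\le\varepsilon_m 2^k$ supplied by the frequency gap $\varpi$ (see~(\ref{Gap Bound})). That mechanism, not Cauchy--Schwarz absorption, is what lets the estimate go through without small energy.

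\textbf{Second issue: the residual $\partial_\theta v - i2^jv$.} Even after the magnetic term is moved into the operator, the Planchon--Vega identity produces the residual term $2^j\int_0^T E_\theta(v+i2^{-j}\partial_\theta v)\,dt$ in (\ref{LS 1}), because the heuristic $\partial_\theta v\approx i2^j v$ is only approximate. Controlling this is the content of the derived-sequence bootstrap in Theorem~\ref{T:Abstract LS}, which gains a factor $\eta^q$ per iteration, and it requires the \emph{tighter} angular decomposition $P=\sum_l P_{\theta,j,l}$ of (\ref{Psum}); the proposal omits this, and without it the leading term cannot be isolated. There is also a commutator issue—$\Delta_\cA$ does not commute with $P_{j,\theta}P_k$—that the paper handles via the Leibniz rule of Lemma~\ref{L:LeibnizRule} and the remainder $R$ in (\ref{R def}); this is not addressed.

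\textbf{Third issue (technical but fatal as written): (\ref{APsiF}) at $s=0$.} You propose applying (\ref{APsiF}) to $A_\ell(s)\psi_m(s)$ at $s=0$. But the right-hand side of (\ref{APsiF}) contains the factor $(s2^{2k})^{-3/8}$, which diverges at $s=0$. That estimate is only usable \emph{inside} the $s$-integral coming from the representation (\ref{CC Integral Rep}) of $A_\ell$, where the singularity is integrable (cf.\ (\ref{Ik1k2})); it cannot be evaluated pointwise at $s=0$.
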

We note that (\ref{LS}) likely extends to $L^{\infty, 2}_{\theta, \lambda}$ for
$\lambda$ satisfying $|\lambda| < 2^{k -40}$, though we do not prove this.
For comparison, note that
from the definition of (\ref{b Envelope}) we have
\begin{equation}
2^{k} \sup_{\lvert j - k \rvert \leq 20} \sup_{\theta \in \mathbf{S}^1}
\sup_{\lvert \lambda \rvert < 2^{k - 40}} \lVert P_{j, \theta} P_k \psi_m
\rVert_{L_{\theta, \lambda}^{\infty, 2}}^2
 \lesssim
2^{-2 \sigma k} b_k^2(\sigma).
\label{LSold}
\end{equation}
On the other hand,
while the right hand side of (\ref{LS}) may indeed be large,
it so happens thanks to our hypotheses of energy dispersion
and $L^4$ smallness that the $b_k(\sigma)$ term is perturbative.
For our purposes, this is a substantial improvement over (\ref{LSold}).
However, it can be seen from the argument in \S \ref{SS:OutlineConclusion} that
even an extension of (\ref{LS}) to $L^{\infty, 2}_{\theta, \lambda}$ spaces
is not sufficient for proving 
$b_k(\sigma) \lesssim c_k(\sigma)$: it is important that we can replace two ``$b_j$" terms
with corresponding ``$c_j$" terms as in (\ref{BSs}).

\subsection{Controlling the electric potential $V$} \label{SS:V}

\begin{lem}
Suppose that $\sigma < \frac{1}{6} - 2\delta$.
Then the electric potential term $V_m$ satisfies the estimate
\begin{equation}
\lVert P_k V_m \rVert_{N_k(T)} \lesssim
\left( \lVert A_x^2 \rVert_{L_{t,x}^2} + \lVert A_t \rVert_{L_{t,x}^2}
+ \lVert \psi_x^2 \rVert_{L_{t,x}^2} \right) 2^{-\sigma k} b_k(\sigma).
\label{VmBound}
\end{equation}
\label{L:VmBound}
\end{lem}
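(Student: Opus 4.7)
The plan is to reduce each of the three summands of $V_m$ to a bilinear expression of the form $F \cdot \psi_m$ (or $F \cdot \overline{\psi_m}$) with $\lVert F \rVert_{L^2_{t,x}}$ bounded by one of the three norms appearing in (\ref{VmBound}), and then apply the bilinear estimates of Lemma \ref{L:NkBilinear} after a Littlewood--Paley decomposition. For the first two summands, $A_t \psi_m$ and $A_x^2 \psi_m$, this reduction is immediate. For the cubic term, expand $\Im(z) = (z - \bar z)/(2i)$ and sum over $\ell$ to write
\begin{equation*}
\psi_\ell \Im(\overline{\psi_\ell} \psi_m) = \tfrac{1}{2i} \bigl[ |\psi_x|^2 \psi_m - \bigl(\textstyle\sum_\ell \psi_\ell^2\bigr) \overline{\psi_m} \bigr],
\end{equation*}
so that each factor of $\psi_m$ or $\overline{\psi_m}$ is multiplied by a scalar $F \in L^2_{t,x}$ with $\lVert F \rVert_{L^2_{t,x}} \lesssim \lVert \psi_x \rVert_{L^4_{t,x}}^2 \lesssim \lVert \psi_x^2 \rVert_{L^2_{t,x}}$.

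It then suffices to show that for arbitrary $F \in L^2_{t,x}$ one has
\begin{equation*}
\lVert P_k(F \psi_m) \rVert_{N_k(T)} \lesssim \lVert F \rVert_{L^2_{t,x}} \cdot 2^{-\sigma k} b_k(\sigma).
\end{equation*}
To this end, decompose $P_k(F \psi_m) = \sum_{k_1, k_2} P_k(P_{k_1} F \cdot P_{k_2} \psi_m)$ and partition into the usual three frequency regimes: the \emph{low--high} regime, where $F$ is of low frequency and $\psi_m$ is localized near $k$; the \emph{high--low} regime, where $F$ is localized near $k$ and $\psi_m$ is of low frequency; and the \emph{high--high} regime, where $k_1, k_2 \gg k$ with $k_1 \sim k_2$. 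In the low--high regime, group the low-frequency part of $F$ via $h = P_{\leq k - 10} F$ and apply (\ref{bilin1}) with $f = P_{\sim k} \psi_m$, using $\lVert h \rVert_{L^2_{t,x}} \leq \lVert F \rVert_{L^2_{t,x}}$ and $\lVert P_{\sim k} \psi_m \rVert_{F_k(T)} \lesssim 2^{-\sigma k} b_k(\sigma)$. In the high--high regime apply (\ref{bilin3}) with $g = P_{k_2} \psi_m \in G_{k_2}(T)$; the resulting series is controlled via Cauchy--Schwarz in $k_2$ together with the $\ell^2$-orthogonality bound $\sum_{k_2} \lVert P_{\sim k_2} F \rVert_{L^2_{t,x}}^2 \lesssim \lVert F \rVert_{L^2_{t,x}}^2$.

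The principal obstacle is the high--low regime, in which I apply (\ref{bilin2}) to obtain
\begin{equation*}
\lVert P_k(P_{\sim k} F \cdot P_{k_2} \psi_m) \rVert_{N_k(T)}
\lesssim
2^{-(k - k_2)/6} \lVert F \rVert_{L^2_{t,x}} \cdot 2^{-\sigma k_2} 2^{\delta(k - k_2)} b_k(\sigma)
\end{equation*}
for each $k_2 \leq k - 80$. Writing $2^{-\sigma k_2} = 2^{-\sigma k} 2^{\sigma(k - k_2)}$, the sum over $k_2$ becomes a geometric series with ratio $2^{-(1/6 - \delta - \sigma)(k - k_2)}$, which converges precisely when $\sigma + \delta < 1/6$. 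The stronger hypothesis $\sigma < 1/6 - 2\delta$ comfortably absorbs this, together with any minor envelope losses incurred elsewhere (e.g., in the Cauchy--Schwarz step of the high--high regime). Assembling the three regimes yields the claimed bilinear estimate, and summing over the three components of $V_m$ delivers (\ref{VmBound}).
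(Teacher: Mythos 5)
Your proof is correct and follows essentially the same strategy as the paper's: reduce each summand of $V_m$ to a bilinear product of an $L^2_{t,x}$ factor with a derivative field, perform the standard Littlewood--Paley trichotomy, and control each regime by the bilinear estimates of Lemma~\ref{L:NkBilinear}, with the hypothesis $\sigma < \tfrac16 - 2\delta$ entering exactly in the high--low regime to make the geometric series converge.

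One small point in your favor: the paper's proof is slightly schematic in its treatment of the cubic term, asserting directly that $f = \psi_x^2$ may be factored out as a scalar multiplying $\psi_x$. Your explicit expansion
\begin{equation*}
\sum_\ell \psi_\ell \Im(\overline{\psi_\ell} \psi_m) = \tfrac{1}{2i}\Bigl[\,|\psi_x|^2\,\psi_m - \bigl(\textstyle\sum_\ell \psi_\ell^2\bigr)\,\overline{\psi_m}\,\Bigr]
\end{equation*}
makes the reduction rigorous, and you correctly observe that conjugating $\psi_m$ is harmless since the $F_k/G_k/N_k$ norms and the annular Fourier localization are all preserved under conjugation. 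You also correctly identify that the binding constraint on $\sigma$ comes solely from the high--low interaction (summing $2^{-(1/6 - \sigma - \delta)(k - k_2)}$ over $k_2 \leq k - 80$), whereas the low--high and high--high regimes impose no constraint; this matches the role the hypothesis plays in the paper. No gaps.
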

\begin{proof}
Letting $f \in \{ A_t, A_x^2, \psi_x^2\}$, we bound $P_k (f \psi_x) $ in $N_k(T)$.
Begin with the following Littlewood-Paley decomposition of $P_k(f \psi_x)$:
\begin{align*}
P_k(f \psi_x) =& \; P_k (P_{<k - 80}f P_{k - 5 < \cdot < k + 5} \psi_x) + \\
&\sum_{ \substack{\lvert k_1 - k \rvert \leq 4 \\ k_2 \leq k - 80}} P_k(P_{k_1}f P_{k_2} \psi_x) + \\
&\sum_{ \substack{\lvert k_1 - k_2 \rvert \leq 90 \\ k_1, k_2 > k - 80}}
P_k( P_{k_1}f P_{k_2} \psi_x ).
\end{align*}
The first term is controlled using H\"older's inequality:
\begin{align*}
\lVert P_k (P_{<k - 80}f P_{k - 5 < \cdot < k + 5} \psi_x) \rVert_{N_k(T)}
&\leq \lVert P_k (P_{<k - 80}f P_{k - 5 < \cdot < k + 5} \psi_x) \rVert_{L_{t,x}^{4/3}} \\
&\leq \lVert P_{< k - 80} f \rVert_{L_{t,x}^2} \lVert P_{k - 5 < \cdot < k + 5} \psi_x \rVert_{L_{t,x}^4}.
\end{align*}
To control the second term we apply (\ref{bilin2}):
\begin{equation*}
\lVert P_k (P_{k_1} f P_{k_2} \psi_x) \rVert_{N_k(T)} 
\lesssim 
2^{(k_2 - k)/6}
\lVert P_{k_1} f \rVert_{L_{t,x}^2} \lVert P_{k_2} \psi_x \rVert_{G_{k_2}(T)}.
\end{equation*}
Using (\ref{b Envelope}), (\ref{Sum 1}), and $\sigma < 1/6 - 2 \sigma$,
we conclude
\begin{equation*}
\lVert  \sum_{ \substack{\lvert k_1 - k \rvert \leq 4 \\ k_2 < k - 80}} P_k(P_{k_1}f P_{k_2} \psi_x)
\rVert_{N_k(T)}
\lesssim
2^{-\sigma k} b_k(\sigma) \sum_{\lvert k_1 - k \rvert \leq 4} \lVert P_{k_1} f \rVert_{L_{t,x}^2}.
\end{equation*}
To control the high-high interaction, apply (\ref{bilin3}):
\begin{equation*}
\lVert P_k( P_{k_1}f P_{k_2} \psi_x ) \rVert_{N_k(T)}
\lesssim
2^{(k - k_2)/6} \lVert P_{k_1} f \rVert_{L_{t,x}^2} \lVert P_{k_2} \psi_x \rVert_{G_{k_2}(T)}.
\end{equation*}
Therefore, by (\ref{b Envelope}),
\begin{equation*}
\sum_{ \substack{\lvert k_1 - k_2 \rvert \leq 90 \\ k_1, k_2 > k - 80}}
\lVert 
P_k( P_{k_1}f P_{k_2} \psi_x )
\rVert_{N_k(T)}
\lesssim
\sum_{ \substack{\lvert k_1 - k_2 \rvert \leq 90 \\ k_1, k_2 > k - 80}}
2^{(k - k_2)/6} \lVert P_{k_1} f \rVert_{L_{t,x}^2} 2^{-\sigma k_2} b_{k_2}(\sigma).
\end{equation*}
Using Cauchy-Schwarz and (\ref{Sum 2}) yields
\begin{equation*}
\sum_{ \substack{\lvert k_1 - k_2 \rvert \leq 90 \\ k_1, k_2 > k - 80}}
\lVert 
P_k( P_{k_1}f P_{k_2} \psi_x )
\rVert_{N_k(T)}
\lesssim
2^{-\sigma k} b_k(\sigma) 
\left( \sum_{k_1 \geq k - 80} \lVert P_{k_1} f \rVert_{L_{t,x}^2}^2 \right)^{1/2},
\end{equation*}
and so, by switching the $L_{t,x}^2$ and $\ell^2$ norms, 
we get from the standard square function estimate
that
\begin{equation*}
\sum_{ \substack{\lvert k_1 - k_2 \rvert \leq 90 \\ k_1, k_2 > k - 80}}
\lVert 
P_k( P_{k_1}f P_{k_2} \psi_x )
\rVert_{N_k(T)}
\lesssim
\lVert f \rVert_{L_{t,x}^2} 2^{-\sigma k} b_k(\sigma).
\end{equation*}
\end{proof}

\begin{cor}
For $\sigma \in [0, \sigma_1 - 1]$ it holds that
\begin{equation*}
\lVert P_k V_m \rVert_{N_k(T)} 
\lesssim
\cV 2^{-\sigma k} b_k(\sigma).
\end{equation*}
\label{V bound corollary}
\end{cor}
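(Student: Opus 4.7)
The plan is to reduce the corollary to the three $L^2_{t,x}$ norms appearing on the right-hand side of Lemma \ref{L:VmBound}, and to absorb each of them into a piece of $\cV$ as defined in (\ref{calV def}). The bound on $\lVert A_x^2 \rVert_{L^2_{t,x}}$ comes straight from (\ref{AxL2}), together with the elementary inequality $\sup_j b_j^2 \cdot \sum_k b_k^2 \leq (1+\sum_\ell b_\ell^2)\sup_k b_k^2$, identifying this factor with the second summand of (\ref{calV def}). The bound on $\lVert A_t \rVert_{L^2_{t,x}}$ is precisely the first summand of (\ref{calV def}) via (\ref{AtL2}). For the third factor, I would use $\lVert \psi_x^2 \rVert_{L^2_{t,x}} = \lVert \psi_x\rVert_{L^4_{t,x}}^2$, the $L^4$ Littlewood--Paley square function identity, and Minkowski's inequality to obtain
\[
\lVert \psi_x\rVert_{L^4_{t,x}}^2 \;\sim\; \lVert \bigl(\textstyle\sum_k |P_k\psi_x|^2\bigr)^{1/2}\rVert_{L^4_{t,x}}^2 \;\lesssim\; \textstyle\sum_k \lVert P_k\psi_x\rVert_{L^4_{t,x}}^2,
\]
which again fits inside the first summand of $\cV$. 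Assembling these three ingredients yields the corollary on the range $\sigma < 1/6 - 2\delta$ admitted directly by Lemma \ref{L:VmBound}.

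To reach the full range $\sigma \in [0,\sigma_1-1]$, I would revisit the Littlewood--Paley decomposition of $P_k V_m$ used in the proof of Lemma \ref{L:VmBound}. In the low-high case, where a low-frequency coefficient is paired with $P_k \psi_m$, replacing the use of (\ref{bilin2}) by the improvement (\ref{bilin2 improv}) upgrades the off-diagonal decay from $2^{-|k-k_2|/6}$ to $2^{-|k-k_2|/2}$ and, via (\ref{Sum 1}), enlarges the admissible range to $\sigma < 1/2 - 2\delta$. Above that threshold the high-frequency mass must instead be shifted onto the coefficient itself: here the frequency-localized parabolic estimates (\ref{PkAxL2}) and (\ref{PkAtL2}) already carry the envelope factor $2^{-\sigma k} b_k(\sigma)$, and one pairs them with the low-frequency $\psi_m$ factor via H\"older and Bernstein, so that the remaining dyadic summation is controlled by (\ref{Sum 2}). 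The cubic electric contribution $\psi_\ell\,\Im(\overline{\psi_\ell}\psi_m)$ is handled by a parallel trilinear Littlewood--Paley splitting along the lines of Lemma \ref{L:Trilin1}, with two of the three $\psi$-factors supplying the $\sum_\ell\lVert P_\ell\psi_x(0)\rVert_{L^4}^2$ piece of $\cV$.

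The main obstacle I expect lies in the large-$\sigma$ regime just described: one must combine the frequency-localized coefficient bounds with the low-frequency pairings and the slowly-varying summation rules so that the resulting constant collapses to exactly $\cV \cdot 2^{-\sigma k} b_k(\sigma)$, rather than a strictly larger product. Once this bookkeeping is carried out in each of the sub-cases (low-high, high-low, and high-high) with exponents bounded away from $\delta$ in every geometric series, the corollary follows in the full stated range.
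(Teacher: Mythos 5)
Your proposal matches the paper's approach in its essentials: on the range $\sigma < 1/6 - 2\delta$, feed (\ref{AxL2}), (\ref{AtL2}), and the square-function bound into Lemma \ref{L:VmBound}, identifying the three $L^2_{t,x}$ factors with pieces of $\cV$; then for larger $\sigma$, re-run the Littlewood--Paley decomposition of $P_k V_m$ and, in the high--low term (coefficient at frequency $\sim k$, $\psi_x$ at low frequency), transfer the $2^{-\sigma k}b_k(\sigma)$ weight onto the coefficient via (\ref{PkAxL2}) and (\ref{PkAtL2}) rather than onto $\psi_x$. The treatment of the cubic $\psi_\ell\,\Im(\overline{\psi_\ell}\psi_m)$ term by an additional Littlewood--Paley splitting, extracting the envelope factor from the highest-frequency $\psi$, is likewise what the paper does.

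Two small points of imprecision, neither fatal to the idea. First, your proposed intermediate stage using (\ref{bilin2 improv}) to push the range out to $\sigma \lesssim 1/2 - O(\delta)$ is superfluous: once you commit to shifting the envelope weight onto the coefficient, the weaker estimate (\ref{bilin2}) already suffices for \emph{all} $\sigma \in [0,\sigma_1-1]$, because the $\sigma$-loss in the dyadic sum over the low-frequency $\psi_x$ index is gone entirely (that sum now only needs $\delta < 1/6$, not $\sigma < 1/6 - 2\delta$). Second, the passage ``pairs them with the low-frequency $\psi_m$ factor via H\"older and Bernstein, so that the remaining dyadic summation is controlled by (\ref{Sum 2})'' is muddled: you still need the bilinear $N_k$ estimate (\ref{bilin2}) (not bare H\"older/Bernstein) to land the product in $N_k(T)$ with the off-diagonal factor $2^{(k_2-k)/6}$, and since the remaining sum runs over low frequencies $k_2 \leq k - 80$, the relevant summation rule there is (\ref{Sum 1}); (\ref{Sum 2}) is what you use in the high--high sub-cases.
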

\begin{proof}
Given (\ref{VmBound}), this is a direct consequence of
(\ref{AxL2}), (\ref{AtL2}), and the fact that
\begin{equation*}
\lVert f \rVert_{L^4_{t,x}}^2
\lesssim
\sum_{k \in \mathbf{Z}} \lVert P_k f \rVert_{L^4_{t,x}}^2.
\end{equation*}
Therefore the result holds for $\sigma < 1/6 - 2 \delta$.

To extend the proof to larger $\sigma$, we may mimic the proof of
Lemma \ref{L:VmBound} by performing the same Littlewood-Paley decomposition
and then, with regard to the first and third terms of the decomposition, proceeding as
before in the proof of that lemma. The argument, however, must be modified in handling the
term
\begin{equation}
\sum_{ \substack{|k_1 - k| \leq 4 \\ k_2 \leq k - 80}} P_k (P_{k_1}f P_{k_2} \psi_x),
\label{V-term2}
\end{equation}
where $f \in \{A_t, A_x^2, \psi_x^2\}$.
We take different approaches according to the choice of $f$.

When $f = A_x^2$, we apply (\ref{bilin2}) and invoke (\ref{PkAxL2}) to obtain
\begin{align*}
\lVert \sum_{\substack{ | k_1 - k | \leq 4 \\ k_2 < k - 80} } P_k( P_{k_1} A_x^2 P_{k_2} \psi_x) \rVert_{N_k(T)}
&\lesssim \;
\sum_{\substack{ | k_1 - k | \leq 4 \\ k_2 < k - 80} }
2^{(k_2 - k)/6} \lVert P_{k_1} A_x^2 \rVert_{L^2_{t,x}}
\lVert P_{k_2} \psi_x \rVert_{G_{k_2}(T)} \\
&\lesssim \;
\sum_{\substack{ | k_1 - k | \leq 4 \\ k_2 < k - 80} }
2^{(k_2 - k)/6}
2^{-\sigma k_1} b_{k_1}(\sigma) b_{k_2} \cdot \sup_{j} b_j \cdot \sum_{\ell} b_\ell^2 \\
&\lesssim \;
2^{-\sigma k} b_k(\sigma) \cdot b_k \cdot \sup_{j} b_j \cdot \sum_j b_j^2.
\end{align*}

In the case where $f = A_t$, we apply (\ref{bilin2}) and use (\ref{PkAtL2}) to conclude
\begin{equation*}
\lVert \sum_{\substack{ | k_1 - k | \leq 4 \\ k_2 < k - 80} } P_k( P_{k_1} A_t P_{k_2} \psi_x) \rVert_{N_k(T)}
\lesssim
2^{-\sigma k} b_k(\sigma) \tilde{b}_k b_k (1 + \sum_p b_p^2),
\end{equation*}
which suffices by Cauchy-Schwarz.

Finally we turn to $f = \psi_x^2$, which we further decompose as
\[
f = 
2 \sum_{\substack{|j_1 - k | \leq 4 \\ j_2 < k - 80}} P_{j_1} \psi_x P_{j_2} \psi_x
+
\sum_{\substack{|j_1 - j_2| \leq 8 \\ j_1, j_2 \geq k - 80}} P_{j_1} \psi_x P_{j_2} \psi_x.
\]
To control the high-low term, we apply estimate (\ref{bilin5}):
\begin{align*}
\sum_{\substack{|j_1 - k | \leq 4 \\ j_2 < k - 80}}
\lVert P_{j_1} \psi_x P_{j_2} \psi_x \rVert_{L^2}
&\lesssim \;
 \sum_{\substack{|j_1 - k | \leq 4 \\ j_2 < k - 80}}
2^{(j_2-j_1)/6} b_{j_2} 2^{-\sigma j_1} b_{j_1}(\sigma) \\
&\lesssim \;
2^{-\sigma k} b_k b_k(\sigma).
\end{align*}

We turn to the high-high case. 
The full trilinear expression is given by
\begin{equation*}
\sum_{\substack{ | k_1 - k | \leq 4 \\ k_2 < k - 80} } 
P_k( P_{k_1} (
\sum_{\substack{|j_1 - j_2| \leq 8 \\ j_1, j_2 \geq k_1 - 80}}
P_{j_1} \psi_x P_{j_2} \psi_x)
\cdot P_{k_2} \psi_x)
\end{equation*}

We can drop the $P_{k_1}$ factor because of the summation ranges:
\begin{equation*}
\sum_{\substack{ | k_1 - k | \leq 4 \\ k_2 < k - 80} } 
\sum_{\substack{|j_1 - j_2| \leq 8 \\ j_1, j_2 \geq k_1 - 80}}
P_k(  P_{j_1} \psi_x P_{j_2} \psi_x \cdot P_{k_2} \psi_x).
\end{equation*}
We apply estimate (\ref{bilin2}) with $h = P_{j_2} \psi_x P_{k_2} \psi_x$ to get
\begin{align*}
\sum_{\substack{ | k_1 - k | \leq 4 \\ k_2 < k - 80} } 
&\sum_{\substack{|j_1 - j_2| \leq 8 \\ j_1, j_2 \geq k_1 - 80}}
\lVert P_k(  P_{j_1} \psi_x P_{j_2} \psi_x \cdot P_{k_2} \psi_x) \rVert_{N_k(T)} \\
&\lesssim \;
\sum_{\substack{ | k_1 - k | \leq 4 \\ k_2 < k - 80} } 
\sum_{\substack{|j_1 - j_2| \leq 8 \\ j_1, j_2 \geq k_1 - 80}}
2^{-|j_1 - k|/6} \lVert P_{j_1} \psi_x \rVert_{G_{j_1}(T)}
\lVert P_{j_2} \psi_x P_{k_2} \psi_x \rVert_{L^2}.
\end{align*}
Next we use (\ref{bilin5}) to control the $L^2$ norm:
\begin{align*}
\sum_{\substack{ | k_1 - k | \leq 4 \\ k_2 < k - 80} } 
&\sum_{\substack{|j_1 - j_2| \leq 8 \\ j_1, j_2 \geq k_1 - 80}}
2^{-|j_1 - k|/6} \lVert P_{j_1} \psi_x \rVert_{G_{j_1}(T)}
\lVert P_{j_2} \psi_x P_{k_2} \psi_x \rVert_{L^2} \\
&\lesssim \;
\sum_{\substack{ | k_1 - k | \leq 4 \\ k_2 < k - 80} } 
\sum_{\substack{|j_1 - j_2| \leq 8 \\ j_1, j_2 \geq k_1 - 80}}
2^{-|j_1 - k|/6} 2^{-|j_2 - k_2|/6}
2^{- \sigma j_1} b_{j_1}(\sigma) b_{j_2} b_{k_2}
\end{align*}
In this sum we can replace the factor $2^{-|j_2 - k_2|/6}$ by the larger factor
$2^{-|k - k_2|/6}$, from which it is seen that the whole sum is controlled by
\begin{equation*}
2^{-\sigma k} b_k(\sigma) b_k
\sum_{k_2 < k - 80} 2^{-|k - k_2|/6} b_{k_2}
\lesssim 
2^{-\sigma k} b_k^2 b_k(\sigma)
\end{equation*}
\end{proof}


\subsection{Decomposing the magnetic potential}
\label{SS:DMP}
\label{SS:B}

We begin by introducing a paradifferential decomposition of the magnetic nonlinearity,
splitting it into two pieces. This decomposition depends upon a frequency
parameter $k \in \mathbf{Z}$, which we suppress in the notation;
this same $k$ will also be the output frequency whose behavior we are interested
in controlling.
The decomposition
also depends upon the frequency gap parameter $\varpi \in \mathbf{Z}_+$. 
How $\varpi$ is chosen and the exact role it plays are discussed in \S \ref{SS:App}.
There it is shown that $\varpi$ may be set equal to a sufficiently large
universal constant (independent of $\varepsilon, \varepsilon_1, k$, etc.).

Define $A_{\mathrm{lo \wedge lo}}$ as
\begin{equation*}
A_{m, \mathrm{lo \wedge lo}}(s)
:=
-\sum_{k_1, k_2 \leq k - \varpi}
\int_s^\infty \Im( \overline{P_{k_1} \psi_m} P_{k_2} \psi_s )(s^\prime) ds^\prime
\end{equation*}
and $A_{\mathrm{hi \lor hi}}$ as
\begin{equation*}
A_{m, \mathrm{hi \lor hi}}(s)
:=
-\sum_{\max\{k_1, k_2\} > k - \varpi}
\int_s^\infty \Im( \overline{P_{k_1} \psi_m} P_{k_2} \psi_s )(s^\prime) ds^\prime
\end{equation*}
so that
$A_m = A_{m, \mathrm{lo \wedge lo}} + A_{m, \mathrm{hi \lor hi}}$.
Similarly define $B_{\mathrm{lo \wedge lo}}$ as
\begin{equation*}
B_{m, \mathrm{lo \wedge lo}} 
:= - i \sum_{k_3} \left( \partial_\ell( A_{\ell, \mathrm{lo \wedge lo}} P_{k_3} \psi_m)
+
A_{\ell, \mathrm{lo \wedge lo}} \partial_\ell P_{k_3} \psi_m \right)
\end{equation*}
and
$B_{\mathrm{hi \lor hi}}$ as
\begin{equation*}
B_{m, \mathrm{hi \lor hi}} 
:= - i \sum_{k_3} \left( \partial_\ell( A_{\ell, \mathrm{hi \lor hi}} P_{k_3} \psi_m)
+
A_{\ell, \mathrm{hi \lor hi}} \partial_\ell P_{k_3} \psi_m \right)
\end{equation*}
so that
$B_m = B_{m, \mathrm{lo \wedge lo}} + B_{m, \mathrm{hi \lor hi}}$.

Our goal is to control $P_k B_m$ in $N_k(T)$.
We consider first $P_k B_{m, \mathrm{hi \lor hi}}$,
performing a trilinear Littlewood-Paley decomposition.
In order for frequencies $k_1, k_2, k_3$ to have an output in
this expression at a frequency $k$, we must have
$(k_1, k_2, k_3) \in Z_2(k) \cup Z_3(k) \cup Z_0(k)$,
where 
\begin{equation}\label{Zzero}
Z_0(k) := Z_1(k) \cap \{ (k_1, k_2, k_3) \in \mathbf{Z}^3 :
k_1, k_2 > k - \varpi \}
\end{equation}
and the other $Z_j(k)$'s are defined in (\ref{Z def}).
We apply Lemma \ref{L:Trilin1} to bound
$P_k B_{m, \mathrm{hi \lor hi}}$ in $N_k(T)$ by
\begin{equation*}
\begin{split}
\sum_{\substack{ (k_1,k_2,k_3) \in \\ Z_2(k) \cup Z_3(k)
\cup Z_0(k)}}
& \int_0^\infty
2^{\max\{k, k_3\}} C_{k,k_1,k_2,k_3} \lVert P_{k_1} \psi_x(s) \rVert_{F_{k_1}} \times \\
& \times \lVert P_{k_2} (D_\ell \psi_\ell(s)) \rVert_{F_{k_2}}
 \lVert P_{k_3} \psi_m(0) \rVert_{G_{k_3}} ds,
\end{split}
\end{equation*}
which, thanks to (\ref{PsiF}) and (\ref{APsiF}), is
controlled by
\begin{equation*}
\begin{split}
\sum_{\substack{ (k_1,k_2,k_3) \in \\ Z_2(k) \cup Z_3(k)
\cup Z_0(k)}}
&2^{\max\{k, k_3\}} C_{k,k_1,k_2,k_3} b_{k_1} b_{k_2} b_{k_3} \times \\
& \times \int_0^\infty
(1 + s2^{2k_1})^{-4} 2^{k_2} (s 2^{2k_2})^{-3/8}
(1 + s2^{2k_2})^{-2}
ds.
\end{split}
\end{equation*}
As
\begin{equation}
\int_0^\infty
(1 + s2^{2k_1})^{-4} 2^{k_2} (s 2^{2k_2})^{-3/8}
(1 + s2^{2k_2})^{-2}
ds
\lesssim
2^{-\max \{k_1, k_2\}},
\label{Ik1k2}
\end{equation}
we reduce to
\begin{equation}\label{MagBound}
\sum_{\substack{ (k_1,k_2,k_3) \in \\ Z_2(k) \cup Z_3(k)
\cup Z_0(k)}}
2^{\max\{k, k_3\} - \max\{k_1,k_2\}} 
C_{k,k_1,k_2,k_3}
b_{k_1} b_{k_2} b_{k_3}.
\end{equation}

To estimate $P_k B_{m, \mathrm{hi \lor hi}}$ on $Z_2 \cup Z_3$,
we apply Corollary \ref{C:MTE2} and use the energy dispersion
hypothesis.
As for $Z_0(k)$, we note that its cardinality $\lvert Z_0(k) \rvert$
satisfies $\lvert Z_0(k) \rvert \lesssim \varpi$ independently
of $k$. Hence for fixed $\varpi$ summing over this set
is harmless given sufficient energy dispersion. We obtain a bound of
\begin{equation}\label{Bsig0}
\lVert P_k \Bhi \rVert_{N_k(T)} \lesssim b_k^2 b_k \lesssim \cV b_k.
\end{equation}


Consider now the leading term $P_k B_{m, \mathrm{lo \wedge lo}}$.
Bounding this in $N_k$ with any hope of summing
requires the full strength of the decay that comes from the
local smoothing/maximal function estimates. 
Such bounds as are immediately at our disposal
(i.e., (\ref{bilin2 improv}) and (\ref{bilin5 improv}), however,
do not bring $B_{m, \mathrm{lo \wedge lo}}$ within the perturbative
framework, instead yielding a bound of the form
\begin{equation*}
\sum_{\substack{ k_1, k_2 \leq k - \varpi \\ \lvert k_3 - k \rvert \leq 4}} 
b_{k_1} b_{k_2} b_{k_3},
\end{equation*}
which is problematic since even $\sum_{j \ll k} c_j^2 \sim E_0^2 = O(1)$
for $k$ large enough.
This stands in sharp contrast with the small energy setting.

In the next section, however, we are able to capture enough
improvement in such estimates so as to barely bring
$B_{m, \mathrm{lo \wedge lo}}$ back within reach
of our bootstrap approach.

Finally, we need for $\sigma > 0$ an estimate analogous to (\ref{Bsig0}).
Returning to the proof of (\ref{MagBound}), we remark that any $b_{k_j}$
may be replaced by $2^{-\sigma k_j} b_{k_j}$; in order to obtain an analogue of
(\ref{Bsig0}), we must make replacements judiciously so as to retain summability.
In particular, for any $(k_1, k_2, k_3) \in Z_2(k) \cup Z_3(k) \cup Z_0(k)$,
we replace $b_{\kx}$ with $2^{-\sigma \kx} b_{\kx}(\sigma)$ so that (\ref{MagBound})
becomes
\[
\sum_{\substack{ (k_1,k_2,k_3) \in \\ Z_2(k) \cup Z_3(k)
\cup Z_0(k)}}
2^{\max\{k, k_3\} - \max\{k_1,k_2\}} 
C_{k,k_1,k_2,k_3}
b_{\kn} b_{\kd} 2^{-\sigma \kx} b_{\kx}(\sigma),
\]
where $\kn, \kd, \kx$ denote, respectively, the min, mid, and max of $\{k_1, k_2, k_3\}$.
Over the set $Z_2(k) \cup Z_3(k) \cup Z_0(k)$ (see (\ref{Z def}) and (\ref{Zzero}) for definitions),
we have $\kx \gtrsim k$, which guarantees summability due to straightforward 
modifications of Corollaries \ref{C:MTE1} and \ref{C:MTE2}.
Therefore
\begin{equation*}
\lVert P_k \Bhi \rVert_{N_k(T)} \lesssim b_k^2 2^{-\sigma k} b_k(\sigma),
\end{equation*}
which, combined with (\ref{Bsig0}) and the definition (\ref{calV def}) of $\cV$, implies
\begin{cor}\label{B bound corollary}
Assume $\sigma \in [0, \sigma_1 - 1]$.
The term $\Bhi$ satisfies the estimate
\begin{equation}
\lVert P_k \Bhi \rVert_{N_k(T)}
\lesssim
\cV 2^{-\sigma k} b_k(\sigma).
\end{equation}
\end{cor}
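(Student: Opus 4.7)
The plan is to establish the corollary by assembling the trilinear analysis carried out in the preceding paragraphs of \S \ref{SS:DMP} and then extending it to general $\sigma$. First I would expand $P_k \Bhi$ via the heat-integral representation (\ref{CC Integral Rep}) of $\Ahi$, obtaining a sum over $(k_1,k_2,k_3)$ of heat-time integrals of products of Littlewood-Paley pieces of $\psi_x$, $D_\ell \psi_\ell$, and $\psi_m$. The frequency restriction defining $\Ahi$ combined with the output localization $P_k$ forces $(k_1,k_2,k_3) \in Z_2(k) \cup Z_3(k) \cup Z_0(k)$, where $Z_0(k)$ is the sub-region of $Z_1(k)$ with $k_1, k_2 > k - \varpi$.

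Next, for the $\sigma = 0$ case, I would apply the main trilinear estimate (Lemma \ref{L:Trilin1}) with $F_{k_1}, F_{k_2}, G_{k_3}$ norms on the three factors, invoke the parabolic bounds (\ref{PsiF}) and (\ref{APsiF}) on the heat-flow factors, and evaluate the heat-time integral using (\ref{Ik1k2}). This reduces matters to the scalar sum (\ref{MagBound}). On $Z_2(k) \cup Z_3(k)$, Corollary \ref{C:MTE2} delivers an $O(b_k^3)$ bound. On $Z_0(k)$, the cardinality $|Z_0(k)| \lesssim \varpi^2$ is independent of $k$, so slow-variation of the envelopes combined with the energy-dispersion hypothesis (\ref{iED}) and the bootstrap assumption (\ref{Main Bootstrap}) supplies enough smallness to conclude. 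This reproduces (\ref{Bsig0}): $\lVert P_k \Bhi \rVert_{N_k(T)} \lesssim \cV b_k$.

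For general $\sigma \in (0, \sigma_1 - 1]$, I would rerun the same trilinear decomposition but upgrade exactly one of the three envelopes: namely, at the maximum index $\kx := \max\{k_1,k_2,k_3\}$ I would replace $b_{\kx}$ by $2^{-\sigma \kx} b_{\kx}(\sigma)$, leaving the other two as $b_{\kn}, b_{\kd}$. The key structural observation is that on all three index sets $Z_2(k), Z_3(k), Z_0(k)$ one has $\kx \gtrsim k$, so after minor modifications of Corollaries \ref{C:MTE1} and \ref{C:MTE2} the $2^{-\sigma \kx}$ weight sums cleanly and produces $\lVert P_k \Bhi \rVert_{N_k(T)} \lesssim b_k^2 \cdot 2^{-\sigma k} b_k(\sigma)$. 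Combining this with the $\sigma = 0$ bound and recalling the definition (\ref{calV def}) of $\cV$ yields the asserted estimate.

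The main subtle point is the correct choice of which envelope to upgrade in the $\sigma > 0$ argument: placing the $2^{-\sigma\,\cdot}$ weight at $\kx$ rather than at $\kn$ or $\kd$ is essential, since only $\kx$ is guaranteed to be comparable to or larger than $k$ uniformly across $Z_2, Z_3, Z_0$. Any other placement would spoil summability on $Z_2(k)$, where $\kn, \kd$ can be far below $k$. The $\varpi$-dependence inherited from $Z_0(k)$ is harmless, since $\varpi$ is fixed as a universal constant (independent of $k, \varepsilon_1$) in the later analysis of \S \ref{SS:App}, and no use of $L^4$ smallness is needed here---only energy dispersion, which is how these parabolic bounds are proved.
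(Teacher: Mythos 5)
Your proposal is correct and follows essentially the same route as the paper: decompose via (\ref{CC Integral Rep}), apply Lemma \ref{L:Trilin1} with (\ref{PsiF}), (\ref{APsiF}), (\ref{Ik1k2}) to reach (\ref{MagBound}), sum via Corollary \ref{C:MTE2} on $Z_2 \cup Z_3$ and via boundedness of $|Z_0(k)|$ on $Z_0$, and for $\sigma > 0$ upgrade precisely the $b_{\kx}$ envelope since $\kx \gtrsim k$ holds uniformly on $Z_0 \cup Z_2 \cup Z_3$. Your remark that only energy dispersion (not $L^4$ smallness) is used matches the paper's own comment at the start of \S \ref{SS:ParabolicEstimates}.
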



\subsection{Closing the gauge field bootstrap} \label{SS:OutlineConclusion}

We turn first to the completion of the proof of Theorem \ref{EnvelopeTheorem},
as we now have in place all of the estimates that we need to prove (\ref{Goal}).

Using the main linear estimate of Proposition \ref{MainLinearEstimate}
and the decomposition introduced in \S \ref{SS:DMP},
we obtain
\begin{equation}
\begin{split}
\lVert P_k \psi_m \rVert_{G_k(T)}
\lesssim& \;
\lVert P_k \psi_m(0) \rVert_{L^2_x} +
\lVert P_k V_m \rVert_{N_k(T)} \\
&\; + \lVert P_k \Bhi \rVert_{N_k(T)} +
\lVert P_k \Blo \rVert_{N_k(T)}.
\end{split}
\end{equation}
In \S\S \ref{SS:V}, \ref{SS:DMP} it is shown that $P_k V_m$
and $P_k \Bhi$ are perturbative in the sense that
\begin{equation*}
\lVert P_k V_m \rVert_{N_k(T)}
+
\lVert P_k \Bhi \rVert_{N_k(T)}
\lesssim \cV 2^{-\sigma k} b_k(\sigma),
\end{equation*}
To handle $P_k \Blo$, we first write
\begin{equation*}
P_k \Blo
=
- i \partial_\ell( \Alol P_k \psi_m)
+ R,
\end{equation*}
where $R$ is a perturbative remainder
(thanks to a slight modification of technical Lemma \ref{L:ls app RHS}).
Therefore
\begin{equation}
\lVert P_k \psi_m \rVert_{G_k(T)}
\lesssim
2^{-\sigma k} c_k(\sigma) + \cV 2^{-\sigma k} b_k(\sigma)
+ \lVert \partial_\ell( \Alol P_k \psi_m) \rVert_{N_k(T)}.
\label{psi G preliminary}
\end{equation}
Thus it remains to control
$- i \partial_\ell( \Alol P_k \psi_m)$,
which we expand as
\begin{equation}
\begin{split}
- i P_k \partial_\ell \sum_{ \substack{k_1, k_2 \leq k - \varpi \\ \lvert k_3 - k \rvert \leq 4}}
\int_0^\infty \Im(\overline{P_{k_1} \psi_\ell} P_{k_2} \psi_s)(s^\prime)
P_{k_3} \psi_m(0) ds^\prime,
\end{split}
\label{lo expanded}
\end{equation}
and whose $N_k(T)$ norm we denote by $\Nlo$.
In the $\sigma = 0$ case
the key is to apply Theorem \ref{T:BilinearStrichartz}
to $\overline{P_{k_1} \psi_\ell}(s^\prime)$ and $P_{k_3} \psi_m(0)$,
after first placing all of (\ref{lo expanded}) in $N_k(T)$ using
(\ref{bilin2 improv}).
We obtain
\begin{equation*}
\begin{split}
\Nlo
&\lesssim \; 
2^k \sum_{ \substack{k_1, k_2 \leq k - \varpi \\ \lvert k_3 - k \rvert \leq 4}}
2^{- \lvert k - k_2 \rvert / 2} 2^{- \lvert k_1 - k_3 \rvert / 2} 2^{-\max\{k_1, k_2\}} b_{k_2}
\left(c_{k_1} c_{k_3} + \cV^{1/2} b_{k_1} b_{k_3} \right) 
\\
&\lesssim \;
2^k \sum_{k_1, k_2 \leq k - \varpi} 2^{(k_1 + k_2)/2 - k} 2^{-\max\{k_1, k_2\}}
b_{k_2}(c_{k_1} c_k + \cV^{1/2} b_{k_1} b_k)
\end{split}
\end{equation*}
Without loss of generality we restrict the sum to $k_1 \leq k_2$:
\begin{equation*}
\sum_{k_1 \leq k_2 \leq k - \varpi} 2^{(k_1 - k_2)/2} b_{k_2}(c_{k_1} c_k + \cV^{1/2} b_{k_1} b_k)
\end{equation*}
Using the frequency envelope property to sum off the diagonal, we reduce to
\begin{equation*}
\Nlo
\lesssim
\sum_{j \leq k - \varpi} (b_j c_j c_k + \cV^{1/2} b_j^2 b_k).
\end{equation*}
Combining this with (\ref{psi G preliminary}) and the fact that
$R$ is perturbative,
we obtain
\begin{equation}\label{Sig0Boot}
b_k \lesssim c_k + \cV b_k +
\sum_{j \leq k - \varpi} (b_j c_j c_k + \cV^{1/2} b_j^2 b_k),
\end{equation}
which, in view of our choice of $\cV$, reduces to
\begin{equation*}
b_k \lesssim c_k + c_k \sum_{j \leq k - \varpi} b_j c_j .
\end{equation*}
Squaring and applying Cauchy-Schwarz yields
\begin{equation}
b_k^2 \lesssim  (1 +  \sum_{j \leq k - \varpi} b_j^2 ) c_k^2.
\label{SCS}
\end{equation}
Setting
\begin{equation*}
B_k := 1 + \sum_{j < k} b_j^2
\end{equation*}
in (\ref{SCS}) leads to
\begin{equation*}
B_{k+1} \leq B_k (1 + C c_k^2)
\end{equation*}
with $C > 0$ independent of $k$.
Therefore
\begin{equation*}
B_{k + m} 
\leq 
B_k \prod_{\ell = 1}^m (1 + C c_{k + \ell}^2)
\leq
B_k \exp(C \sum_{\ell = 1}^m c_{k + \ell}^2)
\lesssim_{E_0} B_k.
\end{equation*}
Since $B_k \to 1$ as $k \to -\infty$, we conclude
\begin{equation*}
B_k \lesssim_{E_0} 1
\end{equation*}
uniformly in $k$, so that, in particular,
\begin{equation}
\sum_{j \in \mathbf{Z}} b_j^2
\lesssim 1,
\label{b l2 bounded}
\end{equation}
which, joined with (\ref{SCS}), implies (\ref{Goal}).

The proof of (\ref{Goal2}) is almost an immediate consequence.
Instead of (\ref{Sig0Boot}), we obtain
\begin{equation*}
b_k(\sigma) \lesssim c_k(\sigma) + \cV b_k(\sigma)
+ \sum_{j \leq k - \varpi} (b_j c_j c_k(\sigma) + \cV^{1/2} b_j^2 b_k(\sigma)),
\end{equation*}
which suffices to prove (\ref{Goal2}) in view of (\ref{b l2 bounded}).


\subsection{De-gauging} \label{SS:Degauging}

The previous subsections overcome the most significant obstacles
encountered in
proving conditional global regularity.
All of the key estimates therein apply to the Schr\"odinger map system placed in the caloric
gauge, and a bootstrap argument is in fact run and closed at that level.  
This final subsection justifies the whole approach, 
showing how to transfer these results obtained at the gauge level back
to the underlying Schr\"odinger map itself.  

\begin{proof}[Proof of (\ref{Main Conclusion})]
To gain control over the derivatives $\partial_m \varphi$ in $L_t^\infty L_x^2$,
we utilize representation (\ref{Phi Frame Rep}) and perform a Littlewood-Paley
decomposition.  We only indicate how to handle the term $v \cdot \Re( \psi_m )$,
as the term $w \cdot \Im( \psi_m )$ may be handled similarly.  
Starting with
\begin{align}
P_k (v \Re(\psi_m) )
=&
\sum_{\lvert k_2 - k \rvert \leq 4} 
P_k( P_{\leq k - 5} v \cdot P_{k_2} \Re(\psi_m) ) + \nonumber \\
&\sum_{\substack{ \lvert k_1 - k \rvert \leq 4 \\ k_2 \leq k - 4}}
P_k (P_{k_1} v \cdot P_{k_2} \Re(\psi_m) ) + \nonumber \\
&\sum_{\substack{ \lvert k_1 - k_2 \rvert \leq 8 \\ k_1, k_2 \geq k - 4}}
P_k( P_{k_1} v \cdot P_{k_2} \Re(\psi_m) ),
\label{vp lp}
\end{align}
we proceed to bound each term in $L_t^\infty L_x^2$.

In view of the fact that $\lvert v \rvert \equiv 1$, 
the low-high frequency interaction is controlled by
\begin{align}
\sum_{\lvert k_2 - k \rvert \leq 4} \lVert P_k( P_{\leq k - 5} v \cdot P_{k_2} \Re(\psi_m) )
\rVert_{L_t^\infty L_x^2}
&\lesssim
\lVert P_{\leq k - 5} v \rVert_{L_{t,x}^\infty}
\lVert P_k \psi_m \rVert_{L_t^\infty L_x^2} \nonumber \\
&\lesssim
\lVert P_k \psi_m \rVert_{L_t^\infty L_x^2} \nonumber \\
&\lesssim
c_k.
\label{vp LH}
\end{align}
To control the high-low frequency interaction, we use 
H\"older's inequality, Bernstein's inequality, 
(\ref{c cons}) and Bernstein's inequality again, and finally
the bound (\ref{dv Bound}) along with summation rule (\ref{Sum 1}):
\begin{align}
\sum_{\substack{ \lvert k_1 - k \rvert \leq 4 \\ k_2 \leq k - 4}}
\lVert P_k (P_{k_1} v \cdot P_{k_2} \Re(\psi_m) ) \rVert_{L_t^\infty L_x^2}
&\lesssim
\sum_{\substack{ \lvert k_1 - k \rvert \leq 4 \\ k_2 \leq k - 4}}
\lVert P_{k_1} v \rVert_{L_t^\infty L_x^2} 
\lVert P_{k_2} \psi_m \rVert_{L_{t,x}^\infty} \nonumber \\
&\lesssim
\sum_{\substack{ \lvert k_1 - k \rvert \leq 4 \\ k_2 \leq k - 4}}
\lVert P_{k_1} v \rVert_{L_t^\infty L_x^2} \cdot
2^{k_2} \lVert P_{k_2} \psi_m \rVert_{L_t^\infty L_x^2} \nonumber \\
&\lesssim
\sum_{\substack{ \lvert k_1 - k \rvert \leq 4 \\ k_2 \leq k - 4}}
\lVert P_{k_1} \partial_x v \rVert_{L_t^\infty L_x^2} \cdot
2^{k_2 - k} c_{k_2} \nonumber \\
&\lesssim
c_k.
\label{vp HL}
\end{align}
To control the high-high frequency interaction, we use Bernstein's inequality,
Cauchy-Schwarz, Bernstein again, (\ref{dv Bound}), and finally (\ref{Sum 2}):
\begin{align}
\sum_{\substack{ \lvert k_1 - k_2 \rvert \leq 8 \\ k_1, k_2 \geq k - 4}}
\lVert P_k( P_{k_1} v \cdot P_{k_2} \Re(\psi_m) ) \rVert_{L_t^\infty L_x^2}
&\lesssim
\sum_{\substack{ \lvert k_1 - k_2 \rvert \leq 8 \\ k_1, k_2 \geq k - 4}}
2^k \lVert P_{k_1} v \cdot P_{k_2} \Re( \psi_m ) \rVert_{L_t^\infty L_x^1} \nonumber \\
&\lesssim
\sum_{\substack{ \lvert k_1 - k_2 \rvert \leq 8 \\ k_1, k_2 \geq k - 4}}
2^k \lVert P_{k_1} v \rVert_{L_t^\infty L_x^2} \lVert P_{k_2} \psi_m \rVert_{L_t^\infty L_x^2} \nonumber \\
&\lesssim
\sum_{\substack{ \lvert k_1 - k_2 \rvert \leq 8 \\ k_1, k_2 \geq k - 4}}
2^{k - k_1} \lVert P_{k_1} \partial_x v \rVert_{L_t^\infty L_x^2}
\lVert P_{k_2} \psi_m \rVert_{L_t^\infty L_x^2} \nonumber \\
&\lesssim
\sum_{k_2 \geq k - 4} 2^{k - k_2} c_{k_2} \nonumber \\
&\lesssim
c_k.
\label{vp HH}
\end{align}
Combining (\ref{vp LH}), (\ref{vp HL}), and (\ref{vp HH})
and applying them in (\ref{vp lp}), we obtain
\begin{equation*}
\lVert P_k (v \; \Re (\psi_m) ) \rVert_{L_t^\infty L_x^2} \lesssim c_k.
\end{equation*}
As the above calculation holds with $w$ in place of $v$, we conclude 
(recalling (\ref{Phi Frame Rep})) that
\begin{equation*}
\lVert P_k \partial_x \varphi \rVert_{L_t^\infty L_x^2} \lesssim c_k.
\end{equation*}
Hence (\ref{Main Conclusion}) holds for $\sigma = 0$.

Now we turn to the case $\sigma \in [0, \sigma_1 - 1]$.
By using Bernstein's inequality in (\ref{vp LH}) and (\ref{vp HH}), 
we may obtain
\begin{align}
\sum_{\lvert k_2 - k \rvert \leq 4} \lVert P_k( P_{\leq k - 5} v \cdot P_{k_2} \Re(\psi_m) )
\rVert_{L_t^\infty L_x^2}
&\lesssim
2^{-\sigma k} c_k(\sigma) \label{vp LHs} \\
\sum_{\substack{ \lvert k_1 - k_2 \rvert \leq 8 \\ k_1, k_2 \geq k - 4}}
\lVert P_k( P_{k_1} v \cdot P_{k_2} \Re(\psi_m) ) \rVert_{L_t^\infty L_x^2}
&\lesssim
2^{-\sigma k} c_k(\sigma),
\label{vp HHs}
\end{align}
as well as analogous estimates with $w$ in place of $v$.  
Such a direct argument, however, does not yield the analogue of (\ref{vp HL}).
We circumvent this obstruction as follows.
Let $\mathcal{C} \in (0, \infty)$ be the best constant for which
\begin{equation}
\lVert P_k \partial_x \varphi \rVert_{L_t^\infty L_x^2}
\leq
\mathcal{C} 2^{-\sigma k} c_k(\sigma)
\label{v bestC}
\end{equation}
holds for $\sigma \in [0, \sigma_1 - 1]$.
Such a constant exists by smoothness and the fact that the $c_k(\sigma)$
are frequency envelopes.
In view of definition (\ref{gamma Envelope}) and 
estimate (\ref{Hard Envelope Bounds}),
we similarly have
\begin{equation}
\lVert P_k \partial_x v(0) \rVert_{L_t^\infty L_x^2}
\lesssim 
\mathcal{C} 2^{-\sigma k} c_k(\sigma).
\label{vw strapbound}
\end{equation}
Using (\ref{vw strapbound}) in (\ref{vp HL}), we obtain
\begin{equation}
\sum_{\substack{ \lvert k_1 - k \rvert \leq 4 \\ k_2 \leq k - 4}}
\lVert P_k (P_{k_1} v \cdot P_{k_2} \Re(\psi_m) ) \rVert_{L_t^\infty L_x^2}
\lesssim
\mathcal{C} 2^{-\sigma k} c_k c_k(\sigma).
\label{vp HLs}
\end{equation}
From the representations (\ref{Phi Frame Rep}) and (\ref{vp lp}), 
and from the estimates (\ref{vp LHs}), (\ref{vp HHs}), and (\ref{vp HLs}), 
along with the analogous estimates for $w$, it follows that
\begin{equation*}
\lVert P_k \partial_x \varphi \rVert_{L_t^\infty L_x^2}
\lesssim
(1 + c_k \mathcal{C}) 2^{-\sigma k} c_k(\sigma).
\end{equation*}
In view of energy dispersion ($c_k \leq \varepsilon$)
and the optimality of $\mathcal{C}$ in (\ref{v bestC}), we conclude
\begin{equation*}
\mathcal{C} \lesssim 1 + \varepsilon \mathcal{C}
\end{equation*}
so that $\mathcal{C} \lesssim 1$.
Therefore
\begin{equation*}
\lVert P_k \partial_x^\sigma \partial_m \varphi \rVert_{L_t^\infty L_x^2}
\sim 2^{\sigma k} \lVert P_k \partial_m \varphi \rVert_{L_t^\infty L_x^2}
\lesssim c_k(\sigma),
\end{equation*}
which completes the proof of (\ref{Main Conclusion}).
\end{proof}

It will be convenient in certain arguments to use the 
weaker frequency envelope defined by
\begin{equation}
\tilde{b}_k = \sup_{k^\prime \in \mathbf{Z}} 2^{-\delta \lvert k - k^\prime \rvert}
\lVert P_{k^\prime} \psi_x \rVert_{L_{t,x}^4}.
\label{tb Envelope}
\end{equation}

\begin{proof}[Proof of Lemma \ref{equiv mt con}]
Let us first establish
\begin{equation*}
\sum_{k \in \mathbf{Z}} \lVert P_k \psi_x \rVert_{L_{t,x}^4}^2
\lesssim
\sum_{k \in \mathbf{Z}} \lVert P_k \partial_x \varphi \rVert_{L_{t,x}^4}^2.
\end{equation*}
We use (\ref{Derivative Field}), i.e.,
$\psi_m = v \cdot \partial_m \varphi + i w \cdot \partial_m \varphi$,
but for the sake of exposition only treat $v \cdot \partial_m \varphi$.
We start with the Littlewood-Paley decomposition
\begin{align*}
P_k \psi_m(0)
=&
\sum_{\lvert k_2 - k \rvert \leq 4}
P_k (P_{\leq k - 5}v \cdot P_{k_2} \partial_m \varphi) + \\
&
\sum_{ \substack{ \lvert k_1 - k \rvert \leq 4 \\ k_2 \leq k - 4} }
P_k (P_{k_1} v \cdot P_{k_2} \partial_m \varphi) + \\
&
\sum_{ \substack{ \lvert k_1 - k_2 \rvert \leq 8 \\ k_1, k_2 \geq k-4 } }
P_k (P_{k_1} v \cdot P_{k_2} \partial_m \varphi).
\end{align*}
In view of $\lvert v \rvert \equiv 1$, the $L_{t,x}^4$ norm of the low-high interaction is controlled
by $\tilde{b}_k$ (see (\ref{tb Envelope})).
To control the high-low interaction, we use H\"older's and Bernstein's inequalities
along with (\ref{dv Bound}):
\begin{align*}
\sum_{ \substack{ \lvert k_1 - k \rvert \leq 4 \\ k_2 \leq k - 4} }
\lVert P_k (P_{k_1} v \cdot P_{k_2} \partial_m \varphi \rVert_{L_{t,x}^4}
&\lesssim
\sum_{ \substack{ \lvert k_1 - k \rvert \leq 4 \\ k_2 \leq k - 4} }
\lVert P_{k_1} v \rVert_{L_t^\infty L_x^4} \cdot
\lVert P_{k_2} \partial_m \varphi \rVert_{L_t^4 L_x^\infty} \\
&\lesssim
\sum_{ \substack{ \lvert k_1 - k \rvert \leq 4 \\ k_2 \leq k - 4} }
2^{k_1 / 2} \lVert P_{k_1} v \rVert_{L_t^\infty L_x^2}
2^{k_2 / 2} \lVert P_{k_2} \partial_m \varphi \rVert_{L_{t,x}^4} \\
&\lesssim
\sum_{ \substack{ \lvert k_1 - k \rvert \leq 4 \\ k_2 \leq k - 4} }
2^{k_1} \lVert P_{k_1} v \rVert_{L_t^\infty L_x^2} \tilde{b}_k \\
&\lesssim \tilde{b}_k.
\end{align*}
To control the high-high interaction, we use Bernstein, H\"older, Bernstein again,
and (\ref{dv Bound}):
\begin{align*}
\sum_{ \substack{ \lvert k_1 - k_2 \rvert \leq 8 \\ k_1, k_2 \geq k-4 } }
\lVert P_k (P_{k_1} v \cdot P_{k_2} \partial_m \varphi) \rVert_{L_{t,x}^4}
&\lesssim
\sum_{ \substack{ \lvert k_1 - k_2 \rvert \leq 8 \\ k_1, k_2 \geq k-4 } }
2^{k/2} \lVert P_{k_1} v \cdot P_{k_2} \partial_m \varphi \rVert_{L_t^4 L_x^2} \\
&\lesssim
\sum_{ \substack{ \lvert k_1 - k_2 \rvert \leq 8 \\ k_1, k_2 \geq k-4 } }
2^{k/2} \lVert P_{k_1} v \rVert_{L_t^\infty L_x^4} \lVert P_{k_2} \partial_m \varphi \rVert_{L_{t,x}^4} \\
&\lesssim
\sum_{ \substack{ \lvert k_1 - k_2 \rvert \leq 8 \\ k_1, k_2 \geq k-4 } }
2^{(k + k_1)/2} \lVert P_{k_1} v \rVert_{L_t^\infty L_x^2} \lVert P_k \partial_m \varphi \rVert_{L_{t,x}^4} \\
&\lesssim
\sum_{ \substack{ \lvert k_1 - k_2 \rvert \leq 8 \\ k_1, k_2 \geq k-4 } }
2^{(k - k_1)/2} \lVert P_{k_1} \partial_x v \rVert_{L_t^\infty L_x^2}
\lVert P_{k_2} \partial_m \varphi \rVert_{L_{t,x}^4} \\
&\lesssim
\sum_{k_2 \geq k - 4} 2^{(k - k_2)/4} \tilde{b}_{k_2} \\
&\lesssim
\tilde{b}_k.
\end{align*}
Therefore
\begin{equation*}
\lVert P_k \psi_m(0) \rVert_{L_{t,x}^4}
\lesssim
\tilde{b}_k
\end{equation*}
and
\begin{equation*}
\sum_{k \in \mathbf{Z}} \lVert P_k \psi_m(0) \rVert_{L_{t,x}^4}^2
\lesssim
\sum_{k \in \mathbf{Z}} \tilde{b}_k^2
\sim
\sum_{k \in \mathbf{Z}} \lVert P_k \partial_m \varphi(0) \rVert_{L_{t,x}^4}^2.
\end{equation*}
By using (\ref{Phi Frame Rep}), creating an $L^4$ frequency envelope for
$P_k \partial_m \varphi(0)$,
and reversing the roles of $\psi_\alpha$ and
$\partial_\alpha \varphi$ in the preceding argument,
we conclude the reverse inequality
\begin{equation*}
\sum_{k \in \mathbf{Z}} \lVert P_k \partial_m \varphi(0) \rVert_{L_{t,x}^4}^2
\lesssim
\sum_{k \in \mathbf{Z}} \lVert P_k \psi_m(0) \rVert_{L_{t,x}^4}^2.
\end{equation*}
\end{proof}
\section{Local smoothing and bilinear Strichartz} \label{S:Local smoothing}

The main goal of this section is to establish the improved bilinear Strichartz estimate
of Theorem \ref{T:BilinearStrichartz}.
As a by-product we also obtain the frequency-localized 
local smoothing estimate of Theorem \ref{ls thm}.

Our approach is to first establish abstract local smoothing and bilinear Strichartz estimates
for solutions to certain magnetic nonlinear Schr\"odinger equations.
These are in the spirit of \cite{PlVe09, PlVe, Tls}.
We shall then apply these to Schr\"odinger maps, in particular to the paralinearized 
derivative field equations written with respect to the caloric gauge.

We introduce some notation.
Let $I_k(\mathbf{R}^d)$ denote the set $\{ \xi \in \Rd : \lvert \xi \rvert \in [-2^{k-1}, 2^{k+1}] \}$ and
$I_{(-\infty, k]} := \bigcup_{j \leq k} I_j$.
For a $d$-vector-valued function $B = ( B_\ell )$ on $\Rd$ with real entries,
define the magnetic Laplacian $\Delta_B$, acting on complex-valued
functions $f$, via
\begin{equation}
\Delta_B f := (\partial_x + i B)((\partial_x + i B) f) = 
\Delta f + i (\partial_\ell B_\ell) f + 2i B_\ell \partial_\ell f - B_\ell^2 f.
\label{Magnetic}
\end{equation}
For a unit vector $\e \in \mathbf{S}^{d-1}$, denote by $\{ x \cdot \e = 0 \}$ the
orthogonal complement in $\Rd$ of the span of $\e$, equipped
with the induced measure. Given $\e$, we can construct a
positively oriented orthonormal basis $\e, \e_1, \ldots, \e_{d-1}$ of $\Rd$
so that $\e_1, \ldots, \e_{d-1}$ form an orthonormal basis for $\{ x \cdot \e = 0 \}$.
For complex-valued functions $f$ on $\Rd$,
define $E_{\e}(f) : \mathbf{R} \to \mathbf{R}$ as
\begin{equation}
E_{\e}(f)(x_0) :=
\int_{x \cdot \e = 0} \lvert f \rvert^2 dx^\prime
=
\int_{\mathbf{R}^{d-1}} 
\lvert f(x_0 \e + x_j \e_j) \rvert^2
dx^\prime,
\label{Ee def}
\end{equation}
where the implicit sum runs over $1, 2, \ldots, d-1$, and
$dx^\prime$ is standard $d-1$-dimensional Lebesgue measure.
We also adopt the following notation for this section:
for $z, \zeta$ complex, 
\begin{equation*}
z \wedge \zeta := z \bar{\zeta} - \bar{z} \zeta
= 2i \Im(z \bar{\zeta}).
\end{equation*}

\subsection{The key lemmas}

\begin{lem}[Abstract almost-conservation of energy]
Let $d \geq 1$ and
$\e \in \mathbf{S}^{d-1}$. Let $v$ be a $C^\infty_t(H^\infty_x)$ function 
on $\mathbf{R}^d \times [0, T]$ solving
\begin{equation}
(i \partial_t + \Delta_\cA) v = \Lambda_v
\label{Abstract NLS}
\end{equation}
with initial data $v_0$.
Take $\cA_\ell$ to be real-valued, smooth, and bounded,
with
$\Delta_{\cA}$ defined via (\ref{Magnetic}).
Then
\begin{equation}
\lVert v \rVert^2_{L^\infty_t L^2_x} 
\leq
\lVert v_0 \rVert_{L^2_x}^2 + 
\left\lvert
\int_0^T \int_{\mathbf{R}^d}
v \wedge \Lambda_v dx dt
\right\rvert.
\label{Abstract energy}
\end{equation}
\label{L:Abstract energy}
\end{lem}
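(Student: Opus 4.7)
The plan is a standard $L^2$-energy identity: compute $\partial_t |v|^2$ pointwise, show the magnetic-Laplacian contribution integrates to zero in $x$ (formal self-adjointness of $\Delta_\cA$ for real $\cA$), and then integrate in $t$.

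Using (\ref{Abstract NLS}) in the form $\partial_t v = i \Delta_\cA v - i \Lambda_v$ (and its conjugate for $\partial_t \bar v$), a direct expansion gives
\begin{equation*}
\partial_t |v|^2 = (\partial_t v)\bar v + v (\partial_t \bar v) = i (\Delta_\cA v) \wedge v + i\, v \wedge \Lambda_v,
\end{equation*}
both summands being real since $z \wedge \zeta = 2i \Im(z \bar\zeta)$ lies in $i\mathbf{R}$. The crux is therefore to verify
\begin{equation*}
\int_{\Rd} (\Delta_\cA v) \wedge v \, dx = 0.
\end{equation*}
Expanding $\Delta_\cA$ via (\ref{Magnetic}): the real term $-\cA_\ell^2 v$ contributes $-\cA_\ell^2 (v \wedge v) = 0$; the flat Laplacian piece gives $\int [(\Delta v) \bar v - (\Delta \bar v) v] \, dx$, which vanishes after two integrations by parts; and the two remaining imaginary first-order pieces combine into
\begin{equation*}
2i \int_{\Rd} \bigl[ (\partial_\ell \cA_\ell)|v|^2 + \cA_\ell \partial_\ell |v|^2 \bigr] \, dx = 2i \int_{\Rd} \partial_\ell (\cA_\ell |v|^2) \, dx = 0.
\end{equation*}
All boundary terms at spatial infinity vanish because $v(t,\cdot) \in H^\infty_x$ and $\cA$ is smooth and bounded, so $\cA_\ell |v|^2$ decays sufficiently.

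Integrating the resulting identity $\tfrac{d}{dt} \lVert v(t) \rVert_{L^2_x}^2 = i \int v \wedge \Lambda_v \, dx$ from $0$ to any $t \in [0,T]$, taking absolute values, and passing to the supremum in $t$ yields (\ref{Abstract energy}). There is no substantive obstacle at this level of abstraction: the Schwartz-in-$x$ regularity of $v$ and the smoothness/boundedness of $\cA$ make every integration by parts rigorous, and no use is made of the structure of the forcing $\Lambda_v$ — that will be exploited only in subsequent applications of the lemma, where $v$ plays the role of a frequency-localized derivative field and $\Lambda_v$ is built from the paradifferential decomposition of the magnetic and electric nonlinearities.
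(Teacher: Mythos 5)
Your argument is correct and follows the same route as the paper's own proof: compute $\partial_t |v|^2$, use the magnetic Leibniz identities to write the $\Delta_\cA$ contribution as a spatial divergence plus a real-multiple-of-$v$ piece (so that $\int_{\mathbf{R}^d}(\Delta_\cA v)\wedge v\,dx = 0$), then integrate in $t$. The only stylistic difference is that you regroup the first-order terms as $2i\,\partial_\ell(\cA_\ell|v|^2)$ while the paper splits them as $\partial_\ell(v\wedge i\cA_\ell v)$ and $\partial_\ell(v\wedge\partial_\ell v)$; the two are equivalent bookkeeping for the same integration by parts.
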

\begin{proof}
We begin with
\begin{equation*}
\frac{1}{2} \partial_t \int \lvert v \rvert^2 dx 
=
\int \Im( \bar{v} \partial_t v ) dx,
\end{equation*}
which may equivalently be written as
\begin{equation*}
i \partial_t \int \lvert v \rvert^2 dx 
=
- \int v \wedge i \partial_t v dx.
\end{equation*}
Substituting from (\ref{Abstract NLS}) yields
\begin{equation*}
i \partial_t \int \lvert v \rvert^2 dx
= \int v \wedge \left( \Delta_\cA v
- \Lambda_v \right) dx.
\end{equation*}
Expanding $\Delta_\cA$ using (\ref{Magnetic})
and using the straightforward relations
\begin{equation*}
\partial_{\ell}(v \wedge i \cA_\ell v) =
v \wedge i(\partial_\ell \cA_\ell) v
+ v \wedge 2i \cA_\ell \partial_\ell v
\end{equation*}
and
\begin{equation*}
\partial_\ell (v \wedge \partial_\ell v) = v \wedge \Delta v,
\end{equation*}
we get
\begin{align*}
i \partial_t \int \lvert v \rvert^2 dx
=& \; \int \partial_\ell (v \wedge \partial_\ell v) dx
+\int \partial_\ell (v \wedge i \cA_\ell v) dx 
\\
& \; - \int v \wedge \cA_\ell^2 v dx
- \int v \wedge \Lambda_v dx.
\end{align*}
The first two terms on the right hand side vanish upon integration in $x$;
the third is equal to zero because $\cA_\ell^2$ is real.
Integrating in time and taking absolute values
therefore yields
\begin{equation*}
\left\vert \int_{\Rd} \lvert v(T^\prime) \rvert^2 - \lvert v_0 \rvert^2 dx \right\rvert
=
\left\lvert \int_0^{T^\prime} \int_{\Rd} v \wedge \Lambda_v dx dt \right\rvert
\end{equation*}
 for any time $T^\prime \in (0, T]$.
\end{proof}

\begin{lem}[Local smoothing preparation]
Let $d \geq 1$ and
$\e \in \mathbf{S}^{d-1}$. 
Let $j, k \in \mathbf{Z}$ and $j = k + O(1)$.
Let $\varepsilon_m > 0$ be a small positive number
such that $\varepsilon_m 2^{O(1)} \ll 1$.
Let $v$ be a $C^\infty_t(H^\infty_x)$ function 
on $\mathbf{R}^d \times [0, T]$ solving
\begin{equation}
(i \partial_t + \Delta_\cA) v = \Lambda_v,
\end{equation}
where
$\cA_\ell$
is real-valued, smooth, and
satisfies the estimate
\begin{equation}
\lVert \cA \rVert_{L^\infty_{t,x}} \leq \varepsilon_m 2^k.
\label{A L-infinity}
\end{equation}
The solution $v$ is assumed to have (spatial) frequency support in $I_k$, 
with the additional constraint that 
$\e \cdot \xi \in [2^{j-1}, 2^{j+1}]$
for all $\xi$ in the support of $\hat{v}$.
Then
\begin{equation}
2^j \int_0^T E_{\e}(v) dt
\lesssim 
\lVert v \rVert_{L^\infty_t L^2_x}^2 + 
\left\lvert \int_0^T \int_{x \cdot \e \geq 0} v \wedge \Lambda_v dx dt \right\rvert 
+ 2^j \int_0^T E_{\e}(v + i 2^{-j} \partial_\e v) dt.
\label{LS 1}
\end{equation}
\label{L:LS prep}
\end{lem}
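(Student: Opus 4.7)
The plan is to apply a Morawetz-type mass identity with multiplier $\mathbf{1}_{\{x\cdot\e \geq 0\}}$ to extract a boundary flux at $\{x\cdot\e = 0\}$, and then convert that flux into $2^j E_\e(v)$ using the frequency hypothesis $\e\cdot\xi \in [2^{j-1}, 2^{j+1}]$ via a paradifferential substitution. Throughout, $E_\e$ is evaluated at $x_0 = 0$ (the general case following by translation).

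First, I would derive the divergence identity
\begin{equation*}
\partial_t \lvert v \rvert^2 + 2\partial_\ell J_\ell = 2\Im(\bar v \Lambda_v),
\qquad J_\ell := \Im(\bar v \partial_\ell v) + \cA_\ell \lvert v \rvert^2,
\end{equation*}
which follows from the equation $i\partial_t v = -\Delta_\cA v + \Lambda_v$ together with the pointwise relation $\Im(\bar v \Delta_\cA v) = \partial_\ell J_\ell$. This last identity holds because $\cA$ is real-valued and enters $\Delta_\cA$ symmetrically; the computation is the usual one underlying mass conservation for the magnetic Schr\"odinger flow. Integrating over $\{x \cdot \e \geq 0\} \times [0, T]$, the spatial divergence produces a flux at $\{x\cdot\e = 0\}$, and estimating the time-boundary contribution by $\lVert v \rVert_{L^\infty_t L^2_x}^2$ gives
\begin{equation*}
2\int_0^T \int_{x \cdot \e = 0} [\Im(\bar v \partial_\e v) + \cA_\e \lvert v \rvert^2]\, dx' \, dt
\leq
\lVert v \rVert_{L^\infty_t L^2_x}^2
+
\left\lvert \int_0^T \int_{x \cdot \e \geq 0} v \wedge \Lambda_v \, dx \, dt \right\rvert,
\end{equation*}
where $\cA_\e := \cA_\ell \e_\ell$.

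Second, I would convert the boundary momentum flux into a mass density. Setting $u := v + i 2^{-j} \partial_\e v$ as on the RHS of (\ref{LS 1}), a direct algebraic calculation (using $\Im(iz) = \Re(z)$) gives the pointwise identity
\begin{equation*}
\Im(\bar v \partial_\e v) = 2^j \lvert v \rvert^2 - 2^j \Re(\bar v u).
\end{equation*}
Substituted into the previous estimate, the left-hand side becomes $2\cdot 2^j \int_0^T E_\e(v)\, dt$ plus two error contributions: one of the form $\int \cA_\e \lvert v \rvert^2 \, dx'\, dt$ and one of the form $2\cdot 2^j \int \Re(\bar v u)\, dx' \, dt$.

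Finally, I would absorb both errors into the LHS. The magnetic error is bounded by
$\lVert \cA \rVert_{L^\infty} \int_0^T E_\e(v)\, dt \leq \varepsilon_m 2^k \int_0^T E_\e(v)\, dt \lesssim \varepsilon_m \cdot 2^j \int_0^T E_\e(v)\, dt$,
using $j = k + O(1)$, and is absorbed for $\varepsilon_m$ small. The $\Re(\bar v u)$ error is controlled by Cauchy--Schwarz on the hyperplane $\{x\cdot\e = 0\}$ and weighted AM--GM, producing $\eta \cdot 2^j \int E_\e(v)\, dt + \eta^{-1} \cdot 2^j \int E_\e(u)\, dt$; the first summand is absorbed into the LHS, and the second is exactly the last term on the right-hand side of (\ref{LS 1}). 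The only subtle step is the first: one must verify that the magnetic piece of $\Im(\bar v \Delta_\cA v)$ sits entirely inside the divergence, so that no uncontrollable bulk term appears — everything else is a routine Cauchy--Schwarz absorption.
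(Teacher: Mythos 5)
Your proof is correct and follows essentially the same route as the paper's: a half-space mass (Morawetz) identity yielding a boundary flux $\Im(\bar v D_\e v)$ on $\{x\cdot\e=0\}$, conversion of the momentum flux to $2^j E_\e(v)$ via the identity $\Im(\bar v\partial_\e v)=2^j|v|^2 - 2^j\Re(\bar v u)$ (the paper writes the equivalent fact as $v\wedge\partial_\e v = -2i2^j|v|^2 + v\wedge(\partial_\e v - i2^j v)$), absorption of the magnetic boundary term via $\|\cA\|_{L^\infty}\leq\varepsilon_m 2^k$ and $j=k+O(1)$, and Cauchy--Schwarz to split off the $E_\e(v+i2^{-j}\partial_\e v)$ error. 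The only cosmetic difference is that the paper phrases the first step via $M_\e(t):=\int_{x\cdot\e\geq 0}|v|^2\,dx$ and differentiates in $t$, rather than writing out the divergence-form identity, but the content is identical.
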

\begin{proof}
We begin by introducing
\begin{equation*}
M_\e(t) :=  \int_{x \cdot \e \geq 0} \lvert v(x, t) \rvert^2 dx.
\end{equation*}
Then
\begin{equation}
0 \leq M_\e(t) \leq \lVert v(t) \rVert_{L_x^2(\Rd)}^2 \leq
\lVert v \rVert_{L_t^\infty L_x^2([-T, T] \times \Rd)}^2.
\label{M bound}
\end{equation}
Differentiating in time yields
\begin{align*}
i \dot{M}_\e(t) 
&= 
\int_{x \cdot \e \geq 0}
v \wedge (i \partial_t v) dx \\
&=
\int_{x \cdot \e \geq 0}
v \wedge 
(\Delta_\cA v
- \Lambda_v)
dx,
\end{align*}
which may be rewritten as
\begin{equation}
i \dot{M}_\e(t) = \int_{x \cdot \e \geq 0}
\partial_\ell (v \wedge (\partial_\ell + i \cA_\ell)v) dx
- \int_{x \cdot \e \geq 0} v \wedge \Lambda_v dx.
\label{Mdot}
\end{equation}
By integrating by parts,
\begin{equation*}
\int_{x \cdot \e \geq 0}
\partial_\ell (v \wedge (\partial_\ell + i \cA_\ell)v) dx
=
-\int_{x \cdot \e = 0} v \wedge 
(\partial_{\e} v + i \e \cdot \cA v) dx^\prime,
\end{equation*}
and therefore (\ref{Mdot}) may be rewritten as
\begin{equation}
-\int_{x \cdot \e = 0} v \wedge 
(\partial_{\e} v + i \e \cdot \cA v) dx^\prime
= 
i \dot{M}_\e(t) + 
\int_{x \cdot \e \geq 0} v \wedge \Lambda_v dx.
\label{Mdot2}
\end{equation}
On the one hand, we have the heuristic that
$\partial_\e v \approx i 2^j v$ since $v$ has localized frequency support.
On the other hand,
since $\cA$ is real-valued, we have
\begin{equation}
\int_0^T \int_{x \cdot \e = 0} v \wedge
 i \e \cdot \cA v dx^\prime dt=
2 \int_0^T \int_{x \cdot \e = 0} \e 
\cdot \cA \lvert v \rvert^2 dx^\prime dt
\label{A identity}
\end{equation}
and hence by assumption (\ref{A L-infinity}) also
\begin{equation}
\int_0^T \int_{x \cdot \e = 0} \lvert \cA \rvert \lvert v \rvert^2 dx^\prime dt
\leq 
\varepsilon_m 
2^k \int_0^T \int_{x \cdot \e = 0} \lvert v \rvert^2 dx^\prime dt.
\label{Gap Bound}
\end{equation}
Together these facts motivate rewriting $v \wedge \partial_\e v$ as
\begin{equation}
v \wedge \partial_\e v = 2 \cdot i 2^j \lvert v \rvert^2
+ v \wedge (\partial_\e v - i 2^j v).
\label{v identity}
\end{equation}
Using (\ref{A identity}), (\ref{v identity}), and
the bounds (\ref{Gap Bound}) and (\ref{M bound}) in (\ref{Mdot2}),
we obtain
by time-integration that
\begin{equation*}
\begin{split}
(1 - \varepsilon_m 2^{k - j})
2^{j}
 \int_0^T E_\e(v) dt
\leq& \;
\lVert v \rVert_{L^\infty_t L^2_x}^2 + 
\left\lvert \int_0^T \int_{x \cdot \e \geq 0} v \wedge \Lambda_v dx dt \right\rvert \\
&+ 2 \cdot 2^j \int_0^T \int_{x \cdot \e = 0}
\lvert v + i 2^{-j} \partial_\e v \rvert \lvert v \rvert dx^\prime dt.
\end{split}
\end{equation*}
Applying Cauchy-Schwarz to the last term yields
\begin{equation*}
2^j \int_0^T \int_{x \cdot \e = 0} \lvert v + i 2^{-j} \partial_\e v \rvert \lvert v \rvert dx^\prime dt
\leq 
8 \cdot 2^j \int_0^T E_{\e}(v + i 2^{-j} \partial_\e v) dt 
+ \frac{1}{8}  \cdot 2^j \int_0^T E_{\e}(v) dt.
\end{equation*}
Therefore (\ref{LS 1}).
\end{proof}

We now describe the constraints on the nonlinearity that we shall require in the
abstract setting
\begin{defin}
Let $\cP$ be a fixed finite subset of $\{ 1 < p < \infty \}$.
A bilinear form $B(\cdot, \cdot)$
is said to be \emph{adapted} 
to $\cP$ provided it measures its arguments
in Strichartz-type spaces, the estimate
\begin{equation*}
\left\lvert 
\int_0^T \int_{\Rd} f \wedge g dx dt \right\rvert
\lesssim
B(f, g)
\end{equation*}
holds for all complex-valued functions $f, g$ on $\Rd \times [0, T]$,
Bernstein's inequalities hold in both arguments of $B$, and
these arguments are measured in $L^p_x$
only for $p \in \cP$. 
Given $B(\cdot, \cdot)$ and $\e \in \mathbf{S}^{d-1}$,
we define $B_\e(\cdot, \cdot)$ via
\begin{equation*}
B_\e(f, g) := B(f, \chi_{\{ x \cdot \e \geq 0 \}} g).
\end{equation*}
\label{D:good}
\end{defin}

\begin{defin}
Let $\e \in \mathbf{S}^{d-1}$ and
let $\cA_\ell$ be real-valued and smooth.
Let $v$ be a $C_t^\infty(H_x^\infty)$ function on $\mathbf{R}^d \times [0, T]$
solving
\begin{equation*}
(i \partial_t + \Delta_\cA) v = \Lambda_{v}.
\end{equation*}
Assume $v$ is (spatially) frequency-localized to $I_k$
with the additional constraint that 
$\e \cdot \xi \in [2^{j-1}, 2^{j+1}]$
for all $\xi$ in the support of $\hat{v}$.
Define a sequence of functions $\{ v^{(m)} \}_{m = 1}^\infty$
by setting $v^{(1)} = v$ and
\begin{equation*}
v^{(m+1)} := v^{(m)} + i 2^{-j} \partial_\e v^{(m)}.
\end{equation*}
By (\ref{Magnetic}) and the Leibniz rule,
\begin{equation*}
(i \partial_t + \Delta_\cA) v^{(m)} = \Lambda_{v^{(m)}},
\end{equation*}
where
\begin{equation*}
\Lambda_{v^{(m)}} 
:=
(1 + i 2^{-j} \partial_\e) \Lambda_{v^{(m-1)}}
+ i2^{-j}( i \partial_\e \partial_\ell A_\ell - \partial_\e A_\ell^2) v^{(m-1)}
- 2^{-j + 1} (\partial_\e A_\ell) \partial_\ell v^{(m-1)}.
\end{equation*}
The sequence $\{ v^{(m)} \}_{m=1}^\infty$ is called the
\emph{derived sequence} corresponding to $v$.

Suppose we are given a form $B$ adapted to $\cP$.
The derived sequence is
said to be \emph{controlled} with respect to $B_\e$
provided that 
$B_\e(v^{(m)}, \Lambda_{v^{(m)}}) < \infty$
for each $m \geq 1$.
\label{D:improving}
\end{defin}
We remark that if the derived sequence $\{ v^{(m)} \}_{m=1}^\infty$
of $v$ is controlled, then for all $\ell \geq 1$, the
derived sequences $\{ v^{(m)} \}_{m = \ell}^\infty$ are also controlled.

\begin{thm}[Abstract local smoothing]
Let $d \geq 1$ and
$\e \in \mathbf{S}^{d-1}$.  
Let $j, k \in \mathbf{Z}$ and $j = k + O(1)$.
Let $\varepsilon_m > 0$ be a small positive number
such that $\varepsilon_m 2^{O(1)} \ll 1$.
Let $\eta > 0$. Let $\cP$ be a fixed finite subset of $(1, \infty)$
with $2 \in \cP$, and let $B_\e$ be a form adapted to $\cP$.
Let $v$ be a $C^\infty_t(H^\infty_x)$ function 
on $\mathbf{R}^d \times [0, T]$ solving
\begin{equation}
(i \partial_t + \Delta_\cA) v = \Lambda_v,
\label{Magnetic NLS}
\end{equation}
where
$\cA_\ell$
is real-valued, smooth, has spatial Fourier support in $I_{(-\infty, k]}$, and
satisfies the estimate
\begin{equation}
\lVert \cA \rVert_{L^\infty_{t,x}} \leq \varepsilon_m 2^k.
\end{equation}
The solution $v$ is assumed to have (spatial) frequency support in $I_k$.
We take $\Lambda_v$ to be frequency-localized to $I_{(-\infty, k]}$.

Assume moreover that 
\begin{equation}
\e \cdot \xi \in [(1 - \eta)2^j, (1 + \eta)2^j]
\label{small support}
\end{equation}
for all $\xi$ in the support of $\hat{v}$.

If the derived sequence of $v$ is controlled with
respect to $B_\e$, then
there exist $\eta^* > 0$ such that, 
for all $0 \leq \eta < \eta^*$,
the local smoothing estimate
\begin{equation}
2^j \int_0^T E_{\e}(v) dt 
\lesssim
\lVert v \rVert_{L^\infty_t L^2_x}^2
+ B_\e(v, \Lambda_v)
\label{Abstract LS}
\end{equation}
holds uniformly in $T$ and $j = k + O(1)$.
\label{T:Abstract LS}
\end{thm}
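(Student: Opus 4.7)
The plan is to iterate Lemma \ref{L:LS prep} along the derived sequence $\{v^{(m)}\}_{m \ge 1}$ from Definition \ref{D:improving}. Each $v^{(m)}$ shares the Fourier support of $v$, so once $\eta \le 1/2$ the strip hypothesis $\e \cdot \xi \in [2^{j-1}, 2^{j+1}]$ of Lemma \ref{L:LS prep} applies to every member of the sequence. Setting $\mathcal{E}_m := 2^j \int_0^T E_\e(v^{(m)})\,dt$ and using Definition \ref{D:good} to dominate the half-space cross term by $B_\e(v^{(m)}, \Lambda_{v^{(m)}})$, $M$-fold iteration of Lemma \ref{L:LS prep} yields the telescoping estimate
\[
\mathcal{E}_1 \le \sum_{m=1}^{M} C^m \bigl(\|v^{(m)}\|_{L^\infty_t L^2_x}^2 + B_\e(v^{(m)}, \Lambda_{v^{(m)}})\bigr) + C^M \mathcal{E}_{M+1},
\]
where $C$ is the implicit constant from Lemma \ref{L:LS prep}. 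The aim is to make this right-hand side collapse to the desired bound as $M \to \infty$.

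\textbf{Energy and tail.} The operator $v \mapsto v^{(m)}$ is Fourier multiplication by $(1 - 2^{-j}\,\e \cdot \xi)^{m-1}$, whose modulus is at most $\eta^{m-1}$ on the support of $\hat v$. Plancherel therefore gives $\|v^{(m)}\|_{L^\infty_t L^2_x} \le \eta^{m-1}\|v\|_{L^\infty_t L^2_x}$, and the energy sum in the telescoping identity converges geometrically as soon as $C\eta^2 < 1$. For the tail $\mathcal{E}_{M+1}$, I would expand the trace on $\{x \cdot \e = 0\}$ via the slicing formula as an integral over $\xi_\e$ of $(1 - 2^{-j}\xi_\e)^M \hat v$ and apply Cauchy--Schwarz using the pointwise bound $|1 - 2^{-j}\xi_\e| \le \eta$ together with the $\xi_\e$-support width of at most $2\eta\,2^j$; this gives $\mathcal{E}_{M+1} \lesssim 2^{2j}\eta^{2M+1}\,T\,\|v\|_{L^\infty_t L^2_x}^2$. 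For each fixed $T$, then, $C^M \mathcal{E}_{M+1} \to 0$ as $M \to \infty$ whenever $\eta < 1/\sqrt{C}$, while the resulting infinite-sum upper bound is itself uniform in $T$.

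\textbf{The nonlinearity sum is the main obstacle.} Unlike $v^{(m)}$, the quantity $\Lambda_{v^{(m)}}$ does not inherit the $\eta^{m-1}$ smallness: unwinding the recursion of Definition \ref{D:improving} writes $\Lambda_{v^{(m)}} = (1 + i 2^{-j}\partial_\e)^{m-1}\Lambda_v + (\text{magnetic corrections})$, and on functions frequency-localized to $I_{(-\infty,k]}$ the operator $(1 + i 2^{-j}\partial_\e)$ has $L^p$ operator norm at most some absolute $C_0$ by Bernstein (using $j = k + O(1)$), so a priori $\Lambda_{v^{(m)}}$ can grow like $C_0^{m-1}$. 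The geometric smallness must therefore be extracted from the first slot of $B_\e$. Since Definition \ref{D:good} measures the arguments of $B_\e$ only through the finite family $\{L^p : p \in \cP\}$, and since $v$ is frequency-localized to $I_k$, Bernstein's $L^2$-to-$L^p$ equivalence on $I_k$ upgrades the multiplier bound to $\|v^{(m)}\|_{L^p} \lesssim \eta^{m-1}\|v\|_{L^p}$ uniformly in $m$, for every $p \in \cP$. Bilinearity of $B_\e$ then combines with the recursion-based bound $\|\Lambda_{v^{(m)}}\|_{L^{p'}} \lesssim C_0^{m-1}\|\Lambda_v\|_{L^{p'}} + O(\varepsilon_m)\cdot(\text{corrections})$ to produce
\[
B_\e(v^{(m)}, \Lambda_{v^{(m)}}) \lesssim (C_0\eta)^{m-1} B_\e(v, \Lambda_v) + O(\varepsilon_m)\cdot(\text{absorbable corrections}),
\]
where each magnetic correction carries a factor $2^{-j}(\partial_\e \cA)$ or $2^{-j}\partial_\e\cA^2$ of $L^\infty$ size $O(\varepsilon_m)$ by Bernstein together with $\|\cA\|_{L^\infty} \le \varepsilon_m 2^k$.

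\textbf{Closing.} Taking $\eta^* := \min\bigl(1/\sqrt{2C},\,1/(2 C C_0)\bigr)$ ensures that both $C\eta^2 < 1$ and $C C_0\eta < 1$, so the energy, tail, and nonlinearity sums all converge, and passing to the limit $M \to \infty$ yields (\ref{Abstract LS}) uniformly in $T$ and in $j = k + O(1)$.
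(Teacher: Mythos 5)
The high-level strategy you adopt — iterate Lemma \ref{L:LS prep} along the derived sequence and sum a geometric series — is a plausible alternative to the paper's argument, which instead runs a single-step "best-constant" bootstrap: the paper introduces $K_{T,k}$ as the optimal constant in the estimate over all controlled sequences (a priori finite since $2^j\int_0^T E_\e(v)\,dt \lesssim 2^{2j}T\|v\|_{L^\infty_t L^2_x}^2$), applies it once to $\tilde v = v^{(2)}$, and closes via $K_{T,k}\lesssim 1 + \eta^q K_{T,k}$. The bootstrap avoids your need to handle a tail $\mathcal E_{M+1}$ explicitly and to establish convergence of the full nonlinearity series, both of which make your route more delicate. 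That said, iteration is not wrong in principle.

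The genuine gap is in the step you label as the ``main obstacle.'' You claim that ``Bernstein's $L^2$-to-$L^p$ equivalence on $I_k$ upgrades the multiplier bound to $\|v^{(m)}\|_{L^p} \lesssim \eta^{m-1}\|v\|_{L^p}$ uniformly in $m$, for every $p \in \cP$.'' There is no such equivalence: Bernstein for frequency-localized functions only transports estimates from lower to higher Lebesgue exponents, and for a Fourier multiplier the $L^p$ operator norm, $p\neq 2$, is not controlled by the $L^\infty$ size of the symbol. Plancherel gives $\eta$ on $L^2$, and Bernstein gives the trivial $O(1)$ bound on any $L^p$, $1\le p\le\infty$; to get any positive power of $\eta$ on $L^p$, $p\neq 2$, one must interpolate. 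Riesz--Thorin between the $L^2$ and $L^p$ bounds (as the paper does) gives operator norm $\lesssim \eta^{2/p}$ for $p\ge 2$ and $\lesssim \eta^{2-2/p}$ for $p\le 2$, hence $\lesssim \eta^q$ per iteration for some $q=q(\cP)\in(0,1)$, \emph{not} $\eta$. This still yields geometric decay, so your telescoping sum can be repaired by replacing $\eta$ with $\eta^q$ throughout and shrinking $\eta^*$ accordingly — but as written the mechanism you invoke is false, and the interpolation step is precisely the content you skipped. A secondary, smaller concern is that your ``$O(\varepsilon_m)\cdot(\text{absorbable corrections})$'' bound on the magnetic remainder in $\Lambda_{v^{(m)}}$ must itself be uniform enough in $m$ to be summable against $C^m$; the paper's one-step bootstrap sidesteps this, whereas your infinite sum does not.
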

\begin{proof}

The foundation for proving (\ref{Abstract LS}) is 
(\ref{LS 1}),
which for an adapted form $B_\e$ implies
\begin{equation}
2^j \int_0^T E_{\e}(v) dt
\lesssim 
\lVert v \rVert_{L^\infty_t L^2_x}^2 + 
B_\e(v, \Lambda_v)
+ 2^j \int_0^T E_{\e}(v + i 2^{-j} \partial_\e v) dt.
\label{LS 2}
\end{equation}
Therefore our goal is control the last term in (\ref{LS 2}).
This we do using a bootstrap argument that hinges
upon the fact that $\tilde{v} := v + i 2^{-j} \partial_\e  v$
is the second term in the derived sequence of $v$, and
that being ``controlled" is an inherited property (in the sense
of the comments following Definition \ref{D:improving}).

By Bernstein's and H\"older's inequalities, we have
\begin{equation*}
2^j \int_0^T E_\e(v) dt
\lesssim
2^{2j} T \lVert v \rVert_{L_t^\infty L_x^2}^2.
\end{equation*}
for any $v$.
For fixed $T > 0$ and $k \in \mathbf{Z}$, let $K_{T, k} \geq 1$ 
be the best constant for which the inequality
\begin{equation}
2^j \int_0^T E_{\e}(v) dt 
\leq K_{T, k}
\left(
\lVert v \rVert_{L^2_x}^2 + B_\e(v, \Lambda_{v})
\right)
\label{BestK}
\end{equation}
holds
for all controlled sequences.
Applying (\ref{BestK}) to $\tilde{v}$ results in
\begin{equation}
2^j \int_0^T E_\e(\tilde{v}) dt
\leq
K_{T, k} 
\left(
\lVert \tilde{v} \rVert_{L^2_x}^2 + B_\e(\tilde{v}, \Lambda_{\tilde{v}})
\right),
\label{K v tilde}
\end{equation}
and thus we seek to control norms of $\tilde{v}$ in terms of those
of $v$.

Let $\widetilde{P}_k, \widetilde{P}_{j, \e}$ denote slight fattenings
of the Fourier multipliers $P_k, P_{j, \e}$.
On the one hand, Plancherel implies
\begin{equation}
\lVert 
(1 + i 2^{-j} \partial_\e) \widetilde{P}_{j, \e} \widetilde{P}_k
\rVert_{L^2_x \to L^2_x} \lesssim \eta.
\label{Plancherel}
\end{equation}
On the other hand, Bernstein's inequalities imply
\begin{equation*}
\lVert 
(1 + i 2^{-j} \partial_\e) \widetilde{P}_{j, \e} \widetilde{P}_k
\rVert_{L^p_x \to L^p_x} \lesssim 1,
\quad \quad 1 \leq p \leq \infty.
\end{equation*}
Therefore it follows from Riesz-Thorin interpolation that
\begin{displaymath}
\lVert 
(1 + i 2^{-j} \partial_\e) \widetilde{P}_{j, \e} \widetilde{P}_k
\rVert_{L^p_x \to L^p_x}
\lesssim
\left\{
\begin{array}{ll}
\eta^{2/p}  & 2 \leq p < \infty \\
\eta^{2 - 2/p} & 1 < p \leq 2.
\end{array}
\right.
\end{displaymath}
Restricting to $p \in \cP$, we conclude that there exists
a $q > 0$ such that
\begin{equation}
\lVert 
(1 + i 2^{-j} \partial_\e) \widetilde{P}_{j, \e} \widetilde{P}_k
\rVert_{L^p_x \to L^p_x}
\lesssim \eta^q
\label{RTI}
\end{equation}
for all $p \in \cP$ and all $\eta$ small enough.

Applying (\ref{RTI}) and Bernstein to $\tilde{v}$ yields
\begin{equation*}
\lVert \tilde{v} \rVert_{L^2_x}
\lesssim
\eta^q
\lVert v \rVert_{L^2_x},
\quad\quad
B_\e(\tilde{v}, \Lambda_{\tilde{v}})
\lesssim
\eta^q
B_\e(v, \Lambda_v),
\end{equation*}
which, combined with (\ref{K v tilde}) and (\ref{LS 2}),
leads to
\begin{equation*}
2^j \int_0^T E_\e(v) dt
\lesssim
(1 + \eta^q K_{T, k})
\left(
\lVert v \rVert_{L^\infty_t L^2_x}^2 +
B_\e(v, \Lambda_v)
\right).
\end{equation*}
As $K_{T, k}$ is the best constant for which (\ref{BestK}) holds,
it follows that
\begin{equation*}
K_{T, k} \lesssim 1 + \eta^q K_{T, k}
\end{equation*}
and hence that $K_{T, k} \lesssim 1$ for $\eta$ small enough.
\end{proof}

\begin{cor}
Given the assumptions of Theorem \ref{T:Abstract LS}, it holds that
\begin{equation*}
2^j \int_0^T E_{\e}(v) dt 
\lesssim
\lVert v_0 \rVert_{L^2_x}^2
+ B(v, \Lambda_v)
+ B_\e(v, \Lambda_v)
\end{equation*}
\label{C:Abstract LS}
\end{cor}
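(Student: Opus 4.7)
The corollary differs from Theorem \ref{T:Abstract LS} only in its first two right-hand-side terms: we must replace $\lVert v \rVert_{L^\infty_t L^2_x}^2$ with $\lVert v_0 \rVert_{L^2_x}^2 + B(v, \Lambda_v)$. The plan is therefore to invoke Theorem \ref{T:Abstract LS} as a black box and then bound $\lVert v \rVert_{L^\infty_t L^2_x}^2$ by the two terms on the right, which is precisely what the abstract almost-conservation of energy statement in Lemma \ref{L:Abstract energy} provides.

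More precisely, I would first apply Theorem \ref{T:Abstract LS} to get
\begin{equation*}
2^j \int_0^T E_\e(v) dt \lesssim \lVert v \rVert_{L^\infty_t L^2_x}^2 + B_\e(v, \Lambda_v).
\end{equation*}
Next, applying Lemma \ref{L:Abstract energy} to the same solution $v$ of (\ref{Magnetic NLS}) (note the hypotheses of that lemma are strictly weaker than those of Theorem \ref{T:Abstract LS}, so they are met here) yields
\begin{equation*}
\lVert v \rVert_{L^\infty_t L^2_x}^2 \leq \lVert v_0 \rVert_{L^2_x}^2 + \left\lvert \int_0^T \int_{\mathbf{R}^d} v \wedge \Lambda_v dx dt \right\rvert.
\end{equation*}
Finally, since $B$ is adapted to $\cP$ in the sense of Definition \ref{D:good}, the very first condition in that definition gives
\begin{equation*}
\left\lvert \int_0^T \int_{\mathbf{R}^d} v \wedge \Lambda_v dx dt \right\rvert \lesssim B(v, \Lambda_v).
\end{equation*}
Combining these three inequalities yields the claimed estimate.

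There is no real obstacle here: the corollary is just the composition of two results already established. The only minor point to verify is that the hypotheses of Lemma \ref{L:Abstract energy} hold whenever those of Theorem \ref{T:Abstract LS} do, which is immediate since the energy lemma requires only smoothness, bounded real-valued $\cA_\ell$, and a $C_t^\infty(H_x^\infty)$ solution of the magnetic Schr\"odinger equation---all of which are explicitly assumed in the hypotheses we inherit.
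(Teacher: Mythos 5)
Your argument is exactly the one the paper intends: the paper's own proof is the one-line statement that the corollary follows immediately from Theorem \ref{T:Abstract LS} and Lemma \ref{L:Abstract energy}, and you have simply spelled out that composition (including the use of the adaptedness of $B$ to pass from the energy increment integral to $B(v,\Lambda_v)$). Correct, and same approach.
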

\begin{proof}
This is an immediate consequence of Theorem \ref{T:Abstract LS} and
Lemma \ref{L:Abstract energy}.
\end{proof}

\begin{cor}[Abstract bilinear Strichartz]
Let $d \geq 1$ and $\e \in \mathbf{S}^{d-1}$. 
Set $\tilde{\e} = (-\e, \e) / \sqrt{2}$.
Let $j, k \in \mathbf{Z}$ and $j = k + O(1)$.
Let $\varepsilon_m > 0$ be a small positive number
such that $\varepsilon_m 2^{O(1)} \ll 1$.
Let $\eta > 0$. Let $\cP$ be a fixed
finite subset of $(1, \infty)$ with $2 \in \cP$, and let $B_{\tilde{\e}}$
be a form that is adapted to $\cP$.

Let $w(x, y)$ be a $C^\infty_t(H^\infty_{x,y})$ function on 
$\mathbf{R}^{2d} \times [0, T]$,equal to $w_0$ at $t=0$
and
solving
\begin{equation*}
(i \partial_t + \Delta_\cA) w = \Lambda_w,
\end{equation*}
where $\cA_{k^\prime}$ is real-valued, smooth, has spatial Fourier
support in $I_{(-\infty, k]}$, and satisfies the estimate
\begin{equation*}
\lVert \cA \rVert_{L^\infty_{t,x,y}} \leq \varepsilon_m 2^k.
\end{equation*}

Assume $w$ has (spatial) frequency support in $I_k$ and that
\begin{equation*}
\tilde{\e} \cdot \xi \in [(1 - \eta) 2^j, (1 + \eta) 2^j]
\end{equation*}
for all $\xi$ in the support of $\hat{w}$.
Take $\Lambda_w$ to be frequency-localized to $I_{(-\infty, k]}$.

Suppose that $w(x, y)$ admits a decomposition
$w(x, y) = u(x) v(y)$, where $u$ has frequency support
in $I_\ell$, $\ell \ll k$. Use $u_0, v_0$ to denote $u(t = 0), v(t = 0)$.
If the derived sequence of $w$ is controlled with respect to
$B_{\tilde{\e}}$, then
\begin{equation}
\lVert u v \rVert_{L^2_{t,x}}^2
\lesssim
2^{\ell(d-1)} 2^{-j}
\left( \lVert u_0 \rVert_{L^2_x}^2 \lVert v_0 \rVert_{L^2_x}^2
+ B(w, \Lambda_w) + B_{\tilde{\e}}(w, \Lambda_w) \right)
\label{ABS}
\end{equation}
\label{C:ABS}
uniformly in $T$ and $j = k + O(1)$ provided $\eta$ is small enough.
\end{cor}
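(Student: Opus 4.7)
The plan is a tensor-product-plus-local-smoothing strategy: view $w(x, y, t) = u(x, t) v(y, t)$ as a solution of the lifted magnetic Schr\"odinger equation on $\mathbf{R}^{2d}$, apply the abstract local smoothing estimate of Corollary \ref{C:Abstract LS} in the direction $\tilde{\e}$, and then convert the resulting hyperplane integral $E_{\tilde{\e}}(w)(0)$ into the diagonal bilinear product $\lVert u v \rVert_{L^2_x}^2$ by paying a Bernstein factor $2^{\ell(d-1)}$ associated to the low-frequency factor $u$. All hypotheses of Corollary \ref{C:Abstract LS} on $\mathbf{R}^{2d}$ (frequency localization, smallness of $\cA$, the controlled-derived-sequence hypothesis, smallness of $\eta$) are already placed on $w$ in the statement, so invoking it is automatic.

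First I would parameterize the hyperplane $\{(x, y) \cdot \tilde{\e} = 0\} = \{x \cdot \e = y \cdot \e\}$ by coordinates $(r, x^\perp, y^\perp) \in \mathbf{R} \times \mathbf{R}^{d-1} \times \mathbf{R}^{d-1}$, where $r$ is the common value of $x \cdot \e$ and $y \cdot \e$ and $x^\perp, y^\perp$ range over the $(d-1)$-dimensional orthogonal complement of $\e$ in $\mathbf{R}^d$. Tensorization then gives
\begin{equation*}
E_{\tilde{\e}}(w)(0) = \int_{\mathbf{R}} \lVert u(r, \cdot) \rVert_{L^2_{x^\perp}}^2 \lVert v(r, \cdot) \rVert_{L^2_{y^\perp}}^2 \, dr.
\end{equation*}

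Second, since $u$ has spatial frequency support in $I_\ell(\mathbf{R}^d)$, every slice $u(r, \cdot)$ has $(d-1)$-dimensional Fourier support in a ball of radius $O(2^\ell)$, so Bernstein on $\mathbf{R}^{d-1}$ yields the pointwise bound $|u(r, x^\perp)|^2 \lesssim 2^{\ell(d-1)} \lVert u(r, \cdot) \rVert_{L^2_{x^\perp}}^2$. Substituting this into $\lVert u(\cdot, t) v(\cdot, t) \rVert_{L^2_x}^2 = \int |u(r, x^\perp, t)|^2 |v(r, x^\perp, t)|^2 \, dr \, dx^\perp$ at each fixed time and then integrating in time yields
\begin{equation*}
\lVert u v \rVert_{L^2_{t,x}}^2 \lesssim 2^{\ell(d-1)} \int_0^T E_{\tilde{\e}}(w) \, dt.
\end{equation*}
Corollary \ref{C:Abstract LS} applied to $w$ in direction $\tilde{\e}$ then bounds the right-hand side by $2^{-j}\bigl(\lVert w_0 \rVert_{L^2_{x,y}}^2 + B(w, \Lambda_w) + B_{\tilde{\e}}(w, \Lambda_w)\bigr)$, and the factorization $\lVert w_0 \rVert_{L^2_{x,y}}^2 = \lVert u_0 \rVert_{L^2_x}^2 \lVert v_0 \rVert_{L^2_x}^2$ coming from $w_0 = u_0 \otimes v_0$ produces exactly (\ref{ABS}).

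The main technical content is already absorbed into Corollary \ref{C:Abstract LS}, so the remaining obstacle for the present corollary is quite mild. The only point requiring care is that Bernstein must be applied to the $(d-1)$-dimensional slice rather than to the full $d$-dimensional $u$, which is precisely what produces the power $2^{\ell(d-1)}$ --- not $2^{\ell d}$ --- appearing in the statement. The other small bookkeeping item is to check that the hypotheses on $\eta$, on the frequency supports, and on the controlled derived sequence transfer without change from the $\mathbf{R}^d$ data to $w$ on $\mathbf{R}^{2d}$ in the direction $\tilde{\e}$; these verifications are immediate from the definitions.
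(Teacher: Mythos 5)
Your proposal is correct and follows essentially the same route as the paper's proof: invoke Corollary \ref{C:Abstract LS} on $\mathbf{R}^{2d}$ in the direction $\tilde{\e}$, tensorize the hyperplane integral $E_{\tilde{\e}}(w)(0)$ using the product structure $w = u \otimes v$, and compare it to $\lVert uv \rVert_{L^2_x}^2$ via Bernstein applied to the $(d-1)$-dimensional slices of the low-frequency factor $u$. The only cosmetic difference is that the paper spells out the explicit basis $(\e,\e)/\sqrt{2}, (0,\e_j), (\e_j,0)$ for the hyperplane, whereas you use the equivalent $(r, x^\perp, y^\perp)$ parameterization; the bookkeeping and the factor $2^{\ell(d-1)}$ match exactly.
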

\begin{proof}
Taking into account that
\begin{equation*}
\lVert w_0 \rVert_{L^2_{x,y}}
=
\lVert u_0 \rVert_{L^2_x} \lVert v_0 \rVert_{L^2_x},
\end{equation*}
we apply Corollary \ref{C:Abstract LS} to $w$ at $(x, y) = 0$:
\begin{equation}
2^j \int_0^T E_{\tilde{\e}}(w) dt
\lesssim
\lVert u_0 \rVert_{L^2_x}^2 \lVert v_0 \rVert_{L^2_x}^2 +
B(w, \Lambda_w) +
B_{\tilde{\e}}(w, \Lambda_w).
\label{ls for w}
\end{equation}

We complete $(-\e, \e) / \sqrt{2}$ to a basis as follows:
\begin{equation*}
(-\e, \e)/ \sqrt{2}, 
(0, \e_1), \ldots, (0, \e_{d-1}),
(\e, \e)/ \sqrt{2}, 
(\e_1, 0), \ldots, (\e_{d-1}, 0).
\end{equation*}
On the one hand,
$E_{\tilde{\e}}(w)(0)$ is 
by definition (see (\ref{Ee def})) equal to
\begin{equation*}
\int_{\mathbf{R}}
\int_{\mathbf{R}^{2d-2}}
\lvert u(0 \cdot \e + r \e + x_j \e_j, t)
v(0 \cdot \e + r \e + y_j \e_j, t)
\rvert^2
dx^\prime dy^\prime dr.
\end{equation*}
We rewrite it as
\begin{equation}
\int_{\mathbf{R}}
\int_{\mathbf{R}^{d-1}} 
\lvert v(r \e + y_j \e_j, t) \rvert^2 dy^\prime
\int_{\mathbf{R}^{d-1}} 
\lvert u(r\e + x_j \e_j, t) \rvert^2 
dx^\prime
dr.
\label{Ee w}
\end{equation}
On the other hand,
\begin{equation*}
\begin{split}
\lVert u v \rVert_{L^2_y}^2
&= 
\int_{\Rd}
\lvert u(y, t) \rvert^2 \lvert v(y, t) \rvert^2 dy \\
&=
\int_{\mathbf{R}}
\int_{\mathbf{R}^{d-1}}
\lvert u(r \e + y_j \e_j) \rvert^2
\lvert v(r \e + y_j \e_j) \rvert^2
dy^\prime dr,
\end{split}
\end{equation*}
and by applying Bernstein to $u$ in the $y^\prime$ variables, we obtain
\begin{equation}
\lVert u v \rVert_{L^2_y}^2
\lesssim 2^{\ell(d - 1)}
\int_{\mathbf{R}}
\int_{\mathbf{R}^{d-1}}
\lvert v (r \e + y_j \e_j) \rvert^2 dy^\prime
\int_{\mathbf{R}^{d-1}}
\lvert u(r \e + x_j \e_j ) \rvert^2 dx^\prime
dr.
\label{uv Bernstein}
\end{equation}
Together (\ref{uv Bernstein}), (\ref{Ee w}), and (\ref{ls for w})
imply (\ref{ABS}).
\end{proof}

\subsection{Applying the abstract lemmas}
\label{SS:App}
We would like to apply the abstract estimates just developed
to the evolution equation (\ref{NLS}). We work in the caloric gauge
and adopt the magnetic potential decomposition introduced in
\S \ref{SS:DMP}. 
Throughout we take $\cV$ as defined in (\ref{calV def}).

Our starting point is the equation
\begin{equation}
(i \partial_t + \Delta) \psi_m
= \Blo + \Bhi + V_m.
\label{app start}
\end{equation}
Applying Fourier multipliers
$P_k$, $P_{j, \theta} P_k$, or variants thereof, we easily obtain corresponding
evolution equations for $P_k \psi_m$, $P_{j, \theta} P_k$, etc.
In rewriting a projection $P$ of (\ref{app start}) in the form (\ref{Abstract NLS}),
evidently $\Delta_\cA \psi_m$ should somehow come from 
$\Delta P \psi_m - P \Blo$, whereas $P \Bhi + P V_m$ ought to constitute the leading
part of the nonlinearity $\Lambda$. Fourier multipliers $P$, however, do not commute
with the connection coefficients $A$, and therefore in order to use the 
abstract machinery we must first track and control certain commutators.
Toward this end we adopt some notation from \cite{T2}.

Following \cite[\S 1]{T2}, we use
$L_O(f_1, \dots, f_m)(s, x, t)$ to denote any multi-linear
expression of the form
\begin{equation*}
\begin{split}
L_O(f_1, &\dots, f_m)(s, x, t) \\
&:= \int K(y_1, \ldots, y_{M(c)} f_1(s, x - y_1, t) \ldots
f_m(s, x - y_{M(c)}, t) dy_1 \ldots dy_{M(c)},
\end{split}
\end{equation*}
where the kernel $K$ is a measure with bounded mass
(and $K$ may change from line to line). Moreover, the kernel
of $L_O$ does not depend upon the index $\alpha$.
Also, we extend this notation to vector or matrices by making
$K$ into an appropriate tensor.
The expression $L_O(f_1, \dots, f_m)$ may be thought
of as a variant of $O(f_1, \dots, f_m)$. It obeys two key
properties. The first is simple consequence of Minkowski's
inequality (e.g., see \cite[Lemma 1]{T2}).
\begin{lem}
Let $X_1, \ldots, X_m, X$ be spatially translation-invariant
Banach spaces such that the product estimate
\begin{equation*}
\lVert f_1 \cdots f_m \rVert_{X}
\leq
C_0 \lVert f_1 \rVert_{X_1} \cdots \lVert f_m \rVert_{X_m}
\end{equation*}
holds for all scalar-valued $f_i \in X_i$ and for some constant
$C_0 > 0$. Then
\begin{equation*}
\lVert L_O(f_1, \ldots, f_m) \rVert_{X}
\lesssim
(Cd)^{Cm} C_0 \lVert f_1 \rVert_{X_1} \cdots \lVert f_m \rVert_{X_m}
\end{equation*}
holds for all $f_i \in X_i$ that are scalars, $d$-dimensional vectors,
or $d \times d$ matrices.
\end{lem}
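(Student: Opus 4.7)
The plan is to reduce the claim for vector- and matrix-valued $f_i$ to the given scalar product estimate by two elementary ingredients. First, Minkowski's inequality applied to the bounded-mass kernel $K$ converts the convolution-like structure of $L_O$ into a pointwise product of spatially translated $f_i$'s measured in $X$. Second, translation invariance of each $X_i$ together with the product hypothesis absorbs the shifts at no cost. A componentwise expansion then handles the non-scalar case, producing the $(Cd)^{Cm}$ loss from summing over index tuples.

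For scalar $f_i$, writing
\begin{equation*}
L_O(f_1, \ldots, f_m)(s, x, t) = \int K(y_1, \ldots, y_{M(c)}) f_1(s, x - y_1, t) \cdots f_m(s, x - y_{M(c)}, t) \, dy_1 \cdots dy_{M(c)},
\end{equation*}
Minkowski's inequality for $X$ relative to the measure $K$ gives
\begin{equation*}
\lVert L_O(f_1, \ldots, f_m) \rVert_X \leq \int |K(y)| \, \Bigl\lVert f_1(\cdot - y_1) \cdots f_m(\cdot - y_{M(c)}) \Bigr\rVert_X \, dy.
\end{equation*}
By the scalar product estimate and translation invariance of each $X_i$, the inner norm is bounded by $C_0 \prod_i \lVert f_i \rVert_{X_i}$, uniformly in the shifts. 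Bounded total mass of $K$ then closes the estimate in the scalar case with implicit constant of order $C_0$.

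For vector- or matrix-valued $f_i$, I would fix the standard basis of $\mathbf{R}^d$ (respectively, of $d \times d$ matrices) and expand each $f_i$ into its scalar component functions. The tensor-valued kernel $K$ distributes the contractions prescribed by the multilinear structure, and the resulting terms are themselves scalar $L_O$-type integrals whose kernels are component scalars of $K$, each of bounded mass. The scalar case applies to every such term, contributing $C_0 \prod_i \lVert f_i \rVert_{X_i}$ after bounding each component norm of $f_i$ by $\lVert f_i \rVert_{X_i}$. Summing over all tuples of basis indices — of cardinality at most $d^{O(m)}$ when the $f_i$ take values among scalars, $d$-vectors, and $d \times d$ matrices — produces the prefactor $(Cd)^{Cm}$.

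There is no serious obstacle; the argument is essentially a bookkeeping exercise. The only mildly delicate point is justifying the Minkowski step for an arbitrary finite Borel measure $K$, which is standard: translation acts as an isometry on $X$ by hypothesis, so $y \mapsto T_y(f_1 \cdots f_m)$ is a bounded $X$-valued function and its Bochner integral against $K$ obeys the usual norm inequality. Everything else is routine.
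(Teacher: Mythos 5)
Your argument is correct and is exactly the approach the paper invokes: the paper simply cites this as a direct consequence of Minkowski's inequality (referring to Tao's Lemma 1 in \cite{T2}), and your write-up spells out that Minkowski step, the use of translation invariance to absorb the shifts, and the componentwise expansion that produces the $(Cd)^{Cm}$ factor. Nothing to add.
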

The next lemma is an adapation of \cite[Lemma 2]{T2}.
\begin{lem}[Leibniz rule]
Let $P_k^\prime$ be a $C^\infty$ Fourier multiplier
whose frequency support lies in some compact subset
of $I_k(\mathbf{R}^d)$. The commutator identity
\begin{equation*}
P_k^\prime (f g) =
f P_k^\prime g + L_O(\partial_x f, 2^{-k} g)
\end{equation*}
holds.
\label{L:LeibnizRule}
\end{lem}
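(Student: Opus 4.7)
The plan is to prove the commutator identity by a direct kernel computation, leveraging the fundamental theorem of calculus to produce the derivative on $f$ and the smallness of the kernel moments to produce the $2^{-k}$ on $g$.

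Let $K$ denote the convolution kernel of $P_k^\prime$. Since $P_k^\prime$ has frequency support in a compact subset of $I_k(\mathbf{R}^d)$, a scaling argument gives $K(y) = 2^{kd} K_0(2^k y)$ for some Schwartz function $K_0$, so in particular the moment bound $\lVert \,|y|\, K(y) \rVert_{L^1_y} \lesssim 2^{-k}$ holds. I would begin by writing
\begin{equation*}
P_k^\prime(fg)(x) - f(x) P_k^\prime g(x) = \int K(y)\bigl[f(x-y) - f(x)\bigr] g(x-y)\, dy,
\end{equation*}
which is just the definition of convolution minus the multiplication of $f(x)$ into the integral.

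Next I would expand the finite difference via the fundamental theorem of calculus:
\begin{equation*}
f(x-y) - f(x) = -\int_0^1 y \cdot \partial_x f(x - \tau y)\, d\tau.
\end{equation*}
Substituting this back produces
\begin{equation*}
P_k^\prime(fg)(x) - f(x) P_k^\prime g(x) = -\int_0^1\!\!\int \bigl(y\, K(y)\bigr) \cdot \partial_x f(x - \tau y)\, g(x - y)\, dy\, d\tau.
\end{equation*}
The change of variables $y_1 := \tau y$, $y_2 := y$ for fixed $\tau$, together with averaging in $\tau$, recasts the right-hand side as a bilinear expression of the form $\int \widetilde{K}(y_1, y_2)\, \partial_x f(x - y_1)\, g(x - y_2)\, dy_1\, dy_2$, where $\widetilde{K}$ is a measure with total mass controlled by $\lVert\,|y|\,K(y)\rVert_{L^1} \lesssim 2^{-k}$.

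Finally, pulling the factor of $2^{-k}$ onto $g$, the measure $2^k \widetilde{K}$ has \emph{bounded} mass (independent of $k$), so by definition of the $L_O$ notation,
\begin{equation*}
P_k^\prime(fg) - f\, P_k^\prime g = L_O\bigl(\partial_x f,\; 2^{-k} g\bigr),
\end{equation*}
which is the claimed identity. The proof is essentially bookkeeping; the only mildly delicate point is verifying that the parametrized family of measures indexed by $\tau \in [0,1]$ integrates to a single bounded-mass measure of the required form, but this is immediate from Fubini once the $\tau$-integration is carried out.
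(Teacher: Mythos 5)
Your proposal is correct and follows essentially the same route as the paper: write the commutator as an integral against the convolution kernel, expand $f(x-y) - f(x)$ by the fundamental theorem of calculus, and read off the $L_O$ structure from the first moment of the kernel. The only cosmetic difference is that the paper rescales to $k=0$ at the outset and states the conclusion as following from the rapid decay of the rescaled kernel, while you keep general $k$ and track the $2^{-k}$ factor explicitly via the moment bound $\lVert\,|y|\,K(y)\rVert_{L^1}\lesssim 2^{-k}$; these are the same computation.
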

\begin{proof}
Rescale so that $k = 0$ and let $m(\xi)$ denote the symbol 
of $P_0^\prime$ so that
\begin{equation*}
\widehat{P_0^\prime h}(\xi) := m(\xi) \hat{h}(\xi).
\end{equation*}
By the Fundamental Theorem of Calculus, we have
\begin{equation*}
\begin{split}
\left( P_0^\prime (f g) - f P_0^\prime g \right)(s, x, t)
&= \int_{\mathbf{R}^d}
\check{m}(y) \left(f(s, x - y, t) - f(s, x, t) \right) g(s, x - y, t) dy\\
&= - \int_0^1 \int_{\mathbf{R}^d} \check{m}(y) y \cdot \partial_x f(s, x - r y, t) g(s, x - y, t) dy dr.
\end{split}
\end{equation*}
The conclusion follows from the rapid decay of $\hat{m}$.
\end{proof}

We are interested in controlling $P_{\theta, j} P_k \psi_m$ in $L^{\infty, 2}_\theta$
over all $\theta \in \mathbf{S}^1$ and $\lvert j - k \rvert \leq 20$. In the abstract
framework, however, we assumed a much tighter localization than $P_{\theta, j}$
provides. Therefore we decompose $P_{\theta, j}$ as a sum
\begin{equation}
P = \sum_{l = 1, \ldots, O\left((\eta^*)^{-1}\right)} P_{\theta, j, l},
\label{Psum}
\end{equation}
and it suffices by the triangle inequality to bound  $P_{\theta, j, l} P_k \psi_m$.
We note that this does not affect perturbative estimates since $\eta^*$ is universal
and in particular does not depend upon $\varepsilon_1, \varepsilon$.

For notational convenience set $P := P_{\theta, j, l} P_k$. Applying
$P$ to (\ref{app start}) yields
\begin{equation*}
(i \partial_t + \Delta) P \psi_m
= P \left( \Blo + \Bhi + V_m \right).
\end{equation*}
Now
\begin{equation*}
P \Blo
=
- i P \sum_{\lvert k_3 - k \rvert \leq 4}(\partial_\ell(\Alol P_{k_3} \psi_m)
+ \Alol \partial_\ell P_{k_3} \psi_m),
\end{equation*}
as $P$ localizes to a region of the annulus $I_k$.
Applying Lemma \ref{L:LeibnizRule}, we obtain
\begin{equation*}
P \Blo
=
- i (\partial_\ell(\Alol P \psi_m)
-i \Alol \partial_\ell P \psi_m)
+ R
\end{equation*}
where
\begin{equation}
R :=
\sum_{\lvert k_3 - k \rvert \leq 4} \left( L_O(\partial_x \partial_\ell \Alol,
 2^{-k} P_{k_3} \psi_m) +
 L_O( \partial_x \Alol, 2^{-k} P_{k_3} \partial_\ell \psi_m) \right).
\label{R def}
\end{equation}
Set
\begin{equation*}
\cA_m := \Alo.
\end{equation*}
Then
\begin{equation}
(i \partial_t + \Delta_{\cA}) P \psi_m
=
P(\Bhi + V_m) + \cA_x^2 P \psi_m + R.
\label{ls app prep}
\end{equation}
It is this equation 
that we shall show fits within the abstract local smoothing framework.

First we check that Lemmas \ref{L:Abstract energy} and \ref{L:LS prep}
apply.  The main condition to check is (\ref{A L-infinity}).
Key are (\ref{A_x Bound}) and Bernstein, which together 
with the fact that $\cA$ is frequency-localized to $I_{(-\infty, k]}$
provide the estimate
\begin{equation*}
\lVert \cA \rVert_{L^\infty_{t,x}} \lesssim 2^k.
\end{equation*}
To achieve the $\varepsilon_m$ gain, we adjust $\varpi$, which
forces a gap between $I_k$ and the frequency support of $\cA$,
i.e., we localize $\cA$ to $I_{(-\infty, k - \varpi]}$ instead.
Thus it suffices to set $\varpi \in \mathbf{Z}_+$ equal to a sufficiently large universal
constant.

There is more to check in showing that (\ref{ls app prep}) falls within the purview
of Theorem \ref{T:Abstract LS}.
Already we have $d = 2$, $\e = \theta$, $\varepsilon_m \sim 2^{-\varpi}$,
$\cA_m := \Alo$,
$v = P_{\theta, j, l} P_k \psi_m$,
and $\Lambda_v = P(\Bhi + V_m) + \cA_x^2 P \psi_m + R$.

Next we choose $\cP$ based upon the norms used in $N_k$, with the exception
of the local smoothing/maximal function estimates. To be precise, define
the new norms $\tN_k$ via
\begin{equation*}
\begin{split}
\lVert f \rVert_{\tN_k(T)} :=& \inf_{f = f_1 + f_2 + f_3 + f_4 + f_5}
\lVert f_1 \rVert_{L_{t,x}^{4/3}}
+ 2^{k/6} \lVert f_2 \rVert_{L_{\hat{\theta}_1}^{3/2, 6/5}}
+ 2^{k/6} \lVert f_3 \rVert_{L_{\hat{\theta}_2}^{3/2, 6/5}} \\
& + 2^{-k/6}
\lVert f_4 \rVert_{L^{6/5,3/2}_{\hat{\theta}_1}}
+ 2^{-k/6}
\lVert f_5 \rVert_{L^{6/5,3/2}_{\hat{\theta}_2}}
\end{split}
\end{equation*}
and similarly $\tG_k$ via
\begin{equation*}
\begin{split}
\lVert f \rVert_{\tG_k(T)} :=& \;
\lVert f \rVert_{L_t^\infty L_x^2} +
\lVert f \rVert_{L_{t,x}^4}
+ 2^{-k/2} \lVert f \rVert_{L_x^4 L_t^\infty} \\
&+ 2^{-k/6} \sup_{\theta \in \mathbf{S}^1}
\lVert f \rVert_{L^{3,6}_\theta}
+ 2^{k / 6} \sup_{\lvert j - k \rvert \leq 20} \sup_{\theta \in \mathbf{S}^1}
\lVert P_{j, \theta} f \rVert_{L_\theta^{6,3}}.
\end{split}
\end{equation*}
Set $\cP = \{2, 3, 3/2, 4, 4/3, 6, 5/6 \}$.
We define the form $B(\cdot, \cdot)$ via
\begin{equation}
B(f, g) := 
\lVert f \rVert_{\tG_k(T)} \lVert g \rVert_{\tN_k(T)}
\label{Bdef}
\end{equation}
and $B_\theta$ by
\begin{equation}
B_\theta(f, g) := B(f, \chi_{\{ x \cdot \theta \geq 0 \}} g)
\label{Btheta}
\end{equation}
as in Definition \ref{D:good}. That $B_\theta$
is adapted to $\cP$ is a direct consequence of the definition.

\begin{prop}
Let $\eta > 0$ be a parameter to be specified later.
Let
$d = 2$, $\e = \theta$, $\varepsilon_m \sim 2^{-\varpi}$,
$\cA_m := \Alo$,
$v = P_{\theta, j, l}^{(\eta)} P_k \psi_m$,
$\Lambda_v = P(\Bhi + V_m) + \cA_x^2 P \psi_m + R$,
and
$\cP = \{2, 3, 3/2, 4, 4/3, 6, 5/6 \}$.
Let $B, B_\theta$ be given by (\ref{Bdef}) and (\ref{Btheta})
respectively.
Then the conditions of Theorem \ref{T:Abstract LS} are satisfied
and the derived sequence of $v$ is controlled with respect to $B_\theta$
so that conclusion (\ref{Abstract LS}) holds for
$v = P_{\theta, j, l}^{(\eta)} P_k \psi_m$ given $\eta$ sufficiently small.
\label{P:Applying ALS}
\end{prop}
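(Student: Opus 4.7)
The plan is to verify, in turn, each hypothesis of Theorem \ref{T:Abstract LS} along with the adaptedness of $B_\theta$ to $\cP$ and the ``controlled'' condition on the derived sequence of $v$. Since (\ref{ls app prep}) was obtained by applying $P := P_{\theta,j,l} P_k$ to (\ref{app start}) and then using the commutator identity of Lemma \ref{L:LeibnizRule} to peel $\Alol$ off the lowest-frequency piece of $\Blo$, the magnetic Schr\"odinger structure $(i\partial_t + \Delta_{\cA})v = \Lambda_v$ is already in place; what remains is a series of bookkeeping checks.

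First I would verify the hypotheses on $\cA = \Alo$. Smoothness follows from $\varphi \in H^{\infty,\infty}_Q(T)$, and reality is inherited from $A_\alpha = w \cdot \partial_\alpha v$ (equivalently, $\Im(\overline{\psi_m}\psi_s)$ in (\ref{CC Integral Rep}) is real). By definition of $\Alo$, its spatial Fourier support lies in $I_{(-\infty, k-\varpi]} \subset I_{(-\infty,k]}$, giving the Fourier-support hypothesis. For the $L^\infty$ bound I would combine Bernstein's inequality with (\ref{A_x Bound}) to obtain
\[ \lVert \Alo \rVert_{L^\infty_{t,x}} \lesssim 2^{k-\varpi} \lVert A_x \rVert_{L^\infty_t L^2_x} \lesssim_{E_0} 2^{-\varpi} 2^k, \]
so that taking $\varpi$ equal to a sufficiently large universal constant produces $\varepsilon_m \sim 2^{-\varpi}$ satisfying $\varepsilon_m 2^{O(1)} \ll 1$ as required.

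The frequency-localization hypotheses on $v$ and $\Lambda_v$ are then routine: $P_k$ places $v$ in $I_k$, while the decomposition (\ref{Psum}) into $O((\eta^*)^{-1})$ pieces $P_{\theta, j, l}$ furnishes the tight angular-radial constraint $\theta \cdot \xi \in [(1-\eta)2^j, (1+\eta)2^j]$; since the number of pieces depends only on $\eta^*$, summation over $l$ at the end is harmless. Similarly, $P(\Bhi + V_m)$ inherits support in $I_k$ from the outer $P_k$, $\cA_x^2 P\psi_m$ is a low$\times$high product landing in $I_k$, and the remainder $R$ of (\ref{R def}) is a low$\times$high product concentrated at frequency $\sim 2^k$; in particular $\Lambda_v$ sits in $I_{(-\infty,k]}$. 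Adaptedness of $B_\theta$ to $\cP$ reduces, via $|z \wedge \zeta| = 2|\Im(z\bar{\zeta})| \leq 2|z||\zeta|$, to the termwise H\"older pairings $L^4 \times L^{4/3}$, $L^{3,6}_\theta \times L^{3/2,6/5}_\theta$, and $L^{6,3}_\theta \times L^{6/5,3/2}_\theta$ matching the five summands of $\tN_k$ against the corresponding summands of $\tG_k$, and Bernstein does indeed hold in each argument as the exponents lie in $\cP$.

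The step I expect to require the most care, though it is purely qualitative, is checking that the derived sequence $\{v^{(m)}\}_{m \geq 1}$ is controlled with respect to $B_\theta$, i.e., $B_\theta(v^{(m)}, \Lambda_{v^{(m)}}) < \infty$ for every $m$. The recursion $v^{(m+1)} = v^{(m)} + i 2^{-j} \partial_\theta v^{(m)}$ preserves the spatial frequency support of $v$, while the defining formula for $\Lambda_{v^{(m)}}$ introduces at each step only boundedly many additional factors of $\partial_x \cA$ and $2^{-j}$, all of which are absorbed by the same Bernstein-plus-smoothness argument used for $\cA$ itself together with Corollary \ref{EED Cor}. Only finiteness is needed here, not a quantitative bound, so this step does not affect the constants. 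Once this is established, Theorem \ref{T:Abstract LS} applies directly and yields (\ref{Abstract LS}) for $v = P_{\theta,j,l} P_k \psi_m$, which is the claim of the proposition.
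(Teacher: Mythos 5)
Your outline correctly matches the structure of the paper's argument: check the hypotheses on $\cA$ (reality, smoothness, Fourier support, $L^\infty$ bound via Bernstein and (\ref{A_x Bound}) with $\varpi$ large), check frequency localization, check adaptedness of $B_\theta$, and then verify that the derived sequence is controlled. The first three checks are handled just as in the paper (which in fact performs the $\cA$-verification in the text immediately preceding the proposition).

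The gap is in the last step. You correctly observe that the ``controlled'' condition is a qualitative finiteness requirement, but the justification you offer is not quite right. The $\tN_k(T)$ norm is not controlled by Corollary \ref{EED Cor}: that corollary supplies $L_t^\infty L_x^2$ bounds with heat-time decay, whereas $\tN_k(T)$ is built out of $L_{t,x}^{4/3}$ and lateral Strichartz norms. To make your qualitative argument precise, the clean route is to note that every term of $\Lambda_{v^{(m)}}$ is a smooth function with adequate spatial decay on the finite slab $\RR \times (-T,T)$, hence lies in $L_{t,x}^{4/3}$, which embeds into $\tN_k(T)$ by the trivial decomposition; the multiplier $\chi_{\{x\cdot\theta \geq 0\}}$ is bounded on these Lebesgue-type spaces. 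This last observation (boundedness of the sharp cutoff on $\tN_k$) is singled out in the paper as essential, and your proposal leaves it implicit. More importantly, the paper does not take the qualitative shortcut: it proves Lemma \ref{L:ls app RHS}, the quantitative bound $\lVert \Lambda_v \rVert_{\tN_k(T)} \lesssim \cV\, 2^{-\sigma k} b_k(\sigma)$, inside the proposition's proof (via Corollaries \ref{V bound corollary}, \ref{B bound corollary}, the trilinear estimate Lemma \ref{L:Trilin1}, and the parabolic estimates (\ref{PsiF})--(\ref{APsiF})), and then for $q > 1$ tracks the new terms $2^{-j}(\partial_\theta\partial_\ell\cA_\ell)v^{(q)}$, $2^{-j}(\partial_\theta\cA_\ell^2)v^{(q)}$, $2^{-j+1}(\partial_\theta\cA_\ell)\partial_\ell v^{(q)}$ by recycling the same arguments together with Bernstein. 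The quantitative bound is not logically required for the proposition, but it is the substantive content and is used immediately afterward to derive Corollary \ref{C:ALS}; so if you take the shortcut here you have merely deferred that work rather than avoided it.
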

\begin{proof}
The only claim of Proposition \ref{P:Applying ALS} that remains to be
verified is that the derived sequence of 
$v = P_{\theta, j, l} P_k \psi_m$
is controlled with respect to $B_\theta$. In particular, we need
to show that for each $q \geq 1$ we have
\begin{equation*}
B_\theta(v^{(q)}, \Lambda_{v^{(q)}}) < \infty,
\end{equation*}
where $v^{(1)} := P_{\theta, j, l} P_k \psi_m$,
\begin{equation*}
v^{(q+1)} := v^{(q)} + i 2^{-j} \partial_\theta v^{(q)},
\end{equation*}
and
\begin{equation*}
\Lambda_{v^{(q+1)}} :=
(1 + i 2^{-j} \partial_\theta) \Lambda_{v^{(q)}}
+ i2^{-j}( i \partial_\theta \partial_\ell \cA_\ell - \partial_\theta \cA_\ell^2) v^{(q)}
- 2^{-j + 1} (\partial_\theta \cA_\ell) \partial_\ell v^{(q)}.
\end{equation*}

We first prove the following lemma.
\begin{lem}
Let $\sigma \in [0, \sigma_1 - 1]$.
The right hand side of (\ref{ls app prep}) satisfies
\begin{equation*}
\lVert P(\Bhi + V_m) + \cA_x^2 P \psi_m + R \rVert_{\tN_k(T)}
\lesssim
\cV 2^{-\sigma k} b_k(\sigma).
\end{equation*}
\label{L:ls app RHS}
\end{lem}
\begin{proof}
We will repeatedly use implicitly the fact that the multiplier $P_{\theta, j, l}$
is bounded on $L^p, 1 \leq p \leq \infty$, so that in particular $P$
obeys estimates that are at least as good as those obeyed by $P_k$.

From Corollaries \ref{V bound corollary}, \ref{B bound corollary} of 
\S\S \ref{SS:V}, \ref{SS:B} it follows that $P_k(\Bhi + V_m)$
is perturbative and bounded in $\tN_k(T)$ by 
$\cV 2^{-\sigma k} b_k(\sigma)$.
The $\tN_k(T)$ estimates on $P V_m$ immediately imply
the boundedness of $\cA_x^2 P \psi_m$.

To estimate $R$, we apply Lemma \ref{L:Trilin1} to bound
$P \Blo$ by
\begin{equation*}
\begin{split}
\sum_{(k_1, k_2, k_3) \in Z_1(k)}
&\int_0^\infty 2^{\max\{k_1, k_2\}} 2^{k_3 - k} C_{k, k_1, k_2, k_3}
\lVert P_{k_1} \psi_x(s) \rVert_{F_{k_1}} \times \\
&\times \lVert P_{k_2}(D_\ell \psi_\ell(s)) \rVert_{F_{k_2}}
\lVert P_{k_3} \psi_m(0) \rVert_{G_{k_3}} ds,
\end{split}
\end{equation*}
which, in view of (\ref{PsiF}), (\ref{APsiF}), and (\ref{Ik1k2}),
is controlled by
\begin{equation*}
\sum_{(k_1, k_2, k_3) \in Z_1(k)}
C_{k, k_1, k_2, k_3} b_{k_1} b_{k_2} 2^{-\sigma k_3} b_{k_3}(\sigma).
\end{equation*}
Summation is achieved thanks to Corollary \ref{C:MTE1}.
\end{proof}

We return to the proof of the proposition, and in particular to showing
that $B_\theta(v, \Lambda_v) < \infty$. 
With the important observation that
the spatial multiplier
$\chi_{x \cdot \theta \geq 0}$ is bounded on the spaces
$\tN_k(T)$, we may apply Lemma \ref{L:ls app RHS}
to control $\chi_{x \cdot \theta \geq 0} \Lambda_v$ in $\tN_k$.
Since by assumption $P \psi_m$ is bounded in $\tG_k(T)$
(even in $G_k(T)$), we conclude that
$B_\theta(v, \Lambda_v) < \infty$.

Next we need to show $B_\theta(v^{q}, \Lambda_{v^{q}}) < \infty$
for $q > 1$.
By Bernstein,
\begin{equation*}
\lVert v^{(q)} \rVert_{\tG_{k}(T)}
\lesssim
\lVert v^{(q-1)} \rVert_{\tG_{k}(T)}.
\end{equation*}
Similarly,
\begin{equation*}
\lVert (1 + i 2^{-j}) \partial_\theta \Lambda_{v^{(q)}} \rVert_{\tN_k(T)}
\lesssim
\lVert \Lambda_{v^{(q-1)}} \rVert_{\tN_k(T)}.
\end{equation*}
Thus it remains to control
$i2^{-j}( i \partial_\theta \partial_\ell \cA_\ell - \partial_\theta \cA_\ell^2) v^{(q)}$
and $2^{-j + 1} (\partial_\theta \cA_\ell) \partial_\ell v^{(q)}$ in $\tN_k$
for each $q > 1$.
Both are consequences of arguments in Lemma \ref{L:ls app RHS}:
Boundedness of $2^{-j} (\partial_\theta \partial_\ell \cA_\ell) v^{(q)}$
and $2^{-j+1}(\partial_\theta \cA_\ell) \partial_\ell v^{(q)}$
follows directly from the argument used to control $R$ 
and from Bernstein's
inequality, whereas boundedness of $2^{-j} (\partial_\theta \cA_\ell^2) v^{(q)}$
is a consequence of Bernstein and the estimates on $\cA_x^2 P \psi_m$
from \S \ref{SS:V}.
\end{proof}

Combining Lemma \ref{L:ls app RHS} and Proposition \ref{P:Applying ALS},
we conclude that Corollary \ref{C:Abstract LS} applies to
$v = P \psi_m$, with right hand side bounded by
$2^{-2 \sigma k} c_k(\sigma)^2 + \cV 2^{-2 \sigma k} b_k(\sigma)^2$.
In view of the decomposition (\ref{Psum}), we conclude
\begin{cor}
Assume $\sigma \in [0, \sigma_1 - 1]$.
The function $P_k \psi_m$
satisfies
\begin{equation*}
\sup_{\lvert j - k \rvert \leq 20} \sup_{\theta \in \mathbf{S}^1}
\lVert P_{j, \theta} P_k \psi_m \rVert_{L^{\infty, 2}_{\theta}}
\lesssim 2^{-k/2} ( 2^{-\sigma k} c_k(\sigma) + \cV^{1/2} 2^{-\sigma k} b_k(\sigma) ),
\end{equation*}
\label{C:ALS}
\end{cor}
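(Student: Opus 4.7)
The plan is to assemble the pieces already developed: apply the abstract local smoothing estimate of Corollary \ref{C:Abstract LS} to the frequency-localized derivative field, control both the initial data contribution and the nonlinear contribution using the work above, and finally sum back through the angular partition of unity \eqref{Psum}.

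First, fix $\theta \in \mathbf{S}^1$, $j \in \mathbf{Z}$ with $|j-k| \leq 20$, and an index $l$ from the decomposition \eqref{Psum}. Set $P := P_{\theta, j, l} P_k$ and $v := P \psi_m$. Proposition \ref{P:Applying ALS} has verified that $v$ fits the framework of Theorem \ref{T:Abstract LS} with $\cA_m = \Alo$, $d = 2$, $\e = \theta$, and form $B_\theta$ defined by \eqref{Bdef}--\eqref{Btheta}; moreover the derived sequence of $v$ is controlled. Corollary \ref{C:Abstract LS} therefore yields
\begin{equation*}
2^j \int_0^T E_\theta(v)\,dt \lesssim \lVert v(0) \rVert_{L^2_x}^2 + B(v, \Lambda_v) + B_\theta(v, \Lambda_v),
\end{equation*}
where the left-hand side equals $2^j \lVert v \rVert_{L^{\infty,2}_\theta}^2$. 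Since $j = k + O(1)$, this is $\sim 2^k \lVert v \rVert_{L^{\infty,2}_\theta}^2$.

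Next I would bound each term on the right. For the initial data, since $P$ is bounded on $L^2_x$, Corollary \ref{DataCor} gives
\begin{equation*}
\lVert v(0) \rVert_{L^2_x}^2 \lesssim \lVert P_k \psi_m(0) \rVert_{L^2_x}^2 \lesssim 2^{-2\sigma k} c_k(\sigma)^2.
\end{equation*}
For the nonlinear terms, the bootstrap frequency envelope definition \eqref{b Envelope} combined with the embedding $G_k \hookrightarrow \tG_k$ yields $\lVert v \rVert_{\tG_k(T)} \lesssim 2^{-\sigma k} b_k(\sigma)$ (or $\lesssim b_k$ at $\sigma = 0$), while Lemma \ref{L:ls app RHS} controls $\lVert \Lambda_v \rVert_{\tN_k(T)} \lesssim \cV \cdot 2^{-\sigma k} b_k(\sigma)$. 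Since $\chi_{\{x\cdot\theta \geq 0\}}$ is bounded on $\tN_k(T)$, both $B(v, \Lambda_v)$ and $B_\theta(v, \Lambda_v)$ are bounded by $\cV \cdot 2^{-\sigma k} b_k(\sigma) \cdot 2^{-\sigma k} b_k(\sigma) = \cV \cdot 2^{-2\sigma k} b_k(\sigma)^2$ (after using $b_k \lesssim b_k(\sigma)$ at the relevant place or more simply the product bound directly).

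Combining these gives
\begin{equation*}
2^k \lVert P_{\theta, j, l} P_k \psi_m \rVert_{L^{\infty,2}_\theta}^2 \lesssim 2^{-2\sigma k} c_k(\sigma)^2 + \cV \cdot 2^{-2\sigma k} b_k(\sigma)^2.
\end{equation*}
Taking square roots, dividing by $2^{k/2}$, and summing over the $O((\eta^*)^{-1})$ angular subpieces in \eqref{Psum} (with $\eta^*$ a universal constant, so the sum is harmless) by the triangle inequality, then taking suprema over $\theta$ and $|j - k| \leq 20$, yields the claimed estimate. The only nontrivial check is that the sum over $l$ does not degrade the bound — but since the number of pieces is absolute and independent of $k, \varepsilon_1$, this simply contributes a universal multiplicative constant, which was the main point I would want to emphasize to justify the setup in Proposition \ref{P:Applying ALS}.
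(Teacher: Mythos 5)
Your proof is correct and follows essentially the same route as the paper: apply Corollary \ref{C:Abstract LS} to $v = P_{\theta,j,l}P_k\psi_m$ (the setup justified by Proposition \ref{P:Applying ALS}), bound the initial-data contribution via Corollary \ref{DataCor}, bound the forms $B$ and $B_\theta$ using $\lVert v \rVert_{\tG_k} \lesssim 2^{-\sigma k}b_k(\sigma)$ and Lemma \ref{L:ls app RHS}, and sum over the $O((\eta^*)^{-1})$ angular subpieces of \eqref{Psum}. The paper's proof is just the compressed version of this. One minor imprecision worth flagging: the parenthetical ``$b_k \lesssim b_k(\sigma)$'' is not generally true (the weight $2^{\sigma k'}$ cuts both ways), but your preceding line already bounds $B(v,\Lambda_v)$ directly by $\cV 2^{-2\sigma k}b_k(\sigma)^2$ without needing it, so the parenthetical is harmless. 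Also, strictly speaking $\int_0^T E_\theta(v)\,dt$ as the abstract lemma produces it is the slice at $x\cdot\theta = 0$ rather than the full $L^{\infty,2}_\theta$ norm, and the upgrade to the sup over translated hyperplanes relies on translation-invariance of the $\tN_k$ norms (as the paper addresses in the proof of Corollary \ref{C:ABSapp}); the paper glosses over this in the same way you do, so this is consistent with the source.
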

thereby establishing Theorem \ref{ls thm}.

Our next objective is to apply Corollary \ref{C:ABS} to the case where
$w$ splits as a product $u(x) v(y)$ where $u, v$ are appropriate frequency
localizations of $\psi_m$ or $\overline{\psi_m}$.
First we must find function spaces suitable for defining an adapted form.
We start with $(i \partial_t + \Delta_{\cA}) w = \Lambda_w$
and observe how it behaves with respect to separation
of variables.
If $w(x, y) = u(x) v(y)$, then the left hand side may be rewritten as
$u \cdot (i \partial_t + \Delta_{\cA_y})v + v \cdot (i \partial_t + \Delta_{\cA_x})u$.
Let $\Lambda_u := (i \partial_t + \Delta_{\cA_x})u$ and
$\Lambda_v := (i \partial_t + \Delta_{\cA_y})v$. Then
\begin{equation*}
(i \partial_t + \Delta_{\cA}) (uv) = u \Lambda_v + v \Lambda_u.
\end{equation*}
We control
\begin{equation*}
\int_0^T \int_{\mathbf{R}^2 \times \mathbf{R}^2} u(x) v(y) 
\left( \Lambda_u(x) v(y) + u(x) \Lambda_v(y) \right) dx dy dt
\end{equation*}
as follows: in the case of the first term $u(x) v(y) \Lambda_u(x) v(y)$ we place each 
$v(y)$ in $L^\infty_t L^2_y$; we bound $u(x) \Lambda_u(x)$ by
placing $u(x)$ in $G_j$ and $\Lambda_u(x)$ in $\tN_j$. To control
$u(x) v(y) u(x) \Lambda_v(y)$, 
we simply reverse the roles of $u$ and $v$ (and of $x$ and $y$).
This leads us to the spaces $\oN_{k, \ell}$ defined by
\begin{equation}
\begin{split}
\lVert f \rVert_{\oN_{k, \ell}(T)} :=
\inf_{ \substack{ J \in \mathbf{Z}_+, 
f(x, y) = \\ \sum_{j = 1}^{2J} (g_j(x)h_j(y) + g_{j+1}(x)h_{j+1}(y))} }
&\left( \lVert g_j \rVert_{\tN_\ell(T)} \lVert h_j \rVert_{L^\infty_t L^2_y} \right. \\
&\;\left. + \lVert g_{j+1} \rVert_{L^\infty_t L^2_x} \lVert h_{j+1} \rVert_{\tN_k(T)} \right),
\end{split}
\label{oN def}
\end{equation}
and the spaces $\oG_{k, \ell}$ defined via
\begin{equation}
\lVert f \rVert_{\oG_{k, \ell}(T)}  :=
\lVert  \lVert f(x, y) \rVert_{\tG_k(T)(y)} \rVert_{\tG_{\ell}(T)(x)}.
\label{oG def}
\end{equation}
We use these spaces to define the form $\oB(\cdot, \cdot)$ by
\begin{equation}
\oB(f, g) := 
\lVert f \rVert_{\oG_{k, \ell}(T)} \lVert g \rVert_{\oN_{k, \ell}(T)}
\label{oBdef}
\end{equation}
and the form $\oB_\Theta$ by
\begin{equation}
\oB_\Theta(f, g) := \oB(f, \chi_{\{ (x, y) \cdot \Theta \geq 0 \}} g),
\label{oBtheta}
\end{equation}
where $\Theta := (-\theta, \theta)$.

\begin{prop}
Let $\eta > 0$ be a small parameter 
and $\varpi \in \mathbf{Z}_+$ a large parameter, both to be specified later.
Let $j, k, \ell \in \mathbf{Z}$, $j = k + O(1)$, $\ell \ll k$.
Let
$d = 2$, $\e = \theta$, $\varepsilon_m \sim 2^{-\varpi}$,
$\cA_x := \Alo$,
$v = P_{\theta, j, l}^{(\eta)} P_k \psi_m$,
$\Lambda_v = P(\Bhi + V_m) + \cA_x^2 P \psi_m + R$,
and
$\cP = \{2, 3, 3/2, 4, 4/3, 6, 5/6 \}$.
Here $R$ is given by (\ref{R def}).
Also, let $u = \overline{P_{\ell} \psi_{p}}$, $p \in \{1, 2\}$ and
$\Lambda_u = P_{\ell} (B_{p, \mathrm{hi \lor hi}} + V_p) + \cA_x^2 P_\ell \psi_p + R^\prime$,
where $R^\prime$ is given by (\ref{R def}), but defined in terms of derivative field
$\psi_\ell$ and frequency $\ell$ rather than $\psi_m$ and $k$.

Let $w(x, y) := u(x)v(y)$, $\cA := (\cA_x, \cA_y)$,
$\Lambda_w := \Lambda_u v + u \Lambda_v$.
Then, for $\varpi$ sufficiently large and $\eta$ sufficiently small,
the conditions of Corollary \ref{C:ABS} are satisfied
and (\ref{ABS}) applies to $u(x)v(x)$.
\label{P:ABSapp}
\end{prop}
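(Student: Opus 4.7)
The plan is to verify, one by one, the hypotheses of Corollary \ref{C:ABS} for the stated data, leaning heavily on Proposition \ref{P:Applying ALS} for the factor $v$, and handling the new product-space ingredients with the spaces $\oG_{k,\ell}$ and $\oN_{k,\ell}$ introduced in \eqref{oG def} and \eqref{oN def}. The four hypotheses to check are: (i) the magnetic potential $\cA = (\cA_x, \cA_y)$ is real-valued, smooth, has spatial Fourier support in $I_{(-\infty,k]}$, and obeys $\lVert \cA \rVert_{L^\infty} \le \varepsilon_m 2^k$; (ii) $w$ has spatial Fourier support in $I_k$; (iii) the tight angular localization $\tilde{\e}\cdot\xi \in [(1-\eta)2^j,(1+\eta)2^j]$ holds for all $\xi \in \mathrm{supp}\,\hat{w}$; (iv) the derived sequence of $w$ is controlled with respect to $\oB_\Theta$ from \eqref{oBtheta}.

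For (i), both components of $\cA$ equal $\Alo$ (acting in different variables), whose frequency support by construction lies in $I_{(-\infty, k - \varpi]}$; in the product space this lies inside $I_{(-\infty,k]}$. Combining \eqref{A_x Bound} with Bernstein gives $\lVert \Alo \rVert_{L^\infty_{t,x}} \lesssim 2^{k-\varpi}$, so choosing $\varpi$ a sufficiently large universal constant yields $\varepsilon_m \sim 2^{-\varpi}$ with $\varepsilon_m 2^{O(1)} \ll 1$, exactly as in the proof of Proposition \ref{P:Applying ALS}. For (ii), since $\mathrm{supp}\,\hat{w} \subset \mathrm{supp}\,\hat{u} \times \mathrm{supp}\,\hat{v} \subset I_\ell \times I_k$ and $\ell \ll k$, we have $|(\xi_x,\xi_y)|^2 = |\xi_x|^2 + |\xi_y|^2 \in [2^{2k-2},2^{2k+2}](1 + O(2^{2(\ell-k)}))$, which after a harmless fattening of $P_k$ lies in $I_k$ on $\R^{2d}$. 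For (iii), the computation $\tilde{\e}\cdot(\xi_x,\xi_y) = \theta\cdot(\xi_y-\xi_x)/\sqrt{2}$ combined with the tight localization $\theta \cdot \xi_y \in [(1-\eta/2)2^j, (1+\eta/2)2^j]$ for $v$ (inherited from the decomposition \eqref{Psum}) and the bound $|\theta\cdot\xi_x| \le 2^\ell$ gives the required tight localization in the product space provided $2^{\ell-j} \ll \eta$, which is secured by the hypothesis $\ell \ll k$ after a possible further enlargement of the gap (absorbed into $\ell \ll k$).

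For (iv), control of the derived sequence, the key is the product structure $\Lambda_w = \Lambda_u \cdot v + u \cdot \Lambda_v$, which matches precisely the infimum structure in \eqref{oN def}. Using Lemma \ref{L:ls app RHS} applied separately at frequencies $\ell$ and $k$, we obtain $\lVert \Lambda_u \rVert_{\tN_\ell(T)} \lesssim \cV b_\ell$ and $\lVert \Lambda_v \rVert_{\tN_k(T)} \lesssim \cV 2^{-\sigma k} b_k(\sigma)$, while the bootstrap gives $\lVert u \rVert_{L^\infty_t L^2_x},\lVert v \rVert_{L^\infty_t L^2_y} \lesssim 1$. Multiplying by the spatial cutoff $\chi_{\{(x,y)\cdot\Theta \ge 0\}}$ costs nothing on the $\tN$-type components (these norms are bounded on this multiplier), so $\oB_\Theta(w,\Lambda_w) < \infty$. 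To propagate this through the derived sequence $w^{(m+1)} := w^{(m)} + i 2^{-j}\partial_{\tilde{\e}} w^{(m)}$, we argue exactly as in Proposition \ref{P:Applying ALS}: the differential operator $(1 + i 2^{-j}\partial_{\tilde{\e}})$ is bounded on $\tG$- and $\tN$-type spaces by Bernstein on the frequency-localized supports, and the new terms involving $\partial_{\tilde{\e}} \cA_\ell$ and $\partial_{\tilde{\e}}\cA_\ell^2$ are handled by the same arguments used to control $R$ and $\cA_x^2 P\psi_m$ in Lemma \ref{L:ls app RHS}, now bilinearized via \eqref{oN def}.

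The main obstacle in the argument is the tight angular localization (iii), which is the only place where the hypothesis $\ell \ll k$ is essential in a delicate way: we need $2^{\ell-j}$ smaller than the universal threshold $\eta^*$ produced by Theorem \ref{T:Abstract LS}, which dictates how much room the decomposition \eqref{Psum} must leave for the $u$-contribution. Once this quantitative gap is fixed (by choosing $\varpi$ large and restricting $\ell$ accordingly), the remaining verifications are essentially bookkeeping that mirrors Proposition \ref{P:Applying ALS} one factor at a time, with the product spaces $\oG_{k,\ell}$ and $\oN_{k,\ell}$ designed precisely to make this bookkeeping transparent.
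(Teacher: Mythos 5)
Your proposal is correct and follows essentially the same route as the paper: the paper's proof is a five-line verification that (a) the frequency support conditions on $\cA$ and $\Lambda_w$ hold, (b) the $L^\infty$ bound on $\cA$ follows from (\ref{A_x Bound}) and Bernstein for $\varpi$ large, (c) the frequency support/angular localization of $w$ forces the gap $\ell \ll k$ to be large relative to $\eta$, and (d) the derived sequence is controlled by appealing to Proposition \ref{P:Applying ALS} and the definitions of $\oN_{k,\ell}$, $\oG_{k,\ell}$, and your (i)--(iv) are exactly these checks with the angular-localization computation spelled out more explicitly. One very small point you glide past is the formal confirmation that $\oB_\Theta$ is adapted to $\cP$ in the sense of Definition \ref{D:good} (the paper dispatches it with one sentence as ``a straightforward consequence of its definition''), but your discussion of the $\tG$/$\tN$ structure and the boundedness of the half-space cutoff on the $\tN$-type norms makes clear you have the right picture.
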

\begin{proof}
The frequency support conditions on $\cA$
and $\Lambda_w$ are easily verified.
That the $L^\infty$ bound
on $\cA$ holds follows from (\ref{A_x Bound}) and Bernstein
provided $\varpi$ is large enough (cf.~discussion preceding
Proposition \ref{P:Applying ALS}).
In order to guarantee the frequency support conditions on $w$,
it is necessary to make the gap $\ell \ll k$ sufficiently large
with respect to $\eta$.

That $\oB_\Theta$ is adapted to $\cP$ is a 
straightforward consequence of its definition. To see that the derived
sequence of $w$ is controllable, we look to the proof of
Proposition \ref{P:Applying ALS} and the
definitions of the $\oN_{k, \ell}$, $\oG_{k, \ell}$ spaces.
\end{proof}

In a spirit similar to that of the proof of Corollary \ref{C:ALS}, 
we may combine Lemma \ref{L:ls app RHS} and
the proof of Proposition \ref{P:Applying ALS}
to control $B(w, \Lambda_w) + B_\Theta(w, \Lambda_w)$;
in fact, in measuring $\Lambda_w$ in the 
$\oN_{k, \ell}$ spaces, it suffices to take $J = 1$ (see (\ref{oN def})).
Then we obtain
$B(w, \Lambda_w) + B_\Theta(w, \Lambda_w) \lesssim \cV b_j 2^{-\sigma k} b_k(\sigma)$.
Using decomposition (\ref{Psum}) and the triangle inequality 
to bound $P_k \psi_m$ in terms of the bounds on
$P_{\theta, j, \ell}^{(\eta)} P_k \psi_m$, we obtain the bilinear Strichartz
analogue of Corollary \ref{C:ALS}. In our application, however,
the lower-frequency term will not simply be $\overline{P_j \psi_\ell}$,
but rather its heat flow evolution $\overline{P_j \psi_\ell}(s)$.

\begin{cor}[Improved Bilinear Strichartz]
Let $j, k \in \mathbf{Z}$, $j \ll k$, and
$u \in \{ P_j \psi_\ell, \overline{P_j \psi_\ell} : j \leq k - \varpi, \ell \in \{1, 2\} \}$.
Then for $s \geq 0$, $\sigma \in [0, \sigma_1 - 1]$,
\begin{equation}
\lVert u(s) P_k \psi_m(0) \rVert_{L^2_{t,x}}
\lesssim
2^{(j - k) / 2} (1 + s 2^{2j})^{-4} 2^{-\sigma k} \left(c_j c_k(\sigma) + \cV b_j b_k(\sigma) \right).
\label{ABSapp}
\end{equation}
\label{C:ABSapp}
\end{cor}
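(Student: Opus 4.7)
The plan is to combine Proposition \ref{P:ABSapp} and Corollary \ref{C:ABS} with the nonlinearity estimate of Lemma \ref{L:ls app RHS}, and to pass from $s = 0$ to $s > 0$ via Duhamel in the heat time.

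For the $s = 0$ case, Proposition \ref{P:ABSapp} has already verified the abstract bilinear Strichartz hypotheses for the tensor-product function $w(x, y) := u(x)\,v(y)$, where $u = \overline{P_j \psi_p}$ (or $u = P_j \psi_p$) at low frequency $j \ll k$ and $v = P_{\theta, j', l}^{(\eta)} P_k \psi_m$ is one of the $O((\eta^*)^{-1})$ sharply localized pieces of $P_k \psi_m$ produced by the decomposition (\ref{Psum}), with $j' = k + O(1)$. Applying Corollary \ref{C:ABS} then gives
\begin{equation*}
\lVert u(0)\, v \rVert_{L^2_{t,x}}^2 \lesssim 2^{j - k}\left(\lVert u_0 \rVert_{L^2_x}^2 \lVert v_0 \rVert_{L^2_x}^2 + \oB(w, \Lambda_w) + \oB_\Theta(w, \Lambda_w)\right).
\end{equation*}
The initial-data term is bounded via Corollary \ref{DataCor}: $\lVert u_0 \rVert_{L^2_x} \lesssim c_j$ and $\lVert v_0 \rVert_{L^2_x} \lesssim 2^{-\sigma k} c_k(\sigma)$. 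For the form terms, the tensor structure of $w$, together with the definitions (\ref{oN def})--(\ref{oBtheta}) and Lemma \ref{L:ls app RHS} applied separately at low frequency $j$ (to $\Lambda_u$) and at high frequency $k$ (to $\Lambda_v$), produces
\begin{equation*}
\oB(w, \Lambda_w) + \oB_\Theta(w, \Lambda_w) \lesssim \cV\, b_j^2 \cdot 2^{-2\sigma k} b_k^2(\sigma).
\end{equation*}
Summing the $O((\eta^*)^{-1})$ tight-localization pieces of (\ref{Psum}) via the triangle inequality and taking square roots then yields (\ref{ABSapp}) at $s = 0$.

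For $s > 0$, I would use Duhamel on the heat-flow equation $(\partial_s - \Delta)\psi_p = U_p$ from (\ref{HF}), writing
\begin{equation*}
P_j \psi_p(s) = e^{s \Delta} P_j \psi_p(0) + \int_0^s e^{(s - s^\prime) \Delta} P_j U_p(s^\prime) \ds^\prime.
\end{equation*}
The free piece $e^{s\Delta}$ acts as a Fourier multiplier supported near $\lvert \xi \rvert \sim 2^j$ whose operator norm on the translation-invariant Banach spaces of interest decays like $(1 + s 2^{2j})^{-N}$ for any $N$, so this contribution reduces to the $s = 0$ case multiplied by the required prefactor $(1 + s 2^{2j})^{-4}$. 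For the Duhamel remainder, I expand $P_j U_p(s^\prime)$ using the trilinear representation (\ref{Heat Nonlinearity}), use the parabolic envelope bounds (\ref{PsiF}) and (\ref{APsiF}) on each factor at heat time $s^\prime$, and pair against $P_k \psi_m(0)$ by invoking the $s = 0$ bilinear Strichartz estimate already in hand on the relevant Littlewood-Paley pieces. The $s^\prime$-integral is then convergent by the bookkeeping used in (\ref{Ik1k2}), and the combined parabolic smoothing supplies the remaining $(1 + s 2^{2j})^{-4}$ decay.

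The main obstacle I anticipate is the Duhamel piece: the heat-flow nonlinearity $U_p$ contains the connection coefficient $A_x$, which is only weakly localized in frequency, so after integrating over $s^\prime \in [0, s]$ one must verify that the final bound still exhibits both the off-diagonal factor $2^{(j - k)/2}$ and the full heat-time decay, without accumulating extra powers of $b_j$ or $\cV$ that would be fatal to the envelope summation downstream in \S \ref{SS:OutlineConclusion}. This is precisely the regime in which the refined parabolic bound (\ref{APsiF})---with its $(s2^{2k})^{-3/8}$ factor---was designed to operate, and together with the sharp tensor structure used in Proposition \ref{P:ABSapp} it should close the estimate.
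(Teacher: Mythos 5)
Your treatment of the free-evolution piece $(e^{s\Delta}u(0))v(0)$ has a genuine gap, and it is precisely the step the paper flags as ``not a direct consequence of the time $s=0$ bound.'' The $s=0$ estimate controls the $L^2_{t,x}$ norm of the \emph{product} $u(0)v(0)$; it is not of the form $\lVert u \rVert_X \lVert v \rVert_Y$ that would let you pull a Fourier multiplier through one factor. If $M = e^{s\Delta}P_j$ and you know only $\lVert u(0)v(0) \rVert_{L^2_{t,x}} \lesssim A$ and $\lVert M \rVert_{L^2 \to L^2} \lesssim (1+s2^{2j})^{-N}$, nothing forces $\lVert (Mu(0)) v(0) \rVert_{L^2_{t,x}} \lesssim (1+s2^{2j})^{-N} A$: the multiplier acts only on $u$, and the spatial interaction with $v$ can realign badly. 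More to the point, $e^{s\Delta}u(0)$ no longer solves the magnetic Schr\"odinger equation (\ref{app start}) that underwrites the abstract bilinear Strichartz machinery, so you cannot rerun Proposition \ref{P:ABSapp} with $e^{s\Delta}u(0)$ in place of $u(0)$ either. The paper circumvents this by writing $e^{s\Delta}u(0)$ as a superposition of spatial translates $\cT_a u(0)$ and invoking a translated version of the $s=0$ bilinear estimate for each $(\cT_{x_1}u)(\cT_{x_2}v)$ (estimate (\ref{translated})), then summing via Minkowski and Young. Establishing that translated version requires an additional verification---the spatial cutoff $\chi_{\{x\cdot\theta\geq 0\}}$ in the adapted form $B_\theta$ is \emph{not} translation-invariant---and the paper explicitly checks that in the concrete setting the relevant $\tN_k$ bounds hold uniformly over all spatial multipliers in the second argument. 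Your proposal omits this entire step.

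Secondarily, for the Duhamel remainder you propose re-expanding $P_jU_p(s')$ and pairing each piece against $P_k\psi_m(0)$ by re-invoking the $s=0$ bilinear Strichartz estimate; this is more elaborate than what is needed and also not obviously licensed, since that estimate is stated only for $u\in\{P_j\psi_\ell, \overline{P_j\psi_\ell}\}$ and the cubic terms in $U_p$ are not of that form. The paper's route is simpler and avoids the issue: put $v(0)=P_k\psi_m(0)$ in $G_k(T)$, bound the Duhamel integral $\int_0^s e^{(s-s')\Delta}P_jU_p(s')\,ds'$ directly in $F_j(T)$ via Lemma \ref{Nonlinear Evolution Bound} (which already carries the $(1+s2^{2j})^{-4}$ decay and the $\varepsilon$-gain), and then apply the local-smoothing bilinear estimate (\ref{bilin5 improv}) to obtain the off-diagonal factor $2^{(j-k)/2}$. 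This yields (\ref{NonlinearBSterm}) in two lines.
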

\begin{proof}
It only remains to prove (\ref{ABSapp}) when $s > 0$.
Let $v := P_k \psi_m$. Using the Duhamel
formula, we write
\begin{equation}
u(s) v
=
(e^{s \Delta}u(0))  v(0) + 
\int_0^s e^{(s - s^\prime)\Delta} U(s^\prime) \ds^\prime \cdot v(0),
\label{Uhamel}
\end{equation}
where $U$ is defined by (\ref{Heat Nonlinearity 0}) in terms
of $u$.

To control the nonlinear term 
$\int_0^s e^{(s - s^\prime) \Delta} U(s^\prime) \ds^\prime \cdot v(0)$ in $L^2$,
we apply local smoothing estimate (\ref{bilin5 improv}), which places
the nonlinear evolution in $F_j(T)$ and $v(0)$ in $G_k(T)$.
Using Lemma \ref{Nonlinear Evolution Bound} 
to bound the $F_j(T)$ norm, we conclude
\begin{equation}
\lVert \int_0^s e^{(s - s^\prime) \Delta} \widetilde{U}(s^\prime) \ds^\prime \cdot v(0) \rVert_{L^2_{t,x}}
\lesssim
\cV
2^{(j - k) / 2}
(1 + s 2^{2j})^{-4}  2^{-\sigma k} b_j b_k(\sigma).
\label{NonlinearBSterm}
\end{equation}
It remains to show
\begin{equation}
\lVert (e^{s \Delta} u) v \rVert_{L^2_{t,x}}
\lesssim
(1 + s 2^{2j})^{-4} 2^{(j- k)/2} 2^{-\sigma k}(c_j c_k(\sigma) + \cV b_j b_k(\sigma)),
\label{LinearBS}
\end{equation}
which is not a direct consequence of the time $s = 0$ bound.
Let $\cT_{a}$ denote the spatial translation operator
that acts on functions $f(x, t)$ according to
$\cT_{a} f(x, t) := f(x - a, t)$.
Then, if
\begin{equation}
\lVert (\cT_{x_1} u) (\cT_{x_2} v) \rVert_{L^2_{t,x}}
\lesssim
2^{(j- k)/2} 2^{-\sigma k}(c_j c_k(\sigma) + \cV b_j b_k(\sigma))
\label{translated}
\end{equation}
can be shown to hold for all $x_1, x_2 \in \mathbf{R}^2$,
then (\ref{LinearBS}) follows from Minkowski's and Young's inequalities.

Consider, then, a solution $w$ to
\begin{equation*}
(i \partial_t + \Delta_\cA(x, t)) w(x, t) = \Lambda_w(x, t)
\end{equation*}
satisfying the conditions of Theorem \ref{T:Abstract LS}.
The translate $\cT_{x_0} w(x, t)$ then satisfies
\begin{equation*}
(i \partial_t + \Delta_{\cT_{x_0}(\cA)(x, t)}) 
(\cT_{x_0}w)(x, t) = (\cT_{x_0}\Lambda_w)(x, t).
\end{equation*}
The operator $\cT_{x_0}$ clearly does not affect 
$L^\infty_{t,x}$ bounds or frequency support conditions.
The only possible obstruction to concluding (\ref{Abstract LS})
is this: whereas the derived
sequence of $w$ is controlled with respect to $B_\e$,
in the abstract setting it may no longer be the case
that the derived sequence of $\cT_{x_0} w$ is controlled.
This is due to the presence of the spatial multiplier
in the definition of $B_\e$.
Fortunately, as already alluded to in the proof of
Proposition (\ref{P:Applying ALS}), in our applications
we do enjoy uniform boundedness with respect to
\emph{any} spatial multipliers appearing in the second
argument of an adapted form $B_\e$.
Therefore Proposition \ref{P:ABSapp} holds 
for spatial translates of frequency projections of $\psi_m$,
from which we conclude (\ref{translated}).
\end{proof}
This establishes Theorem \ref{T:BilinearStrichartz}.
\section{The caloric gauge} \label{S:The caloric gauge}

In \S \ref{SS:CalCon} we briefly recall from \cite{Sm09}
the construction of the caloric gauge and some useful
quantitative estimates.
In \S \ref{SS:FreqLocCalGau} we prove the frequency-localized estimates
stated in \S \ref{SS:Frequency localization}.

\subsection{Construction and basic results}\label{SS:CalCon}
In brief, the basic caloric gauge construction goes as follows.
Starting with $H^\infty_Q$-class data $\varphi_0 : \rr \to \mathbf{S}^2$ with energy $E(\varphi_0) < \Ec$,
evolve $\varphi_0$ in $s$ via the heat flow equation (\ref{Covariant Heat}).  At $s = \infty$
the map trivializes.  Place an arbitrary orthonormal frame $e(\infty)$ on
$T_{\varphi(s = \infty)} \mathbf{S}^2$.  Evolving this frame backward in time via parallel
transport in the $s$ direction yields a caloric gauge on $\varphi^* T_{\varphi(s = \infty)} \mathbf{S}^2$.

For energies $E(\varphi_0)$ sufficiently small, global existence and decay bounds
may be proven directly using Duhamel's formula.  In order to extend these results
to all energies less than $\Ec$, we employ in \cite{Sm09} an induction-on-energy argument
that exploits the symmetries of (\ref{Covariant Heat}) via concentration compactness.

In \cite{Sm09} the following energy densities play an important role in the quantitative arguments.
\begin{defin}
For each positive integer $k$, define the energy densities $\e_k$ of a heat flow $\varphi$
by
\begin{align}
\e_k &:= \lvert (\varphi^* \nabla)_x^{k-1} \partial_x \varphi \rvert^2 \nonumber \\
&:= \langle (\varphi^* \nabla)_{j_1} \cdots (\varphi^* \nabla)_{j_{k-1}} \partial_{j_k}\varphi,
(\varphi^* \nabla)_{j_1} \cdots (\varphi^* \nabla)_{j_{k-1}} \partial_{j_k}\varphi \rangle,
\label{ek def}
\end{align}
where $j_1, \ldots, j_k$ are summed over $1,2$ and $\nabla$ denotes the
Riemannian connection on the sphere, i.e., for vector fields $X, Y$ on the sphere
$\nabla_X Y$ denotes the orthogonal projection of $\partial_X Y$ onto the sphere.
\label{Energy Densities}
\end{defin}
A key quantitative result of \cite{Sm09} is the following
\begin{thm}
For any initial data $\varphi_0 \in H^\infty_Q$ 
with $E(\varphi_0) < \Ec$ we have that there exists a unique global 
smooth heat flow $\varphi$ with initial data $\varphi_0$.  
Moreover, $\varphi$ satisfies the following estimates
\begin{align}
\int_0^\infty \int_\rr s^{k-1} \e_{k+1}(s, x) \dx ds 
&\lesssim_{E_0, k} 1 \label{ek1} \\
\sup_{0 < s < \infty} s^{k-1} \int_\rr \e_k(s, x) \dx 
&\lesssim_{E_0, k} 1 \nonumber \\
\sup_{ \substack{ 0 < s < \infty \\ x \in \rr} } s^k \e_k(s, x) 
&\lesssim_{E_0, k} 1 \nonumber \\
\int_0^\infty s^{k-1} \sup_{x \in \rr} \e_k(s, x) \ds 
&\lesssim_{E_0, k} 1 \label{ek4}
\end{align}
for each $k \geq 1$, as well as the estimate
\begin{equation}
\int_0^\infty \int_\rr \e_1^2(s, x) \ds ds \lesssim_{E_0} 1.
\label{controlling norm}
\end{equation}
\end{thm}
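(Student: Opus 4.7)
I would prove the theorem in three stages: global existence via a no-bubbling continuation argument, small-energy quantitative bounds from parabolic Bochner estimates, and a concentration compactness induction on energy to reach the full subthreshold range. The first two pieces are largely standard; the last is where the structural assumption $E_0 < \Ec$ is used decisively.

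For global existence and uniqueness, standard parabolic theory yields short-time smooth solutions of \eqref{Covariant Heat} for $H_Q^\infty$ data together with a continuation criterion: the flow extends smoothly as long as no ball $B_r(x)$ carries energy approaching $\Ec = 4\pi$. Since the total energy $E(\varphi(s)) = \tfrac12 \int \e_1(s,\cdot)\,dx$ is nonincreasing along the flow (testing \eqref{Covariant Heat} against $\Delta\varphi + \varphi|\partial_x\varphi|^2$), energy concentration is impossible, so the flow is global.

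For the quantitative estimates under a small-energy hypothesis, I would use the intrinsic Bochner identity for heat flow into $\mathbf{S}^2$, which schematically takes the form
\[
(\partial_s - \Delta) \e_k \lesssim_k \sum_{j_1 + j_2 = k+1} \e_{j_1} \e_{j_2},
\]
with implicit constants depending only on $k$ and the sphere geometry. Energy conservation gives $\|\e_1\|_{L^\infty_s L^1_x} \lesssim E_0$; iterating the Bochner inequality against the weights $s^{k-1}$ and using parabolic regularization (à la Moser / Struwe) yields \eqref{ek1}--\eqref{ek4} inductively in $k$. The controlling norm \eqref{controlling norm} in the small-energy regime follows either by pairing the $k = 1$ Bochner identity against a suitable multiplier and integrating in $s$, or by combining Struwe's local energy monotonicity with $\varepsilon$-regularity for heat flow. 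To pass to arbitrary $E_0 < \Ec$ I would argue by induction: assuming the estimates hold whenever $E(\varphi_0) \leq E^*$, suppose for contradiction that there exist $\varphi_0^{(n)}$ with $E(\varphi_0^{(n)}) \to E^{**}$ slightly above $E^*$ along which one of the bounds blows up. Applying a profile decomposition adapted to the scaling symmetry $\varphi(\lambda^{-1}x, \lambda^{-2}s)$ of \eqref{Covariant Heat} together with the fact that $\varphi(s) \to \mathrm{const}$ as $s \to \infty$, one extracts a limit which is either (i) a nonconstant finite-energy harmonic map $\rr \to \mathbf{S}^2$ of energy at most $E^{**} < \Ec$, impossible since no such map exists, or (ii) a strictly lower-energy heat flow inheriting the failure of the estimates, contradicting the inductive hypothesis.

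\textbf{Main obstacle.} The concentration compactness step is the delicate piece. One must develop a parabolic profile decomposition that respects the sphere-valued constraint (in particular, weak limits of bounded-energy maps into $\mathbf{S}^2$ need not be sphere-valued unless concentration is accounted for), and one must prove a quantitative stability statement that transfers the failure of \eqref{ek1}--\eqref{controlling norm} at energy $E^{**}$ to failure at some strictly smaller energy. The topological input that $\Ec = 4\pi$ is the least nontrivial harmonic map energy into $\mathbf{S}^2$ is what excludes the bubble alternative and closes the induction.
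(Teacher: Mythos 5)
The paper does not prove this theorem; it cites it from \cite{Sm09}, and the surrounding discussion explains that the cited argument establishes the decay bounds directly via Duhamel's formula at small energy and then reaches all $E_0 < \Ec$ by an induction on energy using concentration compactness, exploiting the parabolic scaling symmetry of (\ref{Covariant Heat}). Your three-stage plan matches this at the top level, with the small-energy step run instead through iterated Bochner/Struwe estimates rather than Duhamel; that is a defensible substitution, and you have correctly identified the profile decomposition and the bubble-energy dichotomy as the genuinely hard piece.

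Two gaps need attention. First, the global-existence step as written is a non sequitur: ``total energy nonincreasing $\implies$ energy concentration impossible'' is false, since a finite-time singularity of 2d harmonic-map heat flow into $\mathbf{S}^2$ forms precisely while the total energy decreases. The input you actually need is the Struwe bubbling lower bound: any such singularity separates a nonconstant harmonic sphere carrying energy at least $\Ec = 4\pi$, and since $E(\varphi(s)) \leq E_0 < \Ec$ along the flow, no such bubble can detach. Cite the lower bound, not monotonicity alone. Second, for the round sphere the curvature term in the Bochner identity enters with the unfavorable sign (positively curved target), so your schematic $(\partial_s - \Delta)\e_k \lesssim \sum_{j_1+j_2=k+1} \e_{j_1}\e_{j_2}$ cannot be closed by a parabolic maximum principle; you must instead run integrated energy inequalities against the weights $s^{k-1}$ and treat the quadratic curvature contribution perturbatively, which even for $k=1$ requires an $\varepsilon$-regularity or small-energy ingredient before the induction on energy can absorb it. Neither gap is fatal — the result is true and your outline is structurally right — but both steps currently assert more than the stated reasoning delivers.
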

We employ (\ref{ek1}), (\ref{ek4}), and (\ref{controlling norm}) below.

\subsection{Frequency-localized caloric gauge estimates}\label{SS:FreqLocCalGau}
The key estimates to establish are (\ref{Hard Envelope Bounds}) for $\varphi$; most
remaining estimates will be derived as a corollary.  Our strategy is to exploit energy dispersion 
so that we can apply
the Duhamel formula
to a frequency localization of 
the heat flow equation (\ref{Covariant Heat}), 
which for convenience we rewrite as
\begin{equation}
\partial_s \varphi = \Delta \varphi + \varphi \e_1.
\label{Covariant Heat Alt}
\end{equation}

\begin{proof}[Proof of (\ref{Hard Envelope Bounds}) for $\varphi$]

Let $\sigma_1 \in \mathbf{Z}_+$ be positive
and let $S^\prime \geq S \gg 0$.
Let $\mathcal{K} \in \mathbf{Z}_+$, $T \in (0, 2^{2\mathcal{K}}]$ be fixed. 
Define for each $t \in (-T, T)$ the quantity
\begin{equation}
\cC(S, t) := \sup_{\sigma \in [2 \delta, \sigma_1]} \sup_{s \in [0, S]} \sup_{k \in \mathbf{Z}}
(1 + s 2^{2k})^{\sigma_1} 2^{\sigma k} \gamma_k(\sigma)^{-1} 
\lVert P_k \varphi(s, \cdot, t) \rVert_{L_x^2(\rr)}.
\label{B(S)}
\end{equation}
For fixed $t$ the function $\cC(S, t): [0, S^\prime] \to (0, \infty)$ 
is well-defined, continuous, and non-decreasing.
Moreover, in view of the definition (\ref{gamma Envelope}) of $\gamma_k(\sigma)$, it follows that
$\lim_{S \to 0} \cC(S, t) \lesssim 1$.
A simple consequence of (\ref{B(S)}) is
\begin{equation}
\lVert P_k \varphi(s, \cdot, t) \rVert_{L_x^2(\rr)}
\leq
\cC(S, t)
(1 + s 2^{2k})^{-\sigma_1} 2^{-\sigma k} \gamma_k(\sigma)
\label{B(S,t) con}
\end{equation}
for $0 \leq s \leq S \leq S^\prime$.

Our goal is to show $\cC(S, t) \lesssim 1$ uniformly in $S$ and $t$ and our strategy
is to apply Duhamel's formula to (\ref{Covariant Heat Alt})
and run a bootstrap argument.
Beginning with the decomposition
\begin{align*}
P_k (\varphi \e_1) =& 
\sum_{\lvert k_2 - k \rvert \leq 4} P_k (P_{\leq k-5}\varphi \cdot P_{k_2} \e_1) + \\
&\sum_{\lvert k_1 - k \rvert \leq 4} P_k (P_{k_1}\varphi \cdot P_{\leq k - 5} \e_1) + \\
&\sum_{\substack{ k_1, k_2 \geq k - 4 \\ \lvert k_1 - k_2 \rvert \leq 8}}
P_k (P_{k_1} \varphi \cdot P_{k_2} \e_1),
\end{align*}
we proceed to place in $L_x^2$ each of the three terms on the right hand side;
we then integrate in $s$ and consider separately 
the low-high, high-low, and high-high frequency
interactions.

\nline
\noindent \textbf{Low-High interaction.}
By Duhamel and the triangle
inequality it suffices to bound
\begin{equation}
\mathrm{LH}(s, t) := \int_0^s e^{-(s - s^\prime)2^{2k-2}} \sum_{\lvert k_2 - k\rvert \leq 4}
\lVert P_k (P_{\leq k - 5} \varphi(s^\prime, \cdot, t) \cdot P_{k_2} \e_1(s^\prime, \cdot, t)) \rVert_{L_x^2}
ds^\prime.
\label{LH Def}
\end{equation}
By H\"older's inequality, $\lvert \varphi \rvert \equiv 1$, and $L^p$-boundedness
of the Littlewood-Paley multipliers,
\begin{align*}
\mathrm{LH}(s, t) &\lesssim \int_0^s e^{-(s - s^\prime)2^{2k-2}} \sum_{\lvert k_2 - k\rvert \leq 4}
\lVert P_{\leq k-5}\varphi \rVert_{L_x^\infty} \lVert P_{k_2} \e_1 \rVert_{L_x^2} ds^\prime \\
&\lesssim \int_0^s e^{-(s - s^\prime)2^{2k-2}} \sum_{\lvert k_2 - k\rvert \leq 4}
\lVert P_{k_2} \e_1(s^\prime, \cdot, t) \rVert_{L_x^2} ds^\prime.
\end{align*}
To control the sum
we further decompose $P_\ell \e_1 = P_\ell (\partial_x \varphi \cdot \partial_x \varphi)$ into low-high and high-high frequency interactions:
\begin{equation}
P_\ell \e_1 =
2 \sum_{\lvert \ell_1 - \ell \rvert \leq 4} P_\ell ( P_{\leq \ell - 5} \partial_x \varphi \cdot
P_{\ell_1} \partial_x \varphi ) +
\sum_{\substack{\ell_1, \ell_2 \geq \ell - 4 \\ \lvert \ell_1 - \ell_2\rvert \leq 8}}
P_\ell (P_{\ell_1}\partial_x \varphi \cdot P_{\ell_2} \partial_x \varphi).
\label{Further Decomposition}
\end{equation}

\nline
\noindent \textbf{Low-High interaction (i).}
We first attend to the low-high subcase.  For convenience set $\Xi_{lh}$
equal to the first term of the right hand side of (\ref{Further Decomposition}), i.e.,
\begin{equation*}
\Xi_{lh}(s, x, t) := \sum_{\lvert \ell_1 - \ell \rvert \leq 4} 
P_\ell ( P_{\leq \ell - 5} \partial_x \varphi(s, x, t) \cdot
P_{\ell_1} \partial_x \varphi(s, x, t) ).
\end{equation*}
By the triangle inequality, H\"older's inequality, Berstein's inequality, the
definition (\ref{ek def}) for $\e_1(s, \cdot, t)$, and (\ref{B(S,t) con}), it follows that
\begin{align*}
\lVert \Xi_{lh}(s, \cdot, t) \rVert_{L_x^2}
&\lesssim 
\sum_{\lvert \ell_1 - \ell \rvert \leq 4} \lVert P_\ell ( P_{\leq \ell - 5} \partial_x \varphi \cdot
P_{\ell_1} \partial_x \varphi )\rVert_{L_x^2} \\
&\lesssim
\sum_{\lvert \ell_1 - \ell \rvert \leq 4} \lVert P_{\leq \ell - 5} \partial_x \varphi \rVert_{L_x^\infty}
\lVert P_{\ell_1} \partial_x \varphi \rVert_{L_x^2} \\
&\lesssim 
\sum_{\lvert \ell_1 - \ell \rvert \leq 4} \lVert P_{\leq \ell - 5} \partial_x \varphi \rVert_{L_x^\infty}
2^{\ell_1} \lVert P_{\ell_1} \varphi \rVert_{L_x^2} \\
&\lesssim
\lVert \sqrt{\e_1} \rVert_{L_x^\infty} 2^\ell \sum_{\lvert \ell_1 - \ell \rvert \leq 4}
\lVert P_{\ell_1} \varphi \rVert_{L_x^2} \\
&\lesssim
\lVert \sqrt{\e_1}(s, \cdot, t) \rVert_{L_x^\infty} 2^\ell 2^{-\sigma \ell} \gamma_\ell(\sigma)
\cC(S, t) (1 + s 2^{2 \ell})^{-\sigma_1}.
\end{align*}
As we apply this inequality in the case where $\ell = k_2$, $\lvert k_2 - k \rvert \leq 4$, we have
\begin{align}
\int_0^s & e^{-(s - s^\prime)2^{2k-2}} 
\lVert \Xi_{lh}(s^\prime, \cdot, t) \rVert_{L_x^2} ds^\prime \nonumber \\
&\lesssim
2^k 2^{-\sigma k} \gamma_k(\sigma) \cC(S, t)
\int_0^s e^{-(s - s^\prime)2^{2k-2}}
\lVert \sqrt{\e_1}(s^\prime, \cdot , t) \rVert_{L_x^\infty} (1 + s^\prime 2^{2 k})^{-\sigma_1} ds^\prime.
\label{LHi}
\end{align}
Apply Cauchy-Schwarz.  Clearly
\begin{equation}
\left( \int_0^s \lVert \sqrt{\e_1}(s^\prime, \cdot, t) \rVert_{L_x^\infty}^2 \ds^\prime \right)^{1/2}
\leq \lVert \e_1(\cdot, \cdot, t) \rVert_{L_s^1 L_x^\infty}^{1/2}.
\label{1stCSfactor}
\end{equation}
We postpone applying (\ref{ek4}) with $k = 1$ to (\ref{1stCSfactor}).
As for the other factor, we have
\begin{equation}
\left( \int_0^s e^{-(s - s^\prime)2^{2k-1}} (1 + s^\prime 2^{2k})^{-2\sigma_1} ds^\prime \right)^{1/2}
\lesssim \left( s (1 + s 2^{2k - 1})^{-2 \sigma_1} (1 + s 2^{2k})^{-1} \right)^{1/2}
\label{2ndCSfactor}
\end{equation}
since
\begin{equation*}
\int_0^s e^{-(s - s^\prime)\lambda} (1 + s^\prime \lambda^\prime)^{-\alpha} ds^\prime
\lesssim s (1 + \lambda s)^{-\alpha} (1 + \lambda^\prime s)^{-1}
\end{equation*}
for $s \geq 0$, $0 \leq \lambda \leq \lambda^\prime$, and $\alpha > 1$.
Hence, applying Cauchy-Schwarz to (\ref{LHi}) and using (\ref{1stCSfactor}) and (\ref{2ndCSfactor}), we get
\begin{align*}
\int_0^s &e^{-(s - s^\prime)2^{2k-2}} 
\lVert \Xi_{lh}(s^\prime, \cdot, t) \rVert_{L_x^2}
ds^\prime \\
&\lesssim 2^{-\sigma k} \gamma_k(\sigma) \cC(S, t) 2^k s^{1/2}
(1 + s 2^{2k-1})^{-\sigma_1} (1 + s 2^{2k})^{-1/2} 
\lVert \e_1(t) \rVert_{L_s^1 L_x^\infty([0, s] \times \rr)}^{1/2}.
\end{align*}
Discarding $s^{1/2} 2^k (1 + s 2^{2k})^{1/2} \leq 1$, we conclude
\begin{align}
\int_0^s &e^{-(s - s^\prime)2^{2k-2}} \lVert \Xi_{lh}(s^\prime, \cdot, t) 
\rVert_{L_x^2} \; ds^\prime \nonumber \\
&\lesssim
2^{-\sigma k} \gamma_k(\sigma) \cC(S, t)
(1 + s 2^{2k-1})^{-\sigma_1}
\lVert \e_1(t) \rVert_{L_s^1 L_x^\infty([0, s] \times \rr)}^{1/2}.
\label{lowhigh subcase}
\end{align}


\nline
\noindent \textbf{Low-High interaction (ii).}
We now move on to the high-high interaction subcase, setting
$\Xi_{hh}$ equal to the second term of the right hand side of (\ref{Further Decomposition}):
\begin{equation*}
\Xi_{hh}(s, x, t) :=  \sum_{\substack{\ell_1, \ell_2 \geq \ell - 4 \\ \lvert \ell_1 - \ell_2\rvert \leq 8}}
P_\ell (P_{\ell_1} \partial_x \varphi(s, x, t) \cdot P_{\ell_2} \partial_x \varphi(s, x, t) ).
\end{equation*}
By the triangle inequality, Bernstein, and Cauchy-Schwarz,
\begin{align*}
\lVert \Xi_{hh} \rVert_{L_x^2} &\lesssim
\sum_{\substack{\ell_1, \ell_2 \geq \ell - 4 \\ \lvert \ell_1 - \ell_2\rvert \leq 8}}
\lVert
P_\ell (P_{\ell_1} \partial_x \varphi \cdot P_{\ell_2} \partial_x \varphi )
\rVert_{L_x^2} \\
&\lesssim
\sum_{\substack{\ell_1, \ell_2 \geq \ell - 4 \\ \lvert \ell_1 - \ell_2\rvert \leq 8}}
2^{\ell} \lVert P_{\ell_1} \partial_x \varphi 
\cdot P_{\ell_2} \partial_x \varphi \rVert_{L_x^1} \\
&\lesssim
\sum_{\substack{\ell_1, \ell_2 \geq \ell - 4 \\ \lvert \ell_1 - \ell_2\rvert \leq 8}}
2^{\ell}
\lVert P_{\ell_1} \partial_x \varphi \rVert_{L_x^2}
\lVert P_{\ell_2} \partial_x \varphi \rVert_{L_x^2}.
\end{align*}
At this stage we apply Bernstein twice, exploiting $\lvert \ell_1 - \ell_2 \rvert \leq 8$:
\begin{align*}
\lVert P_{\ell_1} \partial_x \varphi \rVert_{L_x^2}
\lVert P_{\ell_2} \partial_x \varphi \rVert_{L_x^2}
&\lesssim
2^{\ell_2} \lVert P_{\ell_1} \partial_x \varphi \rVert_{L_x^2}
\lVert P_{\ell_2} \varphi \rVert_{L_x^2} \\
&\lesssim
\lVert P_{\ell_1} \lvert \partial_x \rvert^2 \varphi \rVert_{L_x^2}
\lVert P_{\ell_2} \varphi \rVert_{L_x^2}.
\end{align*}
So
\begin{equation*}
\lVert \Xi_{hh} \rVert_{L_x^2} \lesssim
2^{\ell}
\sum_{\substack{\ell_1, \ell_2 \geq \ell - 4 \\ \lvert \ell_1 - \ell_2\rvert \leq 8}}
\lVert P_{\ell_1} \lvert \partial_x \rvert^2 \varphi \rVert_{L_x^2}
\lVert P_{\ell_2} \varphi \rVert_{L_x^2}.
\end{equation*}
Applying Cauchy-Schwarz yields
\begin{align}
\lVert \Xi_{hh} \rVert_{L_x^2} &\lesssim
2^{\ell}
\left( \sum_{\ell_1 \geq \ell - 4} \lVert P_{\ell_1} \lvert \partial_x \rvert^2
\varphi \rVert_{L_x^2}^2 \right)^{1/2}
\left( \sum_{\ell_2 \geq \ell - 4} \lVert P_{\ell_2} \varphi 
\rVert_{L_x^2}^2 \right)^{1/2} \nonumber \\
&\lesssim
\lVert \lvert \partial_x \rvert^2 \varphi \rVert_{L_x^2}
2^{\ell}
\left( \sum_{\ell_2 \geq \ell - 4} \lVert P_{\ell_2} \varphi 
\rVert_{L_x^2}^2 \right)^{1/2}.
\label{hhCS}
\end{align}
As $\varphi$ takes values in $\mathbf{S}^2$, which has constant curvature,
we readily estimate ordinary derivatives by covariant ones:
\begin{equation}
\lvert \partial_x^2 \varphi \rvert \lesssim
\sqrt{\e_2} + \e_1.
\label{ord by cov}
\end{equation}
Applying (\ref{ord by cov}) in (\ref{hhCS}) and
using (\ref{B(S,t) con}), we arrive at
\begin{align}
\lVert \Xi_{hh}(s, \cdot, t) \rVert_{L_x^2}
\lesssim& \;
\lVert \sqrt{\e_2} + \e_1 \rVert_{L_x^2}
2^{\ell}
\left(
\sum_{\ell_2 \geq \ell - 4} 
\lVert P_{\ell_2} \varphi \rVert_{L_x^2}^2 \right)^{1/2} \nonumber \\
\lesssim& \;
\lVert (\sqrt{\e_2} + \e_1)(s, \cdot, t) \rVert_{L_x^2}
2^{\ell}
\cC(S, t) \times \nonumber \\
&\quad \times \left(
\sum_{\ell_2 \geq \ell - 4} (1 + s 2^{2 \ell_2})^{- 2 \sigma_1} 2^{-2 \sigma \ell_2}
\gamma_{\ell_2}^2(\sigma) \right)^{1/2} \nonumber \\
\lesssim& \;
\lVert (\sqrt{\e_2} + \e_1)(s, \cdot, t) \rVert_{L_x^2}
2^{\ell}
\cC(S, t) (1 + s 2^{2 \ell})^{-\sigma_1} \times \nonumber \\
&\quad \times
\left( \sum_{\ell_2 \geq \ell - 4} 2^{-2 \sigma \ell_2} \gamma_{\ell_2}^2(\sigma) \right)^{1/2}.
\label{presum}
\end{align}
As $\sigma > \delta$ is bounded away from $\delta$ uniformly, we may apply
summation rule (\ref{Sum 2}) in (\ref{presum}).
Recalling $\ell = k_2$ where $\lvert k_2 - k \rvert \leq 4$, we conclude
\begin{equation*}
\lVert \Xi_{hh}(s, \cdot, t) \rVert_{L_x^2} \lesssim
\lVert (\sqrt{\e_2} + \e_1)(s, \cdot, t) \rVert_{L_x^2}
2^k 2^{-\sigma k} \gamma_k(\sigma)
\cC(S, t)
(1 + s 2^{2 k})^{-\sigma_1}.
\end{equation*}
Integrating in $s$ yields
\begin{align}
\int_0^s &e^{-(s - s^\prime)2^{2k-2}} 
\lVert \Xi_{hh}(s^\prime, \cdot, t) \rVert_{L_x^2}
ds^\prime
\lesssim \nonumber \\
&2^k 2^{-\sigma k} \gamma_k(\sigma) \cC(S, t)
\int_0^s e^{-(s - s^\prime)2^{2k-2}}
\lVert (\sqrt{\e_2} + \e_1)(s^\prime, \cdot, t)
\rVert_{L_x^2} (1 + s^\prime 2^{2 k})^{-\sigma_1} ds^\prime.
\label{presplit}
\end{align}
We use the triangle inequality to write
$\lVert \sqrt{\e_2} + \e_1 \rVert_{L_x^2} \leq
\lVert \sqrt{\e_2} \rVert_{L_x^2} + \lVert \e_1 \rVert_{L_x^2}$
and split the integral in (\ref{presplit}) into two pieces.
By Cauchy-Schwarz and (\ref{2ndCSfactor}),
\begin{align}
\int_0^s &e^{-(s - s^\prime)2^{2k-2}}
\lVert \e_1(s^\prime, \cdot, t)
\rVert_{L_x^2} (1 + s^\prime 2^{2 k})^{-\sigma_1} ds^\prime \nonumber \\
&\leq
\left( \int_0^s \lVert \e_1(s^\prime, \cdot, t) \rVert_{L_x^2}^2 \ds^\prime \right)^{1/2} 
\left( \int_0^s e^{-(s - s^\prime)2^{2k-1}} 
(1 + s^\prime 2^{2k})^{-2\sigma_1} ds^\prime \right)^{1/2} \nonumber \\
&\lesssim
\lVert \e_1(t) \rVert_{L_{s,x}^2}
\left( s(1 + s 2^{2k - 1})^{-2 \sigma_1} (1 + s 2^{2k})^{-1} \right)^{1/2}.
\label{1stIntegral}
\end{align}
To the remaining integral we also apply Cauchy-Schwarz
and (\ref{2ndCSfactor}):
\begin{align}
\int_0^s &e^{-(s - s^\prime)2^{2k-2}}
\lVert \sqrt{\e_2}(s^\prime, \cdot, t)
\rVert_{L_x^2} (1 + s^\prime 2^{2 k})^{-\sigma_1} ds^\prime \nonumber \\
&\leq
\left( \int_0^s \lVert \e_2(s^\prime, \cdot, t) \rVert_{L_x^1} \ds^\prime \right)^{1/2}
\left( \int_0^s e^{-(s - s^\prime)2^{2k-1}} 
(1 + s^\prime 2^{2k})^{-2\sigma_1} ds^\prime \right)^{1/2} \nonumber \\
&\lesssim
\lVert \e_2(t) \rVert_{L_{s,x}^1}^{1/2}
\left( s(1 + s 2^{2k - 1})^{-2 \sigma_1} (1 + s 2^{2k})^{-1} \right)^{1/2}
\label{2ndIntegral}.
\end{align}
Hence using Cauchy-Schwarz, (\ref{1stIntegral}), and (\ref{2ndIntegral})
in (\ref{presplit}),
we conclude
\begin{align}
\int_0^s &e^{-(s - s^\prime)2^{2k-2}} \lVert \Xi_{hh}(s^\prime, \cdot, t) 
\rVert_{L_x^2} \; ds^\prime \nonumber \\
&\lesssim
2^{-\sigma k} \gamma_k(\sigma) \cC(S, t) (1 + s 2^{2k - 1})^{-\sigma_1}
\left(\lVert \e_1(t) \rVert_{L_{s,x}^2} +
\lVert \e_2(t) \rVert_{L_{s,x}^1}^{1/2}\right).
\label{highhigh subcase}
\end{align}

\nline
\noindent \textbf{Low-High interaction conclusion.}
Combining (\ref{lowhigh subcase}) and (\ref{highhigh subcase}), we conclude
in view of (\ref{LH Def}) and the decomposition (\ref{Further Decomposition}) that
\begin{align}
\mathrm{LH}(s, t) 
\lesssim 
&\;
2^{-\sigma k} \gamma_k(\sigma) \cC(S, t) (1 + s 2^{2k - 1})^{-\sigma_1} \times \nonumber \\
&\left(\lVert \e_1(t) \rVert_{L_s^1 L_x^\infty}^{1/2} +
\lVert \e_1(t) \rVert_{L_{s,x}^2} +
\lVert \e_2(t) \rVert_{L_{s,x}^1}^{1/2}\right).
\label{LH Bound}
\end{align}

\nline
\noindent \textbf{High-Low interaction.}
We now go on to bound the high-low interaction.  By Duhamel
and the triangle inequality it suffices to bound
\begin{equation*}
\mathrm{HL}(s, t) :=
\int_0^s e^{-(s - s^\prime) 2^{2k-2}} \sum_{\lvert k_1 - k\rvert \leq 4}
\lVert P_k (P_{k_1}\varphi(s^\prime, \cdot, t) 
\cdot P_{\leq k - 5} \e_1(s^\prime, \cdot, t)) \rVert_{L_x^2} ds^\prime.
\end{equation*}
By H\"older's inequality, (\ref{B(S,t) con}), and Bernstein's inequality, we have
\begin{align*}
\sum_{\lvert k_1 - k\rvert \leq 4}&
\lVert P_k (P_{k_1}\varphi(s, \cdot, t) \cdot P_{\leq k - 5} \e_1(s, \cdot, t)) \rVert_{L_x^2} \\
&\lesssim
\sum_{\lvert k_1 - k\rvert \leq 4}
\lVert P_{k_1} \varphi \rVert_{L_x^2} \lVert P_{\leq k - 5} \e_1 \rVert_{L_x^\infty} \\
&\lesssim
\lVert P_{\leq k - 5} \e_1(s, \cdot, t) \rVert_{L_x^\infty}
\sum_{\lvert k_1 - k\rvert \leq 4}
(1 + s^\prime 2^{2k_1})^{-\sigma_1} 2^{-\sigma k_1} \gamma_{k_1}(\sigma) \cC(S, t)\\
&\lesssim
2^k \lVert P_{\leq k - 5} \e_1(s, \cdot, t) \rVert_{L_x^2}
2^{-\sigma k} \gamma_k(\sigma) \cC(S, t) (1 + s^\prime 2^{2k})^{-\sigma_1}.
\end{align*}
Hence
\begin{equation*}
\mathrm{HL}(s, t) \lesssim 2^k 2^{-\sigma k} \gamma_k(\sigma) \cC(S, t)
\int_0^s e^{-(s - s^\prime) 2^{2k-2}} (1 + s^\prime 2^{2k})^{-\sigma_1}
\lVert \e_1(s^\prime, \cdot, t) \rVert_{L_x^2} ds^\prime.
\end{equation*}
Bounding the integral as in (\ref{1stIntegral}),
we obtain
\begin{equation}
\mathrm{HL}(s, t) \lesssim 2^{-\sigma k} \gamma_k(\sigma) \cC(S, t)
(1 + s 2^{2k-1})^{-\sigma_1}
\lVert \e_1(t) \rVert_{L_{s,x}^2}.
\label{HL Bound}
\end{equation}

\nline
\noindent \textbf{High-High interaction.}
We conclude with the high-high interaction.  Set
\begin{equation*}
\mathrm{HH}(s, x, t) := \int_0^s e^{-(s - s^\prime)2^{2k - 2}}
\sum_{\substack{ k_1, k_2 \geq k - 4 \\ \lvert k_1 - k_2 \rvert \leq 8}}
\lVert P_k (P_{k_1} \varphi(s, x, t) 
\cdot P_{k_2} \e_1(s, x, t)) \rVert_{L_x^2} ds^\prime.
\end{equation*}
By Bernstein, Cauchy-Schwarz, and (\ref{B(S,t) con}),
\begin{align*}
\sum_{\substack{ k_1, k_2 \geq k - 4 \\ \lvert k_1 - k_2 \rvert \leq 8}}
&\lVert P_k (P_{k_1} \varphi \cdot P_{k_2} \e_1) \rVert_{L_x^2} \\
&\lesssim
\sum_{\substack{k_1, k_2 \geq k - 4 \\ \lvert k_1 - k_2 \rvert \leq 8}}
2^k \lVert P_{k_1} \varphi \rVert_{L_x^2} \lVert P_{k_2} \e_1 \rVert_{L_x^2} \\
&\lesssim
2^k \left( \sum_{k_1 \geq k - 4} \lVert P_{k_1} \varphi \rVert_{L_x^2}^2 \right)^{1/2}
\left( \sum_{k_2 \geq k - 4} \lVert P_{k_2} \e_1 \rVert_{L_x^2}^2 \right)^{1/2} \\
&\lesssim
2^k \left( \sum_{k_1 \geq k - 4} (1 + s^\prime 2^{2k_1})^{-2 \sigma_1} 2^{-2 \sigma k_1}
\gamma_{k_1}(\sigma)^2 \cC(S, t)^2 \right)^{1/2} \lVert \e_1(s, \cdot, t) \rVert_{L_x^2} \\
&= \lVert \e_1(s, \cdot, t)  \rVert_{L_x^2} 2^k \cC(S, t) 
\left( \sum_{k_1 \geq k - 4} (1 + s^\prime 2^{2 k_1})^{-2 \sigma_1} 2^{-2 \sigma k_1}
\gamma_{k_1}(\sigma)^2 \right)^{1/2}.
\end{align*}
We handle the sum as in (\ref{presum}), 
taking advantage of the frequency envelope summation rule (\ref{Sum 2}),
and conclude
\begin{equation}
\mathrm{HH}(s, t) \lesssim 2^{-\sigma k} \gamma_k(\sigma) \cC(S, t)
(1 + s 2^{2k-1})^{-\sigma_1}
\lVert \e_1(t) \rVert_{L_{s,x}^2}.
\label{HH Bound}
\end{equation}

\nline
\noindent \textbf{Wrapping up.}
For the linear term $e^{s \Delta} P_k \varphi$ we have
\begin{align}
\lVert e^{s \Delta} P_k \varphi_0 \rVert_{L_x^2} 
&\leq e^{-s 2^{2k - 2}} \lVert P_k \varphi_0 \rVert_{L_x^2} \nonumber \\
&\leq   e^{-s 2^{2k - 2}} 2^{-\sigma k} \gamma_k(\sigma).
\label{linear term}
\end{align}
Using (\ref{LH Bound}), (\ref{HL Bound}), (\ref{HH Bound}), and (\ref{linear term})
in Duhamel's formula applied to the covariant heat equation (\ref{Covariant Heat Alt}),
we have that for any $s \in [0, S]$, $t \in (-T, T)$,
\begin{align*}
2^{\sigma k} \lVert P_k \varphi(s, \cdot, t) \rVert_{L_x^2} &(1 + s 2^{2k})^{\sigma_1} \\
\lesssim &\;
\gamma_k(\sigma) + \mathrm{LL}(s, t) + \mathrm{LH}(s, t) + \mathrm{HH}(s, t) \\
\lesssim& \;
\gamma_k(\sigma) + \gamma_k(\sigma) \cC(S, t) 
\left( \lVert \e_1(t) \rVert_{L_s^1 L_x^\infty}^{1/2}
+ \lVert \e_2(t) \rVert_{L_{s,x}^1}^{1/2}
+ \lVert \e_1(t) \rVert_{L_{s,x}^2} \right). 
\end{align*}
In view of (\ref{ek4}) with $k = 1$, (\ref{ek1}) with $k = 1$, and (\ref{controlling norm}),
we may split up the $s$-time interval $[0, \infty)$ into $O_{E_0}(1)$ intervals $I_\rho$
on which
$\lVert \e_1(t) \rVert_{L_s^1 L_x^\infty(I_\rho \times \rr)}^{1/2}$,
$ \lVert \e_2(t) \rVert_{L_s^1 L_x^1(I_\rho \times \rr)}^{1/2}$, and
$ \lVert \e_1(t) \rVert_{L_s^2 L_x^2(I_\rho \times \rr)}$ are all simultaneously small uniformly in $t$.  
By iterating a bootstrap argument $O_{E_0}(1)$ times beginning with interval $I_1$, we
conclude that $\cC(s, t) \lesssim 1$ for all $s > 0$, uniformly in $t$. 
Therefore
\begin{equation}
\lVert P_k \varphi(s) \rVert_{L_t^\infty L_x^2} \lesssim
(1 + s 2^{2k})^{-\sigma_1} 2^{-\sigma k} \gamma_k(\sigma)
\label{gamma con}
\end{equation}
for $s \in [0, \infty)$ and $\sigma \geq 2\delta$.
\end{proof}

\begin{rem}
Having proven the quantitative bounds (\ref{Hard Envelope Bounds}) for $\varphi$, 
one may establish as a corollary 
the qualitative bounds (\ref{Soft Envelope Bounds}) for $\varphi$
by using an inductive argument as in the proof of \cite[Lemma 8.3]{BeIoKeTa11}.
We omit the proof, noting
in particular that the argument deriving (\ref{Soft Envelope Bounds})
from (\ref{Hard Envelope Bounds}) does not require a small-energy hypothesis.
\end{rem}

\begin{proof}[Proof of (\ref{Hard Envelope Bounds}) for $v, w$]
We begin by introducing the matrix-valued function
\begin{equation}
R(s, x, t) := \partial_s \varphi(s, x, t) \cdot \varphi(s, x, t)^\dagger - 
\varphi(s, x, t) \cdot \partial_s \varphi(s, x, t)^\dagger,
\label{R Def}
\end{equation}
where here $\varphi$ is thought of as a column vector.  The dagger ``$\dagger$''
denotes transpose.
Using the heat flow equation (\ref{Covariant Heat}) in (\ref{R Def}), we rewrite $R$ as
\begin{align}
R 
&= \Delta \varphi \cdot \varphi^\dagger - \varphi \cdot \Delta \varphi^\dagger \label{R deflap} \\
&= \partial_m ( \partial_m \varphi \cdot \varphi^\dagger - 
\varphi \cdot \partial_m \varphi^\dagger) \label{R defmod}
\end{align}
and proceed to bound its Littlewood-Paley projections $P_k R$ in $L_x^2$.
Noting that by Bernstein we have
\begin{equation}
\lVert P_k (\partial_m (\partial_m \varphi \cdot \varphi^\dagger)) \rVert_{L_x^2} \sim
2^k \lVert P_k (\partial_m \varphi \cdot \varphi^\dagger) \rVert_{L_x^2},
\label{R note}
\end{equation}
we further decompose the nonlinearity $P_k (\partial_m \varphi \cdot \varphi^\dagger)$:
\begin{align}
P_k (\partial_m \varphi \cdot \varphi^\dagger) 
=&
\sum_{\lvert k_2 - k \rvert \leq 4} P_{\leq k - 4} \partial_m \varphi \cdot
P_{k_2} \varphi^\dagger + \nonumber \\
&\sum_{\lvert k_1 - k \rvert \leq 4} P_{k_1} \partial_m \varphi \cdot
P_{\leq k - 4} \varphi^\dagger + \nonumber \\
&\sum_{\substack{ k_1, k_2 \geq k - 4 \\ \lvert k_1 - k_2 \rvert \leq 8}}
P_k ( P_{k_1} \partial_m \varphi \cdot P_{k_2} \varphi^\dagger ).
\label{R decomposition}
\end{align}
By H\"older's and Bernstein's inequalities and by $\lvert \varphi \rvert \equiv 1$ 
and (\ref{gamma con}) with Bernstein,
\begin{align}
\sum_{\lvert k_2 - k \rvert \leq 4} \lVert P_{\leq k - 4} \partial_m \varphi \cdot
P_{k_2} \varphi \rVert_{L_x^2}
&\lesssim
\sum_{\lvert k_2 - k \rvert \leq 4} 2^k
\lVert P_{\leq k - 4} \varphi \rVert_{L_{x}^\infty}
\lVert P_{k_2} \varphi \rVert_{L_x^2} \nonumber \\
&\lesssim 2^k (1 + s 2^{2k})^{-\sigma_1} 2^{-\sigma k} \gamma_k(\sigma).
\label{R LH}
\end{align}
Similarly,
\begin{align}
\sum_{\lvert k_1 - k \rvert \leq 4} \lVert P_{k_1} \partial_m \varphi \cdot
P_{\leq k - 4} \varphi \rVert_{L_x^2}
&\lesssim \sum_{\lvert k_1 - k \rvert \leq 4}
\lVert P_{k_1} \partial_m \varphi \rVert_{L_x^2}
\lVert P_{\leq k - 4} \varphi \rVert_{L_{x}^\infty} \nonumber \\
&\lesssim 2^k (1 + s 2^{2k})^{-\sigma_1} 2^{-\sigma k} \gamma_k(\sigma).
\label{R HL}
\end{align}
Finally, by Bernstein and Cauchy-Schwarz, energy decay, (\ref{gamma con}),
and frequency envelope summation rule (\ref{Sum 2}), we get
\begin{align}
\sum_{\substack{ k_1, k_2 \geq k - 4 \\ \lvert k_1 - k_2 \rvert \leq 8}}
\lVert P_k ( P_{k_1} \partial_m \varphi \cdot P_{k_2} \varphi ) \rVert_{L_x^2}
&\lesssim 
\sum_{\substack{ k_1, k_2 \geq k - 4 \\ \lvert k_1 - k_2 \rvert \leq 8}}
2^k \lVert P_{k_1} \partial_m \varphi \rVert_{L_x^2}
\lVert P_{k_2} \varphi \rVert_{L_x^2} \nonumber \\
&\lesssim 
2^k \sum_{k_2 \geq k - 4} \lVert P_{k_2} \varphi \rVert_{L_x^2} \nonumber \\
&\lesssim 
2^k \sum_{k_1 \geq k - 4} (1 + s 2^{2k_1})^{-\sigma_1}
2^{-\sigma k_1} \gamma_{k_1}(\sigma) \nonumber \\
&\lesssim 
2^k (1 + s 2^{2k})^{-\sigma_1} 2^{-\sigma k} \gamma_k(\sigma).
\label{R HH}
\end{align}
Using the decomposition (\ref{R decomposition}) and
combining the cases (\ref{R LH}), (\ref{R HL}), and (\ref{R HH})
to control (\ref{R note}), we conclude from the representation (\ref{R defmod})
of $R$ that for fixed $t \in (-T, T)$,
\begin{equation*}
2^{\sigma k} \lVert P_k R(s, \cdot, t) \rVert_{L_x^2}
\lesssim 2^{2k} (1 + s 2^{2k})^{-\sigma_1} \gamma_k(\sigma).
\end{equation*}
As this estimate is uniform in $T$, it follows that
\begin{equation}
2^{\sigma k} \lVert P_k R(s) \rVert_{L_t^\infty L_x^2}
\lesssim 2^{2k} (1 + s 2^{2k})^{-\sigma_1} \gamma_k(\sigma).
\label{RsDecay}
\end{equation}
By arguing as in \cite[Lemma 8.4]{BeIoKeTa11}, one may obtain the
qualitative estimate
\begin{align}
\sup_{s \geq 0} &\left( (1 + s)^{(\sigma + 2)/2}
\lVert \partial_x^\sigma \partial_t^\rho R(s) \rVert_{L_t L_x^2} \right. \nonumber \\
&\left. (1 + s)^{(\sigma + 3)/2} 
\lVert \partial_x^\sigma \partial_t^\rho R(s) \rVert_{L_{t,x}^\infty} \right)
< \infty.
\label{R qual}
\end{align}
From the Duhamel representation of $\varphi$ and the explicit formula
for the heat kernel, one can easily show that qualitative bound
\begin{footnote}{
We may alternately invoke (\ref{R qual}) as in \cite{BeIoKeTa11}.}
\end{footnote}
\begin{equation*}
\int_0^\infty \lVert R(s, \cdot, t) \rVert_{L_x^\infty} \ds
\lesssim_\varphi 1
\end{equation*}
as in \cite[\S 7]{Sm09}.
Hence we may define $v$ as the unique solution of the ODE
\begin{equation}
\partial_s v = R(s) \cdot v
\quad \quad \text{and} \quad \quad
v(\infty) = Q^\prime,
\label{R ode}
\end{equation}
where $Q^\prime \in \mathbf{S}^2$ is chosen so that $Q \cdot Q^\prime = 0$.
This indeed coincides with the definition given in \cite{Sm09}, since
(\ref{R ode}) is nothing other than the parallel transport condition
$(\varphi^* \nabla)_s v = 0$ written explicitly in the setting 
$\mathbf{S}^2 \hookrightarrow \mathbf{R}^3$.  Smoothness and basic
convergence properties follow as in \cite{Sm09},
to which we refer the reader for the precise results and proofs.  
Our goal here is to exploit
(\ref{R ode}) and (\ref{RsDecay}) to prove (\ref{Hard Envelope Bounds})
for $v$.

Using 
$\int_0^\infty \lVert \partial_x^\sigma \partial_t^\rho R(s)) \rVert_{L_{t,x}^\infty} \ds < \infty$
from (\ref{R qual}), we conclude
\begin{equation}
\sup_{s \geq 0} (1 + s)^{(\sigma + 1)/2}
\lVert \partial_x^\sigma \partial_t^\rho (v(s) - Q^\prime) \rVert_{L_{t,x}^\infty} < \infty
\label{v qual}
\end{equation}
for $\sigma, \rho \in \mathbf{Z}_+$.
Integrating (\ref{R ode}) in $s$ from infinity, we get
\begin{equation}
v(s) - Q^\prime +
\int_s^\infty R(s^\prime) \cdot Q^\prime \ds^\prime
=
- \int_s^\infty R(s^\prime) \cdot (v(s^\prime) - Q^\prime) \ds^\prime,
\end{equation}
which, combined with estimates (\ref{R qual}) and (\ref{v qual}), implies
\begin{equation}
\sup_{s \geq 0} \sup_{k \in \mathbf{Z}} (1 + s)^{\sigma / 2} 2^{\sigma k}
\lVert P_k \partial_t^\rho v(s) \rVert_{L_t^\infty L_x^2} < \infty,
\label{pkv qual}
\end{equation}
i.e., (\ref{Soft Envelope Bounds}) for $v$.
Projecting (\ref{R ode}) to frequencies $\sim 2^k$ and integrating in $s$,
we obtain
\begin{equation}
P_k(v(s)) = -\int_s^\infty P_k(R(s^\prime) \cdot v(s^\prime)) \; ds^\prime.
\label{Pkvs}
\end{equation}
Set
\begin{equation*}
\cC_1(S, t) := \sup_{\sigma \in [2\delta, \sigma_1]} \sup_{s \in [S, \infty)} \sup_{k \in \mathbf{Z}}
\gamma_k(\sigma)^{-1} (1 + s 2^{2k})^{\sigma_1 - 1} 2^{\sigma k}
\lVert P_k v(s, \cdot, t) \rVert_{L_x^2}.
\end{equation*}
That $\cC_1(S, t) < \infty$ follows from (\ref{pkv qual}) and 
$\sup_{k \in \mathbf{Z}} \gamma_k(\sigma)^{-1} 2^{- \delta \lvert k \rvert} < \infty$.
Consequently, for $s \in [S, \infty)$,
\begin{equation}
\lVert P_k v(s, \cdot, t) \rVert_{L_x^2}
\lesssim
\cC_1(S, t)
(1 + s 2^{2k})^{-\sigma_1 + 1} 2^{-\sigma k} \gamma_k(\sigma).
\label{B1 con}
\end{equation}

We perform the Littlewood-Paley decomposition
\begin{align}
P_k (R(s) v(s)) =& 
\sum_{\lvert k_2 - k \rvert \leq 4 } P_k( P_{\leq k - 4} R(s) P_{k_2} v(s) ) + \nonumber \\
&\sum_{ \lvert k_1 - k \rvert \leq 4}  P_k( P_{k_1} R(s) P_{\leq k - 4} v(s) ) + \nonumber \\
&\sum_{k_2 \geq k - 4} P_k( P_{\geq k - 4} R(s) P_{k_2} v(s) )
\label{R decomp}
\end{align}
and proceed to consider individually the various frequency interactions.
By H\"older's inequality, Bernstein's inequality, and (\ref{B1 con}),
\begin{align}
\sum_{\lvert k_2 - k \rvert \leq 4 }& \lVert  P_k( P_{\leq k - 4} R(s) P_{k_2} v(s) ) \rVert_{L_x^2}
\nonumber \\
&\lesssim
\sum_{\lvert k_2 - k \rvert \leq 4 }
\lVert P_{\leq k - 4} R(s) \rVert_{L_x^2}
\lVert P_{k_2} v(s) \rVert_{L_{x}^\infty} \nonumber \\
&\lesssim
 \lVert R(s) \rVert_{L_x^2}
\sum_{\lvert k_2 - k \rvert \leq 4} 2^{k_2} \lVert P_{k_2} v(s) \rVert_{L_x^2} \nonumber \\
&\lesssim
\lVert R(s) \rVert_{L_x^2} 2^k 2^{-\sigma k} \gamma_k(\sigma)
(1 + s 2^{2k})^{-\sigma_1 + 1} \cC_1(S, t).
\label{R LHb}
\end{align}
By H\"older's inequality, $\lvert v \rvert \equiv 1$, and (\ref{RsDecay}),
\begin{align}
\sum_{ \lvert k_1 - k \rvert \leq 4} \lVert  P_k( P_{k_1} R(s) P_{\leq k - 4} v(s) ) \rVert_{L_x^2}
&\lesssim \lVert P_{\leq k - 4} v(s) \rVert_{L_{x}^\infty}
\sum_{ \lvert k_1 - k \rvert \leq 4} \lVert P_{k_1} R(s) \rVert_{L_x^2} \nonumber \\
&\lesssim 2^{2k} (1 + s 2^{2k})^{-\sigma_1} 2^{-\sigma k} \gamma_k(\sigma).
\label{R HLb}
\end{align}
From Bernstein's inequality, Cauchy-Schwarz, (\ref{B1 con}), 
and $\sigma > 2 \delta$ with (\ref{Sum 2}), it follows that
\begin{align}
 \sum_{k_2 \geq k - 4}&
 \lVert  P_k( P_{\geq k - 4} R(s) P_{k_2} v(s) ) \rVert_{L_x^2} \nonumber \\
&\lesssim
\sum_{k_2 \geq k - 4} 2^k \lVert P_{\geq k - 4} R(s) P_{k_2} v(s) \rVert_{L_x^1} \nonumber \\
&\lesssim \lVert R(s) \rVert_{L_x^2} 2^k
\sum_{k_2 \geq k - 4} \lVert P_{k_2} v(s) \rVert_{L_x^2} \nonumber \\
&\lesssim \lVert R(s) \rVert_{L_x^2} 2^k
\sum_{k_2 \geq k - 4}
2^{-\sigma k_2} \gamma_{k_2}(\sigma) (1 + s 2^{2k_2})^{-\sigma_1 + 1} \cC_1(S, t) \nonumber \\
&\lesssim \lVert R(s) \rVert_{L_x^2}
2^k 2^{-\sigma k} \gamma_k(\sigma) (1 + s 2^{2k})^{-\sigma_1 + 1} \cC_1(S, t).
\label{R HHb}
\end{align}
Using the decomposition (\ref{R decomp}) in (\ref{Pkvs}) and combining the estimates
(\ref{R LHb}), (\ref{R HLb}), and (\ref{R HHb}) implies
\begin{align*}
2^{\sigma k} \lVert P_k v(s) \rVert_{L_x^2} \leq&
\int_s^\infty 2^{\sigma k} \lVert P_k (R(s^\prime) v(s^\prime)) \rVert_{L_x^2} \; ds^\prime \\
\lesssim& \;
\gamma_k(\sigma) \int_s^\infty 2^{2k} (1 + s^\prime 2^{2k})^{-\sigma_1} \; ds^\prime \\
&+ \cC_1(s, t) \gamma_k(\sigma) \int_s^\infty 
\lVert R(s^\prime) \rVert_{L_x^2} 2^k (1 + s^\prime 2^{2k})^{-\sigma_1 + 1} \; ds^\prime.
\end{align*}
Applying Cauchy-Schwarz in $s$, we obtain
\begin{align}
2^{\sigma k} \lVert P_k v(s) \rVert_{L_x^2} 
\lesssim& \;
\gamma_k(\sigma) \int_s^\infty 2^{2k} (1 + s^\prime 2^{2k})^{-\sigma_1} \; ds^\prime \nonumber \\
&+ \cC_1(s, t) \gamma_k(\sigma)
\left( \int_s^\infty \lVert R(s^\prime) \rVert_{L_x^2}^2 \; ds^\prime \right)^{1/2} \times \nonumber \\
&\times \left( \int_s^\infty 2^{2k} (1 + s^\prime 2^{2k})^{-2 \sigma_1 + 2} 
\ds^\prime \right)^{1/2} \nonumber \\
&\lesssim 
\gamma_k(\sigma) + \cC_1(s, t) \gamma_k(\sigma)
\left(\int_s^\infty \lVert R(s^\prime) \rVert_{L_x^2}^2 \ds^\prime \right)^{1/2}.
\label{R bootstrap prep}
\end{align}

As noted in (\ref{ord by cov}), it holds that
$\lvert \Delta \varphi \rvert \leq \sqrt{\e_2} + \e_1$, and so it follows
from the representation (\ref{R deflap}) of $R$ that
\begin{equation}
\lvert R(s, x, t) \rvert \leq \lvert \e_1(s, x, t) \rvert + 
\lvert \sqrt{\e_2}(s, x, t) \rvert.
\label{R B}
\end{equation}
As (\ref{R B}) implies
\begin{equation*}
\int_0^\infty \lVert R(s) \rVert_{L_x^2}^2 \ds \lesssim
\lVert \e_2 \rVert_{L_{s,x}^1} +
\lVert \e_1 \rVert_{L_{s,x}^2}^2,
\end{equation*}
we therefore  in view of (\ref{ek1}) with $k = 1$ and (\ref{controlling norm})
may choose $S$ large so that the integral of the $R$ term 
in (\ref{R bootstrap prep}) is small, say $\leq \varepsilon$.  Then  
\begin{equation*}
\cC_1(S, t) \lesssim 1 + \varepsilon \cC_1(S, t)
\end{equation*}
so that $\cC_1(S) \lesssim 1$ for such $S$.
In fact, together (\ref{ek1}) and (\ref{controlling norm}) imply that
we may divide the time interval $[0, \infty)$
into $O_{E_0}(1)$ subintervals $I_\rho$ so that on each such subinterval
\begin{equation*}
\int_{I_\rho} \lVert R(s) \rVert_{L_x^2}^2 \ds \leq \varepsilon^2.
\end{equation*}
Hence by a simple iterative bootstrap argument we conclude 
\begin{equation}
\cC_1(0, t) \lesssim 1.
\label{B1t}
\end{equation}
As (\ref{B1t}) is uniform in $t$, we have
\begin{equation}
\lVert P_k v(s, \cdot, t) \rVert_{L_x^2}
\lesssim
(1 + s 2^{2k})^{-\sigma_1 + 1} 2^{-\sigma k} \gamma_k(\sigma).
\label{Pkv bound}
\end{equation}
By repeating the above argument with $w$ in place of $v$
(and appropriately modifying the boundary condition at $\infty$ in
(\ref{R ode})), we get
\begin{equation}
\lVert P_k w(s, \cdot, t) \rVert_{L_x^2}
\lesssim
(1 + s 2^{2k})^{-\sigma_1 + 1} 2^{-\sigma k} \gamma_k(\sigma).
\label{Pkw bound}
\end{equation}
and
\begin{equation*}
\sup_{s \geq 0} \sup_{k \in \mathbf{Z}} (1 + s)^{\sigma / 2} 2^{\sigma k}
\lVert P_k \partial_t^\rho w(s) \rVert_{L_t^\infty L_x^2} < \infty,
\end{equation*}
i.e., (\ref{Hard Envelope Bounds}) and (\ref{Soft Envelope Bounds}) respectively
for $w$.
\end{proof}

\begin{proof}[Proof of (\ref{Hard Field Bounds})]
Recall that
\begin{align}
\psi_m &= v \cdot \partial_m \varphi + i w \cdot \partial_m \varphi \nonumber \\
&= - \partial_m v \cdot \varphi - i \partial_m w \cdot \varphi.
\label{psi dmv}
\end{align}
Our first aim is to control $\lVert P_k \psi_x \rVert_{L_t^\infty L_x^2}$.
Toward this end, we perform a Littlewood-Paley decomposition of $\partial_m v \cdot \varphi$:
\begin{align}
P_k ( \partial_m v \cdot \varphi ) =&
\sum_{\lvert k_2 - k \rvert \leq 4}
P_k \left( P_{\leq k - 5} \partial_m v \cdot P_{k_2} \varphi \right) + \nonumber \\
&\sum_{\lvert k_1 - k \rvert \leq 4}
P_k \left( P_{k_1} \partial_m v \cdot P_{\leq k - 5} \varphi \right) + \nonumber \\
&\sum_{\substack{ k_1, k_2 \geq k - 4 \\ \lvert k_1 - k_2 \rvert \leq 8}}
P_k \left( P_{k_1} \partial_m v \cdot P_{k_2} \varphi \right).
\label{dmv phi lp}
\end{align}
To control the low-high frequency term we apply
H\"older's inequality, energy decay, and (\ref{gamma con}) with Bernstein's inequality:
\begin{align}
\sum_{\lvert k_2 - k \rvert \leq 4}
\lVert P_k (P_{\leq k - 5} \partial_m v \cdot P_{k_2} \varphi) \Vert_{L_x^2} 
&\lesssim
\sum_{\lvert k_2 - k \rvert \leq 4}
\lVert P_{\leq k - 5} \partial_m v \rVert_{L_x^2} \lVert P_{k_2} \varphi \rVert_{L_x^\infty} \nonumber \\
&\lesssim
(1 + s 2^{2k})^{-\sigma_1} 2^k 2^{-\sigma k} \gamma_k(\sigma).
\label{dmv lh}
\end{align}
We control the high-low frequency term by using
H\"older's inequality, $\lvert \varphi \rvert \equiv 1$, and (\ref{Pkv bound}):
\begin{align}
\sum_{\lvert k_1 - k \rvert \leq 4}
\lVert P_k( P_{k_1} \partial_m v \cdot P_{\leq k - 5} \varphi) \rVert_{L_x^2}
&\lesssim
\sum_{\lvert k_1 - k \rvert \leq 4}
\lVert P_{k_1} \partial_m v \rVert_{L_x^2} \lVert P_{\leq k - 5} \varphi \rVert_{L_x^\infty} \nonumber \\
&\lesssim
(1 + s 2^{2k})^{-\sigma_1} 2^k 2^{-\sigma k} \gamma_k(\sigma).
\label{dmv hl}
\end{align}
To control the high-high frequency term, we use Bernstein's inequality and Cauchy-Schwarz,
energy conservation and (\ref{gamma con}), and (\ref{Sum 2}):
\begin{align}
\sum_{\substack{ k_1, k_2 \geq k - 4 \\ \lvert k_1 - k_2 \rvert \leq 8}}
\lVert P_k (P_{k_1} \partial_m v \cdot P_{k_2} \varphi ) \rVert_{L_x^2}
&\lesssim 
\sum_{\substack{ k_1, k_2 \geq k - 4 \\ \lvert k_1 - k_2 \rvert \leq 8}}
2^k \lVert P_{k_1} \partial_m v \rVert_{L_x^2} \lVert P_{k_2} \varphi \rVert_{L_x^2} \nonumber \\
&\lesssim
2^k \sum_{ k_2 \geq k - 4 }
(1 + s 2^{2 k_2})^{-\sigma_1} 2^{-\sigma k_2} \gamma_{k_2}(\sigma) \nonumber \\
&\lesssim
(1 + s 2^{2k})^{-\sigma_1} 2^k 2^{-\sigma k} \gamma_{k}(\sigma)
\label{dmv hh}
\end{align}
We conclude using (\ref{dmv lh}), (\ref{dmv hl}), and (\ref{dmv hh}) 
in representation (\ref{dmv phi lp}) that
\begin{equation}
\lVert P_k (\partial_m v \cdot \varphi) \rVert_{L_x^2}
\lesssim
(1 + s 2^{2k})^{-\sigma_1} 2^k 2^{-\sigma k} \gamma_k(\sigma).
\label{bound for dmv phi}
\end{equation}
By repeating the argument with $w$ in place of $v$, it follows that
(\ref{bound for dmv phi}) also holds with $w$ in place of $v$.  Therefore,
referring back to (\ref{psi dmv}), we conclude
\begin{equation*}
\lVert P_k \psi_m \rVert_{L_x^2}
\lesssim
(1 + s 2^{2k})^{-\sigma_1} 2^k 2^{-\sigma k} \gamma_k(\sigma).
\end{equation*}
As this bound is uniform in $t$, (\ref{Hard Field Bounds}) holds for $\psi_m$.

Recalling that
\begin{equation*}
A_m = \partial_m v \cdot w,
\end{equation*}
and repeating the above argument with $w$ in place of $\varphi$
and (\ref{Pkw bound}) in place of (\ref{gamma con}),
we conclude
\begin{equation*}
\lVert P_k A_x (s) \rVert_{L_t^\infty L_x^2} \lesssim
(1 + s 2^{2k})^{-\sigma_1 + 1} 2^k 2^{-\sigma k} \gamma_k(\sigma).
\end{equation*}
\end{proof}
\section{Proofs of parabolic estimates} \label{S:Parabolic Estimates}

The purpose of this section is to prove the 
parabolic heat-time estimates stated in \S \ref{SS:ParabolicEstimates}.
Many of these estimates have counterparts in \cite{BeIoKeTa11}.
Nevertheless, our proofs are more involved since
we only require energy dispersion, which is weaker
than the small-energy assumption made in \cite{BeIoKeTa11}.
Some of the $L^p$ estimates in \S \ref{SS:CCC} are new.

Throughout we assume $\varepsilon_1$ energy dispersion on the initial data
as stated in (\ref{iED})
and we assume that the bootstrap hypothesis
(\ref{Main Bootstrap}) holds.
Let $\sigma_1 \in \mathbf{Z}_+$ be positive and fixed.
We work exclusively with $\sigma \in [0, \sigma_1 - 1]$, even if this is not always explicitly stated.
Set $\varepsilon =  \varepsilon_1^{7/5}$ for short.

In this section we extensively use the spaces defined via (\ref{SomegaDef}).
They provide a crucial gain in high-high frequency interactions,
which is
captured in Lemmas \ref{BIKT Lemma 5.1} and \ref{BIKT Lemma 5.4}

\begin{lem}
Let $f \in L^2_{k_1}(T)$, where $\lvert k_1 - k \rvert \leq 20$,
let $0 \leq \omega^\prime\leq 1/2$, and let $h \in L^2_k(T)$.
Then
\begin{align*}
\lVert P_k (fg) \rVert_{F_k(T)} &\lesssim \lVert f \rVert_{F_{k_1}(T)} \lVert g \rVert_{L_{t,x}^\infty} \\
\lVert P_k (fg) \rVert_{S_k^{\omega^\prime}(T)} &\lesssim \lVert f \rVert_{F_{k_1}(T)}
2^{k \omega^\prime} \lVert g \rVert_{L_x^{2 / \omega^\prime} L_t^\infty} \\
\lVert h \rVert_{L_{t,x}^\infty} + 2^{k \omega^\prime} 
\lVert h \rVert_{L_x^{2 / \omega^\prime} L_t^\infty} &\lesssim 2^k \lVert h \rVert_{F_k(T)}.
\end{align*}
Moreover, for $f_{k_1}, g_{k_2}$ belonging to
$L^2_{k_1}(T), L^2_{k_2}(T)$
respectively, and with
$\lvert k_1 - k_2 \rvert \leq 8$, we have
\begin{equation*}
\lVert P_k (f_{k_1} g_{k_2}) \rVert_{F_k(T) \cap S_k^{1/2}(T)} \lesssim
2^k 2^{(k_2 - k)(1 - \omega)} \lVert f_{k_1} \rVert_{S_{k_1}^\omega(T)}
\lVert g_{k_2} \rVert_{S_{k_2}^0(T)}.
\end{equation*}
\label{Parabolic Bilinear Estimates}
\end{lem}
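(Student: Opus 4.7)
The plan is to verify the four estimates in sequence, reducing each to Hölder's and Bernstein's inequalities, with (4) as the only delicate step.

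For (1), I would note that multiplication by an $L_{t,x}^\infty$ function is bounded on each component norm of $F_{k+m}^0(T)$ (namely $L_t^\infty L_x^2$, $L_{t,x}^4$, $L_x^4 L_t^\infty$, the lateral $L_\theta^{3,6}$, and $L_{\theta, W_{k+40}}^{2,\infty}$) via a trivial Hölder. Combined with the atomic definition of $F_k(T)$ and the fact that $P_k$ is a Fourier multiplier bounded on each of these spaces, this reduces to the decomposition of $f$ in $F_{k_1}$. Estimate (2) is handled similarly: writing $\tfrac{1}{2_{\omega'}} = \tfrac{1}{2} + \tfrac{\omega'}{2}$ and $\tfrac{1}{p_{\omega'}} = \tfrac{1}{4} + \tfrac{\omega'}{2}$, Hölder immediately yields
\[ \lVert P_k(fg) \rVert_{L_t^\infty L_x^{2_{\omega'}}} \lesssim \lVert f \rVert_{L_t^\infty L_x^2} \lVert g \rVert_{L_x^{2/\omega'}L_t^\infty}, \]
together with the analogous decompositions for $L_t^4 L_x^{p_{\omega'}}$ using $\lVert f \rVert_{L_{t,x}^4}$ and for $L_x^{p_{\omega'}}L_t^\infty$ using $\lVert f \rVert_{L_x^4 L_t^\infty}$. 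Multiplying by $2^{\omega' k}$ and recognizing each factor as a piece of $\lVert f \rVert_{F_{k_1}}$ or of $2^{\omega' k}\lVert g \rVert_{L_x^{2/\omega'}L_t^\infty}$ closes the estimate. For (3), Bernstein in the spatial variables gives $\lVert h \rVert_{L_{t,x}^\infty} \lesssim 2^k \lVert h \rVert_{L_t^\infty L_x^2}$ and $\lVert h \rVert_{L_x^{2/\omega'}L_t^\infty} \lesssim 2^{k(1/2 - \omega')} \lVert h \rVert_{L_x^4 L_t^\infty}$; pairing the latter with the $2^{-k/2}$ weight built into the $F_k^0$ definition produces $2^k \lVert h \rVert_{F_k(T)}$ as required.

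For (4), the high-high interaction $|k_1 - k_2| \leq 8$ with output at $k \leq k_2 + O(1)$, the gain $2^{(k_2-k)(1-\omega)}$ encodes the fact that the output is localized to a lower frequency. The key recipe, for each component norm in $F_k(T) \cap S_k^{1/2}(T)$, is: (i) use Hölder on $f_{k_1} g_{k_2}$ to pair $f_{k_1} \in L^{2_\omega}$ with $g_{k_2}$ in the Hölder conjugate space $L^{2/(1-\omega)}$; (ii) use Bernstein on $g_{k_2}$ at scale $2^{k_2}$ to trade up to this space from the $L^2$-based $S_{k_2}^0$ norm, at cost $2^{\omega k_2}$; (iii) use Bernstein on the $P_k$ output at scale $2^k$ to convert to the target mixed norm, which produces the compensating factor $2^{(k_2-k)(1-\omega)}$. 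Collecting the prefactors and converting $\lVert f \rVert_{L^{2_\omega}} = 2^{-\omega k_1}\lVert f \rVert_{S^\omega_{k_1}}$ yields the claim for each of the Lebesgue-type norms in $F_k^0$ and the three norms in $S_k^{1/2}$.

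The main obstacle is estimate (4) for the lateral Strichartz space $L_\theta^{3,6}$ and the local smoothing/maximal function norm $L_{\theta, W_{k+40}}^{2,\infty}$, since these are not standard Lebesgue spaces and the Hölder/Bernstein manipulations must be applied directionally. For the former, an angular partition lets one reduce to applying Hölder in each directional Lebesgue space and Bernstein in the transverse direction; for the latter, one exploits that multiplication by $f_{k_1}$ (which can be measured in $L_x^4 L_t^\infty$-type spaces adapted to $\theta$) preserves the localization used to define $W_{k+40}$, reducing the estimate to the already-proven high-high Bernstein bound for the $L^{2,\infty}_\theta$ piece. Thus the structure is a systematic—but multi-case—Hölder-plus-Bernstein analysis, paralleling the bilinear estimate schemes in \cite{BeIoKeTa11} but adapted to the enriched $S_k^\omega$ scale.
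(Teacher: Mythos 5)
The paper itself offers no proof here: it simply cites \cite[\S 3]{BeIoKeTa11}, so comparison must be against the cited argument rather than against anything in this paper. Your overall Hölder-plus-Bernstein framework is the right one, and your treatment of (1)--(3) is essentially correct; note for (1) that you are implicitly using boundedness of $P_k$ and of pointwise multiplication by $L^\infty$ on the Galilean-translated lateral spaces $L^{p,q}_{\theta,\lambda}$, and for (3) that the Bernstein step should be applied atom-by-atom to a decomposition of $h$ and then weighted by $2^{m_j}\ge 1$, so the bound does survive the infimum in the definition of $F_k(T)$.

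The gap is in (4). Your stated recipe in step (i) -- pairing $L^{2_\omega}_x$ with $L^{2/(1-\omega)}_x$ to land in $L^1_x$ -- does not reach all component norms of $F_k^0(T)\cap S_k^{1/2}(T)$; for some pieces one needs a different Hölder split (e.g.\ landing in $L^{2_\omega}_x$ and then Bernstein up), and more importantly your treatment of the lateral Strichartz piece $L^{3,6}_\theta$ and the maximal-function piece $L^{2,\infty}_{\theta,W_{k+40}}$ is only gestured at; ``angular partition \dots Bernstein in the transverse direction'' and ``preserves the localization used to define $W_{k+40}$'' are not things that obviously close. The clean route -- which the paper itself makes available around equation (\ref{FsoftBound}) -- is that for any $h\in L^2_k(T)$ one has $\lVert h\rVert_{F_k(T)}\lesssim \lVert h\rVert_{L^2_x L^\infty_t}+\lVert h\rVert_{L^4_{t,x}}$, obtained by dominating $L^{2,\infty}_{\theta,W}$ by $L^{2,\infty}_\theta\lesssim 2^{k/2}\lVert\cdot\rVert_{L^2_x L^\infty_t}$ and dominating $L^{3,6}_\theta$ by $L^4 + L^{2,\infty}_\theta$ (both derivations appear in \S\ref{S:Function spaces}). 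Applying this embedding to $h=P_k(f_{k_1}g_{k_2})$ reduces the $F_k$ half of (4) to two Lebesgue-type bounds, where your Hölder/Bernstein recipe runs without the directional issues. You should replace your direct attack on the lateral and maximal-function norms with this reduction.
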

\begin{proof}
For the proofs, see \cite[\S 3]{BeIoKeTa11}.
\end{proof}

\begin{lem}
Assume that $T \in (0, 2^{2 \mathcal{K}}]$, $f, g \in H^{\infty, \infty}(T)$, 
$P_k f \in F_k(T) \cap S_k^\omega(T)$, $P_k g \in F_k(T)$ for some $\omega \in [0, 1/2]$
and all $k \in \mathbf{Z}$, and
\begin{equation*}
\alpha_k = \sum_{\lvert j - k \rvert \leq 20} \lVert P_j f \rVert_{F_j(T) \cap S_j^\omega(T)},
\quad
\beta_k = \sum_{\lvert j - k \rvert \leq 20} \lVert P_j g \rVert_{F_j(T)}.
\end{equation*}
Then, for any $k \in \mathbf{Z}$,
\begin{equation*}
\lVert P_k (fg) \rVert_{F_k(T) \cap S_k^{1/2}(T)} \lesssim
\sum_{j \leq k} 2^j (\beta_k \alpha_j + \alpha_k \beta_j) +
2^k \sum_{j \geq k} 2^{(j - k)(1 - \omega)} \alpha_j \beta_j.
\end{equation*}
\label{BIKT Lemma 5.1}
\end{lem}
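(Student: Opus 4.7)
The plan is to perform a trilinear paraproduct decomposition
\[
P_k(fg) = \sum_{k_1, k_2 \in \mathbf{Z}} P_k(P_{k_1} f \cdot P_{k_2} g)
\]
and, since the outer projector $P_k$ kills any term whose two inner frequencies cannot combine to produce output in $I_k$, split the surviving contributions into three familiar regimes: (i) low-high with $k_1 \leq k - 10$ and $|k_2 - k| \leq 5$; (ii) high-low with $|k_1 - k| \leq 5$ and $k_2 \leq k - 10$; and (iii) high-high with $|k_1 - k_2| \leq 10$ and $k_1, k_2 \geq k - 5$. Each piece will be controlled by one of the four estimates of Lemma~\ref{Parabolic Bilinear Estimates}, and the three resulting sums will be shown to match the three terms on the right hand side of the stated inequality.

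For the low-high regime I would apply the first three inequalities of Lemma~\ref{Parabolic Bilinear Estimates} with the role of the high-frequency factor played by $P_{k_2} g$ (so that $|k_2 - k| \leq 5$ falls within the hypothesis $|k_1 - k| \leq 20$ of that lemma) and with $P_{k_1} f$ playing the role of the multiplier placed in $L^{\infty}_{t,x}$ or $L^{2/\omega'}_x L^{\infty}_t$. Bernstein on a single frequency block, combined with the third inequality of Lemma~\ref{Parabolic Bilinear Estimates}, gives $\lVert P_{k_1} f \rVert_{L^\infty_{t,x}} \lesssim 2^{k_1} \alpha_{k_1}$ and $\lVert P_{k_1} f \rVert_{L^4_x L^\infty_t} \lesssim 2^{k_1/2} \alpha_{k_1}$, so that each term is controlled by $2^{k_1} \beta_k \alpha_{k_1}$ uniformly in the components of the $F_k(T) \cap S_k^{1/2}(T)$ norm. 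Summing over $k_1 \leq k-10$ yields the contribution $\sum_{j \leq k} 2^j \beta_k \alpha_j$. The high-low regime is perfectly symmetric (interchanging the roles of $f$ and $g$, and of $\alpha$ and $\beta$) and yields $\sum_{j \leq k} 2^j \alpha_k \beta_j$.

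For the high-high regime, the appropriate tool is the fourth inequality of Lemma~\ref{Parabolic Bilinear Estimates}: I would place $P_{k_1} f$ in $S_{k_1}^\omega(T)$ — which is exactly where the hypothesis $P_k f \in S_k^\omega(T)$ and the definition of $\alpha_k$ put it — and $P_{k_2} g$ in $S_{k_2}^0(T)$, using the embedding $F_{k_2}(T) \hookrightarrow S_{k_2}^0(T)$ noted after (\ref{SomegaDef}). This produces
\[
\lVert P_k(P_{k_1} f \cdot P_{k_2} g) \rVert_{F_k(T) \cap S_k^{1/2}(T)} \lesssim 2^k 2^{(k_2 - k)(1 - \omega)} \alpha_{k_1} \beta_{k_2},
\]
and then summing over the diagonal band $|k_1 - k_2| \leq 10$, $k_1, k_2 \geq k-5$, with $j := k_2$, gives the third term $2^k \sum_{j \geq k} 2^{(j-k)(1-\omega)} \alpha_j \beta_j$. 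This is precisely where the $S^\omega$-hypothesis on $f$ pays off: without the $(1-\omega)$-exponent, the sum over $j \geq k$ would not converge and high-high cascades would be uncontrolled.

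The main obstacle I anticipate is the careful matching of the $S_k^{1/2}(T)$ component of the bound in the low-high and high-low regimes against the conclusion, since the $L^\infty_t L_x^{4/3}$ slot of $S_k^{1/2}$ is not especially natural under H\"older and Bernstein alone — it must be extracted indirectly via the second inequality of Lemma~\ref{Parabolic Bilinear Estimates} together with the $L^4_x L^\infty_t$ component of $F_k$. A secondary technical point is summing the geometric series produced by Bernstein against the window sums defining $\alpha_k$ and $\beta_k$; this is handled by frequency-envelope summation rules of the type (\ref{Sum 1})--(\ref{Sum 2}), used here only over windows of width $O(1)$ so no smallness of the exponents is required. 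Once these pieces are assembled, the three resulting sums combine to give exactly the claimed bound, and the proof closes.
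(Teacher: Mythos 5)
The paper does not prove this lemma; it simply cites \cite[\S 5]{BeIoKeTa11}. Your blind reconstruction therefore cannot be compared against an in-paper argument, but it can be checked against the tools available, namely Lemma~\ref{Parabolic Bilinear Estimates}. Your overall architecture is right: trilinear paraproduct split into low-high, high-low, and high-high; the fourth inequality of Lemma~\ref{Parabolic Bilinear Estimates} handles high-high (this part of your argument is fine and matches the term $2^k \sum_{j \geq k} 2^{(j-k)(1-\omega)} \alpha_j \beta_j$ exactly); and the first three inequalities handle low-high and high-low.

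The gap is in the claim that in the low-high regime ``each term is controlled by $2^{k_1} \beta_k \alpha_{k_1}$ uniformly in the components of the $F_k(T) \cap S_k^{1/2}(T)$ norm.'' This is correct for the $F_k$ component: the first inequality of Lemma~\ref{Parabolic Bilinear Estimates} together with $\lVert P_{k_1} f \rVert_{L^\infty_{t,x}} \lesssim 2^{k_1}\alpha_{k_1}$ gives $2^{k_1}\alpha_{k_1}\beta_k$. But it is \emph{not} correct for the $S_k^{1/2}$ component. Running the second inequality with $\omega' = 1/2$ gives
\[
\lVert P_k(P_{k_2}g \cdot P_{k_1}f) \rVert_{S_k^{1/2}(T)}
\lesssim \lVert P_{k_2}g \rVert_{F_{k_2}(T)} \cdot 2^{k/2} \lVert P_{k_1}f \rVert_{L^4_x L^\infty_t}
\lesssim \beta_k \cdot 2^{k/2} \cdot 2^{k_1/2}\alpha_{k_1}
= 2^{(k+k_1)/2}\alpha_{k_1}\beta_k,
\]
and since $k_1 < k$ in the low-high regime the exponent $2^{(k+k_1)/2}$ exceeds the claimed $2^{k_1}$ by a factor $2^{(k-k_1)/2}$. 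The same loss appears if one tries any direct H\"older on the three slots of $S_k^{1/2}$ (e.g.\ $2^{k/2}\lVert \cdot \rVert_{L^\infty_t L^{4/3}_x} \leq 2^{k/2}\lVert P_{k_1}f\rVert_{L^\infty_t L^4_x}\lVert P_{k_2}g\rVert_{L^\infty_t L^2_x} \lesssim 2^{(k+k_1)/2}\alpha_{k_1}\beta_k$); the only way to land on $2^{k_1}$ is to control the high-frequency factor $P_{k_2}g$ in an $S^{1/2}$-type norm, but the hypotheses place $g$ only in $F_{k_2}(T)$. So summing the low-high contribution yields $\sum_{j \leq k} 2^{(k+j)/2}\alpha_j\beta_k$, which is not dominated by $\sum_{j \leq k} 2^j\alpha_j\beta_k$ absent an envelope assumption on $\{\alpha_j\}$ that the lemma does not make. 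The sentence asserting the $2^{k_1}$ bound ``uniformly in the components of the $F_k(T) \cap S_k^{1/2}(T)$ norm'' is therefore unjustified by Lemma~\ref{Parabolic Bilinear Estimates} alone, and the step needs either a bilinear estimate beyond those quoted here or a careful retrieval of the argument in \cite[\S 5]{BeIoKeTa11}.
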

\begin{proof}
For the proof, see \cite[\S 5]{BeIoKeTa11}.
\end{proof}

\subsection{Derivative field control} \label{SS:DFC}
The main purpose of this subsection is to establish the estimate
(\ref{PsiF}), which states
\begin{equation*}
\lVert P_k \psi_m(s) \rVert_{F_k(T)} \lesssim (1 + s 2^{2k})^{-4} 2^{-\sigma k} b_k(\sigma).
\end{equation*}
In the course of the proof we shall also establish auxiliary estimates useful elsewhere.
Estimate (\ref{PsiF}) plays a key role in controlling the nonlinear paradifferential flow,
allowing us to gain regularity by integrating in heat time.
The proof uses a bootstrap argument and exploits the Duhamel formula.

Recall that the fields $\psi_\alpha$, $A_\alpha$, $\alpha = 1, 2, 3$, ($\psi_3 \equiv \psi_t,
A_3 \equiv A_t$)
satisfy (\ref{genPsiEQ}), which states
\begin{equation*}
(\partial_s - \Delta) \psi_\alpha = U_\alpha.
\end{equation*}
We use representation (\ref{Heat Nonlinearity}) of the heat nonlinearity:
\begin{equation*}
U_\alpha := 2i A_\ell \partial_\ell \psi_\alpha + i (\partial_\ell A_\ell) \psi_\alpha
- A_x^2 \psi_\alpha + i \Im(\psi_\alpha \overline{\psi_\ell}) \psi_\ell.
\end{equation*}
Hence $\psi_\alpha$ admits the representation
\begin{equation}
\psi_\alpha(s) = e^{s \Delta} \psi_\alpha(s_0) +
\int_{s_0}^s e^{(s - s^\prime)\Delta} U_\alpha(s^\prime) \; ds^\prime
\label{Psi Duhamel}
\end{equation}
for any $s \geq s_0 \geq 0$.

For each $k \in \mathbf{Z}$, set
\begin{equation*}
a(k) := \sup_{s \in [0, \infty)} (1 + s 2^{2k})^4
\sum_{m = 1, 2} \lVert P_k \psi_m(s) \rVert_{F_k(T)},
\end{equation*}
and for $\sigma \in [0, \sigma_1 - 1]$ introduce the frequency envelopes
\begin{equation}
a_k(\sigma) = \sup_{j \in \mathbf{Z}} 2^{- \delta \lvert k - j \rvert} 2^{\sigma j} a(j).
\label{a Envelope}
\end{equation}
The frequency envelopes $a_k(\sigma)$ are finite 
and in $\ell^2$ by (\ref{Soft Field Space Bounds})
and (\ref{FsoftBound}).

Our goal is to show
$a_k(\sigma) \lesssim b_k(\sigma)$, which in particular implies
(\ref{PsiF}).  

\begin{lem}
Suppose that $\psi_x$ satisfies the bootstrap condition
\begin{equation}
\lVert P_k \psi_x(s) \rVert_{F_k(T) \cap S_k^{1/2}(T)} \leq
\varepsilon_p^{-1/2} b_k (1 + s 2^{2k})^{-4}.
\label{Parabolic  Bootstrap Assumption}
\end{equation}
Then (\ref{PsiF}) holds.
\label{Parabolic Bootstrap}
\end{lem}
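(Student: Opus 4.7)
The plan is to apply Duhamel's formula to (\ref{genPsiEQ}) based at heat time zero:
\begin{equation*}
P_k \psi_m(s) \;=\; e^{s\Delta} P_k \psi_m(0) \;+\; \int_0^s e^{(s-s^\prime)\Delta} P_k U_m(s^\prime)\,ds^\prime,
\end{equation*}
and bound each piece in $F_k(T)$. For the linear contribution, the translation invariance of $F_k(T)$ under the heat semigroup, combined with the embedding $G_k(T) \hookrightarrow F_k(T)$ and the definition (\ref{b Envelope}) of $b_k(\sigma)$, yields
\begin{equation*}
\lVert e^{s\Delta} P_k \psi_m(0) \rVert_{F_k(T)} \lesssim (1+s2^{2k})^{-20} \cdot 2^{-\sigma k} b_k(\sigma),
\end{equation*}
which has much faster decay than what is needed.

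For the Duhamel term, I would expand $U_m$ via representation (\ref{Heat Nonlinearity}) into its four constituents: the leading magnetic term $2iA_\ell \partial_\ell \psi_m$, the divergence term $i(\partial_\ell A_\ell)\psi_m$, the squared-connection term $A_x^2 \psi_m$, and the cubic derivative term $i\Im(\psi_m \overline{\psi_\ell})\psi_\ell$. Substituting the integral representation (\ref{CC Integral Rep}) for $A_\ell(s^\prime)$ turns every connection coefficient into a bilinear expression in $\psi_x$ integrated over $[s^\prime,\infty)$, so $U_m(s^\prime)$ is cubic or higher in $\psi_x$. Then I would perform a trilinear Littlewood--Paley decomposition of each term and estimate each frequency interaction in $F_k(T)$: for high-low and low-high configurations, the bilinear estimates of Lemma \ref{Parabolic Bilinear Estimates} suffice; for high-high interactions the $S_k^{1/2}(T)$ component of the bootstrap hypothesis (\ref{Parabolic Bootstrap Assumption}) is essential, enabling application of Lemma \ref{BIKT Lemma 5.1} with $\omega = 1/2$, whose factor $2^{(j-k)(1-\omega)}$ is precisely what compensates the high-high derivative loss.

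The main obstacle is the leading magnetic interaction $P_k(A_\ell \partial_\ell \psi_m)$ with $\psi_m$ at the output frequency: a derivative $\sim 2^k$ must be recovered from parabolic smoothing. The strategy is to use that the outer heat semigroup contributes $(1 + (s-s^\prime)2^{2k})^{-N}$ in $F_k(T)$, the bootstrap hypothesis supplies the fast decay $(1+s^\prime 2^{2k})^{-4}$, and the inner heat-time integral defining $A_\ell$ contributes another such factor. Using the elementary convolution bounds of the type in (\ref{Ik1k2}), the heat-time integrations converge absolutely and reproduce the $(1+s2^{2k})^{-4}$ profile in the output. Summing over the Littlewood--Paley frequencies via the envelope rules (\ref{Sum 1})--(\ref{Sum 2}), and invoking the bootstrap smallness $\varepsilon_p^{-1/2} b_{k^\prime} \lesssim \varepsilon_1^{1/2}$ (via (\ref{Main Bootstrap}) and the energy-dispersion hypothesis (\ref{iED})), the nonlinear contribution is bounded by $C\varepsilon_1^{1/2}(1+s2^{2k})^{-4}\,2^{-\sigma k}b_k(\sigma)$, and is therefore perturbative. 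Combining with the linear estimate yields (\ref{PsiF}). I expect the genuinely delicate step to be verifying that the heat-time integrals, when $A_\ell$ is expanded via (\ref{CC Integral Rep}), reproduce the correct sharp decay rate for all frequency configurations --- in particular, for the high-high interactions where both $\psi_x$ factors entering $A_\ell$ sit at frequency $\gg k$, so that the parabolic gain must be extracted from a slower-decaying semigroup factor.
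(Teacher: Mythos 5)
Your plan follows the paper's general shape --- Duhamel based at $s=0$, decomposition of $U_m$, a parabolic gain from the semigroup, smallness from energy dispersion plus the bootstrap hypothesis --- but there is a structural gap in the way you propose to reduce $U_m$ to a multilinear expression in $\psi_x$. You claim that substituting the integral representation (\ref{CC Integral Rep}) for $A_\ell$ makes $U_m(s')$ ``cubic or higher in $\psi_x$.'' That is not so: the integrand there is $\Im(\overline{\psi_\alpha}\psi_s)$ with $\psi_s = D_\ell\psi_\ell = \partial_\ell\psi_\ell + iA_\ell\psi_\ell$, so the connection coefficient $A_\ell$ reappears inside itself. The expansion is genuinely non-terminating, and treating $U_m$ as a fixed trilinear expression amenable to one pass of Littlewood--Paley trilinear estimates does not close.

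The paper handles this with a nested bootstrap that you do not reproduce. Rather than unrolling $A_\ell$, it first isolates a standalone lemma controlling $\lVert P_k A_m(s)\rVert_{F_k(T)\cap S_k^{1/2}(T)}$ (Lemma \ref{A Bound}), phrased in terms of a best constant $\cC$ and a carefully tailored heat-time-dependent envelope $b_{k,s}(\sigma)$. The proof of that lemma splits the integrand of (\ref{A Integral}) into the quadratic piece $\overline{\psi_m}\,\partial_\ell\psi_\ell$ (Lemma \ref{Psi Quadratic}) and the cubic piece $\overline{\psi_m}\psi_\ell A_\ell$ (Lemma \ref{Psi A Cubic}); the cubic piece inherits one factor of $\cC$ from the bootstrap hypothesis on $A$ and gains a factor $\varepsilon$ from energy dispersion, so the self-consistency $\cC\lesssim 1+\varepsilon\cC$ closes. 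Only then are the controlled $A_m$ bounds fed into Lemma \ref{U Bound} as a black box to estimate $P_k U_m(r)$ in $F_k\cap S_k^{1/2}$, after which the heat-time integration (Lemma \ref{Nonlinear Evolution Bound}) yields the $\varepsilon$-gain and the $(1+s2^{2k})^{-4}$ decay, closing the outer bootstrap on the $a_k$ envelopes. The $b_{k,s}(\sigma)$ envelope that depends on both frequency $k$ and the dyadic heat-time parameter $k_0$ is not a cosmetic device; it is what makes the $s$-integration over (\ref{A Integral}) reproduce the sharp decay profile across all frequency-vs-heat-time regimes, precisely the issue you flag as delicate at the end. To repair your argument, you would need either to introduce an auxiliary bootstrap on $A$ of this kind, or to sum the full infinite expansion with a geometric-series smallness --- which amounts to the same bootstrap in disguise.
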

We can take $\varepsilon_p = \varepsilon_1^{1/10}$, for instance.
As in \cite{BeIoKeTa11}, this result may be strengthened to
\begin{cor}
The estimate (\ref{PsiF}) holds even when the bootstrap hypothesis
(\ref{Parabolic Bootstrap Assumption}) is dropped.
\end{cor}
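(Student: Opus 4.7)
The plan is to run a continuity argument in the heat-time variable that promotes the bootstrap hypothesis (\ref{Parabolic Bootstrap Assumption}) to an unconditional conclusion. Define, for each $s_\star \geq 0$,
\begin{equation*}
\Phi(s_\star) := \sup_{k \in \mathbf{Z}} \sup_{s \in [0, s_\star]}
b_k^{-1} (1 + s 2^{2k})^4 \lVert P_k \psi_x(s) \rVert_{F_k(T) \cap S_k^{1/2}(T)}.
\end{equation*}
The quantity $\Phi$ is finite on $[0, \infty)$: the qualitative decay in $s$ and $k$ supplied by (\ref{Soft Field Space Bounds}), together with the soft embedding (\ref{FsoftBound}) and Bernstein's inequality applied to frequency-localized pieces, bounds each of the spacetime norms comprising $F_k \cap S_k^{1/2}$ with rapid decay in both $s$ and $k$, yielding finiteness and continuity of the double supremum. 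Monotonicity in $s_\star$ is immediate.

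At $s_\star = 0$ we have $\Phi(0) \leq C_0$ for a universal $C_0$: the $F_k(T)$ piece follows from the definition (\ref{b Envelope}) of $b_k$ together with the embedding $G_k(T) \hookrightarrow F_k(T)$, while the $S_k^{1/2}(T)$ piece is obtained by interpolating the $L_t^\infty L_x^2$ and lateral Strichartz norms already contained in $G_k(T)$ and invoking Bernstein. Next, suppose $\Phi(s_\star) \leq \varepsilon_p^{-1/2}$; then (\ref{Parabolic Bootstrap Assumption}) holds on $[0, s_\star]$ and Lemma \ref{Parabolic Bootstrap} applies. Its proof, which rests on the Duhamel representation (\ref{Psi Duhamel}) together with a paradifferential analysis of the heat nonlinearity (\ref{Heat Nonlinearity}) via Lemma \ref{BIKT Lemma 5.1}, in fact yields
\begin{equation*}
\lVert P_k \psi_x(s) \rVert_{F_k(T) \cap S_k^{1/2}(T)}
\leq C_1 (1 + s 2^{2k})^{-4} b_k, \qquad s \in [0, s_\star],
\end{equation*}
for an absolute constant $C_1$ independent of $s_\star$ and $\varepsilon_p$, since the output space of Lemma \ref{BIKT Lemma 5.1} is precisely $F_k(T) \cap S_k^{1/2}(T)$. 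Hence $\Phi(s_\star) \leq C_1$. Taking $\varepsilon_p = \varepsilon_1^{1/10}$ sufficiently small that $2 C_1 < \varepsilon_p^{-1/2}$, the set $\{ s_\star \geq 0 : \Phi(s_\star) \leq 2 C_1 \}$ is nonempty (by the $s_\star = 0$ case after adjusting constants), closed by continuity of $\Phi$, and open by the strict improvement from $2 C_1$ to $C_1$; by connectedness it equals all of $[0, \infty)$, which establishes the $\sigma = 0$ case of (\ref{PsiF}).

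The main obstacle is substantiating the claim that Lemma \ref{Parabolic Bootstrap} controls the full space $F_k(T) \cap S_k^{1/2}(T)$, not merely $F_k(T)$ as recorded in (\ref{PsiF}). Once that is granted, the continuity argument above is routine. That the stronger conclusion genuinely holds is signalled by the lemma's preamble, which alludes to ``auxiliary estimates useful elsewhere'': the $S_k^{1/2}(T)$ bound is one such, arising as an automatic by-product of Lemma \ref{BIKT Lemma 5.1}. Finally, the general case $\sigma \in (0, \sigma_1 - 1]$ of (\ref{PsiF}) follows by rerunning the Duhamel/paradifferential analysis of Lemma \ref{Parabolic Bootstrap} with $b_k$ replaced by $b_k(\sigma)$; no further bootstrap is required at this stage, since the $\sigma = 0$ estimate, now available unconditionally, already discharges (\ref{Parabolic Bootstrap Assumption}).
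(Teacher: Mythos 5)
Your proposal breaks down at the step where you infer from $\Phi(s_\star) \leq \varepsilon_p^{-1/2}$ that Lemma \ref{Parabolic Bootstrap} applies. The hypothesis (\ref{Parabolic Bootstrap Assumption}) is a bound for every $s \in [0,\infty)$, not just for $s \in [0, s_\star]$, and this is not a cosmetic distinction: the chain of lemmas underlying Lemma \ref{Parabolic Bootstrap} is not triangular in heat time. The Duhamel representation (\ref{Psi Duhamel}) for $\psi_m(s)$ does look backward in $s$ (it uses $U_m(r)$ only for $r \leq s$), but $U_m(r)$ contains $A_\ell(r)$, and the connection coefficients are controlled in Lemma \ref{A Bound} via the integral representation (\ref{A Integral}), which integrates from $r$ to $+\infty$. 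Thus bounding $\psi_m(s)$ for $s \leq s_\star$ already demands control of $\psi_m(s')$ for arbitrarily large $s' > s_\star$ — exactly the information the truncation to $[0, s_\star]$ discards. Your description of the lemma's proof as resting on ``the Duhamel representation together with a paradifferential analysis via Lemma \ref{BIKT Lemma 5.1}'' omits the $A_\ell$ input, and with it the obstruction.

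Nor can the heat-time continuity argument be run in the opposite direction (starting from $s_\star = \infty$, where everything vanishes): that would require the Duhamel piece to be backward-looking, which it is not. The paper sidesteps this coupling entirely — its proof of the corollary is a citation to \cite[Corollary 4.4]{BeIoKeTa11}, where the continuity parameter is the physical time $T$ rather than the heat time $s$. All the norms $F_k(T) \cap S_k^{1/2}(T)$ and the associated envelopes $a_k, b_k$ are monotone and continuous in $T$ and degenerate to pure energy-norm quantities as $T \to 0^+$, so the base case and the open/closed dichotomy are available without ever truncating the heat flow. Your side observation that Lemma \ref{Parabolic Bootstrap} actually yields control in $F_k(T) \cap S_k^{1/2}(T)$ rather than only $F_k(T)$ is correct (it is what Lemma \ref{Psi Bound} records and is used downstream), but it does not repair the continuity scheme you set up.
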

\begin{proof}
Directly apply the argument of \cite[Corollary 4.4]{BeIoKeTa11}, which we omit.
\end{proof}

The sequence of lemmas we prove in order to establish Lemma \ref{Parabolic Bootstrap}
culminates in Lemma \ref{Nonlinear Evolution Bound}, which controls the nonlinear
term of the Duhamel formula (\ref{Psi Duhamel}) by $2^{-\sigma k} a_k(\sigma)$
along with suitable decay and an epsilon-gain arising from energy dispersion.
Its immediate predecessor, Lemma \ref{U Bound}, controls $P_k U_m$ in $F_k(T)$.

Referring back to (\ref{Heat Nonlinearity}) and
seeing as how $U_m$ contains the term $2 i A_\ell \partial_\ell \psi_m$,
we see that in order to apply the parabolic estimates of Lemma \ref{Parabolic Bilinear Estimates}
toward controlling $P_k U_m$, it is necessary that we first control
 $P_k A_m$ in $F_k(T)$ in terms of the
frequency envelopes $\{ a_\ell(\sigma) \}$, and it is to this that we now turn.

For $k, k_0 \in \mathbf{Z}$ and $s \in [2^{2 k_0 - 1}, 2^{2 k_0 + 1})$, set
\begin{displaymath}
b_{k, s}(\sigma) = \left\{
\begin{array}{ll}
\sum_{j = k}^{-k_0} a_j a_j(\sigma)
& k + k_0 \leq 0 \\ \\
2^{k + k_0} a_{-k_0} a_k(\sigma)
& k + k_0 \geq 0.
\end{array}
\right.
\end{displaymath}
Let $\cC$ be the smallest number in $[1, \infty)$ such that
\begin{equation}
\lVert P_k A_m(s) \rVert_{F_k(T) \cap S_k^{1/2}(T)} \leq \cC (1 + s 2^{2k})^{-4} 2^{-\sigma k} b_{k,s}(\sigma)
\label{A Bootstrap}
\end{equation}
for all $s \in [0, \infty)$, $k \in \mathbf{Z}$, $m = 1, 2$, and $\sigma \in [0, \sigma_1 - 1]$.
While this constant is indeed finite, it is not a priori controlled by energy.
To show that $\cC$ is indeed controlled by energy, we use the integral representation
\begin{equation}
A_m(s) = -\sum_{\ell = 1, 2} \int_s^\infty \Im (\overline{\psi_m}( \partial_\ell \psi_\ell
+ i A_\ell \psi_\ell))(r) \; dr
\label{A Integral}
\end{equation}
and seek to control the Littlewood-Paley projection of the integrand in $F_k(T) \cap S_k^{1/2}(T)$.
We treat differently the two types of terms in (\ref{A Integral}) that need to be controlled. 
In Lemma \ref{Psi Quadratic} we bound terms of the sort
$P_k (\psi_x \overline{\psi_x})$ and $P_k (\psi_x \partial_x \overline{\psi_x})$ in 
$F_k(T) \cap S_k^{1/2}(T)$.  In Lemma \ref{Psi A Cubic} we combine the estimate on
$P_k (\psi_x \overline{\psi_x})$ with (\ref{A Bootstrap}) to obtain control on
$P_k (\psi_x \overline{\psi_x} A_x)$, gaining an epsilon from energy dispersion.
Using (\ref{A Integral}) and exploiting the epsilon gain from energy dispersion
will lead us to the conclusion of Lemma \ref{A Bound}: $\cC \lesssim 1$.

We use the following bracket notation in the sequel:
\begin{equation*}
\langle f \rangle := (1 + f^2)^{1/2}.
\end{equation*}

\begin{lem}
For any $f, g \in \{ \psi_m, \overline{\psi_m} : m = 1,2\}$, $r \in [2^{2j - 2}, 2^{2j + 2}]$,
$j \in \mathbf{Z}$, $i = 1, 2$, and $\sigma \in [0, \sigma_1 - 1]$, we have the bounds
\begin{equation}
\lVert P_k (f(r) g(r) )\rVert_{F_k(T) \cap S_k^{1/2}(T)} 
\lesssim
\langle 2^{j + k} \rangle^{-8} 2^{-\sigma k} 2^{-j} a_{-j} a_{\max(k, -j)}(\sigma).
\label{Psi Psi Bilinear Parabolic}
\end{equation}
and
\begin{equation}
\lVert P_k (f(r) \partial_i g(r) ) \rVert_{F_k(T) \cap S_k^{1/2}(T)} 
\lesssim
\langle 2^{j + k} \rangle^{-8} 2^{-\sigma k} 2^{-j} a_{-j} (2^k a_k(\sigma) + 2^{-j} a_{-j}(\sigma)).
\label{Psi dPsi Bilinear Parabolic}
\end{equation}
\label{Psi Quadratic}
\end{lem}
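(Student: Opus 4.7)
The plan is to prove both estimates via a paradifferential (Littlewood–Paley) decomposition of the product together with the bilinear parabolic estimates of Lemma \ref{Parabolic Bilinear Estimates} (and the summed version Lemma \ref{BIKT Lemma 5.1}), exploiting the heat-flow decay packaged in the $a$-envelopes. The starting observation is that since $r\sim 2^{2j}$, the definition of the frequency envelope $a_k(\sigma)$ and the bootstrap hypothesis (\ref{Parabolic Bootstrap Assumption}) give
\begin{equation*}
\lVert P_{k'} f(r)\rVert_{F_{k'}(T)\cap S_{k'}^{1/2}(T)}\;\lesssim\; \langle 2^{j+k'}\rangle^{-8}\, 2^{-\sigma k'} a_{k'}(\sigma),
\end{equation*}
for any $f\in\{\psi_m,\overline{\psi_m}\}$, modulo the harmless constant $\varepsilon_p^{-1/2}$ attached to the $S^{1/2}$ part (which can be absorbed because the final inequality concerns only the $F_k\cap S_k^{1/2}$ output norm; for the bilinear application below, we will only need pure $F_k$-envelopes in the high$\times$high piece thanks to the last estimate of Lemma \ref{Parabolic Bilinear Estimates}).

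First I would decompose
\begin{equation*}
P_k(fg)=\sum_{|k_2-k|\leq 4}P_k(P_{\leq k-5}f\cdot P_{k_2}g)+\text{(symm.)}+\sum_{\substack{|k_1-k_2|\leq 8\\ k_1,k_2\geq k-4}}P_k(P_{k_1}f\cdot P_{k_2}g),
\end{equation*}
and handle each piece with Lemma \ref{Parabolic Bilinear Estimates}: for the low–high (and symmetric) pieces, use $\lVert P_k(hg)\rVert_{F_k\cap S_k^{1/2}}\lesssim \lVert h\rVert_{F_k}(\lVert g\rVert_{L^\infty_{t,x}}+2^{k/2}\lVert g\rVert_{L^4_xL^\infty_t})\lesssim \lVert h\rVert_{F_k}\cdot 2^{k_2}\lVert P_{k_2}g\rVert_{F_{k_2}}$ (after Bernstein); for the high–high piece use the last display of Lemma \ref{Parabolic Bilinear Estimates} with $\omega=0$, which gives $\lVert P_k(P_{k_1}f\,P_{k_2}g)\rVert_{F_k\cap S_k^{1/2}}\lesssim 2^{k_2}\lVert P_{k_1}f\rVert_{F_{k_1}}\lVert P_{k_2}g\rVert_{F_{k_2}}$. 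Substituting the envelope bound above with the heat-decay factor $\mu_{k',j}:=\langle 2^{j+k'}\rangle^{-8}$ reduces everything to summing the series
\begin{equation*}
\sum_{|k_1-k|\leq 4, k_2\leq k-5} 2^{k_2}\mu_{k,j}\mu_{k_2,j}a_k(\sigma)a_{k_2}\;+\;\text{(symm.)}\;+\;\sum_{k_1\geq k-4}2^{k_1}\mu_{k_1,j}^2 a_{k_1}(\sigma)a_{k_1}.
\end{equation*}

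The key combinatorial step is to perform these sums using the envelope summation rules (\ref{Sum 1}) and (\ref{Sum 2}) together with the fact that $\mu_{k',j}$ has its transition at $k'=-j$. A short case analysis splits naturally into $k\geq -j$ and $k<-j$: in the first regime the envelope peak sits at $k_2=-j$ (thanks to (\ref{Sum 1})) giving the factor $2^{-j}a_{-j}$, while the remaining high factor is $a_k(\sigma)=a_{\max(k,-j)}(\sigma)$ with the required $\mu_{k,j}=\langle 2^{j+k}\rangle^{-8}$ decay; in the second regime the roles swap and one checks $\max(k,-j)=-j$, which gives the $a_{-j}(\sigma)$ factor and again the factor $2^{-j}a_{-j}$ out of the $\mu_{k_2,j}$-suppressed sum. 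The high–high sum is easily absorbed because $\mu_{k_1,j}^2$ provides enough decay once $k_1\geq -j$.

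For part (\ref{Psi dPsi Bilinear Parabolic}) I would run the same decomposition on $P_k(f\,\partial_i g)$. The derivative costs a factor $2^{k_2}$ on the piece carrying $\partial_i$, producing $2^k$ when that piece is at frequency $\sim 2^k$ (the high–low regime, which gives $2^k a_k(\sigma)$) and $2^{-j}$ when it sits at the heat-scale $\sim 2^{-j}$ (the low–high regime, which gives $2^{-j}a_{-j}(\sigma)$); summing yields the claimed $2^k a_k(\sigma)+2^{-j}a_{-j}(\sigma)$. The main obstacle is simply the careful bookkeeping of the three envelope sums against the transition point $k'=-j$ of $\mu_{k',j}$ so as to produce the factor $a_{-j}$ (rather than merely $a_k$) and the combined decay $\langle 2^{j+k}\rangle^{-8}2^{-j}$; apart from this, every individual step reduces to a routine application of Lemmas \ref{Parabolic Bilinear Estimates}--\ref{BIKT Lemma 5.1} and Bernstein's inequalities.
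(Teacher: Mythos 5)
Your high-level plan — paradifferential decomposition of $P_k(fg)$ and $P_k(f\,\partial_i g)$, envelope substitution, and summation against the heat-decay weight $\langle 2^{j+k'}\rangle^{-8}$ — is exactly the paper's strategy. The paper, however, gets its trilinear decomposition in one stroke by applying Lemma \ref{BIKT Lemma 5.1} with $\omega=0$ (so that $\alpha_\ell,\beta_\ell$ are pure $F_\ell$-envelopes and the low-high/high-low weight is $2^{\ell_{\min}}$) and then feeding in $\alpha_\ell\lesssim\langle 2^{j+\ell}\rangle^{-8}2^{-\sigma\ell}a_\ell(\sigma)$, $\beta_\ell\lesssim\langle 2^{j+\ell}\rangle^{-8}a_\ell$. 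The delicate parts are then purely the envelope sums (\ref{PPBP RTS1}), (\ref{PPBP RTS2}), (\ref{PdPBP LH RTS}), etc.

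There is a genuine gap in your manual derivation of the low-high bilinear input. You claim
\begin{equation*}
\lVert P_k(h\,P_{k_2}g)\rVert_{F_k\cap S_k^{1/2}}
\lesssim \lVert h\rVert_{F_k}\bigl(\lVert P_{k_2}g\rVert_{L^\infty_{t,x}}+2^{k/2}\lVert P_{k_2}g\rVert_{L^4_xL^\infty_t}\bigr)
\lesssim 2^{k_2}\lVert h\rVert_{F_k}\lVert P_{k_2}g\rVert_{F_{k_2}},
\end{equation*}
but the third estimate of Lemma \ref{Parabolic Bilinear Estimates}, applied to $P_{k_2}g$ (frequency-localized at $k_2$), only gives $\lVert P_{k_2}g\rVert_{L^4_x L^\infty_t}\lesssim 2^{k_2/2}\lVert P_{k_2}g\rVert_{F_{k_2}}$; the factor $2^{k\omega'}$ in the second estimate carries the \emph{output} frequency $k$, not $k_2$. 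So the chain actually yields the weaker weight $2^{(k+k_2)/2}$ for the $S_k^{1/2}$ part. For $k_2\leq k-5$ this is larger than $2^{k_2}$ by $2^{(k-k_2)/2}$, and in the regime $k+j\geq 0$ the inner sum $\sum_{k_2\leq k}2^{(k+k_2)/2}\langle 2^{j+k_2}\rangle^{-8}a_{k_2}$ then peaks at $\sim 2^{(k-j)/2}a_{-j}=2^{(k+j)/2}\cdot 2^{-j}a_{-j}$, which overshoots the target $2^{-j}a_{-j}$ by an unbounded factor $2^{(k+j)/2}$. The sharper $2^{k_2}$ weight is precisely what Lemma \ref{BIKT Lemma 5.1} supplies (its statement gives $\sum_{\ell\leq k}2^\ell(\beta_k\alpha_\ell+\alpha_k\beta_\ell)$); your plan closes if you invoke that lemma directly, as the paper does, rather than trying to rebuild it from the individual estimates of Lemma \ref{Parabolic Bilinear Estimates}.

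A minor point on your opening observation: the bound $\lVert P_{k'}f(r)\rVert_{F_{k'}\cap S_{k'}^{1/2}}\lesssim\langle2^{j+k'}\rangle^{-8}2^{-\sigma k'}a_{k'}(\sigma)$ does not follow from the definition of $a_{k'}(\sigma)$ (which only controls the $F_{k'}$ norm) combined with the bootstrap hypothesis (\ref{Parabolic Bootstrap Assumption}) (which is stated in terms of $b_k$, not $a_k(\sigma)$). Fortunately this is not needed: with $\omega=0$, Lemma \ref{BIKT Lemma 5.1} only requires $F_{k'}$ envelope bounds on both inputs, so the $a_{k'}(\sigma)$ control of the $F_{k'}$ norm is all that enters.
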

\begin{proof}
By Lemma \ref{BIKT Lemma 5.1} with $\omega = 0$ we have
\begin{equation}
\lVert P_k (fg) \rVert_{F_k(T) \cap S_k^{1/2}(T)} 
\lesssim
\sum_{\ell \leq k} 2^\ell \alpha_k \beta_\ell
+ \sum_{\ell \geq k} 2^\ell \alpha_\ell \beta_\ell,
\label{PPBP 5.1}
\end{equation}
where, due to the definition (\ref{a Envelope}), $\alpha_k$ and $\beta_k$ satisfy
\begin{equation}
\alpha_k \lesssim \langle 2^{j + k} \rangle^{-8} 2^{-\sigma k} a_k(\sigma),
\quad \quad
\beta_k \lesssim \langle 2^{j + k} \rangle^{-8} a_k.
\label{PQ ab}
\end{equation}
Turning to the high-low frequency interaction first, we have
using (\ref{PQ ab}) and the 
frequency envelope property (\ref{Slowly Varying}) that
\begin{equation}
\sum_{\ell \leq k} 2^\ell \alpha_k \beta_\ell 
\lesssim 
\langle 2^{j + k} \rangle^{-8} 2^{-\sigma k} 2^{-j} a_{-j}
\sum_{\ell \leq k} \langle 2^{j + \ell} \rangle^{-8}
2^{j + \ell} 2^{\delta \lvert j + \ell \rvert} a_k(\sigma).
\label{PPBP HL}
\end{equation}
Thus it remains to show that
\begin{equation}
\sum_{\ell \leq k} \langle 2^{j + \ell} \rangle^{-8}
2^{j + \ell} 2^{\delta \lvert j + \ell \rvert} a_k(\sigma)
\lesssim 
a_{\max(k, -j)}(\sigma)
\label{PPBP RTS1},
\end{equation}
which follows from pulling out
a factor of $a_k(\sigma)$ or $a_{-j}(\sigma)$, according to
whether $k + j \geq 0$ or $k + j < 0$, and then summing the remaining geometric series.
In case $k + j < 0$ we pull out a factor of $a_{-j}(\sigma)$ via (\ref{Slowly Varying}).

Turning to the high-high frequency interaction term, we have
\begin{equation}
\sum_{\ell \geq k} 2^\ell \alpha_\ell \beta_\ell 
\lesssim
\langle 2^{j + k} \rangle^{-8} 2^{-\sigma k} 2^{-j} a_{-j} 
\sum_{\ell \geq k} \langle 2^{j + \ell} \rangle^{-8} 2^{j + \ell} 
2^{\delta \lvert j + \ell \rvert} a_\ell(\sigma),
\label{PPBP HH}
\end{equation}
and so it remains to show that
\begin{equation}
\sum_{\ell \geq k} \langle 2^{j + \ell} \rangle^{-8} 2^{j + \ell} 
2^{\delta \lvert j + \ell \rvert} a_\ell(\sigma)
\lesssim a_{\max(k, -j)}(\sigma).
\label{PPBP RTS2}
\end{equation}
When $k + j \geq 0$, we have using (\ref{Sum 2})
\begin{equation*}
\sum_{\ell \geq k} \langle 2^{j + \ell} \rangle^{-8} 2^{j + \ell} 
2^{\delta \lvert j + \ell \rvert} a_\ell(\sigma)
\lesssim
a_k(\sigma) \sum_{\ell \geq k} 2^{(2 \delta - 1)(j + \ell)} 
\lesssim a_k(\sigma). 
\label{PPBP S2 C1}
\end{equation*}
If $k + j \leq 0$, then
we control the sum with (\ref{Sum 1}) if $\ell + j < 0$ and
with (\ref{Sum 2}) if $\ell + j \geq 0$.
Hence (\ref{PPBP RTS2}) holds.

Together (\ref{PPBP 5.1})--(\ref{PPBP RTS2}) 
imply (\ref{Psi Psi Bilinear Parabolic}).

To establish (\ref{Psi dPsi Bilinear Parabolic}) we follow a similar strategy.
By Lemma \ref{BIKT Lemma 5.1} with $\omega = 0$ we have
\begin{equation}
\lVert P_k (f \partial_i g) \rVert_{F_k(T) \cap S_k^{1/2}(T)} \lesssim
\sum_{\ell \leq k} 2^\ell \alpha_\ell \beta_k +
\sum_{\ell \geq k} 2^\ell \alpha_k \beta_\ell + \sum_{\ell \geq k} 2^\ell \alpha_\ell \beta_\ell,
\label{PdPBP 5.1}
\end{equation}
where
\begin{equation}
\alpha_k \lesssim \langle 2^{j + k} \rangle^{-8} 2^{-\sigma k} a_k(\sigma)
\label{Alpha Bound PdP}
\end{equation}
for any $\sigma \in [0, \sigma_1 - 1]$ and
\begin{equation}
\beta_k \lesssim \langle 2^{j + k} \rangle^{-8} 2^k 2^{-\sigma k} a_k(\sigma)
\label{Beta Bound PdP}
\end{equation}
for any $\sigma \in [0, \sigma_1 - 1]$.

Beginning with the low-high frequency interaction, we have
\begin{equation}
\sum_{\ell \leq k} 2^\ell \alpha_\ell \beta_k
\lesssim
\langle 2^{j + k} \rangle^{-8} 2^{-\sigma k} 2^k a_k(\sigma) \sum_{\ell \leq k} \langle 2^{j + \ell} \rangle^{-8} 2^\ell a_\ell,
\label{PdPBP LH}
\end{equation}
and so it remains to show that
\begin{equation}
\sum_{\ell \leq k} \langle 2^{j + \ell} \rangle^{-8}  2^\ell a_\ell \lesssim 2^{-j} a_{-j}.
\label{PdPBP LH RTS}
\end{equation}
If $k + j \leq 0$, then (\ref{PdPBP LH RTS}) holds due to (\ref{Sum 1}).
If $k + j \geq 0$, then
we apply (\ref{Sum 1}) and (\ref{Sum 2}) according to whether
$\ell + j \leq 0$ or $\ell + j > 0$.

Turning now to the high-low frequency interaction, we have
\begin{equation}
\sum_{\ell \leq k} 2^\ell \alpha_k \beta_\ell
\lesssim 
 \langle 2^{j + k} \rangle^{-8} 2^{-\sigma k} 2^{-j} a_{-j} 2^k a_k(\sigma)
\sum_{\ell \leq k} \langle 2^{j + \ell} \rangle^{-8} 2^{\ell - k} 2^{\ell + j} 2^{\delta \lvert \ell + j \rvert}.
\label{PdPBP HL}
\end{equation}
We need only check
\begin{equation*}
\sum_{\ell \leq k} \langle 2^{j + \ell} \rangle^{-8} 2^{\ell - k} 2^{\ell + j} 2^{\delta \lvert \ell + j \rvert}
\lesssim 1,
\label{PdPBP HL RTS}
\end{equation*}
which can be seen to hold by breaking into cases $k + j \leq 0$ and $k + j \geq 0$.

We conclude with the high-high frequency interaction:
\begin{align}
\sum_{\ell \geq k} 2^\ell \alpha_\ell \beta_\ell
&\lesssim
\langle 2^{j + k} \rangle^{-8} 2^{- \sigma k} \sum_{\ell \geq k} \langle 2^{j + \ell} \rangle^{-8}
2^{2 \ell} a_\ell(\sigma) a_\ell \nonumber \\
&\lesssim
\langle 2^{j + k} \rangle^{-8} 2^{- \sigma k} 2^{-2j} a_j a_j(\sigma)
\sum_{\ell \geq k} \langle 2^{j + \ell} \rangle^{-8} 2^{2 \ell + 2 j} 2^{2 \delta \lvert \ell + j \rvert }.
\label{PdPBP HH}
\end{align}
Here
\begin{equation*}
\sum_{\ell \geq k} \langle 2^{j + \ell} \rangle^{-8} 2^{2 \ell + 2 j} 2^{2 \delta \lvert \ell + j \rvert } \lesssim 1,
\label{PdPBP HH RTS}
\end{equation*}
which
is seen to hold by considering separately the cases $k+j \geq 0$, $k + j < 0$.

Combining (\ref{PdPBP LH})--(\ref{PdPBP HH RTS}), we conclude 
(\ref{Psi dPsi Bilinear Parabolic}).
\end{proof}

\begin{lem}
Let
\begin{equation*}
f(r) \in \{ \overline{\psi_m}(r) \psi_\ell(r) : m, \ell = 1, 2\},
\quad \quad
g(r) \in \{ A_m(r) : m = 1,2\},
\end{equation*}
and 
$r \in [2^{2j - 2}, 2^{2j + 2}]$.  Then
\begin{displaymath}
\lVert P_k (fg)(r) \rVert_{F_k(T) \cap S_k^{1/2}(T)} \lesssim \left\{
\begin{array}{ll}
\varepsilon \cC 2^{-\sigma k} 2^{-2j} a_{-j} a_{-j}(\sigma)
& k + j \leq 0 \\ \\
\varepsilon \cC \langle 2^{j + k} \rangle^{-8} 2^{-\sigma k}  2^{-2j} b_{k,r}(\sigma)
& k + j \geq 0.
\end{array}
\right.
\end{displaymath}
\label{Psi A Cubic}
\end{lem}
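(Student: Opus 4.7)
The plan is to view $fg = (\overline{\psi_m}\psi_\ell) \cdot A_m$ as a product of two factors and invoke the bilinear product estimate of Lemma~\ref{BIKT Lemma 5.1} with $\omega = 1/2$. The first factor is controlled directly by (\ref{Psi Psi Bilinear Parabolic}), which gives
\begin{equation*}
\alpha_{k'} := \lVert P_{k'}(\overline{\psi_m}\psi_\ell)(r) \rVert_{F_{k'}(T) \cap S_{k'}^{1/2}(T)} \lesssim \langle 2^{j+k'}\rangle^{-8} 2^{-\sigma k'} 2^{-j} a_{-j} a_{\max(k',-j)}(\sigma);
\end{equation*}
the second is controlled by the bootstrap hypothesis (\ref{A Bootstrap}), yielding
\begin{equation*}
\beta_{k'} := \lVert P_{k'} A_m(r) \rVert_{F_{k'}(T)} \lesssim \cC \langle 2^{j+k'}\rangle^{-8} 2^{-\sigma k'} b_{k',r}(\sigma),
\end{equation*}
where I have used $r \in [2^{2j-2}, 2^{2j+2}]$ to replace $(1 + r 2^{2k'})^{-4}$ by a comparable $\langle 2^{j+k'}\rangle^{-8}$.

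I would then substitute these bounds into the three sums in Lemma~\ref{BIKT Lemma 5.1}, namely
\begin{equation*}
\sum_{k' \leq k} 2^{k'}(\alpha_{k'}\beta_k + \alpha_k \beta_{k'}) + 2^k \sum_{k' \geq k} 2^{(k'-k)/2} \alpha_{k'} \beta_{k'},
\end{equation*}
and reduce each to a geometric series controlled by the slowly-varying property (\ref{Slowly Varying}) of the envelopes $a_\cdot, a_\cdot(\sigma), b_{\cdot,r}(\sigma)$ together with the summation rules (\ref{Sum 1}) and (\ref{Sum 2}). The dichotomy in the stated conclusion corresponds precisely to the two branches in the definition of $b_{k,r}(\sigma)$. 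When $k + j \leq 0$, the prefactor $\langle 2^{j+k}\rangle^{-8}$ is harmless and the envelopes should concentrate near the heat scale $-j$; applying the appropriate summation rule on each side of $k' = -j$ collapses both inner and outer sums to $2^{-2j} a_{-j} a_{-j}(\sigma)$. When $k + j \geq 0$, the rapid decay $\langle 2^{j+k}\rangle^{-8}$ absorbs the growth $2^{k+j}$ in the second branch of $b_{k,r}(\sigma)$, leaving exactly the asserted $\langle 2^{j+k}\rangle^{-8} 2^{-2j} b_{k,r}(\sigma)$.

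The $\varepsilon$-smallness is captured by exploiting that the product $\alpha_\cdot \beta_\cdot$ carries at least two $a$-factors (one from the $\overline{\psi}\psi$ bilinear, one from $b_{k',r}(\sigma)$, which is quadratic in the $a$-envelope). Under the bootstrap (\ref{Main Bootstrap}) and the energy dispersion (\ref{iED}), we have
\begin{equation*}
\sup_k a_k \lesssim \varepsilon_p^{-1/2} \sup_k b_k \lesssim \varepsilon_p^{-1/2}\varepsilon_1^{-1/10}\sup_k c_k \lesssim \varepsilon_1^{17/20},
\end{equation*}
so two $\sup a$ factors are comfortably smaller than $\varepsilon = \varepsilon_1^{7/5}$. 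In each interaction regime I would isolate two $a$-factors to bound by $\sup a$ while keeping the remaining envelopes inside the sum, so that the summation closes with the requisite overall gain of $\varepsilon$.

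The main obstacle is the combinatorial bookkeeping in the high-high regime when $k + j < 0$: the definition $b_{k',r}(\sigma) = \sum_{\ell = k'}^{-j} a_\ell a_\ell(\sigma)$ contributes an inner sum that interacts with the outer sum $\sum_{k' \geq k}$ from Lemma~\ref{BIKT Lemma 5.1}, and one must split at $k' = -j$ and apply (\ref{Sum 1}) on one side, (\ref{Sum 2}) on the other, all while keeping exactly two $a$-factors outside the sum to preserve the $\varepsilon$ gain. A secondary technical point is verifying that the $2^{k+j}$ appearing in the second branch of $b_{k,r}(\sigma)$ cancels cleanly against $\langle 2^{k+j}\rangle^{-8}$ without spoiling the off-diagonal decay supplied by (\ref{Psi Psi Bilinear Parabolic}).
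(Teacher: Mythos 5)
Your proposal matches the paper's proof in all essential respects: the paper likewise applies Lemma~\ref{BIKT Lemma 5.1} to the pair $(\overline{\psi}_m\psi_\ell, A_m)$ with (\ref{Psi Psi Bilinear Parabolic}) and (\ref{A Bootstrap}) supplying the envelope bounds $\alpha, \beta$, treats the low-high, high-low, and high-high interactions separately in each of the regimes $k+j \leq 0$ and $k+j \geq 0$, and extracts the $\varepsilon$-gain exactly as you describe, by isolating a surplus $a_{-j}^2$ (or $a_{-j}a_\ell$) factor after invoking (\ref{Slowly Varying}), (\ref{Sum 1}), and (\ref{Sum 2}). The one cosmetic deviation is that the paper uses $\omega = 0$ in Lemma~\ref{BIKT Lemma 5.1} for the case $k+j \geq 0$ and $\omega = 1/2$ only for $k+j \leq 0$, whereas you propose $\omega = 1/2$ throughout; both choices are sound, since the $\langle 2^{j+\ell}\rangle^{-8}$ decay in the bounds dominates the high-high growth $2^{(\ell-k)(1-\omega)}$ either way.
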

\begin{proof}
We apply Lemma \ref{BIKT Lemma 5.1}.
By (\ref{Psi Psi Bilinear Parabolic})
and (\ref{A Bootstrap})
\begin{equation}
\alpha_k(r) \lesssim 2^{-\sigma k} \langle 2^{j + k} \rangle^{-8} 2^{-j} a_{-j} a_{\max(k, -j)}(\sigma)
\label{Alpha Bound}
\end{equation}
and
\begin{equation}
\beta_k(r) \lesssim \cC 2^{-\sigma k} \langle 2^{j + k} \rangle^{-8} b_{k,r}(\sigma)
\label{Beta Bound}
\end{equation}
hold for any $\sigma \in [0, \sigma_1 - 1]$.

We consider six cases, treating separately the low-high, high-low, 
and high-high frequency interactions, which we further divide according to 
whether $k + j \geq 0$ or $k + j \leq 0$.

\nline
\noindent
\textbf{Low-High frequency interaction with $k + j \geq 0$:}

\nline
\noindent
Using (\ref{Alpha Bound}) and (\ref{Beta Bound}), we have
\begin{equation}
\sum_{\ell \leq k} 2^{\ell} \alpha_\ell \beta_k 
\lesssim
\cC \langle 2^{j + k} \rangle^{-8} 2^{- \sigma k} 2^{-2j} b_{k, r}(\sigma)
\sum_{\ell \leq k} 2^{\ell} 2^{2j} \alpha_\ell,
\label{PAC LH1}
\end{equation}
and so it remains to verify that
\begin{equation}
\sum_{\ell \leq k} 2^{\ell} 2^{2j} \alpha_\ell \lesssim \varepsilon.
\label{PAC LH1 RTS}
\end{equation}
Taking $\sigma = 0$ in the bounds (\ref{Alpha Bound}) for $\alpha_\ell$ 
and using (\ref{Slowly Varying}), (\ref{Sum 2}) yields
\begin{align*}
\sum_{\ell \leq k} 2^{\ell} 2^{2j} \alpha_\ell 
&\lesssim
\sum_{\ell \leq k} \langle 2^{j + \ell} \rangle^{-8}
2^{\ell} 2^{2j} 2^{-j} a_{-j} a_{\max(\ell, -j)} \\
&= \sum_{\ell \leq -j} 2^{\ell + j} a_{-j}^2 +
\sum_{-j < \ell \leq k} \langle 2^{j + \ell} \rangle^{-8} 2^{\ell + j} a_{-j} a_{\ell} \\
&\lesssim a_{-j}^2 + a_{-j}^2 \sum_{-j < \ell \leq k} \langle 2^{j + \ell} \rangle^{-8}
2^{(1 + \delta)(\ell + j)} \lesssim \varepsilon,
\end{align*}
which proves (\ref{PAC LH1 RTS}).

\nline
\noindent
\textbf{High-Low frequency interaction with $k + j \geq 0$:}

\nline
\noindent
Taking $\sigma = 0$ in the bounds for $b_{\ell, r}$, we have
\begin{equation}
\sum_{\ell \leq k} 2^{\ell} \alpha_k \beta_\ell 
\lesssim
\cC \langle 2^{j + k} \rangle^{-8} 2^{-\sigma k} 2^{-2j} b_{k, r}(\sigma)
\sum_{\ell \leq k} \langle 2^{j + \ell} \rangle^{-8} 2^{\ell - k} b_{\ell, r},
\label{PAC HL1}
\end{equation}
and so it remains to show that
\begin{equation}
\sum_{\ell \leq k} \langle 2^{j + \ell} \rangle^{-8} 2^{\ell - k} b_{\ell, r}
\lesssim \varepsilon.
\label{PAC HL1 RTS}
\end{equation}
Splitting the sum as follows,
\begin{equation*}
\sum_{\ell \leq k} \langle 2^{j + \ell} \rangle^{-8} 2^{\ell - k} b_{\ell, r} =
\sum_{\ell \leq -j} \langle 2^{j + \ell} \rangle^{-8} 2^{\ell - k}
\sum_{q = \ell}^{-j} a_q^2
+
\sum_{-j < \ell \leq k} \langle 2^{j + \ell} \rangle^{-8} 2^{\ell - k} 2^{\ell + j}
a_{-j} a_\ell,
\end{equation*}
we note that
the first summand is controlled by
\begin{align*}
\sum_{\ell \leq -j} \langle 2^{j + \ell} \rangle^{-8} 2^{\ell - k}
\sum_{q = \ell}^{-j} a_q^2
&\lesssim
a_{-j}^2 \sum_{\ell \leq -j} 2^{\ell - k}
\sum_{q = \ell}^{-j} 2^{- 2\delta ( j + q )} \\
&\lesssim
a_{-j}^2 \lesssim \varepsilon.
\end{align*}
The second summand by
may be handled similarly,
thus proving (\ref{PAC HL1 RTS}).

\nline
\noindent
\textbf{High-High frequency interaction with $k + j \geq 0$:}

\nline
\noindent
Taking $\sigma = 0$ in the bound (\ref{Beta Bound}) for $\beta_\ell$, we have
\begin{align}
\sum_{\ell \geq k} 2^\ell \alpha_\ell \beta_\ell 
\lesssim& \;
\langle 2^{j + \ell} \rangle^{-8}
2^k \sum_{\ell \geq k} 2^{\ell - k}
2^{-\sigma \ell} 2^{-j} a_{-j} a_{\ell}(\sigma)
\cC 2^{\ell + j} a_{-j} a_\ell \nonumber \\
\lesssim &\;
\cC \langle 2^{j + k} \rangle^{-8} 2^{-\sigma k} 2^{-2j} b_{k,r}(\sigma) \times \nonumber \\
&\times \sum_{\ell \geq k} \langle 2^{j + \ell} \rangle^{-8} 2^{\ell - k}
2^{\delta(\ell - k)} 2^{\ell + j} a_{-j} a_\ell 
\label{PAC HH1}
\end{align}
and so it remains to show that
\begin{equation}
\sum_{\ell \geq k} \langle 2^{j + \ell} \rangle^{-8} 2^{\ell - k}
2^{\delta(\ell - k)} 2^{\ell + j} a_{-j} a_\ell 
\lesssim \varepsilon,
\label{PAC HH1 RTS}
\end{equation}
which follows, for instance, from
pulling out $a_{-j}^2$ via (\ref{Slowly Varying}) and summing.

In view of (\ref{PAC LH1})--(\ref{PAC HH1 RTS}), 
it follows from Lemma \ref{BIKT Lemma 5.1},
with $\omega = 0$ that
\begin{equation}
\lVert P_k (fg)(r) \rVert_{F_k(T) \cap S_k^{1/2}(T)} \lesssim
\varepsilon \cC \langle 2^{j + k} \rangle^{-8} 2^{-\sigma k} 2^{-2j} b_{k,r}(\sigma)
\quad \quad \text{for $k + j \geq 0$}
\label{PAC1}
\end{equation}
as required.

\nline
\noindent
\textbf{Low-High frequency interaction with $k + j \leq 0$:}

\nline
\noindent

In this case it follows from (\ref{Beta Bound}) that
\begin{equation*}
\beta_k \lesssim \cC 2^{-\sigma k} \sum_{p = k}^{-j} a_p a_p(\sigma)
\end{equation*}
so that
\begin{align}
\sum_{\ell \leq k} 2^{\ell} \alpha_\ell \beta_k 
&\lesssim
\cC 2^{-\sigma k} 2^{-j} a_{-j}
a_{-j} \sum_{p = k}^{-j} a_p a_p(\sigma)
\sum_{\ell \leq k} \langle 2^{j + \ell} \rangle^{-8} 2^{\ell} \nonumber \\
&\lesssim
\cC  2^{-\sigma k} 2^{-2j} a_{-j} a_{-j}(\sigma) \cdot
a_{-j} \sum_{p = k}^{-j} a_p 2^{-\delta( j + p)}
\sum_{\ell \leq k} 2^{\ell + j}.
\label{PAC LH2}
\end{align}
It remains to show
\begin{equation*}
a_{-j} \sum_{p = k}^{-j} a_p 2^{-\delta( j + p)}
\sum_{\ell \leq k} 2^{\ell + j} \lesssim \varepsilon,
\label{PAC LH2 RTS}
\end{equation*}
which follows from
pulling out $a_p$ as an $a_{-j}$ via (\ref{Slowly Varying}) and summing.

\nline
\noindent
\textbf{High-Low frequency interaction with $k + j \leq 0$:}

\nline
\noindent

In this case
\begin{equation}
\sum_{\ell \leq k} 2^{\ell} \alpha_k \beta_\ell 
\lesssim
\cC 2^{-2j} a_{-j} a_{-j}(\sigma)
\sum_{\ell \leq k} 2^{\ell + j} \sum_{p = \ell}^{-j} a_p^2,
\label{PAC HL2}
\end{equation}
and so we need to show
\begin{equation*}
\sum_{\ell \leq k} 2^{\ell + j} \sum_{p = \ell}^{-j} a_p^2 \lesssim \varepsilon,
\label{PAC HL2 RTS}
\end{equation*}
which follows by pulling out $a_{-j}^2$ and summing.

\nline
\noindent
\textbf{High-High frequency interaction with $k + j \leq 0$:}

\nline
\noindent

As a first step we write
\begin{equation}
2^k \sum_{\ell \geq k} 2^{(\ell - k)/2} \alpha_\ell \beta_\ell =
2^k \sum_{k \leq \ell < -j} 2^{(\ell - k)/2} \alpha_\ell \beta_\ell +
2^k \sum_{\ell \geq -j} 2^{(\ell - k)/2} \alpha_\ell \beta_\ell.
\label{PAC HH2 Split}
\end{equation}
The first summand is controlled by
\begin{align}
2^k \sum_{k \leq \ell < -j} 2^{(\ell - k)/2} \alpha_\ell \beta_\ell 
\lesssim& \; \cC 2^{-\sigma k} 2^{-2j} a_{-j} a_{-j}(\sigma) \times \nonumber \\
&\times \sum_{k \leq \ell < -j} 2^{(\ell - k)/2} 2^{k + j} 2^{-\sigma (\ell - k)}
\sum_{p = \ell}^{-j} a_p^2.
\label{PAC HH2 S1}
\end{align}
We have
\begin{align*}
\sum_{k \leq \ell < -j} 2^{(\ell - k)/2} 2^{k + j} 2^{-\sigma (\ell - k)}
\sum_{p = \ell}^{-j} a_p^2
&\lesssim
a_{-j}^2 2^{(k + j)/2} \sum_{k \leq \ell < -j} 2^{-2\delta(j + \ell)} \\
&\lesssim \varepsilon,
\end{align*}
which establishes the desired control on the first summand.

The second summand is controlled by
\begin{align}
2^k \sum_{\ell \geq -j} 2^{(\ell - k)/2} \alpha_\ell \beta_\ell 
\lesssim& \;
2^k \sum_{\ell \geq -j} 2^{(\ell - k)/2} \langle 2^{j + \ell} \rangle^{-8} 2^{-\sigma \ell}
2^{-j} a_{-j} a_\ell (\sigma) \times \nonumber \\
&\times \cC \langle 2^{j + \ell} \rangle^{-8} 2^{\ell + j} a_{-j} a_\ell \nonumber \\
\lesssim&\; 
\cC 2^{-\sigma k} 2^{-2j} a_{-j} a_{-j}(\sigma) \times \nonumber \\
&\times
\sum_{\ell \geq -j} 2^{(\ell - k)/2} 2^{k + j} 2^{(1 + \delta)(\ell + j)} a_{-j} a_\ell,
\label{PAC HH2 S2}
\end{align}
and so it remains to show that
\begin{equation}
\sum_{\ell \geq -j} 2^{(\ell - k)/2} 2^{k + j} 2^{(1 + \delta)(\ell + j)} a_{-j} a_\ell
\lesssim \varepsilon,
\label{PAC HH2 S2 RTS}
\end{equation}
which follows from pulling out $a_{-j}^2$ and summing.

Combining (\ref{PAC LH2})--(\ref{PAC HH2 S2 RTS}),
 we conclude from applying
Lemma \ref{BIKT Lemma 5.1} with $\omega = 1/2$ that
\begin{equation*}
\lVert P_k(fg)(r) \rVert_{F_k(T) \cap S_k^{1/2}(T)} \lesssim
\varepsilon \cC 2^{-\sigma k} 2^{-2j} a_{-j} a_{-j}(\sigma)
\quad \quad \text{for $k + j \leq 0$},
\end{equation*}
which, combined with (\ref{PAC1}) completes the proof of the lemma.
\end{proof}

\begin{lem}
For any $k \in \mathbf{Z}$ and $s \in [0, \infty)$ we have
\begin{equation*}
\lVert P_k A_m(s) \rVert_{F_k(T) \cap S_k^{1/2}(T)} \lesssim
(1 + s2^{2k})^{-4} 2^{-\sigma k} b_{k,s}(\sigma).
\end{equation*}
\label{A Bound}
\end{lem}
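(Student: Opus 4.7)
The plan is to apply the integral representation
\[
A_m(s) = -\sum_{\ell = 1, 2} \int_s^\infty \Im (\overline{\psi_m}( \partial_\ell \psi_\ell + i A_\ell \psi_\ell))(r) \, dr
\]
from (\ref{A Integral}), whose integrand splits naturally into a ``quadratic'' piece $\overline{\psi_m} \partial_\ell \psi_\ell$ (to be controlled by Lemma \ref{Psi Quadratic}) and a ``cubic'' piece $\overline{\psi_m} \psi_\ell A_\ell$ (to be controlled by Lemma \ref{Psi A Cubic}, which already yields a factor $\varepsilon \cC$). Dyadically decompose the heat-time interval $[s, \infty) = \bigcup_{j \geq k_0 - O(1)} [2^{2j-1}, 2^{2j+1})$, where $s \in [2^{2k_0 - 1}, 2^{2k_0 + 1})$, and on each window integrate in $r$, producing a factor of $2^{2j}$ that multiplies the pointwise-in-$r$ bounds supplied by Lemmas \ref{Psi Quadratic} and \ref{Psi A Cubic}. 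After summing over $j$, the resulting estimate should take the schematic form
\[
\lVert P_k A_m(s) \rVert_{F_k(T) \cap S_k^{1/2}(T)} \leq (C_1 + C_2 \varepsilon \cC) (1+ s 2^{2k})^{-4} 2^{-\sigma k} b_{k,s}(\sigma),
\]
and since $\cC$ was defined in (\ref{A Bootstrap}) as the best constant for the corresponding inequality, this bootstraps to $\cC \lesssim 1$ for $\varepsilon$ sufficiently small.

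For the quadratic contribution, plugging (\ref{Psi dPsi Bilinear Parabolic}) into the $r$-integral reduces matters to estimating
\[
\sum_{j \geq k_0 - O(1)} \langle 2^{j+k} \rangle^{-8} 2^{j} a_{-j} \bigl(2^k a_k(\sigma) + 2^{-j} a_{-j}(\sigma)\bigr) \cdot 2^{-\sigma k}.
\]
In the regime $k + k_0 \leq 0$ the factor $\langle 2^{j+k} \rangle^{-8}$ is of order one until $j \sim -k$, and the bulk of the sum lies in the range $k_0 \leq j \leq -k$, producing (after the substitution $p = -j$) exactly $\sum_{p=k}^{-k_0} a_p a_p(\sigma) = b_{k, s}(\sigma)$. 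In the regime $k+k_0 \geq 0$, the decay $\langle 2^{j+k} \rangle^{-8} \lesssim \langle 2^{k+k_0} \rangle^{-8} \sim (1+s 2^{2k})^{-4}$ is in force from the start of the sum, and the bulk contribution at $j = k_0$ yields $(1+s2^{2k})^{-4} \cdot 2^{k+k_0} a_{-k_0} a_k(\sigma) \sim (1+s2^{2k})^{-4} b_{k,s}(\sigma)$ after invoking the slowly varying bound $a_{-k_0}(\sigma) \lesssim 2^{\delta(k+k_0)} a_k(\sigma)$ and the fact that $\delta < 1$.

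The cubic contribution is handled identically, with Lemma \ref{Psi A Cubic} in place of Lemma \ref{Psi Quadratic}; the $2^{-2j}$ factor there is precisely cancelled by the $r$-integration's $2^{2j}$, and the two-case analysis for $k+k_0 \lessgtr 0$ parallels that above, producing a bound of the form $\varepsilon \cC (1+s 2^{2k})^{-4} 2^{-\sigma k} b_{k, s}(\sigma)$. Combining the two contributions and using the defining property of $\cC$ yields $\cC \leq C_1 + C_2 \varepsilon \cC$, hence $\cC \lesssim 1$ once $\varepsilon$ is small enough; this is the desired conclusion.

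The mechanism of the bootstrap is structurally transparent. The main technical burden is the frequency-envelope bookkeeping: verifying that in each of the regimes $k+k_0 \leq 0$ and $k+k_0 \geq 0$, and for each of the two kinds of contribution inside Lemma \ref{Psi Quadratic}'s estimate, the dyadic $r$-sum collapses to exactly the envelope $b_{k,s}(\sigma)$ and produces the correct decay $(1+s2^{2k})^{-4}$. The cases must be handled separately, and in a few places (notably converting $a_{-k_0}(\sigma)$ into $2^{\delta(k+k_0)}a_k(\sigma)$ in the high-regime of the quadratic term) the slowly-varying property (\ref{Slowly Varying}) must be used at precisely the right moment; this is the most delicate aspect of the argument.
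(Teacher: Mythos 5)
Your proposal is correct and follows essentially the same route as the paper's own proof: the integral representation (\ref{A Integral}), the split into a quadratic piece (controlled by Lemma \ref{Psi Quadratic}) and a cubic piece (controlled by Lemma \ref{Psi A Cubic}, with its factor $\varepsilon\cC$), dyadic decomposition of the heat-time integral yielding a factor $2^{2j}$ on each window, the regime split $k+k_0 \lessgtr 0$ with the slowly-varying envelope bookkeeping collapsing the sum to $(1+s2^{2k})^{-4}b_{k,s}(\sigma)$, and finally the bootstrap $\cC \lesssim 1 + \varepsilon\cC$. You are slightly loose in describing the quadratic-piece summand (which has two terms $2^{j+k}a_{-j}a_k(\sigma)$ and $a_{-j}a_{-j}(\sigma)$ requiring separate treatment via (\ref{Slowly Varying}), plus a tail for $j > -k$), but this is exactly the bookkeeping the paper carries out in establishing (\ref{bks summation}).
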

\begin{proof}
From representation (\ref{A Integral}) for $A_m$ it follows that
\begin{align}
\lVert P_k A_m(s) \rVert_{F_k(T) \cap S_k^{1/2}(T)} \lesssim&
\int_s^\infty \lVert P_k( \overline{\psi_m}(r) \partial_\ell \psi_\ell(r))
\rVert_{F_k(T) \cap S_k^{1/2}(T)} dr \nonumber \\
&+ \int_s^\infty \lVert P_k( \overline{\psi_m}(r) \psi_\ell A_\ell(r))
\rVert_{F_k(T) \cap S_k^{1/2}(T)} dr.
\label{A Bound Split}
\end{align}
Taking $k_0 \in \mathbf{Z}$
so that $s \in [2^{2k_0 - 1}, 2^{2k_0 + 1})$ 
and using (\ref{Psi dPsi Bilinear Parabolic}),
we have that
the first term is dominated by
\begin{align}
&\sum_{j \geq k_0} \int_{2^{2j - 1}}^{2^{2j + 1}}
\lVert P_k (\overline{\psi_m}(r) \partial_\ell \psi_\ell(r)) \rVert_{F_k(T) \cap S_k^{1/2}(T)} dr \nonumber \\
&\lesssim 2^{-\sigma k} \sum_{j \geq k_0}
\langle 2^{j + k} \rangle^{-8} (2^{j + k} a_{-j} a_k(\sigma) + a_{-j} a_{-j}(\sigma)).
\label{A Bound S1}
\end{align}
We claim that
\begin{equation}
\sum_{j \geq k_0}
\langle 2^{j + k} \rangle^{-8} (2^{j + k} a_{-j} a_k(\sigma) + a_{-j} a_{-j}(\sigma))
\lesssim
(1 + s 2^{2k})^{-4} b_{k,s}(\sigma).
\label{bks summation}
\end{equation}
When $k + k_0 \geq 0$, 
it follows from (\ref{Slowly Varying}) that
the left hand side of (\ref{bks summation}) is bounded by
\begin{align}
2^{k_0 + k} a_{-k_0} a_k(\sigma)
&\sum_{j \geq k_0} \langle 2^{j + k} \rangle^{-8}
\left( 2^{j - k_0} 2^{\delta(j - k_0)} + 2^{-k_0 - k} 2^{\delta(j - k_0)} 2^{\delta (k + j)} \right) 
\nonumber \\
&\lesssim
b_{k, s}(\sigma) \sum_{j \geq k_0} \langle 2^{j + k} \rangle^{-8}
( 2^{(1 + \delta)(j - k_0)} + 
2^{(\delta - 1)(k_0 + k)}
2^{2\delta(j - k_0)}),
\label{A Bound S1 RTS1}
\end{align}
and so it suffices to show
\begin{equation}
\sum_{j \geq k_0} \langle 2^{j + k} \rangle^{-8} 2^{2(j - k_0)}
\lesssim
\langle 2^{j + k_0} \rangle^{-8},
\label{A Bound S1 RTS2}
\end{equation}
which follows from series comparison, for instance.

Together (\ref{A Bound S1 RTS2}) and (\ref{A Bound S1 RTS1}),
show that (\ref{bks summation}) holds for $k + k_0 \geq 0$.

If, on the other hand,
$k + k_0 \leq 0$, then we split the sum in (\ref{bks summation}) according to whether
$j + k \leq 0$ or $j + k > 0$.
In the first case,
\begin{align}
\sum_{k_0 \leq j \leq -k} \langle 2^{j + k} 
&
\rangle^{-8}
(2^{j + k}a_{-j} a_k(\sigma) +
a_{-j} a_{-j}(\sigma))  \nonumber
\\
& \lesssim
\langle 2^{k_0 + k} \rangle^{-8}
b_{k, s}(\sigma)
+ \sum_{k_0 \leq j \leq -k} \langle 2^{j + k} \rangle^{-8} 2^{j + k}a_{-j} a_k(\sigma).
\label{A Bound S1 RTS3}
\end{align}
Then
\begin{align}
\sum_{k_0 \leq j \leq -k} \langle 2^{j + k} \rangle^{-8} 2^{j + k}a_{-j} a_k(\sigma)
&\lesssim
\sum_{k_0 \leq j \leq -k} \langle 2^{j + k} \rangle^{-8} 2^{j + k}a_{-j} a_{-j}(\sigma) 2^{-\delta(j + k)} \nonumber \\
&\sim (1 + s2^{2k})^{-4}b_{k,s}(\sigma).
\label{A Bound S1 RTS4}
\end{align}

When $j + k > 0$ we have
\begin{align}
\sum_{j > -k} \langle 2^{j + k} \rangle^{-8}
&
(2^{j + k}a_{-j} a_k(\sigma) + a_{-j} a_{-j}(\sigma)) \nonumber \\
&\lesssim
a_k a_k(\sigma) \sum_{j > -k} \langle 2^{j + k} \rangle^{-8}
(2^{j + k} 2^{\delta(j + k)} + 2^{2 \delta(j + k)}) \nonumber \\
&\lesssim b_{k, s}(\sigma).
\label{A Bound S1 RTS5}
\end{align}

Therefore (\ref{A Bound S1 RTS3}) and (\ref{A Bound S1 RTS4}) imply
(\ref{bks summation}) holds when $k + k_0 \leq 0$ and $j + k \leq 0$
and (\ref{A Bound S1 RTS5}) implies it holds when both $k + k_0 \leq 0$
and $j + k > 0$.

Having shown (\ref{bks summation}), we combine it with (\ref{A Bound S1}), concluding
\begin{equation}
\int_s^\infty \lVert P_k (\overline{\psi_m}(r) \partial_\ell \psi_\ell(r))
\rVert_{F_k(T) \cap S_k^{1/2}(T)} dr \lesssim
(1 + s 2^{2k})^{-4} 2^{-\sigma k} b_{k,s}(\sigma).
\label{A Bound 1}
\end{equation}

We now move on to control the second term in (\ref{A Bound Split}).
By Lemma \ref{Psi A Cubic} and (\ref{bks summation}),
this term is bounded by
\begin{align}
&\sum_{j \geq k_0} \int_{2^{2j-1}}^{2^{2j+1}}
\lVert P_k( \overline{\psi_x}(r) \psi_x(r) A_x(r)) \rVert_{F_k(T) \cap S_k^{1/2}(T)} dr \nonumber \\
&\lesssim \cC 2^{-\sigma k} \varepsilon \sum_{j \geq k_0}
\langle 2^{j + k} \rangle^{-8} (\mathbf{1}_-(k + j) a_{-j} a_{-j}(\sigma) +
\mathbf{1}_+(k + j) b_{k, 2^{2j}}(\sigma)) \nonumber \\
&\lesssim 
\cC 2^{-\sigma k} \varepsilon \langle 2^{k_0 + k} \rangle^{-8} b_{k, 2^{2k_0}}(\sigma).
\label{A Bound 2}
\end{align}

Together (\ref{A Bound Split}), (\ref{A Bound 1}), and (\ref{A Bound 2}) imply
\begin{equation*}
\lVert P_k A_m(s) \rVert_{F_k(T) \cap S_k^{1/2}(T)} \lesssim
2^{-\sigma k} (1 + s 2^{2k})^{-4} b_{k,s}(\sigma)(1 + \cC \varepsilon),
\end{equation*}
from which it follows that $\cC \lesssim 1 + \cC \varepsilon$ and hence $\cC \lesssim 1$,
proving the lemma.
\end{proof}

\begin{lem}
It holds that
\begin{displaymath}
\lVert P_k A_\ell^2(r) \rVert_{F_k(T) \cap S_k^{1/2}(T)}
\lesssim \left\{
\begin{array}{ll}
\varepsilon 2^{-\sigma k} 2^{-j} a_{-j} a_{-j}(\sigma)
& \textrm{if $k + j \leq 0$} \\ \\
 \varepsilon 2^{-\sigma k} 2^{-j} b_{k, 2^{2j}}(\sigma)
& \textrm{if $k + j \geq 0$}.
\end{array} \right.
\end{displaymath}
\label{A Quadratic}
\end{lem}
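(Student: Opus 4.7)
The plan is to mimic the proof of Lemma \ref{Psi A Cubic}, replacing the role of $\overline{\psi_m}\psi_\ell$ (controlled via Lemma \ref{Psi Quadratic}) with a second copy of $A_\ell$ (controlled via Lemma \ref{A Bound}), and tracking how the epsilon-gain arises from energy dispersion. Specifically, I would apply Lemma \ref{BIKT Lemma 5.1} with $\omega = 1/2$ to $f = g = A_\ell$, using as input the frequency envelopes
\begin{equation*}
\alpha_k \lesssim \langle 2^{j+k} \rangle^{-8} 2^{-\sigma k} b_{k,r}(\sigma), \qquad \beta_k \lesssim \langle 2^{j+k} \rangle^{-8} b_{k,r},
\end{equation*}
both of which follow from Lemma \ref{A Bound} (the former for the $\sigma$-weighted envelope and the latter by specializing to $\sigma = 0$).

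The main term to control is then
\begin{equation*}
\sum_{\ell \leq k} 2^\ell (\beta_k \alpha_\ell + \alpha_k \beta_\ell) + 2^k \sum_{\ell \geq k} 2^{(\ell - k)/2} \alpha_\ell \beta_\ell,
\end{equation*}
which I would split into six cases according to the low-high / high-low / high-high trichotomy and the sign of $k + j$. In each case the strategy is the same as in Lemma \ref{Psi A Cubic}: expand $b_{\ell, r}$ or $b_{k, r}$ using its definition as a sum involving the $a_q a_q(\sigma)$, pull out the factor $2^{-\sigma k} 2^{-j} a_{-j} a_{-j}(\sigma)$ or $2^{-\sigma k} 2^{-j} b_{k, 2^{2j}}(\sigma)$ that appears in the target bound, and then observe that what remains is a geometric series whose sum is controlled by $a_{-j}^2$ (or $a_k^2$, transferred to $a_{-j}^2$ via the slowly varying property (\ref{Slowly Varying})), giving the desired $\varepsilon$ factor from the energy dispersion hypothesis (\ref{iED}) combined with the bootstrap assumption (\ref{Main Bootstrap}).

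The summations will mirror exactly those carried out in (\ref{PAC LH1})--(\ref{PAC HH2 S2 RTS}): the low-high and high-low frequency sums in the $k + j \geq 0$ regime reduce to a bound of the form $\sum_{\ell \leq k} 2^{\ell + j} \cdot (\text{geometric factors}) \lesssim \varepsilon$, while the high-high sum in the $k + j \leq 0$ regime requires splitting the $\ell$-sum at $\ell = -j$ and treating the two pieces separately, the first via direct summation with $\langle 2^{j + \ell} \rangle^{-8}$ decay and the second by absorbing the $\beta_\ell$ weight $\langle 2^{j + \ell} \rangle^{-8} b_{\ell, r}$ with $\ell + j > 0$. The $\omega = 1/2$ choice in Lemma \ref{BIKT Lemma 5.1} is important precisely for the $k + j \leq 0$ high-high case, where the extra factor $2^{(\ell - k)/2}$ (rather than $2^{\ell - k}$) keeps the geometric series convergent once $a_{-j}^2$ is pulled out.

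The main obstacle I expect is bookkeeping: unlike Lemma \ref{Psi A Cubic}, both factors here are $A_\ell$'s, so the envelopes $\alpha_k$ and $\beta_k$ are of the same type (both involve the sum-over-$q$ structure of $b_{k,r}$), which roughly doubles the case work required to extract the $\varepsilon$-gain cleanly. In particular, in the regime $k + j \leq 0$, expanding both $\alpha_\ell$ and $\beta_\ell$ via $b_{\ell, r} = \sum_{q = \ell}^{-j} a_q a_q(\sigma)$ produces nested sums in $q$ and $\ell$ that must be interchanged and then bounded by a single factor $a_{-j} a_{-j}(\sigma)$ using (\ref{Slowly Varying}); checking this reduction, and confirming that the resulting inner sum is indeed $O(\varepsilon)$ uniformly in $k$ and $j$, is the one step that requires genuine care rather than routine imitation of Lemma \ref{Psi A Cubic}.
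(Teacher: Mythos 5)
Your proposal follows the paper's strategy closely: both apply Lemma~\ref{BIKT Lemma 5.1} with $f = g = A_\ell$, feed in the envelopes $\alpha_k \lesssim \langle 2^{j+k}\rangle^{-8} 2^{-\sigma k} b_{k,r}(\sigma)$ and $\beta_k \lesssim \langle 2^{j+k}\rangle^{-8} b_{k,r}$ from Lemma~\ref{A Bound}, split by the sign of $k+j$, and for the $k+j\leq 0$ high-high sum split at $\ell = -j$ before pulling out $a_{-j}^2$ via (\ref{Slowly Varying}) to harvest the $\varepsilon$ gain. One small inaccuracy worth flagging: the paper takes $\omega = 0$, not $\omega = 1/2$, and your claim that $\omega = 1/2$ is \emph{important} for convergence in the $k+j\leq 0$ high-high subcase is not right. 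Over $k \leq \ell < -j$ the $\omega = 0$ weight is $2^\ell = 2^{-j}2^{j+\ell}$ with $j+\ell < 0$, and the two $b_{\ell,r}$ factors contribute roughly $2^{-4\delta(j+\ell)}$ after pulling out $a_{-j}^3 a_{-j}(\sigma)$, so the remaining sum is $\sum_{k\leq\ell<-j} 2^{(1-4\delta)(j+\ell)} \lesssim 1$ since $1-4\delta > 0$; no extra gain from $\omega$ is needed. Since $2^{k + (\ell-k)/2} \leq 2^\ell$ for $\ell \geq k$, taking $\omega = 1/2$ only sharpens the high-high weight, so your choice is also valid (the requisite $S_k^{1/2}$ control on $A$ is indeed available from (\ref{A Bootstrap})) — it is merely unnecessary. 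The rest of the sketch, including the reduction of the nested $(p,q,\ell)$ sums to a single geometric series bounded by $\varepsilon$, is what the paper does.
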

\begin{proof}
We apply Lemma \ref{BIKT Lemma 5.1} with $f = g = A_\ell$ 
and $\omega = 0$ so that
\begin{equation*}
\lVert P_k (A_\ell^2(r)) \rVert_{F_k(T) \cap S_k^{1/2}(T)}
\lesssim
\sum_{\ell \leq k} 2^\ell \alpha_k \beta_\ell +
\sum_{\ell \geq k} 2^\ell \alpha_\ell \beta_\ell,
\end{equation*}
where
\begin{equation*}
\alpha_k \lesssim 2^{-\sigma k} \langle 2^{j + k} \rangle^{-8} b_{k,s}(\sigma),
\quad\quad
\beta_k \lesssim \langle 2^{j + k} \rangle^{-8} b_{k,s}.
\end{equation*}

\nline
\noindent
\textbf{Case $k + j \leq 0$:}

\nline
\noindent

We first consider the case $k + j \leq 0$ and
proceed to control the high-low frequency interaction.  We have
\begin{align}
\sum_{\ell \leq k} 2^\ell \alpha_k \beta_\ell &\lesssim
2^{-\sigma k} \sum_{\ell \leq k} 2^\ell b_{k, 2^{2j}}(\sigma) b_{\ell, 2^{2j}} \nonumber \\
&\lesssim 2^{-\sigma k}
\sum_{p = k}^{-j} a_p a_p(\sigma) 2^\ell
\sum_{\ell \leq k} \sum_{q = \ell}^{-j} a_q^2 \nonumber \\
&\lesssim
2^{-\sigma k}
a_{-j} a_{-j}(\sigma) \sum_{p = k}^{-j} 2^{-2\delta(j + p)}
\sum_{\ell \leq k} 2^\ell a_{-j}^2 \sum_{q = \ell}^{-j} 2^{-2 \delta(j + q)}.
\label{AQ HL1}
\end{align}
It remains to show
\begin{equation}
\sum_{p = k}^{-j} 2^{-2\delta(j + p)}
\sum_{\ell \leq k} 2^\ell a_{-j}^2 \sum_{q = \ell}^{-j} 2^{-2 \delta(j + q)}
\lesssim \varepsilon,
\label{AQ HL1 RTS}
\end{equation}
which follows from bounding $a_{-j}^2$ by $\varepsilon$ and summing.
To control the high-high interaction term
we first split the sum as
\begin{equation}
\sum_{\ell \geq k} 2^\ell \alpha_\ell \beta_\ell \lesssim
\sum_{k \leq \ell < -j} 2^\ell \alpha_\ell \beta_\ell +
\sum_{\ell \geq -j} 2^\ell \alpha_\ell \beta_\ell.
\label{AQ HH1 Split}
\end{equation}
The first summand is controlled by
\begin{align*}
\sum_{k \leq \ell < -j}2^{\ell} \alpha_\ell \beta_\ell
&\lesssim
2^{-\sigma k} \sum_{k \leq \ell < -j}2^{\ell} b_{\ell, 2^{2j}}(\sigma) b_{\ell, 2^{2j}} \nonumber \\
&\lesssim
2^{-\sigma k} 2^{-j} \sum_{k \leq \ell < -j} 2^{j + \ell}
\sum_{p = \ell}^{-j} a_p a_p(\sigma)
\sum_{q = \ell}^{-j} a_q^2.
\end{align*}
Pulling out $a_{-j}^3 a_{-j}(\sigma)$ and summing
implies
\begin{equation}
\sum_{k \leq \ell < -j}2^{\ell} \alpha_\ell \beta_\ell
\lesssim
\varepsilon 2^{-\sigma k} 2^{-j} a_{-j} a_{-j}(\sigma).
\label{AQ HH1 S1}
\end{equation}
The second summand is controlled by
\begin{align}
\sum_{\ell \geq -j} 2^{\ell} \alpha_\ell \beta_\ell
&\lesssim
2^{-\sigma k} \sum_{\ell \geq -j} 2^\ell \langle 2^{j + \ell} \rangle^{-8}
b_{\ell, 2^{2j}}(\sigma) b_{\ell, 2^{2j}} \nonumber \\
&\lesssim 2^{-\sigma k} \sum_{\ell \geq -j} 2^\ell\langle 2^{j + \ell} \rangle^{-8}
2^{2(\ell + j)} a_{-j}^2 a_\ell a_\ell(\sigma) \nonumber \\
&\lesssim \varepsilon 2^{-\sigma k} 2^{-j} a_{-j} a_{-j}(\sigma).
\label{AQ HH1 S2}
\end{align}

Combining (\ref{AQ HL1})--(\ref{AQ HH1 S2}), we conclude
\begin{equation}
\lVert P_k A_\ell^2 (r) \rVert_{F_k(T) \cap S_k^{1/2}(T)}
\lesssim
\varepsilon 2^{- \sigma k} 2^{-j} a_{-j} a_{-j}(\sigma)
\quad \quad \text{for $k + j \leq 0$}.
\label{AQ Bound 1}
\end{equation}

\nline
\noindent
\textbf{Case $k + j \geq 0$:}

\nline
\noindent

We now consider the case $k + j \geq 0$ and turn to the high-low frequency interaction, splitting
it into two pieces:
\begin{equation}
\sum_{\ell \leq k} 2^\ell \alpha_k \beta_\ell \leq
\sum_{\ell \leq -j} 2^\ell \alpha_k \beta_\ell + 
\sum_{-j < \ell \leq k} 2^{\ell} \alpha_k \beta_\ell.
\label{AQ HL2 Split}
\end{equation}
The first summand is controlled by
\begin{equation}
\sum_{\ell \leq -j} 2^\ell \alpha_k \beta_\ell \lesssim
2^{-\sigma k} 2^{-j} b_{k, 2^{2j}}(\sigma)
\sum_{\ell \leq -j} 2^{\ell + j} \langle 2^{j + k} \rangle^{-8} b_{\ell, 2^{2j}},
\label{AQ HL2 S1}
\end{equation}
and so we need to show
\begin{equation}
\sum_{\ell \leq -j} 2^{\ell + j} \langle 2^{j + k} \rangle^{-8}  b_{\ell, 2^{2j}} \lesssim \varepsilon,
\label{AQ HL2 S1 RTS}
\end{equation}
which follows from
\begin{align*}
\sum_{\ell \leq -j} 2^{\ell + j} b_{\ell, 2^{2j}} 
&\lesssim
\sum_{\ell \leq -j} 2^{\ell + j} \sum_{p = \ell}^{-j} a_p^2 \\
&\lesssim
a_{-j}^2
\sum_{\ell \leq -j} 2^{(1 - 2 \delta)(\ell + j)} \lesssim \varepsilon.
\end{align*}
The second summand in (\ref{AQ HL2 Split}) is controlled by
\begin{equation}
\sum_{-j < \ell \leq k} 2^{\ell} \alpha_k \beta_\ell \lesssim
2^{-\sigma k} 2^{-j} b_{k, 2^{2j}}(\sigma) \sum_{j < \ell \leq k}
\langle 2^{j + \ell} \rangle^{-8}
2^{\ell + j} \langle 2^{j + k} \rangle^{-8} 2^{\ell + j} a_{-j} a_\ell,
\label{AQ HL2 S2}
\end{equation}
where we note
\begin{align}
\sum_{-j < \ell \leq k} \langle 2^{j + \ell} \rangle^{-8} 2^{2\ell + 2j} a_{-j} a_\ell &\lesssim
a_{-j}^2
\sum_{-j < \ell \leq k} \langle 2^{j + \ell} \rangle^{-8} 2^{(2 + \delta)(\ell + j)} \nonumber \\
&\lesssim \varepsilon.
\label{AQ HL2 S2 RTS}
\end{align}
We now turn to the high-high frequency interaction.  We have
\begin{align}
\sum_{\ell \geq k} 2^{\ell} \alpha_\ell \beta_\ell &\lesssim
\sum_{\ell \geq k} 2^\ell 2^{-\sigma \ell} \langle 2^{j + \ell} \rangle^{-8}
2^{2(\ell + j)} a_{-j}^2 a_\ell a_\ell(\sigma) \nonumber \\
&\lesssim 
2^{-\sigma k} 2^{-j} 2^{k + j} a_{-j} \sum_{\ell \geq k}
2^{\ell - k} 2^{-\sigma (\ell - k)}
\langle 2^{j + \ell} \rangle^{-8} 2^{2 (\ell + j)} a_{-j} a_\ell a_\ell(\sigma) \nonumber \\
&\lesssim
2^{-\sigma k} 2^{-j} b_{k, 2^{2j}}(\sigma)
\sum_{\ell \geq k} 
\langle 2^{j + \ell} \rangle^{-8} 2^{(1 + \delta)(\ell - k)} 2^{2(\ell + j)}
a_{-j} a_\ell.
\label{AQ HH2}
\end{align}
It remains to show that
\begin{equation}
\sum_{\ell \geq k} 
\langle 2^{j + \ell} \rangle^{-8} 2^{(1 + \delta)(\ell - k)} 2^{2(\ell + j)}
a_{-j} a_\ell
\lesssim \varepsilon,
\label{AQ HH2 RTS}
\end{equation}
which follows from
bounding $a_{-j} a_\ell$ by $\varepsilon$ and summing.

Together (\ref{AQ HL2 Split})--(\ref{AQ HH2 RTS}) imply
\begin{equation*}
\lVert P_k A_\ell^2(r) \rVert_{F_k(T) \cap S_k^{1/2}(T)} \lesssim
\varepsilon 2^{-\sigma k} 2^{-j} b_{k, 2^{2j}}(\sigma)
\quad \quad \text {for $k + j \geq 0$},
\end{equation*}
which, combined with (\ref{AQ Bound 1}) implies the lemma.
\end{proof}

Set
\begin{equation}
c_{k, j}(\sigma) = \left\{
\begin{array}{ll}
2^{-j} a_{-j} a_{-j}(\sigma) & \textrm{if $k + j \leq 0$} \\ \\
2^{2k + j} a_{-j} a_k(\sigma) & \textrm{if $k + j \geq 0$}.
\end{array} \right.
\label{ckj Definition}
\end{equation}

\begin{lem}
Let $r \in [2^{2j - 2}, 2^{2j + 2} ]$ and let
\begin{equation*}
F \in \{ A_\ell^2, \partial_\ell A_\ell, fg : \ell = 1,2;
f, g \in \{\psi_m, \overline{\psi_m} : m = 1,2\} \}.
\end{equation*}
Then
\begin{equation}
\lVert P_k F(r) \rVert_{F_k(T) \cap S_k^{1/2}(T)} \lesssim
\langle 2^{j + k} \rangle^{-8} 2^{- \sigma k} c_{k,j}(\sigma).
\label{Quadratic Parabolic}
\end{equation}
\label{Quadratic-type Terms}
\end{lem}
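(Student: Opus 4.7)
The plan is to prove the lemma by unifying the bilinear/quadratic bounds already established, observing that $c_{k,j}(\sigma)$ is precisely the envelope that captures the two regimes $k+j \leq 0$ and $k+j \geq 0$ in compact form. The argument splits according to the three types of quadratic expressions allowed in $F$, and in each case the bound reduces to an earlier lemma plus a short comparison between that lemma's right-hand side and the envelope $c_{k,j}(\sigma)$ defined in (\ref{ckj Definition}).

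First, for $F = fg$ with $f,g \in \{\psi_m, \overline{\psi_m}\}$, I would apply (\ref{Psi Psi Bilinear Parabolic}) directly. The output is $\langle 2^{j+k} \rangle^{-8} 2^{-\sigma k} 2^{-j} a_{-j} a_{\max(k,-j)}(\sigma)$. When $k+j \leq 0$, $\max(k,-j) = -j$ and this coincides with $\langle 2^{j+k}\rangle^{-8} 2^{-\sigma k} c_{k,j}(\sigma)$; when $k+j \geq 0$, $\max(k,-j) = k$ and one just observes $2^{-j} \leq 2^{2k+j}$, yielding the desired bound. Second, for $F = A_\ell^2$, I would invoke Lemma \ref{A Quadratic}. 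The $k+j \leq 0$ branch is immediate since $\langle 2^{j+k}\rangle^{-8} \sim 1$ there. For $k+j \geq 0$, with $r \in [2^{2j-2},2^{2j+2}]$ one has $b_{k,r}(\sigma) \sim 2^{k+j} a_{-j} a_k(\sigma)$, so Lemma \ref{A Quadratic} gives $\varepsilon 2^{-\sigma k} 2^k a_{-j} a_k(\sigma)$, and since $k+j \geq 0$ we have $2^k \leq 2^{2k+j}$, which matches $c_{k,j}(\sigma)$ up to the decay factor (addressed below). Third, for $F = \partial_\ell A_\ell$, Bernstein gives $\lVert P_k \partial_\ell A_\ell(r)\rVert_{F_k \cap S_k^{1/2}} \sim 2^k \lVert P_k A_\ell(r)\rVert_{F_k \cap S_k^{1/2}}$, and Lemma \ref{A Bound} supplies $\langle 2^{j+k}\rangle^{-8} 2^{-\sigma k} b_{k,r}(\sigma)$ (converting $(1+r 2^{2k})^{-4}$ into $\langle 2^{j+k}\rangle^{-8}$ since $r \sim 2^{2j}$). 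A frequency-envelope summation bounds $b_{k,r}(\sigma) \lesssim a_{-j} a_{-j}(\sigma)$ when $k+j \leq 0$, and the inequality $2^k \leq 2^{-j}$ then delivers $c_{k,j}(\sigma)$; when $k+j \geq 0$, $2^k \cdot b_{k,r}(\sigma) = 2^{2k+j} a_{-j} a_k(\sigma) = c_{k,j}(\sigma)$ directly.

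The main obstacle is extracting the $\langle 2^{j+k}\rangle^{-8}$ decay in the $A_\ell^2$ branch for $k+j \geq 0$, since Lemma \ref{A Quadratic} as stated does not explicitly carry this factor. However, the argument there inherits $\langle 2^{j+k}\rangle^{-8}$ from the bound on $\alpha_k$ in each of the estimates (\ref{AQ HL2 S1}), (\ref{AQ HL2 S2}), and (\ref{AQ HH2}); tracking this factor through the summations yields the stronger estimate $\varepsilon \langle 2^{j+k}\rangle^{-8} 2^{-\sigma k} 2^{-j} b_{k,2^{2j}}(\sigma)$. So the step that requires a brief revisit of a prior proof is precisely this extraction of the off-diagonal decay from Lemma \ref{A Quadratic}. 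Once this is in hand the three cases combine to yield (\ref{Quadratic Parabolic}), and the $\varepsilon$ gains present in the $A_\ell^2$ branch are harmless (and indeed will be useful when this lemma is invoked subsequently).
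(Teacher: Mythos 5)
Your proposal is correct and follows essentially the same three-case structure as the paper's own proof (invoking (\ref{Psi Psi Bilinear Parabolic}), Lemma \ref{A Bound}, and Lemma \ref{A Quadratic} respectively, with the two regimes $k+j \lessgtr 0$ handled by comparison with $c_{k,j}(\sigma)$). Your remark about the $A_\ell^2$ branch for $k+j\geq 0$ is a genuinely careful point: the paper's phrase ``multiplying the right hand side by $2^{k+j}$'' does not by itself produce the $\langle 2^{j+k}\rangle^{-8}$ decay required by (\ref{Quadratic Parabolic}), and you are right that this factor must be recovered from the $\alpha_k$ (and $\alpha_\ell$) bounds already present in (\ref{AQ HL2 S1}), (\ref{AQ HL2 S2}), and (\ref{AQ HH2}) — i.e.\ the proof of Lemma \ref{A Quadratic} establishes the stronger conclusion even though its statement does not record the decay.

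One small imprecision in the $\partial_\ell A_\ell$ branch: when $k+j\leq 0$, the frequency-envelope summation gives $b_{k,2^{2j}}(\sigma) = \sum_{p=k}^{-j} a_p a_p(\sigma) \lesssim 2^{2\delta(-j-k)} a_{-j} a_{-j}(\sigma)$, not simply $\lesssim a_{-j}a_{-j}(\sigma)$. The extra $2^{2\delta(-j-k)}$ is then absorbed because $2^k \cdot 2^{2\delta(-j-k)} = 2^{-j}\cdot 2^{(1-2\delta)(k+j)} \leq 2^{-j}$ for $k+j\leq 0$; this is exactly how the paper closes that case, and your statement ``$2^k \leq 2^{-j}$ then delivers $c_{k,j}(\sigma)$'' hides this absorption step. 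Splitting it into two independent inequalities $b_{k,r}(\sigma)\lesssim a_{-j}a_{-j}(\sigma)$ and $2^k\leq 2^{-j}$ is therefore not quite right as written, though the combined conclusion $2^k b_{k,r}(\sigma) \lesssim 2^{-j}a_{-j}a_{-j}(\sigma)$ is correct.
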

\begin{proof}
If $F = A_\ell^2$, then (\ref{Quadratic Parabolic}) is an immediate consequence
of Lemma \ref{A Quadratic} when $k + j \leq 0$.  If $k + j \geq 0$,
then Lemma \ref{A Quadratic} implies
\begin{equation*}
\lVert P_k A_\ell^2 (r) \rVert_{F_k(T) \cap S_k^{1/2}(T)}
\lesssim \varepsilon 2^{-\sigma k} 2^{-j} 2^{k + j} a_{-j} a_{-j}(\sigma),
\end{equation*}
and multiplying the right hand side by $2^{k + j}$ yields the desired estimate.

Consider now the case where $F = \partial_\ell A_\ell$.
By Lemma \ref{A Bound}, we have
\begin{equation}
\lVert P_k (\partial_\ell A_\ell)(r) \rVert_{F_k(T) \cap S_k^{1/2}(T)}
\lesssim 2^k \langle 2^{j + k} \rangle^{-8} 2^{- \sigma k} b_{k, 2^{2j}}(\sigma).
\label{QtT dA}
\end{equation}
When $k + j \geq 0$, we rewrite (\ref{QtT dA}) as
\begin{equation*}
\lVert P_k (\partial_\ell A_\ell)(r) \rVert_{F_k(T) \cap S_k^{1/2}(T)}
\lesssim \langle 2^{j + k} \rangle^{-8} 2^{-\sigma k} 2^k 2^{k + j} a_{-j} a_k(\sigma),
\end{equation*}
which is the desired bound (\ref{Quadratic Parabolic}).  
If $k + j \leq 0$, then (\ref{QtT dA}) becomes
\begin{align*}
\lVert P_k (\partial_\ell A_\ell)(r) \rVert_{F_k(T) \cap S_k^{1/2}(T)}
&\lesssim \langle 2^{j + k} \rangle^{-8} 2^{-\sigma k} 2^k
\sum_{p = k}^{-j} a_p a_p(\sigma) \\
&\lesssim  \langle 2^{j + k} \rangle^{-8} 2^{-\sigma k} 2^{-j} a_{-j} a_{-j}(\sigma)
= \langle 2^{j + k} \rangle^{-8} 2^{-\sigma k} c_{k, j}(\sigma).
\end{align*}

If $F = fg$, $fg$ as in the statement of the lemma, then
(\ref{Quadratic Parabolic}) follows directly from
(\ref{Psi Psi Bilinear Parabolic}) when $k + j \leq 0$.  If $k + j \geq 0$, 
then to get (\ref{Quadratic Parabolic}) we
multiply the right hand side of (\ref{Psi Psi Bilinear Parabolic})
by $2^{2j + 2k}$.
\end{proof}

Set
\begin{equation}
d_{k,j} := \varepsilon \langle 2^{j + k} \rangle^{-8} 2^{-\sigma k} 2^{2k}
( a_k(\sigma) + 2^{-3(k+j)/2} a_{-j}(\sigma)).
\label{dkj Definition}
\end{equation}

\begin{lem}
It holds that
\begin{equation*}
\lVert P_k U_m(r) \rVert_{F_k(T) \cap S_k^{1/2}(T)} \lesssim
\varepsilon \langle 2^{j + k} \rangle^{-8} 2^{-\sigma k} 2^{2k} 
( a_k(\sigma) + 2^{-3(k+j)/2} a_{-j}(\sigma))
=: d_{k,j}.
\end{equation*}
\label{U Bound}
\end{lem}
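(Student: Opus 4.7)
The plan is to decompose $U_m$ into the four summands appearing in (\ref{Heat Nonlinearity}),
\[
U_m = 2i A_\ell \partial_\ell \psi_m + i(\partial_\ell A_\ell)\psi_m - A_x^2 \psi_m + i\psi_\ell \Im(\overline{\psi_\ell}\psi_m),
\]
and to bound each term in $F_k(T) \cap S_k^{1/2}(T)$ by $d_{k,j}$ via Lemma \ref{BIKT Lemma 5.1}, pairing the already-established envelope bounds from Lemmas \ref{Psi Quadratic}, \ref{A Bound}, \ref{A Quadratic}, and \ref{Quadratic-type Terms}. The smallness factor $\varepsilon$ in $d_{k,j}$ should emerge automatically: every one of the four terms is at least cubic in $\psi_x$ (either directly, or through the integral representation (\ref{A Integral}) of $A_m$), and the energy-dispersion hypothesis (\ref{iED}) together with the bootstrap assumption (\ref{Parabolic Bootstrap Assumption}) forces $\sup_k a_k \lesssim \varepsilon_1^{1/2}$, so that any factor of $a_p a_p$ or $b_{p,r}$ contributes a gain of order $\varepsilon$ (recall $\varepsilon = \varepsilon_1^{7/5}$).

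Concretely, for the leading bilinear magnetic term $A_\ell \partial_\ell \psi_m$ at heat time $r \in [2^{2j-2}, 2^{2j+2}]$, I would invoke Lemma \ref{BIKT Lemma 5.1} with $\omega = 0$ using
\[
\alpha_k \lesssim \langle 2^{j+k}\rangle^{-8} 2^{-\sigma k} b_{k,r}(\sigma),
\qquad
\beta_k \lesssim \langle 2^{j+k}\rangle^{-8} 2^{k} 2^{-\sigma k} a_k(\sigma),
\]
the former coming from Lemma \ref{A Bound} and the latter from the Bernstein-upgraded form of (\ref{Psi Psi Bilinear Parabolic}) (or directly the bootstrap bound on $P_k\psi_x$). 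The three sums appearing in Lemma \ref{BIKT Lemma 5.1} are then estimated by splitting according to $k+j \gtrless 0$, pulling out $a_{-j}$ (or $a_k$) factors via (\ref{Slowly Varying}), and summing the resulting geometric series using (\ref{Sum 1})--(\ref{Sum 2}), just as in the proofs of Lemmas \ref{Psi A Cubic} and \ref{A Quadratic}. Using $b_{k,r}(\sigma) \lesssim \sup_p a_p \cdot (\ldots) \lesssim \varepsilon^{1/2}(\ldots)$ supplies the $\varepsilon$; the two-term structure $a_k(\sigma) + 2^{-3(k+j)/2} a_{-j}(\sigma)$ of $d_{k,j}$ arises naturally from the high-frequency term surviving when $k+j \geq 0$ versus the low-frequency envelope dominating when $k+j \leq 0$. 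The term $(\partial_\ell A_\ell)\psi_m$ is handled identically after using (\ref{QtT dA}) to bound $P_k(\partial_\ell A_\ell)$.

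For the two essentially cubic terms, the idea is to first group two factors using the already-proven quadratic estimates, and then apply Lemma \ref{Parabolic Bilinear Estimates} or Lemma \ref{BIKT Lemma 5.1} against the remaining factor. For $A_x^2 \psi_m$, Lemma \ref{Quadratic-type Terms} (or directly Lemma \ref{A Quadratic}) already gives
\[
\lVert P_k A_x^2(r) \rVert_{F_k(T) \cap S_k^{1/2}(T)}
\lesssim \varepsilon \langle 2^{j+k}\rangle^{-8} 2^{-\sigma k} 2^{-j} a_{-j} a_{-j}(\sigma)
\]
(with an appropriate upgrade when $k+j \geq 0$), complete with the $\varepsilon$ factor; pairing this with the $\psi_m$ bound via Lemma \ref{BIKT Lemma 5.1} produces $d_{k,j}$ after summing over $\ell$ and collecting the $\langle 2^{j+k}\rangle^{-8}$ decay. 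The quintic-looking term $\psi_\ell \Im(\overline{\psi_\ell}\psi_m)$ is treated analogously: group $\overline{\psi_\ell}\psi_m$ using (\ref{Psi Psi Bilinear Parabolic}), then combine with the third $\psi_\ell$ factor; the $\varepsilon$ gain here comes simply from the bootstrap smallness of $a_{-j}$.

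The main obstacle I anticipate is purely bookkeeping: matching the two-piece structure $a_k(\sigma) + 2^{-3(k+j)/2} a_{-j}(\sigma)$ of $d_{k,j}$ across all four contributions, in both regimes $k+j \lessgtr 0$, and ensuring that after using (\ref{Slowly Varying})--(\ref{Sum 2}) the resulting exponents $(1+\delta)$, $(2+\delta)$ and so on leave room for absolute summation. The factor $2^{-3(k+j)/2}$ in particular (a loss of $3/2$ powers of the frequency gap) is exactly what allows the low-frequency envelope $a_{-j}(\sigma)$ to appear while still being summed geometrically against the various $\langle 2^{j+k}\rangle^{-8}$ decay factors; tracking this through the high-high contributions of Lemma \ref{BIKT Lemma 5.1} (where the gain $2^{-(1-\omega)|k-\ell|}$ from $\omega = 1/2$ becomes critical) is where care is required. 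Once the four terms have been estimated, summing and absorbing into $d_{k,j}$ completes the proof.
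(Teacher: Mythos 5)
Your proposal correctly identifies the tools --- Lemma \ref{BIKT Lemma 5.1} combined with Lemma \ref{A Bound}, Lemma \ref{A Quadratic}, Lemma \ref{Psi Quadratic}, Lemma \ref{Quadratic-type Terms}, envelope bookkeeping, and the gain from $a_{-j}^2 \lesssim \varepsilon$ --- and this is the paper's strategy as well. The genuine departure is in how the magnetic term is set up. You take (\ref{Heat Nonlinearity}) at face value and bound $P_k(A_\ell \partial_\ell \psi_m)$ directly, so the derivative's factor $2^\ell$ sits \emph{inside} the $\beta_\ell$-envelope. The paper instead works from (\ref{Heat Nonlinearity 0}) and, using the identity $i A_\ell \partial_\ell \psi + i\partial_\ell(A_\ell\psi) = 2i\partial_\ell(A_\ell\psi) - i(\partial_\ell A_\ell)\psi$, reduces to bounding $2^k\lVert P_k(A_\ell \psi_m)\rVert$ (the derivative externalized as a global $2^k$ factor) together with the $(\partial_\ell A_\ell)\psi_m$ piece, which is of $Ff$-type. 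The payoff shows up in the high-high sum of Lemma \ref{BIKT Lemma 5.1}: your version carries an extra $2^{\ell - k}$ there, which is tantamount to running that lemma at $\omega = -1/2$, outside its admissible range $[0,1/2]$. The estimate does still close, but not because of the $\omega = 1/2$ gain alone (which only supplies $2^{-(\ell-k)/2}$, half of what is needed); it closes because $\alpha_\ell$ (from Lemma \ref{A Bound}) and $\beta_\ell$ each carry a $\langle 2^{j+\ell}\rangle^{-8}$ heat-time decay factor, and the product $\langle 2^{j+\ell}\rangle^{-16}$ absorbs the residual $2^{3(\ell-k)/2}$ growth. Your sketch gestures at the $\omega = 1/2$ subtlety but does not name this extra $2^{\ell-k}$ loss; a full write-up must do so, and this is precisely the bookkeeping burden the paper's reduction is engineered to avoid.

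Three smaller points. Your $\beta_k$ bound for $\partial_\ell \psi_m$ should come from the direct envelope on $P_k\psi_m(r)$ --- the definition (\ref{a Envelope}), or equivalently the bootstrap hypothesis of Lemma \ref{Parabolic Bootstrap} --- not from (\ref{Psi Psi Bilinear Parabolic}), which is a bilinear estimate on $P_k(fg)$. You should invoke Lemma \ref{BIKT Lemma 5.1} with $\omega = 1/2$ from the outset (Lemma \ref{A Bound} supplies exactly the $S_k^{1/2}$ control on $A$ that $\alpha$ needs for this). And Lemma \ref{Quadratic-type Terms} itself does not carry the $\varepsilon$ prefactor: for $F = A_\ell^2$ the $\varepsilon$ is supplied by Lemma \ref{A Quadratic}, while for $F = fg$ with $f, g \in \{\psi_m, \overline{\psi_m}\}$ the $\varepsilon$ arises only after taking the product with the outer $\psi$-factor, via $a_{-j}^2 \lesssim \varepsilon$ exactly as in the paper's estimate (\ref{PAC LH1 RTS}) and its siblings. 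None of this invalidates the approach, but these distinctions need to be straightened out before the argument is airtight.
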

\begin{proof}
Using now (\ref{Heat Nonlinearity 0}) instead of (\ref{Heat Nonlinearity}), i.e.,
taking now
\begin{equation*}
U_\alpha = i A_\ell \partial_\ell \psi_\alpha + i \partial_\ell (A_\ell \psi_\alpha)
- A_x^2 \psi_\alpha + i \Im(\psi_\alpha \psi_\ell)\psi_\ell,
\end{equation*}
we have that it suffices to prove
\begin{equation*}
\lVert P_k (F(r) f(r)) \rVert_{F_k(T) \cap S_k^{1/2}(T)} 
+ 2^k \lVert P_k (A_\ell(r) f(r) ) \rVert_{F_k(T) \cap S_k^{1/2}(T)}
\lesssim d_{k, j},
\end{equation*}
where
\begin{equation*}
F \in \{ A_\ell^2, \partial_\ell A_\ell, gh : \ell = 1,2;
f, h \in \{\psi_m, \overline{\psi_m} : m = 1,2\} \}
\end{equation*}
and $f \in \{ \psi_m, \overline{\psi_m} : m = 1, 2\}$.
We consider the terms $P_k(Ff)$, and $P_k(Af)$ separately.

\nline
\noindent
\textbf{Controlling $P_k(Ff)$:}
\nline

We apply Lemma \ref{BIKT Lemma 5.1} to $P_k (F f)$, handling the different
frequency interactions separately and according to cases.
We record 
\begin{equation*}
\alpha_k \lesssim \langle 2^{j + k} \rangle^{-8} 2^{-\sigma k} c_{k, j}(\sigma),
\end{equation*}
a consequence of  (\ref{Quadratic Parabolic}).

Let us begin by assuming $k + j \leq 0$.  
For the low-high frequency interaction, we have
\begin{align}
\sum_{\ell \leq k} 2^\ell \alpha_\ell \beta_k
&\lesssim
2^{-\sigma k} a_k(\sigma) \sum_{\ell \leq k} 2^\ell c_{\ell, j} \nonumber \\
&\lesssim
2^{-\sigma k} a_k(\sigma) \sum_{\ell \leq k} 2^{\ell - j} a_{-j}^2 \nonumber \\
&\lesssim \varepsilon 2^{-\sigma k} 2^{k - j} 2^{- \delta(k + j)} a_{-j}(\sigma).
\label{U1 LH1}
\end{align}
In a similar manner we control
the high-low frequency interaction by
\begin{align}
\sum_{\ell \leq k} 2^\ell \alpha_k \beta_\ell
&\lesssim 2^{-\sigma k} c_{k, j}(\sigma)
\sum_{\ell \leq k} 2^\ell a_\ell \nonumber \\
&\lesssim 2^{-\sigma k} 2^{-j} a_{-j} a_{-j}(\sigma)
\sum_{\ell \leq k} 2^\ell a_\ell \nonumber \\
&\lesssim
\varepsilon 2^{-\sigma k} 2^{k - j} a_{-j}(\sigma). \label{U1 HL1}
\end{align}

The high-high frequency interaction we split into two cases:
\begin{equation}
2^k \sum_{\ell \geq k} 2^{(\ell - k)/2} \alpha_\ell \beta_\ell
\lesssim
2^k \sum_{k \leq \ell < -j} 2^{(\ell - k)/2} \alpha_\ell \beta_\ell +
2^k \sum_{\ell \geq -j} 2^{(\ell - k)/2} \alpha_\ell \beta_\ell.
\label{U1 HH1 Split}
\end{equation}
We control the first summand using the definition 
(\ref{ckj Definition})
of $c_{k ,j}(\sigma)$,
the frequency envelope properties (\ref{Slowly Varying}), (\ref{Sum 1}), and energy dispersion:
\begin{align}
2^k \sum_{k \leq \ell < -j} 2^{(\ell - k)/2} \alpha_\ell \beta_\ell
&\lesssim
2^k \sum_{k \leq \ell < -j} 2^{(\ell - k)/2} 2^{-\sigma \ell}
c_{\ell, j}(\sigma) a_\ell \nonumber \\
&\lesssim
2^{-\sigma k} 2^{k - j} a_{-j}(\sigma) a_{-j}
\sum_{k \leq \ell < -j} 2^{(\ell - k)/2} a_\ell \nonumber \\
&\lesssim
2^{-\sigma k} 2^{k - j} 2^{-(k + j)/2} a_{-j}(\sigma)
a_{-j} \sum_{k \leq \ell < -j} 2^{(\ell + j)/2} a_\ell \nonumber \\
&\lesssim
\varepsilon 2^{-\sigma k} 2^{k - j} 2^{-(k + j)/2} a_{-j}(\sigma).
\label{U1 HH1 S1}
\end{align}

In like manner we control the second summand:
\begin{align}
2^k \sum_{\ell \geq -j} 2^{(\ell - k)/2} \alpha_\ell \beta_\ell
&\lesssim
2^k \sum_{\ell \geq -j} \langle 2^{j + \ell} \rangle^{-8}
2^{(\ell - k)/2} 2^{-\sigma \ell} c_{\ell, j}(\sigma) a_\ell \nonumber \\
&\lesssim
2^k \sum_{\ell \geq -j} \langle 2^{j + \ell} \rangle^{-8}
2^{(\ell - k)/2} 2^{-\sigma \ell} 2^{2 \ell + j} a_{-j} a_\ell(\sigma) a_\ell \nonumber \\
&\lesssim
\varepsilon 2^{-\sigma k} 2^{k - j} 2^{-(k + j)/2} a_{-j}(\sigma)
\label{U1 HH1 S2}
\end{align}

Combining (\ref{U1 LH1})--(\ref{U1 HH1 S2}), we conclude
\begin{equation}
\lVert P_k (F(r) f(r)) \rVert_{F_k(T) \cap S_k^{1/2}(T)} \lesssim \varepsilon
2^{-\sigma k} 2^{k - j} 2^{-(k + j)/2} a_{-j}(\sigma),
\quad \quad k + j \leq 0.
\label{F Psi High}
\end{equation}

We now turn to the case $k + j \geq 0$. 
In the low-high frequency interaction case, we have
\begin{align}
\sum_{\ell \leq k} 2^\ell \alpha_\ell \beta_k
\lesssim &\;
\langle 2^{j + k} \rangle^{-8} 2^{-\sigma k} a_k(\sigma)
\sum_{\ell \leq k} \langle 2^{j + \ell} \rangle^{-8} 2^\ell c_{\ell, j} \nonumber \\
\lesssim &\;
\langle 2^{j + k} \rangle^{-8} 2^{-\sigma k} 2^{2k} a_k(\sigma) \times \nonumber \\
&
\left( \sum_{\ell \leq -j} 2^{\ell - 2k} 2^{-j} a_{-j}^2 +
\sum_{-j < \ell \leq k} \langle 2^{j + \ell} \rangle^{-8} 2^{\ell - 2k} 2^{\ell + j} a_{-j} a_\ell \right).
\label{U1 LH2 Split}
\end{align}
To estimate the first term we use
\begin{equation}
a_{-j}^2 \sum_{\ell \leq -j} 2^{\ell - k} 2^{-j - k}
\lesssim \varepsilon 2^{-(j + k)} \cdot 2^{-(j + k)}
\leq \varepsilon
\label{U1 LH2 S1}
\end{equation}
and for the second
\begin{align}
a_{-j} &\sum_{-j < \ell \leq k} \langle 2^{j + \ell} \rangle^{-8} 2^{3 \ell + j - 2k} a_\ell \nonumber \\
&= a_{-j} \sum_{-j < \ell \leq k} \langle 2^{j + \ell} \rangle^{-8} 2^{\ell + j} 2^{2 \ell - 2k} a_\ell \nonumber \\
&\lesssim a_{-j} a_k \sum_{-j < \ell \leq k} \langle 2^{j + \ell} \rangle^{-8} 2^{\ell + j}
2^{(2 - \delta)(\ell - k)} \lesssim \varepsilon.
\label{U1 LH2 S2}
\end{align}

In the high-low frequency interaction case, we have
\begin{align}
\sum_{\ell \leq k} 2^\ell \alpha_k \beta_\ell
&\lesssim
\langle 2^{j + k} \rangle^{-8}
2^{-\sigma k} c_{k, j}(\sigma) \sum_{\ell \leq k}
\langle 2^{j + \ell} \rangle^{-8} 2^\ell a_\ell \nonumber \\
&\lesssim
\langle 2^{j + k} \rangle^{-8} 2^{-\sigma k} 2^{2k + j} a_{-j} a_k(\sigma)
\sum_{\ell \leq k} \langle 2^{j + \ell} \rangle^{-8} 2^\ell a_\ell \nonumber \\
&\lesssim
\langle 2^{j + k} \rangle^{-8} 2^{-\sigma k} 2^{2k} a_k(\sigma)
a_{-j}^2.
\label{U1 HL2}
\end{align}

In the high-high frequency interaction case we have
\begin{align}
\sum_{\ell \geq k} 2^\ell \alpha_\ell \beta_\ell
&\lesssim
\sum_{\ell \geq k}  \langle 2^{j + \ell} \rangle^{-8} 2^\ell 2^{-\sigma \ell} a_\ell(\sigma) c_{\ell, j} \nonumber \\
&\lesssim
\langle 2^{j + k} \rangle^{-8} 2^{-\sigma k} \sum_{\ell \geq k} \langle 2^{j + \ell} \rangle^{-8}
2^\ell a_\ell(\sigma) 2^{2\ell + j} a_{-j} a_\ell \nonumber \\
&\lesssim
\langle 2^{j + k} \rangle^{-8} 2^{-\sigma k} 2^{2k} a_k(\sigma) a_{-j}^2.
\label{U1 HH2}
\end{align}

From (\ref{U1 LH2 Split})--(\ref{U1 HH2}) we conclude
\begin{equation}
\lVert P_k (F(r) f(r)) \rVert_{F_k(T) \cap S_k^{1/2}(T)} \lesssim \varepsilon
\langle 2^{j + k} \rangle^{-8} 2^{-\sigma k} 2^{2k} a_k(\sigma),
\quad \quad k + j \geq 0.
\label{F Psi Low}
\end{equation}

\nline
\noindent
\textbf{Controlling $2^k P_k(Af)$:}
\nline

We now apply Lemma \ref{BIKT Lemma 5.1} to $P_k(A_\ell f)$.
Note that
\begin{equation*}
\alpha_k \lesssim \langle 2^{j + k} \rangle^{-8} 2^{-\sigma k} b_{k, r}(\sigma)
\end{equation*}
because of Lemma \ref{A Bound}
and that
\begin{equation*}
\beta_k \lesssim \langle 2^{j + k} \rangle^{-8} 2^{-\sigma k} a_k(\sigma).
\end{equation*}

We begin by assuming $k + j \leq 0$.
The low-high frequency interaction is controlled by
\begin{align}
\sum_{\ell \leq k} 2^\ell \alpha_\ell \beta_k
&\lesssim
2^{-\sigma k} a_k(\sigma) \sum_{\ell \leq k} 2^\ell \sum_{p = \ell}^{-j} a_\ell^2 \nonumber \\
&\lesssim
2^{-\sigma k} 2^{-\delta(k + j)} a_{-j}^2 a_{-j}(\sigma)
\sum_{\ell \leq k} 2^\ell \sum_{p = \ell}^{-j} 2^{-2(j + p)}.
\nonumber
\end{align}
Summing yields
\begin{equation}
2^k \sum_{\ell \leq k} 2^\ell \alpha_\ell \beta_k
\lesssim
2^{2k} 2^{-\sigma k} 2^{-(k + j)/2} a_{-j}^2 a_{-j}(\sigma).
\label{U2 LH1}
\end{equation}

Control over the high-low frequency interaction follows from
\begin{align}
\sum_{\ell \leq k} 2^\ell \alpha_k \beta_\ell
&\lesssim
2^{-\sigma k} \sum_{p = k}^{-j} a_p a_p(\sigma)
\sum_{\ell \leq k} 2^\ell a_\ell \nonumber \\
&\lesssim
2^k 2^{-\sigma k} 2^{-2 \delta(k + j)} a_{-j} a_k a_{-j}(\sigma).
\label{U2 HL1}
\end{align}

We now turn to the high-high frequency interaction.
We begin by splitting the sum:
\begin{equation}
2^k \sum_{\ell \geq k} 2^{(\ell - k)/2} \alpha_\ell \beta_\ell
\lesssim 2^k \sum_{k \leq \ell < -j} 2^{(\ell - k)/2} \alpha_\ell \beta_\ell
+ 2^k \sum_{\ell \geq -j} 2^{(\ell - k)/2} \alpha_\ell \beta_\ell.
\label{U2 HH1 Split}
\end{equation}
Then
\begin{align}
2^k \sum_{k \leq \ell < -j} 2^{(\ell - k)/2} \alpha_\ell \beta_\ell
&\lesssim
2^k 2^{-\sigma k} a_{-j}(\sigma) \sum_{k \leq \ell < - j} 2^{(\ell - k)/2}
2^{-\delta(j + \ell)} \sum_{p = \ell}^{-j} a_p^2 \nonumber \\
&\lesssim
2^k 2^{-\sigma k} 2^{-(k + j)/2} a_{-j}^2 a_{-j}(\sigma).
\label{U2 HH1 S1}
\end{align}
As for the second summand, we have
\begin{align}
2^k \sum_{\ell \geq -j} 2^{(\ell - k)/2} \alpha_\ell \beta_\ell
&\lesssim
2^k \sum_{\ell \geq -j} \langle 2^{j + \ell} \rangle^{-8} 2^{(\ell - k)/2}
2^{\ell + j} a_{-j} a_\ell(\sigma) 2^{-\sigma \ell} a_\ell \nonumber \\
&\lesssim
2^k 2^{-\sigma k} 2^{-(k + j)/2} a_{-j}^2 a_{-j}(\sigma).
\label{U2 HH1 S2}
\end{align}

Combining (\ref{U2 LH1})--
(\ref{U2 HH1 S2}) yields
\begin{equation}
2^k \lVert P_k (A_\ell(r) f(r) )\rVert_{F_k(T) \cap S_k^{1/2}(T)} \lesssim \varepsilon
2^{2k} 2^{-\sigma k} 2^{-(k + j)/2} a_{-j}(\sigma)
\quad \quad k + j \leq 0.
\label{A Psi High}
\end{equation}

Now let us assume that $k + j \geq 0$.
The low-high frequency interaction we first split into two pieces.
\begin{equation}
\sum_{\ell \leq k} 2^\ell \alpha_\ell \beta_k
\lesssim
\sum_{\ell \leq -j} 2^\ell \alpha_\ell \beta_k +
\sum_{-j < \ell \leq k} 2^\ell \alpha_\ell \beta_k
\label{U2 LH2 Split}
\end{equation}
For the first term, we have
\begin{align}
\sum_{\ell \leq -j} 2^\ell \alpha_\ell \beta_k
&\lesssim \langle 2^{j + k} \rangle^{-8} 2^{-\sigma k} a_k(\sigma)
\sum_{\ell \leq -j} \sum_{p = \ell}^{-j} a_p^2 \nonumber \\
&\lesssim
\langle 2^{j + k} \rangle^{-8} 2^{-\sigma k} a_{-j}^2 a_k(\sigma) \sum_{\ell \leq -j} 2^\ell
\sum_{p = \ell}^{-j} 2^{-2 \delta (j + p)}.
\label{U2 LH2 S1}
\end{align}
Then
\begin{equation}
\sum_{\ell \leq -j} 2^\ell \sum_{p = \ell}^{-j} 2^{-2 \delta (j + p)}
\lesssim
\sum_{\ell \leq -j} 2^\ell 2^{-2 \delta (j + \ell)}
\lesssim 2^{-j} \leq 2^k.
\label{U2 LH2 S1 RTS}
\end{equation}
As for the second summand,
\begin{align}
\sum_{-j < \ell \leq k} 2^\ell \alpha_\ell \beta_k
&\lesssim
\langle 2^{j + k} \rangle^{-8} 2^{-\sigma k} a_k(\sigma)
\sum_{-j < \ell \leq k} \langle 2^{j + \ell} \rangle^{-8} 2^\ell 2^{\ell + j} a_{-j} a_\ell \nonumber \\
&\lesssim
\langle 2^{j + k} \rangle^{-8} 2^{-\sigma k} 2^k a_{-j}^2 a_k(\sigma) .
\label{U2 LH2 S2}
\end{align}

The high-low frequency interaction is controlled by
\begin{align}
\sum_{\ell \leq k} 2^\ell \alpha_k \beta_\ell
&\lesssim
\langle 2^{j + k} \rangle^{-8} 2^{-\sigma k} 2^{k + j} a_{-j} a_k(\sigma)
\sum_{\ell \leq k} \langle 2^{j + \ell} \rangle^{-8} 2^\ell a_\ell \nonumber \\
&\lesssim
\langle 2^{j + k} \rangle^{-8} 2^{-\sigma k} 2^k a_{-j}^2 a_k(\sigma).
\label{U2 HL2}
\end{align}

Finally, the high-high frequency interaction is controlled by
\begin{align}
\sum_{\ell \geq k} 2^\ell \alpha_\ell \beta_\ell
&\lesssim
\sum_{\ell \geq k} \langle 2^{j + \ell} \rangle^{-8} 2^\ell
2^{\ell + j} a_{-j} a_\ell 2^{-\sigma \ell} a_\ell(\sigma) \nonumber \\
&\lesssim
\langle 2^{j + k} \rangle^{-8} 2^{-\sigma k} 2^k a_{-j} a_k a_{k}(\sigma).
\label{U2 HH2}
\end{align}

Thus, in view of (\ref{U2 LH2 Split})--(\ref{U2 HH2}), 
we have shown that
\begin{equation}
2^k \lVert P_k ( A_\ell(r) f(r) ) \rVert_{F_k(T) \cap S_k^{1/2}(T)}
\lesssim \varepsilon
\langle 2^{j + k} \rangle^{-8} 2^{-\sigma k} 2^{2k} a_k(\sigma)
\quad \quad
k + j \geq 0.
\label{A Psi Low}
\end{equation}

Combining (\ref{F Psi High}), (\ref{F Psi Low}),
(\ref{A Psi High}), and (\ref{A Psi Low}) proves the lemma.
\end{proof}

\begin{lem}
It holds that
\begin{equation*}
\lVert \int_0^s
e^{(s - s^\prime) \Delta}
P_k U_m (s^\prime) ds^\prime \rVert_{F_k(T) \cap S_k^{1/2}(T)}
\lesssim \varepsilon
(1 + s 2^{2k} )^{-4} 2^{-\sigma k} a_k(\sigma).
\end{equation*}
\label{Nonlinear Evolution Bound}
\end{lem}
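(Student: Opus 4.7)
The plan is to bound the Duhamel integral by combining the heat semigroup decay on frequency-localized functions with the nonlinearity estimate from Lemma \ref{U Bound}, and then summing the contributions dyadically in heat time.

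The first step is to establish that the heat semigroup is effectively a contraction on the spaces $F_k(T) \cap S_k^{1/2}(T)$ with polynomial decay for $P_k$-localized functions, i.e.,
\begin{equation*}
\lVert e^{\tau \Delta} P_k f \rVert_{F_k(T) \cap S_k^{1/2}(T)} \lesssim (1 + \tau 2^{2k})^{-N} \lVert P_k f \rVert_{F_k(T) \cap S_k^{1/2}(T)}
\end{equation*}
for any $N \geq 1$. This follows from the translation invariance of the $F_k$ and $S_k^{1/2}$ norms together with the rapid decay of the heat kernel convolved with a $P_k$ localization (which places rapidly decaying multipliers in front of the identity). Armed with this, Minkowski's inequality gives
\begin{equation*}
\left\lVert \int_0^s e^{(s - s')\Delta} P_k U_m(s') \ds' \right\rVert_{F_k \cap S_k^{1/2}}
\lesssim \int_0^s (1 + (s-s')2^{2k})^{-N} \lVert P_k U_m(s') \rVert_{F_k \cap S_k^{1/2}} \ds'.
\end{equation*}

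The next step is to choose $k_0 \in \mathbf{Z}$ with $s \in [2^{2k_0-1}, 2^{2k_0+1})$, dyadically partition $[0, s]$ into intervals $I_j := [2^{2j-1}, 2^{2j+1})$ for $j \leq k_0$ (each of length $\sim 2^{2j}$), and apply Lemma \ref{U Bound} on each $I_j$ to obtain $\lVert P_k U_m(s') \rVert_{F_k \cap S_k^{1/2}} \lesssim d_{k, j}$, where $d_{k,j}$ is defined in (\ref{dkj Definition}). This produces the bound
\begin{equation*}
\sum_{j \leq k_0} \int_{I_j} (1 + (s - s') 2^{2k})^{-N} \ds' \cdot d_{k,j}
\lesssim \varepsilon \, 2^{-\sigma k} 2^{2k} \sum_{j \leq k_0} 2^{2j} \, \Theta_{k,j,k_0} \, \langle 2^{j+k} \rangle^{-8} \bigl( a_k(\sigma) + 2^{-3(k+j)/2} a_{-j}(\sigma) \bigr),
\end{equation*}
where $\Theta_{k,j,k_0}$ denotes an appropriate heat-decay factor which is $\sim 1$ for $j$ near $k_0$ and provides decay $(1 + s 2^{2k})^{-N}$ for $j$ well below $k_0$.

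The final step is the summation in $j$. I would split into the cases $k + k_0 \leq 0$ and $k + k_0 \geq 0$. In the first case, $(1 + s 2^{2k})^{-4} \sim 1$ and $\langle 2^{j+k}\rangle^{-8} \sim 1$ throughout the sum, so one simply uses the geometric factor $2^{2(j+k)}$ together with the slowly varying property of $a_\cdot(\sigma)$ (and summation rules (\ref{Sum 1}), (\ref{Sum 2})) to absorb $a_{-j}(\sigma)$ into $a_k(\sigma)$. In the case $k + k_0 \geq 0$, the required decay $(1 + s 2^{2k})^{-4} \sim 2^{-8(k+k_0)}$ must be extracted; for $j$ with $k + j < 0$ it comes from the heat-kernel factor $\Theta_{k,j,k_0}$ (which is essentially $(s 2^{2k})^{-N}$ when $s - s' \sim s$), while for $k + j \geq 0$ it comes from the $\langle 2^{j+k}\rangle^{-8}$ weight provided by Lemma \ref{U Bound}, with the crossover at $j = -k$ treated harmlessly. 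The main obstacle in this step is organizing the two sources of decay (heat-kernel smoothing versus the $\langle 2^{j+k}\rangle^{-8}$ weight inherited from $U_m$) so that one obtains exactly $(1 + s 2^{2k})^{-4}$ in both regimes without losing the envelope structure of $a_k(\sigma)$.
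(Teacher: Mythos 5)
Your strategy---Duhamel, dyadic decomposition of $[0,s]$ in heat time, application of Lemma \ref{U Bound} on each dyadic block, then summation with the frequency-envelope rules---is the same as the paper's, and both the case $k + k_0 \leq 0$ and the ``$j$ well below $k_0$'' part of the other case go through as you describe. There is, however, a genuine gap in your treatment of the top dyadic block ($j$ near $k_0$) when $k + k_0 > 0$. You assert that the heat-decay factor $\Theta_{k,j,k_0}$ is $\sim 1$ for $j$ near $k_0$ and that the required decay $(1 + s 2^{2k})^{-4} \sim 2^{-8(k+k_0)}$ then comes purely from the $\langle 2^{j+k} \rangle^{-8}$ weight. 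That does not close: with $\Theta_{k,k_0,k_0} \sim 1$ the $j = k_0$ summand contributes
\begin{equation*}
2^{2k} \cdot 2^{2k_0} \cdot \langle 2^{k_0 + k} \rangle^{-8} a_k(\sigma)
= 2^{-6(k + k_0)} a_k(\sigma),
\end{equation*}
which overshoots the target $2^{-8(k+k_0)} a_k(\sigma)$ by a factor of $2^{2(k+k_0)}$, and $k + k_0$ can be arbitrarily large.

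The missing ingredient is that even on the top dyadic interval the heat-kernel weight does not contribute measure $\sim 2^{2k_0}$; it integrates to a fixed scale:
\begin{equation*}
\int_{s/2}^{s} (1 + (s - r) 2^{2k})^{-N} \, dr
= \int_0^{s/2} (1 + u\, 2^{2k})^{-N} \, du
\lesssim 2^{-2k}.
\end{equation*}
Thus the top block contributes $\lesssim 2^{-2k} d_{k, k_0}$, not $2^{2k_0} d_{k, k_0}$, and the improvement $2^{-2k}/2^{2k_0} = 2^{-2(k+k_0)}$ is exactly the factor you are missing. This is how the paper's proof proceeds: it splits the Duhamel integral at $s/2$, pulls out the constant heat-decay factor $2^{-20(k+k_0)}$ from $[0, s/2]$ before doing the dyadic sum, and uses the displayed integral bound to get $2^{-2k} d_{k, k_0}$ from $[s/2, s]$. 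If you replace the claim ``$\Theta_{k,j,k_0} \sim 1$ for $j$ near $k_0$'' with the correct statement that the integrated heat weight on the block $I_j$ is $\lesssim \min(2^{2j}, 2^{-2k})$ (times the decay for $j$ well below $k_0$), the rest of your summation then goes through.
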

\begin{proof}
Let $k_0 \in \mathbf{Z}$ be such that $s \in [2^{2k_0 - 1}, 2^{2k_0 + 1})$.
If $k + k_0 \leq 0$, then it follows from Lemma \ref{U Bound} that
\begin{align*}
\lVert \int_0^s e^{(s - r)\Delta} P_k U_m(r) dr \rVert_{F_k(T) \cap S_k^{1/2}(T)}
&\lesssim
\sum_{j \leq k_0} \int_{2^{2j - 1}}^{2^{2j + 1}} \lVert P_k U_m(r) \rVert_{F_k(T) \cap S_k^{1/2}(T)} dr \\
&\lesssim
\sum_{j \leq k_0} 2^{2j} \varepsilon 2^{-\sigma k} 2^{2k} (a_k(\sigma) + 2^{-3(k + j)/2} a_{-j}(\sigma)) \\
&\lesssim
\varepsilon 2^{-\sigma k} a_k (\sigma) \sum_{j \leq k_0} 2^{2k + 2j} (1 + 2^{-3(k + j)/2}
2^{-\delta (k + j) }) \\
&\lesssim
\varepsilon 2^{-\sigma k} a_k(\sigma).
\end{align*}
On the other hand, if $k + k_0 > 0$, then
\begin{align}
\lVert \int_0^s e^{(s - r)\Delta} P_k U_m(r) dr \rVert_{F_k(T) \cap S_k^{1/2}(T)}
\lesssim &\;
\int_0^{s/2} \lVert e^{(s - r)\Delta} P_k U_m(r) \rVert_{F_k(T) \cap S_k^{1/2}(T)} dr + \nonumber \\
&\int_{s/2}^s \lVert e^{(s - r)\Delta} P_k U_m(r) \rVert_{F_k(T) \cap S_k^{1/2}(T)} dr \nonumber \\
\lesssim &\;
\sum_{j \leq k_0} 2^{-20(k + k_0)} 2^{2j} d_{k,j} + 2^{2k_0}d_{k,k_0} \nonumber \\
\lesssim &\;
2^{-20(k_0 + k)} \sum_{j \leq k_0} 2^{2j} d_{k,j} + 2^{-2k}d_{k,k_0}.
\label{LEB 1}
\end{align}
By Lemma \ref{U Bound} and the fact that $k + k_0 > 0$, it holds that
\begin{equation*}
2^{-2k} d_{k, k_0} \lesssim \varepsilon \langle 2^{k_0 + k} \rangle^{-8} 2^{-\sigma k} a_k(\sigma)
\end{equation*}
and
\begin{align*}
2^{-20(k_0 + k)}&
\sum_{j \leq k_0} 2^{2j} d_{k,j} \nonumber \\
\lesssim &\;
2^{-20(k_0 + k)}
\sum_{j \leq k_0} \varepsilon \langle 2^{j + k} \rangle^{-8} 2^{-\sigma k} 2^{2k}
\left(2^{2j} a_k(\sigma) + 2^{j/2} 2^{-3k/2} a_{-j}(\sigma)\right) \\
\lesssim &\;
\varepsilon 2^{-\sigma k} a_k(\sigma)
2^{-20(k_0 + k)}
\sum_{j \leq k_0} \langle 2^{j + k} \rangle^{-8} \left(2^{2j + 2k} + 2^{(j + k)/2} 2^{\delta \lvert j + k \rvert} \right) \\
\lesssim &\;
\varepsilon \langle 2^{k_0 + k} \rangle^{-8} 2^{-\sigma k} a_k(\sigma),
\end{align*}
which, combined with (\ref{LEB 1}), completes the proof of the lemma.
\end{proof}

\begin{lem}
The bound (\ref{PsiF}) holds:
\begin{equation*}
\lVert P_k \psi_m(s) \rVert_{F_k(T) \cap S_k^{1/2}(T)} \lesssim
(1 + s 2^{2k})^{-4} 2^{-\sigma k} b_k(\sigma).
\end{equation*}
\label{Psi Bound}
\end{lem}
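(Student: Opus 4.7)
The plan is to close the bootstrap hypothesis (\ref{Parabolic Bootstrap Assumption}) by combining the Duhamel formula (\ref{Psi Duhamel}) for the covariant heat equation with the control on the nonlinear Duhamel piece already supplied by Lemma \ref{Nonlinear Evolution Bound}. Applying (\ref{Psi Duhamel}) at $s_0 = 0$ and projecting to frequency $\sim 2^k$ gives
\begin{equation*}
P_k \psi_m(s) = e^{s \Delta} P_k \psi_m(0) + \int_0^s e^{(s - s') \Delta} P_k U_m(s') \, ds'.
\end{equation*}
For the linear piece, the definition (\ref{b Envelope}) of $b_k(\sigma)$ provides the initial data bound $\lVert P_k \psi_m(0) \rVert_{G_k(T)} \leq 2^{-\sigma k} b_k(\sigma)$. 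Since $P_k \psi_m(0)$ is localized to frequencies $\sim 2^k$, the heat kernel acts as Fourier multiplication by $e^{-s|\xi|^2}$ and supplies the gain $(1 + s 2^{2k})^{-4}$ (in fact any negative power). Transferring this bound from $G_k(T)$ to the composite space $F_k(T) \cap S_k^{1/2}(T)$ is done via Bernstein-type inequalities on frequency-localized data together with the standard Strichartz-type theory underlying the definition of $S_k^\omega(T)$, yielding
\begin{equation*}
\lVert e^{s \Delta} P_k \psi_m(0) \rVert_{F_k(T) \cap S_k^{1/2}(T)} \lesssim (1 + s 2^{2k})^{-4} 2^{-\sigma k} b_k(\sigma).
\end{equation*}

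The nonlinear piece is controlled directly by Lemma \ref{Nonlinear Evolution Bound}:
\begin{equation*}
\left\lVert \int_0^s e^{(s - s') \Delta} P_k U_m(s') \, ds' \right\rVert_{F_k(T) \cap S_k^{1/2}(T)} \lesssim \varepsilon (1 + s 2^{2k})^{-4} 2^{-\sigma k} a_k(\sigma),
\end{equation*}
where $a_k(\sigma)$ is the envelope from (\ref{a Envelope}) extended to measure the full $F_k \cap S_k^{1/2}$ norm; its finiteness is guaranteed by the bootstrap hypothesis (\ref{Parabolic Bootstrap Assumption}). Adding the two bounds, multiplying by $(1 + s 2^{2k})^{4} 2^{\sigma k}$, taking the supremum in $s$ and in $k$ against the $\delta$-slowly varying envelope weight $2^{-\delta |k - j|}$, and exploiting the slow-varying property of $b_k(\sigma)$ to commute the supremum through, I would deduce
\begin{equation*}
a_k(\sigma) \lesssim b_k(\sigma) + \varepsilon a_k(\sigma).
\end{equation*}
Since $\varepsilon = \varepsilon_1^{7/5}$ is small by the hypotheses of Theorem \ref{EnvelopeTheorem}, the final term is absorbed into the left hand side, giving $a_k(\sigma) \lesssim b_k(\sigma)$, which is the claimed bound.

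The main technical obstacle, now that Lemmas \ref{Psi Quadratic}--\ref{Nonlinear Evolution Bound} have shouldered the heavy lifting for the nonlinear term, lies in the linear estimate on $e^{s\Delta} P_k \psi_m(0)$: the $S_k^{1/2}(T)$ portion of the composite norm involves Lebesgue exponents (for instance $L^\infty_t L^{4/3}_x$) that are not directly majorized by the $L^2$-based ingredients of $G_k(T)$, so one must exploit the parabolic smoothing and frequency localization through Bernstein's inequalities to transfer between them while retaining the full $(1 + s 2^{2k})^{-4}$ decay uniformly in $s \geq 0$. Modulo this careful linear bookkeeping, the argument is a routine bootstrap assembly of the preceding lemmas.
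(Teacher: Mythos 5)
Your proof takes essentially the same approach as the paper: apply the Duhamel formula for the covariant heat flow, control the linear term via the heat-kernel decay and the definition of $b_k(\sigma)$, control the nonlinear term via Lemma \ref{Nonlinear Evolution Bound}, and absorb the resulting $\varepsilon a_k(\sigma)$ contribution using smallness of $\varepsilon$. The paper's own proof is just as terse on the $S_k^{1/2}(T)$ bound for the linear term, which you rightly flag as the only nontrivial step; note however that Bernstein by itself runs the wrong direction for the $L^{4/3}_x$ exponent appearing in $S_k^{1/2}$, and that the closure of the bootstrap inequality $a_k(\sigma) \lesssim b_k(\sigma)$ only uses the $F_k(T)$ component, since $a_k(\sigma)$ defined in (\ref{a Envelope}) is built from the $F_k(T)$ norm alone.
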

\begin{proof}
In view of (\ref{Psi Duhamel}), we have
\begin{equation*}
P_k \psi_m (s) =
e^{s \Delta} P_k \psi_m(0) +
\int_0^s e^{(s - r) \Delta} P_k U_m(r) \; dr.
\end{equation*}
Then it follows from Lemma \ref{Nonlinear Evolution Bound} that
\begin{equation*}
\lVert P_k \psi_m(s) \rVert_{F_k(T) \cap S_k^{1/2}(T)} \lesssim
2^{-\sigma k} (1 + s 2^{2k})^{-4} (b_k(\sigma) + \varepsilon a_k(\sigma)),
\quad \quad 0 \leq \sigma \leq \sigma_1 - 1 .
\end{equation*}
Therefore $a_k(\sigma) \lesssim b_k(\sigma) + \varepsilon a_k(\sigma)$ and hence
\begin{equation}
a_k(\sigma) \lesssim b_k(\sigma).
\label{a lies under b}
\end{equation}
\end{proof}

\subsection{Connection coefficient control} \label{SS:CCC}
The main results of this subsection are the $L^2_{t,x}$ bounds
(\ref{AxL2}) and (\ref{AtL2}), respectively
proven in Corollary \ref{C:AxL4} and Lemma \ref{L:AtL4}, and the frequency-localized
$L^2_{t,x}$ bounds (\ref{PkAxL2}) and (\ref{PkAtL2}), respectively proven in
Corollaries \ref{C:PkAxL2} and \ref{C:PkAtL2}.

\begin{lem}
Let $s \in [2^{2j - 2}, 2^{2j + 2}]$.  Then it holds that
\begin{equation*}
\lVert P_k (A_\ell (s) \psi_m(s) ) \rVert_{F_k(T) \cap S_k^{1/2}(T)} \lesssim
\varepsilon (1 + s 2^{2k})^{-3} (s 2^{2k})^{-3/8} 2^k 2^{-\sigma k} b_k(\sigma).
\end{equation*}
\label{A Psi Bound}
\end{lem}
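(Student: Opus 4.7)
The plan is to derive this estimate by invoking the bilinear parabolic
bound of Lemma \ref{BIKT Lemma 5.1}, applied with $\omega = 1/2$, to the product
$A_\ell(s) \cdot \psi_m(s)$. The two factor bounds are already in hand: for the $A_\ell$
factor we use Lemma \ref{A Bound}, which gives $\alpha_k \lesssim \langle 2^{j+k}\rangle^{-8}
2^{-\sigma k} b_{k,s}(\sigma)$ with $\{a_k\}$ dominating $\{b_k\}$ via (\ref{a lies under b}),
and for the $\psi_m$ factor we use Lemma \ref{Psi Bound}, which yields
$\beta_k \lesssim \langle 2^{j+k}\rangle^{-8} 2^{-\sigma k} b_k(\sigma)$. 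Plugging these into
Lemma \ref{BIKT Lemma 5.1} produces a sum over low-high, high-low, and high-high
frequency interactions, which I would then bound in the $F_k(T) \cap S_k^{1/2}(T)$ norm
in parallel with the proofs of Lemmas \ref{Psi A Cubic} and \ref{A Quadratic}.

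The next step is a case split according to the sign of $k+j$, exactly as in those earlier
lemmas. When $k+j \leq 0$ the $A$-factor carries the complicated envelope
$b_{k,s}(\sigma) = \sum_{p=k}^{-j} a_p a_p(\sigma)$, so in each of the three interaction
regimes I would pull out the appropriate low-frequency factor (either $a_{-j}a_{-j}(\sigma)$
or $a_k a_k(\sigma)$) via the slowly-varying property (\ref{Slowly Varying}) and sum the
residual geometric series using (\ref{Sum 1})--(\ref{Sum 2}). When $k+j \geq 0$ the
$\langle 2^{j+k}\rangle^{-8}$ factor provides ample decay to close the sums. In both cases,
the identification of the remaining factor as the target $(s2^{2k})^{-3/8}(1+s2^{2k})^{-3}
2^k 2^{-\sigma k} b_k(\sigma)$ follows from the relation $s\sim 2^{2j}$ and the computation
$2^{-3(j+k)/4} = (s 2^{2k})^{-3/8}$.

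The $\varepsilon$ gain, which is the crucial new feature compared with a straightforward
application of Lemma \ref{BIKT Lemma 5.1}, comes from the fact that every term in the
bilinear expansion contains at least one quadratic pairing of the form $a_{-j}^2$,
$a_k^2$, or $a_{-j} a_\ell$ with frequencies separated by at most $O(\lvert k+j\rvert)$.
Under the energy-dispersion hypothesis (\ref{iED}) together with the bootstrap
(\ref{Main Bootstrap}), each such factor is pointwise bounded by $\varepsilon_1^{9/5}
\lesssim \varepsilon$. So in every case I would pull out one such quadratic factor as an
$\varepsilon$, leaving a single linear factor of $b_k(\sigma)$ (or $a_k(\sigma)$, which is
equivalent by (\ref{a lies under b})) together with a convergent geometric sum.

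The main technical obstacle will be the bookkeeping in the high-high interaction when
$k+j\leq 0$: there the sum must be split at the threshold $\ell'= -j$ (as was done in
Lemma \ref{Psi A Cubic}, equation~(\ref{PAC HH2 Split})), with the $S_k^{1/2}$ gain of
$2^{(\ell'-k)/2}$ carefully balanced against $2^{\ell'+j}$ arising from the $A$-envelope
and $\langle 2^{j+\ell'}\rangle^{-8}$ decay of both factors. Achieving exactly the exponent
$-3/8$ (rather than a weaker power) requires combining all three decay mechanisms---the
envelope $\langle 2^{j+\ell'}\rangle^{-8}$, the heat-time factor $2^{\ell'+j}$ hidden in
$b_{\ell',s}$, and the $\omega=1/2$ improvement---without wasting any of them. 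Once this
balancing is arranged and $a_{-j}^2 \lesssim \varepsilon$ is extracted, the desired
estimate follows.
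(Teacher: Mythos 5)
Your plan matches the paper's route in substance: the paper obtains this lemma as a three-line corollary of the bounds (\ref{A Psi High}) and (\ref{A Psi Low}), which were established inside the proof of Lemma \ref{U Bound} by exactly the bilinear argument you outline --- Lemma \ref{BIKT Lemma 5.1} applied to $P_k(A_\ell \psi_m)$, with Lemma \ref{A Bound} controlling the $A$-factor, the $\psi$-envelope controlling the other factor, a case split on $\mathrm{sgn}(k+j)$, and the $\varepsilon$ gain extracted from a quadratic energy-dispersed pairing. You propose to re-run that bilinear analysis in full rather than citing it, which is correct but duplicates work the paper has already done. Two small corrections: the relation in (\ref{a lies under b}) reads $a_k \lesssim b_k$, not the reverse; and the exponent $(s2^{2k})^{-3/8}$ need not be hit exactly --- for $k+j\le 0$, (\ref{A Psi High}) combined with (\ref{Slowly Varying}) produces a factor $2^{-(1/2+\delta)(k+j)}$, which is \emph{stronger} than the stated $(s2^{2k})^{-3/8}\sim 2^{-3(k+j)/4}$ since $1/2+\delta<3/4$, so the claimed tight balancing is not actually needed (and similarly $\langle 2^{j+k}\rangle^{-8}$ over-delivers when $k+j\ge 0$).
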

\begin{proof}
Using (\ref{A Psi High}) and (\ref{Slowly Varying}), we have
\begin{equation}
2^k \lVert P_k (A_\ell (s) \psi_m(s) ) \rVert_{F_k(T) \cap S_k^{1/2}(T)}
\lesssim
\varepsilon 2^{2k} 2^{-\sigma k} 2^{- (1/2 + \delta) (k + j) } a_k(\sigma).
\label{A Psi High k}
\end{equation}
Combining (\ref{A Psi High k}), (\ref{A Psi Low}), and (\ref{a lies under b}) then yields
\begin{displaymath}
\lVert P_k (A_\ell (s) \psi_m(s) ) \rVert_{F_k(T) \cap S_k^{1/2}(T)} \lesssim
\left\{
\begin{array}{ll}
\varepsilon (s 2^{2k})^{-3/8}
2^k 2^{-\sigma k} b_k(\sigma) 
& \textrm{if $k + j \leq 0$} \\ \\
\varepsilon (1 + s 2^{2k})^{-4} 2^k 2^{-\sigma k} b_k(\sigma) 
& \textrm{if $k + j \geq 0$},
\end{array} \right.
\end{displaymath}
which proves the lemma.
\end{proof}

\begin{lem}
Assume that $T \in (0, 2^{2 \mathcal{K}}]$, $f, g \in H^{\infty, \infty}(T)$, $P_k f \in S_k^\omega(T)$,
and
$P_k g \in L_{t,x}^4$ for some $\omega \in [0, 1/2]$ and all $k \in \mathbf{Z}$.
Set
\begin{equation*}
\mu_k := \sum_{\lvert j - k \rvert \leq 20}
\lVert P_{j} f \rVert_{S_{k^\prime}^\omega(T)},
\quad\quad
\nu_k := \sum_{\lvert j - k \rvert \leq 20}
\lVert P_{j} g \rVert_{L_{t,x}^4}.
\end{equation*}
Then, for any $k \in \mathbf{Z}$,
\begin{equation*}
\lVert P_k(fg) \rVert_{L_{t,x}^4}
\lesssim
\sum_{j \leq k} 2^j \mu_j \nu_k + 
\sum_{j \leq k} 2^{(k + j)/2} \mu_k \nu_j +
2^k \sum_{j \geq k} 2^{-\omega (j - k) } \mu_j \nu_j.
\end{equation*}
\label{BIKT Lemma 5.4}
\end{lem}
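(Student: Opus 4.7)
The plan is to perform a Littlewood-Paley decomposition
\begin{equation*}
P_k(fg) = \sum_{k_1, k_2} P_k(P_{k_1} f \cdot P_{k_2} g)
\end{equation*}
and sum over the three standard frequency-interaction regimes (low-high, high-low, and high-high), each producing one of the three summands on the right-hand side. Frequencies outside these regimes cannot contribute an output at $|\xi| \sim 2^k$.

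For the low-high regime ($k_1 = j \leq k - 4$ and $|k_2 - k| \leq 4$), I would apply the trivial H\"older split $\lVert P_j f \cdot P_{k_2} g \rVert_{L^4_{t,x}} \leq \lVert P_j f \rVert_{L^\infty_{t,x}} \lVert P_{k_2} g \rVert_{L^4_{t,x}}$, and use Bernstein to pass from $L^\infty_t L^{2_\omega}_x$ to $L^\infty_{t,x}$ at cost $2^{2j/2_\omega} = 2^{j(1+\omega)}$; combined with $\lVert P_j f \rVert_{L^\infty_t L^{2_\omega}_x} \leq 2^{-\omega j} \mu_j$, this yields $\lVert P_j f \rVert_{L^\infty_{t,x}} \lesssim 2^j \mu_j$ and the contribution $2^j \mu_j \nu_k$. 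For the high-low regime ($|k_1 - k| \leq 4$ and $k_2 = j \leq k - 4$), I would use the H\"older decomposition $L^\infty_t L^4_x \cdot L^4_t L^\infty_x \subset L^4_{t,x}$: Bernstein upgrades $P_{k_1} f$ from $L^\infty_t L^{2_\omega}_x$ to $L^\infty_t L^4_x$ (Bernstein factor $2^{k_1(1/2+\omega)}$, balanced against $2^{-\omega k_1}$ from $\mu_{k_1}$, gives $2^{k_1/2}\mu_{k_1}$), and upgrades $P_j g$ from $L^4_{t,x}$ to $L^4_t L^\infty_x$ at cost $2^{j/2}$, producing $2^{(k+j)/2}\mu_k\nu_j$.

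The main obstacle is the high-high regime ($|k_1 - k_2| \leq 8$ and $k_1, k_2 \geq k - 4$), since any direct spatial H\"older decomposition of the form $L^{p_\omega}_x \cdot L^q_x \subset L^4_x$ forces $1/q = 1/4 - 1/p_\omega = -\omega/2$, which is non-positive for $\omega > 0$, and the bare $\nu_j$-control offers no time integrability beyond $L^4_t$ to rearrange. To circumvent this, I would invoke duality: by self-adjointness of $P_k$,
\begin{equation*}
\lVert P_k(P_{k_1} f \cdot P_{k_2} g) \rVert_{L^4_{t,x}} = \sup_{\lVert \phi \rVert_{L^{4/3}_{t,x}} = 1} \left\lvert \int P_{k_1} f \cdot P_{k_2} g \cdot P_k \phi \right\rvert,
\end{equation*}
so that the test function effectively appears as $P_k \phi$, frequency-localized to $|\xi| \sim 2^k$. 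H\"older then gives $\lVert P_{k_1} f P_k \phi \rVert_{L^{4/3}_{t,x}} \cdot \lVert P_{k_2} g \rVert_{L^4_{t,x}}$, and a further H\"older placing $P_{k_1} f$ in $L^\infty_t L^{2_\omega}_x$ and $P_k \phi$ in $L^{4/3}_t L^q_x$ (with $1/q = 1/4 - \omega/2$) together with Bernstein on the frequency-localized $P_k \phi$ from $L^{4/3}_x$ to $L^q_x$ (at cost $2^{2k(3/4 - 1/q)} = 2^{k(1+\omega)}$) delivers the crucial frequency gain
\begin{equation*}
\lVert P_{k_1} f \cdot P_k \phi \rVert_{L^{4/3}_{t,x}} \lesssim 2^{k(1+\omega) - \omega k_1} \mu_{k_1}.
\end{equation*}
With $k_1 \sim k_2 \sim j$ this is $2^k \cdot 2^{-\omega(j-k)} \mu_j \nu_j$, exactly the third summand. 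Summing the three contributions over the dyadic indices (finitely many neighbors per $j$) then completes the argument.
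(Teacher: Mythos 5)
Your proof is correct, and it follows the expected decomposition into low-high, high-low, and high-high frequency interactions, with each regime's arithmetic checking out (the Bernstein exponents are all right). The paper itself simply cites \cite[\S 5]{BeIoKeTa11} for the proof, so I cannot compare line by line, but the standard route in the high-high regime is slightly more direct than your duality argument and is worth noting: since $P_k(P_{k_1} f \cdot P_{k_2} g)$ is itself frequency-localized to $\lvert \xi \rvert \sim 2^k$, one may apply Bernstein to the \emph{output} to pass from $L^p_x$, where $1/p = 1/2_\omega + 1/4 = 3/4 + \omega/2$, up to $L^4_x$ at cost $2^{2k(1/p - 1/4)} = 2^{k(1+\omega)}$, after which ordinary H\"older in $x$ (placing $P_{k_1} f$ in $L^{2_\omega}_x$ and $P_{k_2} g$ in $L^4_x$) and H\"older in $t$ ($L^\infty_t \cdot L^4_t \subset L^4_t$) give the same $2^k 2^{-\omega(j-k)} \mu_j \nu_j$ as your computation. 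Your duality trick is equivalent to this---by self-adjointness of $P_k$ you transfer the frequency localization onto the test function $P_k \phi$ and Bernstein it instead---so the two are the same estimate viewed from opposite sides of the pairing; the direct version just avoids introducing $\phi$. Either way, the observation you correctly identify as essential, that na\"ive spatial H\"older fails in the high-high regime for $\omega > 0$ and that one must exploit the frequency localization of the output block, is the content of the lemma. One small remark: in the statement, $S_{k'}^\omega(T)$ in the definition of $\mu_k$ should read $S_j^\omega(T)$ (this is evidently a typo, matching the analogous Lemma quoting $\alpha_k$), and your proof correctly treats it as such.
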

\begin{proof}
For the proof, see \cite[\S 5]{BeIoKeTa11}.
\end{proof}

\begin{lem}
It holds that
\begin{equation*}
\lVert P_k \psi_s(0) \rVert_{L_{t,x}^4} +
\lVert P_k \psi_t(0) \rVert_{L_{t,x}^4} 
\lesssim 2^k \tilde{b}_k
(1 + \sum_j b_j^2).
\end{equation*}
\label{Psi_t Bound}
\end{lem}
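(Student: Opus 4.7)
The plan is this. Since $\varphi_0$ is a Schr\"odinger map, at $s=0$ we have $\psi_t(0) = i \psi_s(0)$ by (\ref{Psi_t Equation}), so it suffices to estimate $\lVert P_k \psi_s(0) \rVert_{L^4_{t,x}}$. I would expand via the caloric-gauge identity $\psi_s = D_\ell \psi_\ell = \partial_\ell \psi_\ell + i A_\ell \psi_\ell$ and bound each piece separately. The linear term is immediate: Bernstein in space together with the definition (\ref{tb Envelope}) of $\tilde{b}_k$ give
\begin{equation*}
\lVert P_k \partial_\ell \psi_\ell(0) \rVert_{L^4_{t,x}}
\lesssim 2^k \lVert P_k \psi_\ell(0) \rVert_{L^4_{t,x}}
\lesssim 2^k \tilde{b}_k,
\end{equation*}
which already sits inside the claimed bound.

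The substantive part is the bilinear term $P_k(A_\ell(0) \psi_\ell(0))$, for which I would apply Lemma \ref{BIKT Lemma 5.4} with $f = A_\ell$, $g = \psi_\ell$, and $\omega = 1/2$. The two input quantities are $\mu_k = \sum_{|j-k|\leq 20} \lVert P_j A_\ell(0) \rVert_{S_j^{1/2}(T)}$ and $\nu_k = \sum_{|j-k|\leq 20} \lVert P_j \psi_\ell(0) \rVert_{L^4_{t,x}}$. The bound $\nu_k \lesssim \tilde{b}_k$ is immediate from (\ref{tb Envelope}). For $\mu_k$ I would invoke Lemma \ref{A Bound} at $s = 0$: interpreting $b_{k,s}(0)$ in the limit $s \to 0^+$ (which lies in the regime $k + k_0 \leq 0$), one has $b_{k,0}(0) = \sum_{j \geq k} a_j^2$, and the bootstrap conclusion (\ref{a lies under b}) yields the frequency-uniform estimate
\begin{equation*}
\mu_k \lesssim \sum_{j \in \mathbf{Z}} b_j^2.
\end{equation*}
Substituting these envelopes into the three summands produced by Lemma \ref{BIKT Lemma 5.4} and using the slowly-varying property (\ref{Slowly Varying}) of $\tilde{b}_k$ together with the summation rules (\ref{Sum 1}), (\ref{Sum 2}), each summand is dominated by $2^k \tilde{b}_k \sum_{j \in \mathbf{Z}} b_j^2$; the Bernstein-type gain $2^{-(j-k)/2}$ in the high-high piece absorbs the apparent loss from $\mu_j$ being taken uniformly over $j \geq k$. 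Adding the linear contribution and using $\psi_t(0) = i \psi_s(0)$ yields the desired estimate.

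The main obstacle is the frequency-uniform control of $\mu_k$. Because $A_\ell$ is built from a non-localized heat-time integral of $\overline{\psi}\, D\psi$, its $S_j^{1/2}$ norm cannot be controlled by a single $b_j$ in the naive way one might hope; instead, one has to take the $s \to 0^+$ limit in Lemma \ref{A Bound} and recognize that the right-hand side collapses to the $\ell^2$-type quantity $\sum_j a_j^2 \lesssim \sum_j b_j^2$, independent of $j$. This uniform collapse is precisely the origin of the factor $\sum_j b_j^2$ in the statement (as opposed to, say, a local $b_k$). Once this ingredient is in place, the rest is routine frequency-envelope bookkeeping of Lemma \ref{BIKT Lemma 5.4}.
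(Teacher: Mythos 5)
Your proposal is correct and follows essentially the same route as the paper: reduce to $\psi_t(0) = i\psi_s(0)$, split $\psi_s = D_\ell\psi_\ell$ into the derivative term (handled by Bernstein) and the connection term $A_\ell\psi_\ell$, then feed the frequency-uniform $S^{1/2}_j$ bound on $A_\ell(0)$ from Lemma \ref{A Bound} (which collapses to $\sum_p b_p^2$ at $s=0$ via \eqref{a lies under b}) together with the $L^4$ envelope $\tilde b_k$ for $\psi_\ell(0)$ into Lemma \ref{BIKT Lemma 5.4} with $\omega = 1/2$. The three summands are summed using slow variation exactly as you describe, so the argument goes through; your remark about why the $\ell^2$-type factor $\sum_j b_j^2$ appears (rather than a local $b_k$) is also the correct reading of the mechanism.
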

\begin{proof}
We only treat $\psi_t(0)$ since $\psi_s(0)$ and $\psi_t(0)$ differ only by
a factor of $i$.
As $\psi_t(0) = i D_\ell(0) \psi_\ell (0)$, we have
\begin{equation*}
\psi_t(0) = i \partial_\ell \psi_\ell(0) - A_\ell(0) \psi_\ell(0).
\end{equation*}
Clearly
\begin{equation*}
\lVert P_k \partial_\ell \psi_\ell(0) \rVert_{L_{t,x}^4}
\lesssim
2^k \lVert P_k \psi_x(0) \rVert_{L_{t,x}^4}
\lesssim
2^k \tilde{b}_k.
\end{equation*}
For the remaining term, we apply Lemma \ref{BIKT Lemma 5.4}, bounding
$P_j A_\ell(0)$ in $S_j^{1/2}$ by $\sum_{p} b_p^2$, which follows from
Lemma \ref{A Bound}.  We get
\begin{align*}
\lVert P_k(A_\ell(0) \psi_\ell(0)) \rVert_{L_{t,x}^4} 
\lesssim& \;
\sum_{j \leq k} 2^j (\sum_p b_p^2) \tilde{b}_k +
\sum_{j \leq k} 2^{(k + j)/2} (\sum_p b_p^2) \tilde{b}_j + \\
&\; 2^k \sum_{j \geq k} 2^{-(j - k)/2} (\sum_p b_p^2) \tilde{b}_j.
\end{align*}
Therefore
\begin{equation*}
\lVert P_k (A_\ell \psi_\ell(0) ) \rVert_{L_{t,x}^4} \lesssim 2^k \tilde{b}_k (\sum_j b_j^2).
\end{equation*}
\end{proof}

\begin{cor}\label{C:AltL4Psit}
It holds that
\begin{equation*}
\lVert P_k \psi_s(0) \rVert_{L_{t,x}^4} +
\lVert P_k \psi_t(0) \rVert_{L_{t,x}^4} 
\lesssim 2^k 2^{-\sigma k} b_k(\sigma)
(1 + \sum_j b_j^2).
\end{equation*}
\end{cor}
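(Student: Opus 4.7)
The plan is to mirror the proof of Lemma \ref{Psi_t Bound} while systematically replacing the $L^4$-envelope $\tilde b_k$ by $2^{-\sigma k} b_k(\sigma)$. The observation that enables this replacement is that, directly from the definition (\ref{b Envelope}) by restricting the supremum to $k' = k$, one has $\|P_k \psi_x\|_{G_k(T)} \leq 2^{-\sigma k} b_k(\sigma)$; combined with the embedding $G_k(T) \hookrightarrow L^4_{t,x}$ encoded in the definition of $F_k^0(T)$, this yields the pointwise envelope bound
\[
\|P_k \psi_x\|_{L^4_{t,x}} \leq 2^{-\sigma k} b_k(\sigma).
\]

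Since $\psi_s(0) = -i\psi_t(0)$, it suffices to estimate $\|P_k \psi_t(0)\|_{L^4_{t,x}}$, and using $\psi_t(0) = i D_\ell \psi_\ell(0) = i\partial_\ell \psi_\ell(0) - A_\ell(0) \psi_\ell(0)$ I would treat the linear and trilinear contributions separately. The linear term is handled immediately by Bernstein:
\[
\|P_k \partial_\ell \psi_\ell(0)\|_{L^4_{t,x}} \lesssim 2^k \|P_k \psi_x\|_{L^4_{t,x}} \lesssim 2^k \cdot 2^{-\sigma k} b_k(\sigma).
\]
For the trilinear term $P_k(A_\ell(0)\psi_\ell(0))$ I would apply Lemma \ref{BIKT Lemma 5.4} with $\omega = 1/2$, $f = A_\ell(0)$, $g = \psi_\ell(0)$, and envelopes
\[
\mu_k \lesssim {\textstyle \sum_p} b_p^2, \qquad \nu_k \lesssim 2^{-\sigma k}b_k(\sigma),
\]
where the bound on $\mu_k$ is the $\sigma = 0$ consequence of Lemma \ref{A Bound} at $s = 0$ already invoked in the proof of Lemma \ref{Psi_t Bound}. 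The three resulting dyadic sums should collapse to $2^k \cdot 2^{-\sigma k} b_k(\sigma) \cdot (1 + \sum_p b_p^2)$ by means of the slow variation of $b_k(\sigma)$ and the summation rules (\ref{Sum 1})--(\ref{Sum 2}).

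The main technical hurdle will be the high-low paraproduct $\sum_{j \leq k} 2^{(k+j)/2}\mu_k \nu_j$. Using slow variation, $b_j(\sigma) \leq b_k(\sigma) 2^{\delta(k-j)}$ for $j \leq k$, this sum telescopes to the desired answer only provided $1/2 - \sigma - \delta > 0$. To cover all $\sigma \in [0, \sigma_1 - 1]$ I would follow the workaround used in extending Corollary \ref{V bound corollary} beyond $\sigma < 1/6 - 2\delta$: perform a secondary Littlewood-Paley decomposition of $\psi_\ell(0)$ so as to shift the $2^{\sigma k}$ weight onto the high-frequency factor before applying the bilinear estimates. The remaining manipulations are routine envelope arithmetic that do not rely on the smallness parameter $\cV$.
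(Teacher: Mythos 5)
Your treatment of the linear piece $P_k\partial_\ell\psi_\ell(0)$ matches the paper's, and you correctly locate the real obstruction: the high-low paraproduct $\sum_{j\leq k}2^{(k+j)/2}\mu_k\nu_j$ in Lemma \ref{BIKT Lemma 5.4} fails to sum once $\sigma$ exceeds roughly $1/2 - 2\delta$, because the $2^{-\sigma j}b_j(\sigma)$ weight sits on the low-frequency factor $\psi_\ell(0)$. The paper likewise splits by the size of $\sigma$ and handles the small-$\sigma$ regime exactly as you propose.

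The gap is in your proposed fix for large $\sigma$. A secondary Littlewood--Paley decomposition \emph{of $\psi_\ell(0)$} cannot shift the $\sigma$-weight onto the high-frequency factor, because in the high-low term the high-frequency factor is $A_\ell(0)$, not $\psi_\ell(0)$; re-decomposing a single low-frequency piece $P_{j_2}\psi_\ell(0)$ leaves it at frequency $j_2\leq k$ with the same large $2^{-\sigma j_2}$ prefactor if you continue to measure it in the $\sigma$-weighted envelope, and buys nothing. The workaround in Corollary \ref{V bound corollary} that you cite works there because the high-frequency factor is itself a product ($\psi_x^2$), so one can re-decompose \emph{that} factor; here $A_\ell(0)$ is the high-frequency factor, and what is actually needed is a $\sigma$-weighted bound directly on it. That bound is estimate (\ref{Pk A4}), $\lVert P_{j_1} A_x(0)\rVert_{L^4_{t,x}}\lesssim 2^{-\sigma j_1} b_{j_1} b_{j_1}(\sigma)$, which your proposal never invokes. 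With it, the paper closes the high-low sum by pairing (\ref{Pk A4}) on $A_\ell(0)$ with a plain Bernstein $L^\infty$ bound on $P_{j_2}\psi_\ell(0)$ (carrying only $2^{j_2} b_{j_2}$, no $\sigma$-weight), so the low-frequency sum converges trivially. Without bringing in (\ref{Pk A4}), the large-$\sigma$ case does not close.
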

\begin{proof}
Without loss of generality, we prove the bound only for $\psi_t$.
We have
\[
\lVert P_k \partial_\ell \psi_\ell(0) \rVert_{L^4_{t,x}}
\lesssim 
2^k \lVert P_k \psi_x(0) \rVert_{L^4_{t,x}}
\lesssim
2^k 2^{-\sigma k} b_k(\sigma).
\]
It remains to control $P_k(A_\ell(0) \psi_\ell(0))$ in $L^4_{t,x}$.
The obstruction to applying Lemma \ref{BIKT Lemma 5.4} as we did in
Lemma \ref{Psi_t Bound} is the high-low
interaction, for which summation can be achieved only for small $\sigma$. 
If we restrict the range of $\sigma$ to
$\sigma < 1/2 - 2 \delta$, then we ensure the constant remains bounded
and can apply Lemma \ref{BIKT Lemma 5.4} as in Lemma \ref{Psi_t Bound}.

For $\sigma \geq 1/2 - 2 \delta$, we can still apply the bounds of Lemma \ref{BIKT Lemma 5.4}
to the low-high and high-high
interactions. For the remaining high-low interaction, we
bound $A_\ell(0)$ in $L^4_{t,x}$ and $\psi_\ell(0)$ in $L^\infty_{t,x}$.
In particular, we have, thanks to (\ref{Pk A4}) and Bernstein, that
\begin{align*}
\sum_{ \substack{ | j_1 - k | \leq 4 \\ j_2 \leq k + 4} }
\lVert P_k ( P_{j_1} A_\ell(0) P_{j_2} \psi_\ell(0)) \rVert_{L^4_{t,x}}
&\lesssim \;
\sum_{ \substack{ | j_1 - k | \leq 4 \\ j_2 \leq k + 4} }
\lVert P_{j_1} A_\ell(0) \rVert_{L^4_{t,x}}
\lVert P_{j_2} \psi_\ell(0) \rVert_{L^\infty_{t,x}} \\
&\lesssim \;
\sum_{ \substack{ | j_1 - k | \leq 4 \\ j_2 \leq k + 4} }
2^{-\sigma j_1} b_{j_1} b_{j_1}(\sigma)
2^{j_2} \lVert P_{j_2} \psi_\ell(0) \rVert_{L^\infty_t L^2_x} \\
&\lesssim \;
\sum_{j_2 \leq k +4}
2^{-\sigma k} b_k b_k(\sigma)
2^{j_2} b_{j_2} \\
&\lesssim \;
2^{-\sigma k} b_k^2 b_k(\sigma)
\sum_{j_2 \leq k + 4} 2^k 2^{(j_2 - k) + (k - j_2) \delta} \\
&\lesssim \;
2^{-\sigma k} 2^k b_k^2 b_k(\sigma).
\end{align*}
\end{proof}

\begin{lem}
It holds that
\begin{equation*}
\lVert P_k \psi_s(s) \rVert_{L_{t,x}^4} +
\lVert P_k \psi_t(s) \rVert_{L_{t,x}^4} 
\lesssim
(1 + s 2^{2k})^{-2} 2^k \tilde{b}_k
(1 + \sum_j b_j^2).
\end{equation*}
\label{L:Psi_t Flow Bound}
\end{lem}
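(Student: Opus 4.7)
The plan is to pair Lemma \ref{Psi_t Bound} (which handles the case $s = 0$) with the Duhamel formula along the heat flow (\ref{Psi Duhamel}), writing for $\alpha \in \{s, t\}$,
\begin{equation*}
\psi_\alpha(s) = e^{s\Delta}\psi_\alpha(0) + \int_0^s e^{(s-s')\Delta}U_\alpha(s')\,ds',
\end{equation*}
projecting onto $P_k$ and measuring in $L^4_{t,x}$. The linear piece $e^{s\Delta}P_k\psi_\alpha(0)$ is immediate: translation invariance of the heat kernel gives decay $(1+s2^{2k})^{-N}$ for any $N$, while Lemma \ref{Psi_t Bound} bounds $\lVert P_k\psi_\alpha(0)\rVert_{L^4_{t,x}}$ by $2^k\tilde{b}_k(1+\sum_j b_j^2)$; the product yields exactly the desired estimate for this contribution.

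The nonlinear piece will be handled by bounding $\lVert P_k U_\alpha(s')\rVert_{F_k(T)}$ (which embeds continuously in $L^4_{t,x}$) and integrating against the heat semigroup. Looking at representation (\ref{Heat Nonlinearity}), every summand is a product with a factor of $\psi_\alpha$ together with $A_\ell$ or $\psi_\ell$, so bounds on $A_\ell(s')$ and $\psi_\ell(s')$ along heat flow (Lemmas \ref{A Bound} and \ref{Psi Bound}) can be combined with Lemma \ref{BIKT Lemma 5.4} to estimate $\lVert P_k U_\alpha(s')\rVert_{F_k(T)}$ in terms of a heat-time-dependent frequency envelope for $\psi_\alpha$ in $L^4_{t,x}$, with an overall $\varepsilon$ gain coming from energy dispersion. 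I will split the integration range as $[0, s/2] \cup [s/2, s]$ to distribute the decay $(1+s2^{2k})^{-2}$ between the heat kernel (on the first piece) and the bound on $U_\alpha$ itself (on the second), paralleling the argument of Lemma \ref{Nonlinear Evolution Bound}.

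The chief obstacle is that every term in $U_\alpha$ contains $\psi_\alpha$ as a factor, making the nonlinear bound self-referential. This will be resolved by a bootstrap on the quantity $M_k := \sup_{s \geq 0}(1+s2^{2k})^2 \lVert P_k \psi_\alpha(s)\rVert_{L^4_{t,x}}$: the Duhamel analysis yields an inequality of the form $M_k \lesssim 2^k\tilde{b}_k(1+\sum_j b_j^2) + \varepsilon M_k$, and the smallness of $\varepsilon$ (coming from the energy-dispersion hypothesis (\ref{iED}) together with the $L^4$ smallness (\ref{Main L4 Cal})) permits absorption into the left-hand side. For the $\psi_t$ case, a cleaner alternative is to write $\psi_t = i\psi_s + \Psi$; since the initial data satisfies the Schr\"odinger map system, $\Psi(0) = 0$, and $\Psi$ evolves by the heat equation of Lemma \ref{L:Commutator}. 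The Duhamel representation for $\Psi$ then has only a nonlinear contribution, which can be handled using the $\psi_s$ bound just obtained together with the heat-flow bounds on $\psi_\ell$ and $A_\ell$, thereby reducing the $\psi_t$ estimate to the $\psi_s$ one.
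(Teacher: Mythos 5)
Your primary line of argument matches the paper's proof almost exactly: Duhamel along heat time, the linear piece via the $s=0$ bound (Lemma \ref{Psi_t Bound}) together with heat-kernel decay, the nonlinear piece via Lemma \ref{BIKT Lemma 5.4} with the parabolic envelope bounds on $A_\ell(r)$ and $\psi_\ell(r)$ from Lemmas \ref{A Bound} and \ref{Psi Bound}, and an iteration/bootstrap to break the self-reference in $U_\alpha$ (the paper dispatches this as a ``standard iteration argument'' rather than spelling out your $M_k$). A few small points of imprecision in the plan: the paper measures $P_k U_t(r)$ directly in $L^4_{t,x}$ through Lemma \ref{BIKT Lemma 5.4}, not via $F_k(T)$, because the $\tilde{b}$ envelope it wants to close on is an $L^4$ envelope; the integral over $[0,s]$ is handled by a single weighted convolution bound rather than by the $[0,s/2]\cup[s/2,s]$ split; and the $\varepsilon$ smallness factor here comes solely from energy dispersion baked into the parabolic lemmas (see the remark at the start of \S \ref{SS:ParabolicEstimates}), not from the $L^4$ hypothesis (\ref{Main L4 Cal}), which is not used in this section.

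Your proposed alternative for $\psi_t$, namely writing $\psi_t = i\psi_s + \Psi$ and evolving $\Psi$ by Lemma \ref{L:Commutator} from $\Psi(0)=0$, is a genuinely different idea and is not what the paper does. It is plausible: the Duhamel integral for $\Psi$ has no linear term, and the forcing is built from $\psi_s$, $\psi_\ell$, $A_\ell$, and $\Psi$ itself, so a parallel bootstrap on $\Psi$ would reduce the $\psi_t$ bound to the already-established $\psi_s$ bound. The paper instead treats $\psi_s$ and $\psi_t$ symmetrically (``the proof for $\psi_s(s)$ is analogous''), which is shorter here because the generic nonlinearity $U_\alpha$ has the same structure for any $\alpha$; your route would be more natural if one wanted a sharper statement tracking how $\psi_t$ deviates from $i\psi_s$ as $s$ increases, but it buys nothing for this particular lemma.
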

\begin{proof}
We treat only $\psi_t(s)$ since the proof for $\psi_s(s)$ is analogous.
From (\ref{Psi Duhamel}) we have
\begin{equation*}
\psi_t(s) = e^{s \Delta} \psi_t(0) + \int_0^s e^{(s - r)\Delta} U_t(r) \; dr.
\end{equation*}
We claim that
\begin{equation}
\lVert \int_0^s e^{(s - r) \Delta} P_k U_t(r) \; dr \rVert_{L_{t,x}^4}
\lesssim \varepsilon (1 + s 2^{2k})^{-2} 2^k \tilde{b}_k (1 + \sum_j b_j^2),
\label{U L4 Bound}
\end{equation}
which combined with Lemma \ref{Psi_t Bound} and a standard iteration argument
proves the lemma.

As in the proof of Lemma \ref{Nonlinear Evolution Bound}, we take
\begin{equation*}
F \in \{ A_\ell^2, \partial_\ell A_\ell, fg : \ell = 1, 2;
f, g \in \{ \psi_m, \overline{\psi_m}: m = 1,2\} \}.
\end{equation*}
By (\ref{Quadratic Parabolic}) and (\ref{a lies under b}) we have
\begin{equation}\label{PkFSk}
\lVert P_k F(r) \rVert_{S_k^{1/2}(T)} \lesssim
\varepsilon^{1/2}
(1 + s 2^{2k})^{-2} (s 2^{2k})^{-5/8} 
2^k b_k.
\end{equation}
Moreover, by Lemma \ref{A Bound}
\begin{equation}\label{PkASk}
\lVert P_k A_\ell(r) \rVert_{S_k^{1/2}(T)} \lesssim
\varepsilon^{1/2}
(1 + s 2^{2k})^{-3} (s 2^{2k})^{-1/8} b_k.
\end{equation}
Applying Lemma \ref{BIKT Lemma 5.4} with $\omega = 1/2$ yields
\begin{equation}
\lVert P_k( F(r) \psi_t(r) ) \rVert_{L_{t,x}^4} +
2^k \lVert P_k( A_\ell(r) \psi_t(r) \rVert_{L_{t,x}^4}
\lesssim
\varepsilon
(1 + s 2^{2k})^{-2} (s 2^{2k})^{-7/8}
2^k \tilde{b}_k (1 + \sum_j b_j^2).
\label{Prelim U L4 Bound}
\end{equation}
Integrating with respect to $s$ yields
\begin{equation*}
\int_0^s (1 + (s - r)2^{2k})^{-N} (1 + r 2^{2k})^{-2} (r 2^{2k})^{-7/8} dr
\lesssim
2^{- 2k} (1 + s 2^{2k})^{-2},
\end{equation*}
which, together with (\ref{Prelim U L4 Bound}), implies (\ref{U L4 Bound}).
\end{proof}

\begin{lem}
It holds that
\begin{equation}
\lVert P_k A_m(0) \rVert_{L_{t,x}^4} \lesssim
2^{-\sigma k} b_k b_k(\sigma).
\label{Pk A4}
\end{equation}
\end{lem}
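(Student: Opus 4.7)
The plan is to work from the integral representation (\ref{CC Integral Rep}) at $s=0$, which gives
\[
P_k A_m(0) = -\int_0^\infty P_k \Im\bigl(\overline{\psi_m}(s^\prime)\, \psi_s(s^\prime)\bigr)\, ds^\prime,
\]
and then to estimate the integrand pointwise in $s^\prime$ using a bilinear bound and integrate.

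First I would fix $j \in \mathbf{Z}$ and $s^\prime \in [2^{2j-2}, 2^{2j+2}]$ and apply Lemma \ref{BIKT Lemma 5.4} with $\omega = 1/2$ to the product $\overline{\psi_m}(s^\prime) \cdot \psi_s(s^\prime)$. For the $S_k^{1/2}(T)$-factor, Lemma \ref{Psi Bound} and (\ref{PsiF}) yield
\[
\mu_k \lesssim (1 + s^\prime 2^{2k})^{-4}\, 2^{-\sigma k}\, b_k(\sigma),
\]
while for the $L^4_{t,x}$-factor, Lemma \ref{L:Psi_t Flow Bound} (combined with $\tilde b_k \lesssim b_k$ and $\sum_j b_j^2 \lesssim 1$ from Corollary \ref{C:MTE1}-type summation already used in the bootstrap) yields
\[
\nu_k \lesssim (1 + s^\prime 2^{2k})^{-2}\, 2^k\, b_k.
\]
Lemma \ref{BIKT Lemma 5.4} then decomposes $\lVert P_k(\overline{\psi_m}(s^\prime)\psi_s(s^\prime))\rVert_{L^4_{t,x}}$ into low-high, high-low, and high-high pieces, each of the form (up to the decay factors $(1 + s^\prime 2^{2k^\prime})^{-N}$) $2^{k^\prime} \mu_{k^\prime} \nu_{k^{\prime\prime}}$ etc.

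Next I would carry out the frequency summations using the envelope rules (\ref{Slowly Varying})--(\ref{Sum 2}) exactly as in Lemmas \ref{Psi Quadratic}, \ref{A Quadratic}, and \ref{U Bound}. In each of the three frequency regimes, one pulls out the diagonal contribution $2^{-\sigma k} b_k(\sigma) \cdot b_k$ and is left with a geometric sum in $|k + j|$ weighted by the decay $(1 + s^\prime 2^{2k^{\prime\prime}})^{-N}$. One then splits into the two cases $k + j \leq 0$ and $k + j \geq 0$ as in the proof of Lemma \ref{A Bound}, so as to arrive at a pointwise-in-$s^\prime$ bound of the schematic form
\[
\lVert P_k(\overline{\psi_m}(s^\prime)\psi_s(s^\prime))\rVert_{L^4_{t,x}} \lesssim 2^{-\sigma k}\, b_k\, b_k(\sigma) \cdot \langle 2^{j+k}\rangle^{-2} \cdot 2^{2\min(k,-j)}.
\]

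Finally I would integrate in $s^\prime \in [2^{2j-2}, 2^{2j+2}]$, which contributes a factor of $2^{2j}$, and then sum over $j \in \mathbf{Z}$. The resulting geometric series converges (after using the summation rules a bounded number of times), producing the desired bound
\[
\lVert P_k A_m(0) \rVert_{L^4_{t,x}} \lesssim 2^{-\sigma k}\, b_k\, b_k(\sigma).
\]
The main obstacle will be the bookkeeping in the high-high interaction for the case $k+j \geq 0$ (and its mirror for $k+j\leq 0$), where the $L^4$ output must absorb both the $s^\prime$-decay from the heat-flow smoothing of $\psi_m(s^\prime)$ and the $2^k$ derivative loss from $\psi_s = D_\ell \psi_\ell$; here one uses the $\omega = 1/2$ Strichartz refinement in Lemma \ref{BIKT Lemma 5.4} exactly as in Lemma \ref{L:Psi_t Flow Bound} to achieve convergence of the frequency sum.
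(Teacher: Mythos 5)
Your skeleton is right and matches the paper's proof closely: start from the integral representation $A_m(0) = -\int_0^\infty \Im(\overline{\psi_m}\,\psi_s)\,ds$ (equivalently with $D_\ell\psi_\ell$ in place of $\psi_s$), apply the bilinear $L^4$ estimate of Lemma~\ref{BIKT Lemma 5.4} pointwise in heat time, sum the resulting frequency series using the envelope rules, and integrate. The choice $\omega=1/2$ versus the paper's $\omega=0$ is a harmless technical variant, and your observation that $\tilde b_k\lesssim b_k$ is correct.

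The genuine gap is where you take your $\nu_k$ bound from Lemma~\ref{L:Psi_t Flow Bound}. That lemma gives
\[
\lVert P_k \psi_s(s)\rVert_{L^4_{t,x}} \lesssim (1+s2^{2k})^{-2}\,2^k\,\tilde b_k\,\bigl(1+\textstyle\sum_j b_j^2\bigr),
\]
and you then discard the factor $(1+\sum_j b_j^2)$ by asserting $\sum_j b_j^2 \lesssim 1$. This is circular. The bound $\sum_j b_j^2\lesssim 1$ is~(\ref{b l2 bounded}), which is only established at the \emph{end} of the bootstrap in \S\ref{SS:OutlineConclusion}, and that argument relies on Corollary~\ref{V bound corollary}, which relies on~(\ref{AxL2}) via Corollary~\ref{C:AxL4}, which relies precisely on the present Lemma~(\ref{Pk A4}). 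During the bootstrap, only the hypothesis (\ref{Main Bootstrap}) $b_k\leq\varepsilon_1^{-1/10}c_k$ is available, under which $\sum_j b_j^2$ can be as large as $\varepsilon_1^{-1/5}$; nothing before~(\ref{b l2 bounded}) caps it at $O(1)$. (The attribution to ``Corollary~\ref{C:MTE1}-type summation'' is also a misreading: that corollary sums trilinear constants $C_{k,k_1,k_2,k_3}$, it says nothing about $\sum_j b_j^2$.) As a result, your plan proves only the weaker estimate $\lVert P_k A_m(0)\rVert_{L^4}\lesssim 2^{-\sigma k}b_k b_k(\sigma)(1+\sum_j b_j^2)$, not the stated one.

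The fix — and what the paper does — is to avoid Lemma~\ref{L:Psi_t Flow Bound} for the $\nu_k$ factor and instead bound $D_\ell\psi_\ell(s) = \partial_\ell\psi_\ell(s) + iA_\ell(s)\psi_\ell(s)$ directly: the first term is controlled by $2^k$ times Lemma~\ref{Psi Bound}, and the second by Lemma~\ref{A Psi Bound}, yielding
\[
\lVert P_k (D_\ell\psi_\ell)(s)\rVert_{L^4_{t,x}} \lesssim (1+s2^{2k})^{-3}(s2^{2k})^{-3/8}\,2^k\,2^{-\sigma k}b_k(\sigma)
\]
with no $(1+\sum_j b_j^2)$ factor (the $A_\ell\psi_\ell$ contribution is already perturbative, carrying an $\varepsilon$). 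Feeding this into Lemma~\ref{BIKT Lemma 5.4} and running the same frequency sums then gives exactly $2^{-\sigma k}b_k b_k(\sigma)$, as required.
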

\begin{proof}
We have
\begin{equation*}
\lVert P_k \psi_m(s) \rVert_{S_k^0} \lesssim (1 + s 2^{2k})^{-4} 2^{-\sigma k} b_k(\sigma)
\end{equation*}
and
\begin{equation*}
\lVert P_k (D_\ell \psi_\ell) (s) \rVert_{L_{t,x}^4} \lesssim
(1 + s 2^{2k})^{-3} (s 2^{2k})^{-3/8} 2^k 2^{-\sigma k} b_k(\sigma).
\end{equation*}
Applying Lemma \ref{BIKT Lemma 5.4} with $\omega = 0$, we get
\begin{align*}
\lVert P_k A_m(0) \rVert_{L_{t,x}^4} \lesssim& \;
\sum_{\ell = 1,2} \int_0^\infty \lVert P_k (\overline{\psi_m(s)} D_\ell \psi_\ell(s)) \rVert_{L_{t,x}^4} \ds \\
\lesssim& \;
2^{-\sigma k} \sum_{j \leq k} b_j b_k(\sigma) 2^{j + k}
\int_0^\infty (1 + s 2^{2k})^{-3} (s 2^{2k})^{-3/8} \ds \\
&+ 2^{-\sigma k} \sum_{j \leq k} b_k(\sigma) b_j 2^{(k + j)/2} 2^{j}
\int_0^\infty (1 + s 2^{2k})^{-4} (s 2^{2j})^{-3/8} \ds \\
&+ \sum_{j \geq k} 2^{-\sigma j} b_j(\sigma) b_j 2^{k - j} 2^{2j}
\int_0^\infty (1 + s 2^{2j})^{-7} (s 2^{2j})^{-3/8} \ds.
\end{align*}
Call the integrals $I_1$, $I_2$, and $I_3$, respectively.
Clearly $I_1$ and $I_3$ satisfy $I_1 \lesssim 2^{-2k}$
and $I_3 \lesssim 2^{-2j}$.  By Cauchy-Schwarz, $I_2$ satisfies
\begin{align*}
I_2 &\lesssim \left( \int_0^\infty (1 + s 2^{2k})^{-8} (1 + s 2^{2j})^{4} \ds \right)^{1/2}
\left( \int_0^\infty (1 + s 2^{2j})^{-4} (s 2^{2j})^{-3/8} \ds \right)^{1/2} \\
&\lesssim
2^{-j - k}.
\end{align*}
Therefore
\begin{align*}
\lVert P_k A_m(0) \rVert_{L_{t,x}^4}
&\lesssim
2^{-\sigma k} b_k(\sigma) \sum_{j \leq k} \left( b_j 2^{j - k} + b_j  2^{(j - k)/2} \right)
+ 2^{-\sigma k} \sum_{j \geq k} b_j(\sigma) b_j 2^{k - j} \\
&\lesssim 2^{-\sigma k} b_k b_k(\sigma).
\end{align*}
\end{proof}

\begin{cor}
It holds that
\begin{equation*}
\lVert A_x^2(0) \rVert_{L^2_{t,x}}
\lesssim
\sup_{j \in \mathbf{Z}} b_j^2 \cdot \sum_{k \in \mathbf{Z}} b_k^2.
\end{equation*}
\label{C:AxL4}
\end{cor}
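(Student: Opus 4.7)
The plan is to reduce the $L^2_{t,x}$ bound on the quadratic expression $A_x^2$ to an $L^4_{t,x}$ bound on $A_x$, and then decompose $A_x$ in Littlewood--Paley pieces so that the frequency-localized estimate (\ref{Pk A4}) can be applied termwise.

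First I would observe the trivial identity $\lVert A_x^2(0) \rVert_{L^2_{t,x}} = \lVert A_x(0) \rVert_{L^4_{t,x}}^2$, which converts the task into one of controlling $A_x$ in $L^4_{t,x}$. Then I would invoke the standard Littlewood--Paley square function characterization, which at $p = 4$ gives
\begin{equation*}
\lVert A_x(0) \rVert_{L^4_{t,x}}^2 \lesssim \bigl\lVert \bigl(\textstyle\sum_{k \in \mathbf{Z}} |P_k A_x(0)|^2\bigr)^{1/2} \bigr\rVert_{L^4_{t,x}}^2 = \bigl\lVert \textstyle\sum_{k \in \mathbf{Z}} |P_k A_x(0)|^2 \bigr\rVert_{L^2_{t,x}},
\end{equation*}
after which Minkowski's inequality in $\ell^1 \to L^2$ reduces matters to the estimate $\sum_k \lVert P_k A_x(0) \rVert_{L^4_{t,x}}^2$.

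At this stage I would apply the frequency-localized bound (\ref{Pk A4}) with $\sigma = 0$, which yields $\lVert P_k A_x(0) \rVert_{L^4_{t,x}} \lesssim b_k^2$. Summing in $k$ gives $\sum_k b_k^4$, and the elementary inequality $\sum_k b_k^4 \leq (\sup_j b_j^2) \cdot \sum_k b_k^2$ finishes the estimate.

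There is no genuine obstacle here, as all the nontrivial work has already been done in establishing (\ref{Pk A4}). The only subtlety is the mild interpolation of a quartic $\ell^4$-type sum against the available quadratic $\ell^2$ sum, and extracting a factor of $\sup_j b_j^2$ captures exactly this loss --- this is what gives us the smallness needed when applying the lemma later, since under the hypotheses of Theorem \ref{EnvelopeTheorem} the sup is $O(\varepsilon_1^{-1/5} \varepsilon_1^2)$ while $\sum_k b_k^2$ need only be finite.
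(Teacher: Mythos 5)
Your argument is correct and is essentially the same as the paper's: write $\lVert A_x^2(0) \rVert_{L^2} = \lVert A_x(0) \rVert_{L^4}^2$, pass to $\sum_k \lVert P_k A_x(0) \rVert_{L^4}^2$ via the square-function theorem and Minkowski, and apply (\ref{Pk A4}) with $\sigma = 0$. You have merely spelled out the routine square-function-plus-Minkowski step that the paper leaves implicit.
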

\begin{proof}
We have
\begin{align*}
\lVert A_x^2(0) \rVert_{L^2_{t,x}}
&\lesssim
\lVert A_x(0) \rVert_{L^4_{t,x}}^2 \\
&\lesssim
\sum_{k \in \mathbf{Z}} \lVert P_k A_x(0) \rVert_{L^4_{t,x}}^2 \\
&\lesssim
\sup_{j \in \mathbf{Z}} b_j^2 \cdot \sum_{k \in \mathbf{Z}} b_k^2.
\end{align*}
\end{proof}

\begin{cor}
Let $\sigma \geq 2 \delta$. Then it holds that
\begin{equation*}
\lVert P_k A_x^2(0) \rVert_{L^2_{t,x}} 
\lesssim
2^{-\sigma k} b_k(\sigma) \cdot \sup_{j} b_j \cdot \sum_{\ell \in \mathbf{Z}} b_\ell^2.
\end{equation*}
\label{C:PkAxL2}
\end{cor}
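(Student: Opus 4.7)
The proof will be a direct frequency-localized refinement of Corollary \ref{C:AxL4}, proceeding by Littlewood-Paley decomposition and Hölder's inequality combined with the $L^4$ bound (\ref{Pk A4}). The plan is to split
\[
P_k(A_x^2) = 2 \sum_{\lvert k_2 - k \rvert \leq 4} P_k(P_{\leq k - 5} A_x \cdot P_{k_2} A_x) + \sum_{\substack{\lvert k_1 - k_2 \rvert \leq 8 \\ k_1, k_2 \geq k - 4}} P_k(P_{k_1} A_x \cdot P_{k_2} A_x),
\]
estimate the $L^2_{t,x}$ norm of each piece by Hölder's inequality $\lVert fg \rVert_{L^2} \leq \lVert f \rVert_{L^4} \lVert g \rVert_{L^4}$, and invoke (\ref{Pk A4}) on each $L^4_{t,x}$ factor. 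On one factor we use (\ref{Pk A4}) at level $\sigma$ so as to extract the $2^{-\sigma k} b_k(\sigma)$ weight, and on the other we use its $\sigma = 0$ specialization, which reads $\lVert P_j A_x(0) \rVert_{L^4_{t,x}} \lesssim b_j^2$.

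For the low-high (and symmetric high-low) interaction, the triangle inequality gives
\[
\lVert P_{\leq k - 5} A_x \rVert_{L^4_{t,x}} \lesssim \sum_{j \leq k - 5} b_j^2 \lesssim \sum_{\ell \in \mathbf{Z}} b_\ell^2,
\]
while $\lVert P_{k_2} A_x \rVert_{L^4_{t,x}} \lesssim 2^{-\sigma k_2} b_{k_2} b_{k_2}(\sigma) \sim 2^{-\sigma k} b_k b_k(\sigma)$ for $\lvert k_2 - k \rvert \leq 4$. Bounding $b_k \leq \sup_j b_j$ produces exactly the claimed estimate.

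For the high-high interaction, Hölder and (\ref{Pk A4}) yield the summand $2^{-\sigma k_1} b_{k_1} b_{k_1}(\sigma) \cdot b_{k_2}^2$. Bounding $b_{k_1} \leq \sup_j b_j$ and $\sum_{k_2 \geq k - 4} b_{k_2}^2 \leq \sum_\ell b_\ell^2$ reduces matters to controlling $\sum_{k_1 \geq k - 4} 2^{-\sigma k_1} b_{k_1}(\sigma)$. Applying the frequency envelope property $b_{k_1}(\sigma) \leq 2^{\delta \lvert k - k_1 \rvert} b_k(\sigma)$ and invoking the summation rule (\ref{Sum 2}) (whose applicability requires $\sigma > \delta$, which is ensured by the hypothesis $\sigma \geq 2 \delta$) gives $\sum_{k_1 \geq k - 4} 2^{-\sigma k_1} b_{k_1}(\sigma) \lesssim 2^{-\sigma k} b_k(\sigma)$, completing the high-high bound.

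The only genuine subtlety is ensuring convergence of the $k_1$-geometric series in the high-high interaction, which fails at $\sigma = \delta$ but is guaranteed by $\sigma \geq 2\delta$; otherwise the argument is a routine combination of Hölder's inequality, the already-proved $L^4$ bound (\ref{Pk A4}), and the frequency envelope summation rules. Combining the low-high and high-high contributions gives the desired estimate.
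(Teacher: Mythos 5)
Your proof is correct and takes essentially the same approach as the paper's: a Littlewood--Paley decomposition of $P_k(A_x^2)$, Hölder $\lVert fg\rVert_{L^2}\le\lVert f\rVert_{L^4}\lVert g\rVert_{L^4}$ on each piece, the $L^4$ bound (\ref{Pk A4}) on each factor (with the weight $\sigma$ placed on the near-$2^k$ factor in the unbalanced case and on one of the two factors in the high-high case), and the frequency envelope summation rule to handle the high-high tail, where $\sigma\geq 2\delta$ is used exactly as in the paper. The only cosmetic difference is that you decouple the double high-high sum into $\sup_j b_j$, $\sum_\ell b_\ell^2$, and $\sum_{k_1\geq k-4}2^{-\sigma k_1}b_{k_1}(\sigma)$ before summing, whereas the paper collapses $|j_1-j_2|\leq 8$ into a single index first; both routes are valid and yield the identical estimate.
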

\begin{proof}
We perform a Littlewood-Paley decomposition and invoke (\ref{C:AxL4}).

Consider first the high-low interactions:
\begin{align*}
\sum_{\substack{ |j_2 - k| \leq 4 \\ j_1 \leq k - 5}}
\lVert P_{k} (P_{j_1} A_x P_{j_2} A_x ) \rVert_{L^2}
&\lesssim \;
\sum_{\substack{ |j_2 - k| \leq 4 \\ j_1 \leq k - 5}}
\lVert P_{j_1} A_x \rVert_{L^4} \lVert P_{j_2} A_x \rVert_{L^4} \\
&\lesssim \;
2^{-\sigma k} b_{k} b_{k}(\sigma) 
\sum_{j_1 \leq k - 5} 
b_{j_1}^2 .
\end{align*}
Next consider the high-high interactions:
\begin{align*}
\sum_{\substack{j_1, j_2 \geq k - 4 \\ |j_1 - j_2| \leq 8}}
\lVert P_{k} (P_{j_1} A_x P_{j_2} A_x ) \rVert_{L^2}
&\lesssim \;
\sum_{\substack{j_1, j_2 \geq k - 4 \\ |j_1 - j_2| \leq 8}}
\lVert P_{j_1} A_x \rVert_{L^4} \lVert P_{j_2} A_x \rVert_{L^4} \\
&\lesssim \;
\sum_{j \geq k - 4}
2^{-\sigma j}b_j(\sigma) b_j^3.
\end{align*}
Using the frequency envelope property, we bound this last sum by
\begin{align*}
\sum_{j \geq k - 4}
2^{-\sigma j}b_j(\sigma) b_j^3
&\lesssim \;
2^{-\sigma k} b_k(\sigma)
\sum_{j \geq k - 4}
2^{-\sigma (j - k)} 2^{\delta (j - k)} b_j^3 \\
&\lesssim \;
2^{-\sigma k} b_k(\sigma)
\sup_{j  \geq k - 4} b_j \cdot 
\sum_{j \geq k - 4} b_j^2
\end{align*}
It is in controlling this last sum that we use $\sigma > \delta+$.
\end{proof}

\begin{lem}
It holds that
\begin{equation*}
\lVert A_t(0) \rVert_{L_{t,x}^2}
\lesssim
(1 + \sum_j b_j^2)^2 \sum_k \lVert P_k \psi_x(0) \rVert_{L_{t,x}^4}^2.
\end{equation*}
\label{L:AtL4}
\end{lem}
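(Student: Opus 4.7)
The natural starting point is the integral representation (\ref{CC Integral Rep}) evaluated at $s = 0$, which gives
\[
A_t(0) = -\int_0^\infty \Im\bigl(\overline{\psi_t}\,\psi_s\bigr)(s')\,ds'.
\]
By Minkowski's inequality in $s'$ followed by H\"older's inequality in $(t,x)$, one obtains
\[
\lVert A_t(0) \rVert_{L^2_{t,x}}
\leq \int_0^\infty \lVert \psi_t(s') \rVert_{L^4_{t,x}}\,\lVert \psi_s(s') \rVert_{L^4_{t,x}}\,ds',
\]
and a further Cauchy-Schwarz in $s'$ reduces the task to bounding $\int_0^\infty \lVert \psi_t(s) \rVert_{L^4_{t,x}}^2\,ds$ and similarly for $\psi_s$.

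The key tool is then the heat-time estimate of Lemma \ref{L:Psi_t Flow Bound}, which provides frequency-localized control of both $P_k\psi_t(s)$ and $P_k\psi_s(s)$ in $L^4_{t,x}$ with Gaussian-type decay $(1 + s 2^{2k})^{-2}$ and bound proportional to $2^k \tilde b_k(1+\sum_j b_j^2)$. Squaring, summing over $k$, and applying the Littlewood-Paley square function estimate
\[
\lVert \psi_t(s) \rVert_{L^4_{t,x}}^2 \lesssim \sum_k \lVert P_k \psi_t(s) \rVert_{L^4_{t,x}}^2
\]
(via the pointwise inequality for square functions and Minkowski in $L^2_{t,x}$), yields
\[
\lVert \psi_t(s) \rVert_{L^4_{t,x}}^2
\lesssim \Bigl(1+\sum_j b_j^2\Bigr)^2 \sum_k (1 + s 2^{2k})^{-4} 2^{2k} \tilde b_k^2,
\]
and identically for $\psi_s$.

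The remaining step is routine: integrate in $s$, using the elementary identity $\int_0^\infty (1+s 2^{2k})^{-4} 2^{2k}\,ds \lesssim 1$ (via the substitution $u = s 2^{2k}$), to obtain
\[
\int_0^\infty \lVert \psi_t(s) \rVert_{L^4_{t,x}}^2\,ds
\lesssim \Bigl(1+\sum_j b_j^2\Bigr)^2 \sum_k \tilde b_k^2,
\]
and recognize that the envelope definition (\ref{tb Envelope}) together with standard slowly-varying summation gives $\sum_k \tilde b_k^2 \lesssim \sum_k \lVert P_k \psi_x \rVert_{L^4_{t,x}}^2$. Combining the two $L^4$-integrals via Cauchy-Schwarz produces the claimed bound. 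The only nontrivial input is Lemma \ref{L:Psi_t Flow Bound}, which has already been established; the rest is bookkeeping, so I expect no serious obstacle in this argument, although one should be careful to verify that the square function estimate works cleanly in $L^4_{t,x}$ for vector-valued Littlewood-Paley decompositions.
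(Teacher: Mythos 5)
Your proof is correct, and it reaches the same final bound $\lesssim (1+\sum_j b_j^2)^2 \sum_k \tilde b_k^2$ that the paper obtains; the key ingredient in both is the heat-time $L^4_{t,x}$ decay estimate for $P_k\psi_t(s)$ and $P_k\psi_s(s)$. Where you differ is in how you handle the bilinear product inside the $s$-integral. You apply Cauchy--Schwarz in $s$ \emph{before} decomposing in frequency, reducing to two decoupled quadratic integrals $\int_0^\infty \lVert\psi_t(s)\rVert_{L^4}^2\,ds$ and $\int_0^\infty \lVert\psi_s(s)\rVert_{L^4}^2\,ds$, each of which is summed via the square function estimate $\lVert f\rVert_{L^4}^2 \lesssim \sum_k\lVert P_k f\rVert_{L^4}^2$. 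The paper instead keeps the product coupled: it writes $\psi_s = D_\ell\psi_\ell$, decomposes the bilinear expression $\overline{\psi_t}\cdot D_\ell\psi_\ell$ into low-high and high-low frequency interactions using the envelopes $\mu_k(s)$, $\nu_k(s)$, and only then integrates in $s$, exploiting the frequency-envelope summation rule $\sum_{j\le k}2^j\tilde b_j \lesssim 2^k\tilde b_k$ together with the fact that the $s$-decay is controlled by the dominant frequency. Your route is somewhat more modular --- it cleanly cites the already-established Lemma \ref{L:Psi_t Flow Bound} rather than re-deriving the $\mu_k,\nu_k$ bounds inline from Lemmas \ref{Psi_t Bound}, \ref{Psi Bound}, \ref{A Psi Bound} --- at the modest cost of decoupling the two factors earlier. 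The worry you flag at the end is not an actual obstruction: the $P_k$ are purely spatial multipliers, so the square function estimate applies on each time slice, and since $4\ge 2$ one passes $L^4 \to \ell^2(L^4)$ directly by the standard Littlewood--Paley bound followed by Minkowski in $L^2$; the vector-valued aspect is immaterial since it holds componentwise.
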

\begin{proof}
We begin with
\begin{equation}
\lVert A_t(0) \rVert_{L_{t,x}^2} 
\lesssim
\int_0^\infty \lVert (\overline{\psi}_t \cdot D_\ell \psi_\ell)(s) \rVert_{L_{t,x}^2} \ds.
\label{A_t Starting Point}
\end{equation}
If we define
\begin{equation}
\mu_k(s) := \sup_{k^\prime \in \mathbf{Z}} 2^{-\delta \lvert k - k^\prime \rvert}
\lVert P_k \psi_t(s) \rVert_{L_{t,x}^4}
\quad \text{and} \quad
\nu_k(s) := \sup_{k^\prime \in \mathbf{Z}} 2^{-\delta \lvert k - k^\prime \rvert}
\lVert P_k (D_\ell \psi_\ell)(s) \rVert_{L_{t,x}^4},
\label{munu}
\end{equation}
then
\begin{equation}
\lVert (\overline{\psi}_t \cdot D_\ell \psi_\ell)(s) \rVert_{L_{t,x}^2} 
\lesssim
\sum_k \mu_k(s) \sum_{j \leq k} \nu_j(s) +
\sum_k \nu_k(s) \sum_{j \leq k} \mu_j(s).
\label{L2  Envelope Bound}
\end{equation}
From Lemmas \ref{Psi_t Bound}, \ref{Psi Bound}, and \ref{A Psi Bound}, it follows that
\begin{equation}
\mu_k(s), \nu_k(s) \lesssim (1 + s 2^{2k})^{-2} 2^k \tilde{b}_k (1 + \sum_p b_p^2).
\label{mu nu Bound}
\end{equation}
Combining (\ref{A_t Starting Point}), (\ref{L2 Envelope Bound}), and (\ref{mu nu Bound}),
we have
\begin{align*}
\lVert A_t(0) \rVert_{L_{t,x}^2}
&\lesssim 
\sum_k \mu_k(s) \sum_{j \leq k} \nu_j(s) \\
&\lesssim 
(1 + \sum_p b_p^2)^2 \sum_k 2^k \tilde{b}_k
\sum_{j \leq k} 2^j \tilde{b}_j
\int_0^\infty (1 + s 2^{2j})^{-2} (1 + s 2^{2k})^{-2} \ds \\
&\lesssim
(1 + \sum_p b_p^2)^2 \sum_k 2^k \tilde{b}_k
\sum_{j \leq k} 2^j \tilde{b}_j
\int_0^\infty (1 + s 2^{2k})^{-2} \ds \\
&\lesssim
(1 + \sum_p b_p^2)^2 \sum_k 2^{2k} \tilde{b}_k^2
\int_0^\infty (1 + s 2^{2k})^{-2} \ds \\
&\lesssim
(1 + \sum_p b_p^2)^2 \sum_k \tilde{b}_k^2.
\end{align*}
\end{proof}

As a corollary of the proof, we also obtain
\begin{cor}
Let $\sigma \geq 2 \delta$.
It holds that
\begin{equation*}
\lVert P_k A_t \rVert_{L^2_{t,x}}
\lesssim
(1 + \sum_p b_p^2) \tilde{b}_k 2^{-\sigma k} b_k(\sigma).
\end{equation*}
\label{C:PkAtL2}
\end{cor}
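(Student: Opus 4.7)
The plan is to frequency-localize the proof of Lemma \ref{L:AtL4}. Starting from the integral representation (\ref{CC Integral Rep}) together with the identity $\psi_s = D_\ell \psi_\ell$, one has
\begin{equation*}
\lVert P_k A_t \rVert_{L^2_{t,x}} \lesssim \int_0^\infty \lVert P_k(\overline{\psi_t} \cdot D_\ell \psi_\ell)(s) \rVert_{L^2_{t,x}} \, ds,
\end{equation*}
and a standard Littlewood-Paley decomposition of the product splits it into high-low, low-high, and high-high frequency interactions. Each piece will then be bounded in $L^2_{t,x}$ by H\"older's inequality applied as $L^4 \times L^4 \to L^2$, and finally integrated in heat time.

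The key inputs are two frequency-localized $L^4$ bounds along the heat flow. The first is a $\sigma$-weighted variant of Lemma \ref{L:Psi_t Flow Bound}, namely
\begin{equation*}
\lVert P_k \psi_t(s) \rVert_{L^4_{t,x}} \lesssim (1 + s 2^{2k})^{-2} 2^k 2^{-\sigma k} b_k(\sigma) \bigl(1 + \textstyle\sum_{p} b_p^2\bigr),
\end{equation*}
which follows by rerunning the Duhamel iteration used there, but replacing Lemma \ref{Psi_t Bound} with Corollary \ref{C:AltL4Psit} in the base case and invoking the $\sigma$-weighted forms of Lemmas \ref{A Bound} and \ref{Quadratic-type Terms} to control the heat nonlinearity. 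The second bound is
\begin{equation*}
\lVert P_k (D_\ell \psi_\ell)(s) \rVert_{L^4_{t,x}} \lesssim (1 + s 2^{2k})^{-2} 2^k \tilde{b}_k,
\end{equation*}
which combines the embedding $F_k(T) \cap S_k^{1/2}(T) \hookrightarrow L^4_{t,x}$ with Lemmas \ref{Psi Bound} and \ref{A Psi Bound} (the $(s 2^{2k})^{-3/8}$ singularity coming from the latter being integrable at $s = 0$).

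In each frequency interaction one places the $\sigma$-weighted envelope on whichever of the two factors sits at the output frequency $\sim 2^k$, and the plain $\tilde{b}$-envelope on the other. For the high-low case ($k_1 \sim k$, $k_2 \leq k - 5$) the weight sits on $\psi_t$; the heat-time integral contributes $2^{-2k}$ and one then sums in $k_2$ using the slowly-varying property $\tilde{b}_{k_2} \lesssim \tilde{b}_k 2^{\delta(k - k_2)}$ to recover $\tilde{b}_k$. The low-high case is symmetric, with the weight placed on $D_\ell \psi_\ell$. The high-high contribution ($k_1, k_2 \geq k - 4$) reduces after the $s$-integration to
\begin{equation*}
\bigl(1 + \textstyle\sum_{p} b_p^2\bigr) \sum_{j \geq k - 4} \tilde{b}_j \, 2^{-\sigma j} b_j(\sigma),
\end{equation*}
and the slow variation of both $\tilde{b}$ and $b(\sigma)$ yields the target bound $(1 + \sum_p b_p^2) \tilde{b}_k 2^{-\sigma k} b_k(\sigma)$ provided the tail sum $\sum_{j \geq k} 2^{(2\delta - \sigma)(j - k)}$ converges --- which is precisely why the hypothesis $\sigma \geq 2\delta$ is imposed.

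The main obstacle will be this high-high interaction, where one must simultaneously extract both $\tilde{b}_k$ and $2^{-\sigma k} b_k(\sigma)$ from a single sum; this forces spending two factors of the slow-variation exponent $\delta$, leaving only $\sigma - 2\delta$ of margin for convergence, so the hypothesis $\sigma \geq 2\delta$ is sharp at the level of this argument. All other combinatorial bookkeeping --- the summation over $k_2$ in the two off-diagonal cases, the heat-time integration of the parabolic kernels, and the verification of the $\sigma$-weighted $L^4$ bound for $\psi_t(s)$ --- is routine given the parabolic estimates already established in \S \ref{SS:DFC} and the proof of Lemma \ref{L:AtL4}.
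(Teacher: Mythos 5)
Your proposal is correct and follows essentially the same route as the paper's proof: decompose $P_k(\overline{\psi_t}\cdot D_\ell\psi_\ell)$ into high-low, low-high, and high-high frequency interactions, pair a $\sigma$-weighted $L^4$-in-$s$ bound on one factor with a plain $\tilde{b}$-envelope bound on the other, integrate the parabolic kernels in heat time, and use $\sigma \geq 2\delta$ to sum the high-high tail. The one cosmetic slip is the display claiming $\lVert P_k(D_\ell\psi_\ell)(s)\rVert_{L^4}\lesssim (1+s2^{2k})^{-2}2^k\tilde{b}_k$: as you note parenthetically, the $A_\ell\psi_\ell$ piece actually contributes an additional $(s2^{2k})^{-3/8}$ factor that is unbounded as $s\to 0$, so the correct pointwise bound carries that singularity (as in the paper's $\nu_k$ estimate); since it is integrable in $s$, the downstream conclusion is unaffected.
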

\begin{proof}
We start by modifying the proof of Lemma \ref{L:AtL4}, taking $\mu_k$ and $\nu_k$
as in (\ref{munu}). Then
\begin{align*}
\lVert P_k A_t \rVert_{L^2} &\lesssim \;
\int_0^\infty \lVert P_k (\overline{\psi}_t \cdot D_\ell \psi_\ell)(s) \rVert_{L^2_{t,x}} ds \\
&\lesssim \;
\int_0^\infty \left( \mu_k(s) \sum_{j \leq k} \nu_j(s) + \nu_k \sum_{j \leq k} \mu_j(s)
+ \sum_{j \geq k} \mu_j(s) \nu_j(s) \right) ds.
\end{align*}
Combining Lemmas \ref{Psi Bound} and \ref{A Psi Bound} gives a bound on $\nu_k$ of
\begin{equation}
\lVert \nu_k(s) \rVert_{L^4} \lesssim (1 + s 2^{2k})^{-3} (s 2^{2k})^{-3/8} 2^k 2^{-\sigma k} b_k(\sigma),
\label{nu bound}
\end{equation}
which leads to
\[
\int_0^\infty \nu_k \sum_{j \leq k} \mu_j(s) ds
\lesssim
(1 + \sum_p b_p^2) \tilde{b}_k 2^{-\sigma k} b_k(\sigma).
\]
Also, by using (\ref{mu nu Bound}) for $\mu_k$ and (\ref{nu bound}) for $\nu_k$ yields
\begin{align*}
\int_0^\infty \sum_{j \geq k} \mu_j(s) \nu_j(s) ds
&\lesssim \;
(1 + \sum_p b_p^2) \sum_{j \geq k} 2^{2j} 2^{-\sigma j} b_j(\sigma) \tilde{b}_j 
\int_0^\infty (1 + s 2^{2j})^{-3} (s 2^{2j})^{-3/8} ds \\
&\lesssim \;
(1 + \sum_p b_p^2) \sum_{j \geq k} 2^{-\sigma j} b_j(\sigma) \tilde{b}_j \\
&\lesssim \;
(1 + \sum_p b_p^2) 2^{-\sigma k} b_k(\sigma)
\sum_{j \geq k} 2^{(\delta -\sigma) (j - k)} \tilde{b}_j \\
&\lesssim \;
(1 + \sum_p b_p^2) 2^{-\sigma k} \tilde{b}_k b_k(\sigma).
\end{align*}
Here we have used $\sigma \geq 2 \delta$.
It remains to consider
\[
\int_0^\infty \mu_k(s) \sum_{j \leq k} \nu_j(s) ds.
\]
Suppose that
\begin{equation}
\mu_k(s) \lesssim (1 + s2^{2k})^{-2} 2^k 2^{-\sigma k} b_k(\sigma) (1 + \sum_p b_p^2).
\label{mymu}
\end{equation}
Then
\begin{align*}
\int_0^\infty &\mu_k(s) \sum_{j \leq k} \nu_j(s) ds \\
&\lesssim \;
(1 + \sum_p b_p^2)^2  2^{-\sigma k} b_k(\sigma)
2^k \sum_{j \leq k} \int_0^\infty (1 + s 2^{2k})^{-2} (1 + s 2^{2j})^{-2} 2^j \tilde{b}_j ds \\
&\lesssim \;
(1 + \sum_p b_p^2)^2  2^{-\sigma k} b_k(\sigma)
2^k \sum_{j \leq k} 2^j \tilde{b}_j \int_0^\infty (1 + s 2^{2k})^{-2} ds \\
&\lesssim \;
(1 + \sum_p b_p^2)^2  2^{-\sigma k} b_k(\sigma)
2^{2k} \tilde{b}_k \cdot 2^{-2k} \\
&=
(1 + \sum_p b_p^2)^2  2^{-\sigma k} b_k(\sigma) \tilde{b}_k.
\end{align*}
Hence it remains to establish (\ref{mymu}).

By Corollary \ref{C:AltL4Psit}, (\ref{mymu}) holds when $s = 0$.
To extend this estimate to $s > 0$, we proceed as in the proof of Lemma
\ref{L:Psi_t Flow Bound}, replacing bounds (\ref{PkFSk}) and (\ref{PkASk})
with their $\sigma > 0$ analogues as needed; that these analogues hold
follows from the bounds referenced in establishing (\ref{PkFSk}) and (\ref{PkASk}).
To obtain the analogue of (\ref{Prelim U L4 Bound}), we apply
Lemma \ref{BIKT Lemma 5.4}, choosing to use $\sigma > 0$ bounds only over
the high frequency ranges.
\end{proof}

\emph{Acknowledgments.}
The author sincerely thanks Terence Tao for all of his guidance, support, and encouragement,
and thanks Daniel Tataru and Ioan Bejenaru for their encouragement and helpful
talks and discussions on wave maps and Schr\"odinger maps. 
The author would also like to thank the anonymous referee for pointing out that the proof of Lemma \ref{L:VmBound} is only valid for $\sigma < \frac{1}{2} - \delta^+$, for supplying a suggestion as to how that lemma might be extended to larger $\sigma$, and for helpful suggestions pertaining to overall exposition.

\bibliography{SMbib}

\def\cprime{$'$}
\providecommand{\bysame}{\leavevmode\hbox to3em{\hrulefill}\thinspace}
\providecommand{\MR}{\relax\ifhmode\unskip\space\fi MR }
\providecommand{\MRhref}[2]{%
  \href{http://www.ams.org/mathscinet-getitem?mr=#1}{#2}
}
\providecommand{\href}[2]{#2}
\begin{thebibliography}{10}

\bibitem{BeIoKeTa11b}
I.~Bejenaru, A.~Ionescu, C.~E. Kenig, and D.~Tataru, \emph{{Equivariant
  {S}chr\"odinger maps in two spatial dimensions}}, ArXiv e-prints: 1112.6122
  (2011).

\bibitem{BeIoKe07}
I.~Bejenaru, A.~D. Ionescu, and C.~E. Kenig, \emph{Global existence and
  uniqueness of {S}chr\"odinger maps in dimensions {$d\geq 4$}}, Adv. Math.
  \textbf{215} (2007), no.~1, 263--291. \MR{2354991 (2008i:35124)}

\bibitem{IoKe11}
\bysame, \emph{On the stability of certain spin models in {$2+1$} dimensions},
  J. Geom. Anal. \textbf{21} (2011), no.~1, 1--39. \MR{2755674}

\bibitem{BeIoKeTa11}
I.~Bejenaru, A.~D. Ionescu, C.~E. Kenig, and D.~Tataru, \emph{Global
  {S}chr\"odinger maps in dimensions {$d\geq 2$}: small data in the critical
  {S}obolev spaces}, Ann. of Math. (2) \textbf{173} (2011), no.~3, 1443--1506.
  \MR{2800718}

\bibitem{BeTa10}
I.~{Bejenaru} and D.~{Tataru}, \emph{{Near soliton evolution for equivariant
  {S}chroedinger Maps in two spatial dimensions}}, ArXiv e-prints: 1009.1608
  (2010).

\bibitem{Be08}
Ioan Bejenaru, \emph{Global results for {S}chr\"odinger maps in dimensions
  {$n\geq 3$}}, Comm. Partial Differential Equations \textbf{33} (2008),
  no.~1-3, 451--477. \MR{2398238 (2009h:35392)}

\bibitem{Be08-2}
\bysame, \emph{On {S}chr\"odinger maps}, Amer. J. Math. \textbf{130} (2008),
  no.~4, 1033--1065. \MR{2427007 (2009g:35296)}

\bibitem{Bo98}
J.~Bourgain, \emph{Refinements of {S}trichartz' inequality and applications to
  {$2$}{D}-{NLS} with critical nonlinearity}, Internat. Math. Res. Notices
  (1998), no.~5, 253--283. \MR{1616917 (99f:35184)}

\bibitem{ChShUh00}
Nai-Heng Chang, Jalal Shatah, and Karen Uhlenbeck, \emph{Schr\"odinger maps},
  Comm. Pure Appl. Math. \textbf{53} (2000), no.~5, 590--602. \MR{1737504
  (2001a:35161)}

\bibitem{DAnFa08}
Piero D'Ancona and Luca Fanelli, \emph{Strichartz and smoothing estimates of
  dispersive equations with magnetic potentials}, Comm. Partial Differential
  Equations \textbf{33} (2008), no.~4-6, 1082--1112. \MR{2424390 (2009i:35257)}

\bibitem{DAnFaVeVi10}
Piero D'Ancona, Luca Fanelli, Luis Vega, and Nicola Visciglia, \emph{Endpoint
  {S}trichartz estimates for the magnetic {S}chr\"odinger equation}, J. Funct.
  Anal. \textbf{258} (2010), no.~10, 3227--3240. \MR{2601614 (2011e:35325)}

\bibitem{DiWa01}
Weiyue Ding and Youde Wang, \emph{Local {S}chr\"odinger flow into {K}\"ahler
  manifolds}, Sci. China Ser. A \textbf{44} (2001), no.~11, 1446--1464.
  \MR{1877231 (2002k:53129)}

\bibitem{ErGoSc08}
M.~Burak Erdo{\u{g}}an, Michael Goldberg, and Wilhelm Schlag, \emph{Strichartz
  and smoothing estimates for {S}chr\"odinger operators with large magnetic
  potentials in {$\mathbf{R}^3$}}, J. Eur. Math. Soc. (JEMS) \textbf{10}
  (2008), no.~2, 507--531. \MR{2390334 (2009h:35069)}

\bibitem{ErGoSc09}
\bysame, \emph{Strichartz and smoothing estimates for {S}chr\"odinger operators
  with almost critical magnetic potentials in three and higher dimensions},
  Forum Math. \textbf{21} (2009), no.~4, 687--722. \MR{2541480 (2010j:35091)}

\bibitem{FaVe09}
Luca Fanelli and Luis Vega, \emph{Magnetic virial identities, weak dispersion
  and {S}trichartz inequalities}, Math. Ann. \textbf{344} (2009), no.~2,
  249--278. \MR{2495769 (2010j:35427)}

\bibitem{GuKo11}
S.~{Gustafson} and E.~{Koo}, \emph{{Global well-posedness for 2D radial
  {S}chr\"odinger maps into the sphere}}, ArXiv e-prints: 1105.5659 (2011).

\bibitem{GuKaTs08}
Stephen Gustafson, Kyungkeun Kang, and Tai-Peng Tsai, \emph{Asymptotic
  stability of harmonic maps under the {S}chr\"odinger flow}, Duke Math. J.
  \textbf{145} (2008), no.~3, 537--583. \MR{2462113 (2009k:58030)}

\bibitem{GuNaTs10}
Stephen Gustafson, Kenji Nakanishi, and Tai-Peng Tsai, \emph{Asymptotic
  stability, concentration, and oscillation in harmonic map heat-flow,
  {L}andau-{L}ifshitz, and {S}chr\"odinger maps on $\mathbf{R}^2$}, Comm. Math.
  Phys. \textbf{300} (2010), no.~1, 205--242. \MR{2725187 (2011j:58020)}

\bibitem{IoKe05}
Alexandru~D. Ionescu and Carlos~E. Kenig, \emph{Well-posedness and local
  smoothing of solutions of {S}chr\"odinger equations}, Math. Res. Lett.
  \textbf{12} (2005), no.~2-3, 193--205. \MR{2150876 (2006h:35224)}

\bibitem{IoKe06}
\bysame, \emph{Low-regularity {S}chr\"odinger maps}, Differential Integral
  Equations \textbf{19} (2006), no.~11, 1271--1300. \MR{2278007 (2009m:35485)}

\bibitem{IoKe07}
\bysame, \emph{Global well-posedness of the {B}enjamin-{O}no equation in
  low-regularity spaces}, J. Amer. Math. Soc. \textbf{20} (2007), no.~3,
  753--798 (electronic). \MR{2291918 (2008f:35350)}

\bibitem{IoKe07-2}
\bysame, \emph{Low-regularity {S}chr\"odinger maps. {II}. {G}lobal
  well-posedness in dimensions {$d\geq 3$}}, Comm. Math. Phys. \textbf{271}
  (2007), no.~2, 523--559. \MR{2287916 (2009m:35486)}

\bibitem{Ka05}
Jun Kato, \emph{Existence and uniqueness of the solution to the modified
  {S}chr\"odinger map}, Math. Res. Lett. \textbf{12} (2005), no.~2-3, 171--186.
  \MR{2150874 (2006b:35302)}

\bibitem{KaKo07}
Jun Kato and Herbert Koch, \emph{Uniqueness of the modified {S}chr\"odinger map
  in {$H^{3/4+\epsilon}(\mathbf{R}^2)$}}, Comm. Partial Differential Equations
  \textbf{32} (2007), no.~1-3, 415--429. \MR{2304155 (2008h:58028)}

\bibitem{KePoVe00}
C.~E. Kenig, Gustavo Ponce, and Luis Vega, \emph{On the initial value problem
  for the {I}shimori system}, Ann. Henri Poincar\'e \textbf{1} (2000), no.~2,
  341--384. \MR{1770803 (2001g:35237)}

\bibitem{KeNa05}
Carlos~E. Kenig and Andrea~R. Nahmod, \emph{The {C}auchy problem for the
  hyperbolic-elliptic {I}shimori system and {S}chr\"odinger maps}, Nonlinearity
  \textbf{18} (2005), no.~5, 1987--2009. \MR{2164729 (2006d:35227)}

\bibitem{KePoVe93}
Carlos~E. Kenig, Gustavo Ponce, and Luis Vega, \emph{Small solutions to
  nonlinear {S}chr\"odinger equations}, Ann. Inst. H. Poincar\'e Anal. Non
  Lin\'eaire \textbf{10} (1993), no.~3, 255--288. \MR{1230709 (94h:35238)}

\bibitem{KePoVe98}
\bysame, \emph{Smoothing effects and local existence theory for the generalized
  nonlinear {S}chr\"odinger equations}, Invent. Math. \textbf{134} (1998),
  no.~3, 489--545. \MR{1660933 (99k:35166)}

\bibitem{KePoVe04}
\bysame, \emph{The {C}auchy problem for quasi-linear {S}chr\"odinger
  equations}, Invent. Math. \textbf{158} (2004), no.~2, 343--388. \MR{2096797
  (2005f:35283)}

\bibitem{KrSc09}
J.~{Krieger} and W.~{Schlag}, \emph{{Concentration compactness for critical
  wave maps}}, ArXiv e-prints: 0908.2474 (2009).

\bibitem{KrScTa08}
J.~Krieger, W.~Schlag, and D.~Tataru, \emph{Renormalization and blow up for
  charge one equivariant critical wave maps}, Invent. Math. \textbf{171}
  (2008), no.~3, 543--615. \MR{2372807 (2009b:58061)}

\bibitem{La67}
L.~D. Landau, \emph{Collected papers of {L}. {D}. {L}andau}, Edited and with an
  introduction by D. ter Haar. Second printing, Gordon and Breach Science
  Publishers, New York, 1967. \MR{0237287 (38 \#5577)}

\bibitem{LiWa08}
Fanghua Lin and Changyou Wang, \emph{The analysis of harmonic maps and their
  heat flows}, World Scientific Publishing Co. Pte. Ltd., Hackensack, NJ, 2008.
  \MR{2431658 (2011a:58030)}

\bibitem{Mc07}
Helena McGahagan, \emph{An approximation scheme for {S}chr\"odinger maps},
  Comm. Partial Differential Equations \textbf{32} (2007), no.~1-3, 375--400.
  \MR{2304153 (2009e:35265)}

\bibitem{MeRaRo11-2}
F.~{Merle}, P.~{Rapha{\"e}l}, and I.~{Rodnianski}, \emph{Blow up dynamics for
  smooth equivariant solutions to the energy critical {S}chr\"odinger map},
  ArXiv e-prints: 1106.0912 (2011).

\bibitem{MeRaRo11-3}
Frank Merle, Pierre Rapha{\"e}l, and Igor Rodnianski, \emph{Blow up dynamics
  for smooth equivariant solutions to the energy critical {S}chr\"odinger map},
  C. R. Math. Acad. Sci. Paris \textbf{349} (2011), no.~5-6, 279--283.
  \MR{2783320 (2012e:58029)}

\bibitem{NaShVeZe07}
Andrea Nahmod, Jalal Shatah, Luis Vega, and Chongchun Zeng, \emph{Schr\"odinger
  maps and their associated frame systems}, Int. Math. Res. Not. IMRN (2007),
  no.~21, Art. ID rnm088, 29. \MR{2352219 (2009b:58063)}

\bibitem{NaStUh03-2}
Andrea Nahmod, Atanas Stefanov, and Karen Uhlenbeck, \emph{On {S}chr\"odinger
  maps}, Comm. Pure Appl. Math. \textbf{56} (2003), no.~1, 114--151.
  \MR{1929444 (2003i:58030)}

\bibitem{PaTo91}
N.~Papanicolaou and T.~N. Tomaras, \emph{Dynamics of magnetic vortices},
  Nuclear Phys. B \textbf{360} (1991), no.~2-3, 425--462. \MR{1118793
  (92c:82114)}

\bibitem{PlVe}
Fabrice Planchon and Luis Vega, \emph{Bilinear estimates}, Unpublished.

\bibitem{PlVe09}
\bysame, \emph{Bilinear virial identities and applications}, Ann. Sci. \'Ec.
  Norm. Sup\'er. (4) \textbf{42} (2009), no.~2, 261--290. \MR{2518079
  (2010b:35441)}

\bibitem{RoRuYaSt09}
Igor Rodnianski, Yanir~A. Rubinstein, and Gigliola Staffilani, \emph{On the
  global well-posedness of the one-dimensional {S}chr\"odinger map flow}, Anal.
  PDE \textbf{2} (2009), no.~2, 187--209. \MR{2547134 (2010i:53128)}

\bibitem{RoSt06}
Igor Rodnianski and Jacob Sterbenz, \emph{On the formation of singularities in
  the critical {${\rm O}(3)$} {$\sigma$}-model}, Ann. of Math. (2) \textbf{172}
  (2010), no.~1, 187--242. \MR{2680419 (2011i:58023)}

\bibitem{Sm11}
Paul Smith, \emph{Global regularity of critical {S}chr\"odinger maps:
  subthreshold dispersed energy}, ArXiv e-prints: 1112.0251 (2011).

\bibitem{Sm09}
\bysame, \emph{Geometric renormalization below the ground state}, Int. Math.
  Res. Not. IMRN (2012), no.~16, 3800--3844. \MR{2959028}

\bibitem{St07}
Atanas Stefanov, \emph{Strichartz estimates for the magnetic {S}chr\"odinger
  equation}, Adv. Math. \textbf{210} (2007), no.~1, 246--303. \MR{2298825
  (2009e:81071)}

\bibitem{StTa10-2}
Jacob Sterbenz and Daniel Tataru, \emph{Energy dispersed large data wave maps
  in {$2+1$} dimensions}, Comm. Math. Phys. \textbf{298} (2010), no.~1,
  139--230. \MR{2657817 (2011g:58045)}

\bibitem{StTa10}
\bysame, \emph{Regularity of wave-maps in dimension {$2+1$}}, Comm. Math. Phys.
  \textbf{298} (2010), no.~1, 231--264. \MR{2657818 (2011h:58026)}

\bibitem{St77}
Robert~S. Strichartz, \emph{Restrictions of {F}ourier transforms to quadratic
  surfaces and decay of solutions of wave equations}, Duke Math. J. \textbf{44}
  (1977), no.~3, 705--714. \MR{0512086 (58 \#23577)}

\bibitem{SuSuBa86}
P.-L. Sulem, C.~Sulem, and C.~Bardos, \emph{On the continuous limit for a
  system of classical spins}, Comm. Math. Phys. \textbf{107} (1986), no.~3,
  431--454. \MR{866199 (87k:82024)}

\bibitem{TSchroedinger}
Terence Tao, \emph{Gauges for the {S}chr\"odinger map}.

\bibitem{T2}
\bysame, \emph{Global regularity of wave maps. {II}. {S}mall energy in two
  dimensions}, Comm. Math. Phys. \textbf{224} (2001), no.~2, 443--544.
  \MR{1869874 (2002h:58052)}

\bibitem{Trenorm}
\bysame, \emph{Geometric renormalization of large energy wave maps}, Journ\'ees
  ``\'{E}quations aux {D}\'eriv\'ees {P}artielles'', \'Ecole Polytech.,
  Palaiseau, 2004, pp.~Exp. No. XI, 32. \MR{2135366 (2006i:58044)}

\bibitem{Tdis}
\bysame, \emph{Nonlinear dispersive equations}, CBMS Regional Conference Series
  in Mathematics, vol. 106, Published for the Conference Board of the
  Mathematical Sciences, Washington, DC, 2006, Local and global analysis.
  \MR{2233925 (2008i:35211)}

\bibitem{T3}
\bysame, \emph{{Global regularity of wave maps {III}. {L}arge energy from
  $\textbf{R}^{1+2}$ to hyperbolic spaces}}, ArXiv e-prints: 0805.4666 (2008).

\bibitem{T4}
\bysame, \emph{{Global regularity of wave maps {IV}. {A}bsence of stationary or
  self-similar solutions in the energy class}}, ArXiv e-prints: 0806.3592
  (2008).

\bibitem{T5}
\bysame, \emph{{Global regularity of wave maps {V}. {L}arge data local
  wellposedness and perturbation theory in the energy class}}, ArXiv e-prints:
  0808.0368 (2008).

\bibitem{T6}
\bysame, \emph{{Global regularity of wave maps {VI}. {A}bstract theory of
  minimal-energy blowup solutions}}, ArXiv e-prints: 0906.2833 (2009).

\bibitem{T7}
\bysame, \emph{{Global regularity of wave maps {VII}. {C}ontrol of delocalised
  or dispersed solutions}}, ArXiv e-prints: 0908.0776 (2009).

\bibitem{Tls}
\bysame, \emph{A physical space proof of the bilinear {S}trichartz and local
  smoothing estimates for the {S}chr\"odinger equation}, 2010.

\bibitem{Tat01}
Daniel Tataru, \emph{On global existence and scattering for the wave maps
  equation}, Amer. J. Math. \textbf{123} (2001), no.~1, 37--77. \MR{1827277
  (2002c:58045)}

\end{thebibliography}
\bibliographystyle{amsplain}

\end{document}